\documentclass[11pt,letterpaper]{amsart}
\usepackage{amsfonts}
\usepackage{color}
\usepackage{amsmath}
\usepackage{amssymb, esint,tikz}
\usepackage[scr=boondox]{mathalpha}
\usepackage[foot]{amsaddr}
\usepackage{amsthm,color}
\usepackage[left=3.25cm,right=3.25cm,top=3cm,bottom=2.25cm]{geometry}
\usepackage{enumerate}
\usepackage{mathtools}
\usepackage{hyperref}
\usepackage{times}
\allowdisplaybreaks[4]
\definecolor{darkblue}{rgb}{0,0,0.6}
\hypersetup{
    colorlinks=true,       
    linkcolor=darkblue,          
    citecolor=darkblue,        
    filecolor=darkblue,      
    urlcolor=darkblue           
}
\usepackage{bropd}
\usepackage{cancel}
\usepackage{longtable}
\theoremstyle{plain}
\newtheorem{theorem}{Theorem}[section]

\newtheorem{corollary}[theorem]{Corollary}
\newtheorem{lemma}[theorem]{Lemma}
\newtheorem{proposition}[theorem]{Proposition}

\theoremstyle{definition}
\newtheorem{definition}[theorem]{Definition}

\theoremstyle{remark}

\newtheorem*{claim*}{Claim}

\newtheorem{remark}[theorem]{Remark}

\numberwithin{equation}{section}
\newcommand{\curl}{\mathop{\textup{curl}}}
\newcommand{\thmref}[1]{Theorem~\ref{#1}}
\newcommand{\secref}[1]{\S~\ref{#1}}
\newcommand{\lemref}[1]{Lemma~\ref{#1}}
\newcommand{\propref}[1]{Proposition~\ref{#1}}
\newcommand{\rmkref}[1]{Remark~\ref{#1}}
\newcommand{\appdref}[1]{Appendix~\ref{#1}}

\title[Singularities of axisymmetric compressible flows]{The singularity at  degenerate points in steady axisymmetric compressible free surface flows with gravity}
\address{$^1$Department of Mathematics, The Hong Kong University of Science and Technology, Kowloon, Hong Kong}
\address{$^2$Department of Mathematics, Sichuan University, Chengdu, China}
\address{$^3$School of Mathematical Sciences, Shenzhen University, China}
\author{Lili Du$^{2}$}
\email{dulili@scu.edu.cn}
\author{Chunlei Yang$^{1,3}$}
\email{yangcl@ust.hk}
\subjclass[2020]{Primary 35Q35, 76N30; Secondary 35R35, 35B44}
\keywords{Compressible axisymmetric free surface flows; Euler equations; Degenerate points; Free boundary problem; Monotonicity formula; Frequency formula}
\begin{document}
\begin{abstract}
    In this paper, we analyze the singular shape of the free boundary at degenerate points in a three dimensional axisymmetric compressible gravity flow. For all possible degenerate points on the free surface, we prove that  
    \begin{itemize}
        \item The only nontrivial asymptotic behavior of the free surface at the stagnation points away from the axis of symmetry is the Stokes corner flow.
        \item The possible geometries for free boundaries at the non-stagnation axis points are downward pointing or upward pointing cusps.
        \item At the origin, there are only two nontrivial asymptotics possible: the Garabedian's pointed bubble or a horizontal flat surface.
    \end{itemize}
    The problem is associated with the analysis of the degenerate points of a quasilinear free boundary problem of the Bernoulli type, and the main obstacles are the absence of a Weiss-type monotonicity formula. To achieve our goal, we establish for the first time monotonicity formulas for quasilinear Bernoulli type free boundary problems. Our formula works both when the equation becomes singular and when the free boundary condition is degenerate. 

    Moreover, we establish a new nonlinear frequency formula at the horizontal flat points at the origin and integrate it with the compensated compactness theory for Euler equations, ensuring the strong convergence of variational solutions.

    Our results resolve the Stokes conjecture [Mathematical and Physical Papers, Vol. I., 1880] in a generalized compressible, three dimensional axisymmetric framework. In addition, it can also be realized as a compressible counterpart to V\v{a}rv\v{a}ruc\v{a} and Weiss [Comm. Pure Appl. Math., (67), 2014]. Our approach is completely new
    and gives a systematic approach for studying singularities of a singular Bernoulli type quasilinear free boundary problem.
\end{abstract}
\maketitle
\tableofcontents
\section{Introduction and setups}
\subsubsection*{The physical model}
Let us consider the compressible fluids in $\mathbb{R} ^{3}$ under the influence of the gravity (e.g.  compressible gravity water waves in $\mathbb{R} ^{3}$ \cite{MR3887218,MR4439376}). Assume that the motion is steady, then the motion of the fluid is governed by the Euler system 
\begin{align}\label{axeu1}
    \left\{
        \begin{alignedat}{2}
            &(v\cdot\nabla)v=-\frac{1}{\rho}\nabla p-ge_{z}\qquad&&\text{ in }D,\\
            &(v\cdot\nabla)\rho+\rho \operatorname{div}v=0,\qquad&&\text{ in }D,
        \end{alignedat}
    \right.
\end{align}
where $D$ denotes the fluid domain, $v\in \mathbb{R} ^{3}$ the velocity, $\rho$ the density, $e_{z}=(0,0,1)^{\top}$ and $p$ the pressure of the fluid. For compressible liquids, the equation of the state (see~\cite{MR2177323,MR3812074}) $p=p(\rho)\in C^{1,\alpha}([0,\infty))$ is given by 
\begin{equation}\label{Apop}
   p'(\rho) > 0,\quad 2p'(\rho)+\rho p''(\rho)>0\qquad\text{ for }\rho >0.
\end{equation}
It is direct to check that the $\gamma$-law gas $p(\rho)=A\rho^{\gamma}$ for $\gamma>1$ and $A>0$ satisfies~\eqref{Apop}. We refer readers to the reference~\cite{MR4716737,MR92663,MR96477,MR421279,MR3437861} for discussion on this density-pressure relation. 

In this paper, we focus on axisymmetric solutions of the Euler system \eqref{axeu1}, which means that the velocity vector field can be written as 
\begin{equation*}
    v=v_{r}(x_{1},x_{2})e_{r}+v_{\theta}(x_{1},x_{2})e_{\theta}+v_{z}(x_{1},x_{2})e_{z},
\end{equation*}
where $x_{1}=\sqrt{X^{2}+Y^{2}}$ and $x_{2}=Z$ and 
\begin{equation*}
    e_{r}=\left( \frac{X}{x_{1}},\frac{Y}{x_{1}},0 \right),\qquad e_{\theta}=\left( -\frac{Y}{x_{1}},\frac{X}{x_{1}},0 \right),\qquad e_{z}=(0,0,1).
\end{equation*}
We assume that the flow is swirl--free, which means $v_{\theta}\equiv 0$. Then the Euler system~\eqref{axeu1}, which is satisfied by $v=(v_{r},v_{z})$ and $\rho=\rho(x_{1},x_{2})$, is given by 
\begin{align}\label{axeu2}
    \left\{
        \begin{alignedat}{2}
            &(v\cdot\nabla)\rho+\rho \operatorname{div}v+\rho v_{r}=0\quad&&\text{ in }D,\\
            &(v\cdot\nabla)v=-\frac{1}{\rho}\nabla p-ge_{2}\quad&&\text{ in }D,
        \end{alignedat}
    \right.
\end{align}
where $\nabla=(\partial_{x_{1}},\partial_{x_{2}})$, $\operatorname{div}v=\partial_{x_{1}}v_{r}+\partial_{x_{2}}v_{z}$, and $e_{2}=(0,1)^{\top}$. The system~\eqref{axeu2} models for example an axisymmetric compressible jet issuing from a nozzle in the influence of the gravity (see Figure~\ref{jet}) 
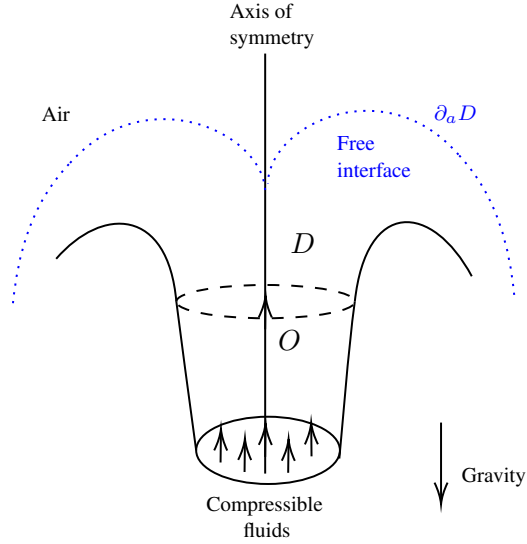
\begin{figure}[!ht]
    \centering
    \tikzset{every picture/.style={line width=0.75pt}} 

    \begin{tikzpicture}[x=0.75pt,y=0.75pt,yscale=-1,xscale=1]
    
    \draw [color={rgb, 255:red, 0; green, 0; blue, 0 }  ,draw opacity=1 ]   (351.8,270) -- (351.8,58) ;
    \draw   (317,258.27) .. controls (317,248.9) and (333.23,241.3) .. (353.25,241.3) .. controls (373.27,241.3) and (389.5,248.9) .. (389.5,258.27) .. controls (389.5,267.65) and (373.27,275.25) .. (353.25,275.25) .. controls (333.23,275.25) and (317,267.65) .. (317,258.27) -- cycle ;
    \draw  [dash pattern={on 4.5pt off 4.5pt}] (307,183.2) .. controls (307,178.56) and (327.15,174.8) .. (352,174.8) .. controls (376.85,174.8) and (397,178.56) .. (397,183.2) .. controls (397,187.84) and (376.85,191.6) .. (352,191.6) .. controls (327.15,191.6) and (307,187.84) .. (307,183.2) -- cycle ;
    \draw    (351.8,261.6) -- (351.98,246.6) ;
    \draw [shift={(352,244.6)}, rotate = 90.67] [color={rgb, 255:red, 0; green, 0; blue, 0 }  ][line width=0.75]    (10.93,-3.29) .. controls (6.95,-1.4) and (3.31,-0.3) .. (0,0) .. controls (3.31,0.3) and (6.95,1.4) .. (10.93,3.29)   ;
    \draw    (363.4,269.6) -- (363.58,254.6) ;
    \draw [shift={(363.6,252.6)}, rotate = 90.67] [color={rgb, 255:red, 0; green, 0; blue, 0 }  ][line width=0.75]    (10.93,-3.29) .. controls (6.95,-1.4) and (3.31,-0.3) .. (0,0) .. controls (3.31,0.3) and (6.95,1.4) .. (10.93,3.29)   ;
    \draw    (341.4,269.2) -- (341.58,254.2) ;
    \draw [shift={(341.6,252.2)}, rotate = 90.67] [color={rgb, 255:red, 0; green, 0; blue, 0 }  ][line width=0.75]    (10.93,-3.29) .. controls (6.95,-1.4) and (3.31,-0.3) .. (0,0) .. controls (3.31,0.3) and (6.95,1.4) .. (10.93,3.29)   ;
    \draw    (351.8,197.2) -- (351.98,182.2) ;
    \draw [shift={(352,180.2)}, rotate = 90.67] [color={rgb, 255:red, 0; green, 0; blue, 0 }  ][line width=0.75]    (10.93,-3.29) .. controls (6.95,-1.4) and (3.31,-0.3) .. (0,0) .. controls (3.31,0.3) and (6.95,1.4) .. (10.93,3.29)   ;
    \draw [color={rgb, 255:red, 0; green, 0; blue, 255 }  ,draw opacity=1 ] [dash pattern={on 0.84pt off 2.51pt}]  (253.4,113.2) .. controls (291.8,70) and (351.8,98.4) .. (351.8,128) ;
    \draw [color={rgb, 255:red, 0; green, 0; blue, 255 }  ,draw opacity=1 ] [dash pattern={on 0.84pt off 2.51pt}]  (224.6,184) .. controls (227,160.8) and (233,134.4) .. (253.4,113.2) ;
    
    \draw [color={rgb, 255:red, 0; green, 0; blue, 255 }  ,draw opacity=1 ] [dash pattern={on 0.84pt off 2.51pt}]  (449.28,109) .. controls (411.78,65.8) and (353.2,94.2) .. (353.2,123.8) ;
    \draw [color={rgb, 255:red, 0; green, 0; blue, 255 }  ,draw opacity=1 ] [dash pattern={on 0.84pt off 2.51pt}]  (477.4,179.8) .. controls (475.06,156.6) and (469.2,130.2) .. (449.28,109) ;
    
    \draw    (329.3,264.4) -- (329.48,249.4) ;
    \draw [shift={(329.5,247.4)}, rotate = 90.67] [color={rgb, 255:red, 0; green, 0; blue, 0 }  ][line width=0.75]    (10.93,-3.29) .. controls (6.95,-1.4) and (3.31,-0.3) .. (0,0) .. controls (3.31,0.3) and (6.95,1.4) .. (10.93,3.29)   ;
    \draw    (375.18,261.3) -- (375.02,246.3) ;
    \draw [shift={(375,244.3)}, rotate = 89.39] [color={rgb, 255:red, 0; green, 0; blue, 0 }  ][line width=0.75]    (10.93,-3.29) .. controls (6.95,-1.4) and (3.31,-0.3) .. (0,0) .. controls (3.31,0.3) and (6.95,1.4) .. (10.93,3.29)   ;
    \draw    (317,258.27) .. controls (314,236) and (308.8,197.8) .. (307,183.2) ;
    \draw    (389.2,262.4) .. controls (390.5,249.33) and (393.6,208) .. (397,183.2) ;
    \draw    (397,183.2) .. controls (402.8,130.8) and (436,133.2) .. (456,170.4) ;
    \draw    (246.4,161.6) .. controls (269.6,134.8) and (302.8,135.6) .. (307,183.2) ;
    \draw    (440.5,244.25) -- (440.5,283.5) ;
    \draw [shift={(440.5,285.5)}, rotate = 270] [color={rgb, 255:red, 0; green, 0; blue, 0 }  ][line width=0.75]    (10.93,-3.29) .. controls (6.95,-1.4) and (3.31,-0.3) .. (0,0) .. controls (3.31,0.3) and (6.95,1.4) .. (10.93,3.29)   ;
    
    \draw (332.4,31.2) node [anchor=north west][inner sep=0.75pt]  [font=\scriptsize] [align=left] {Axis of \\symmetry};
    \draw (320.7,280) node [anchor=north west][inner sep=0.75pt]  [font=\scriptsize] [align=left] {Compressible\\ \ \ \ \ \ \ \ fluids};
    \draw (356.8,196.3) node [anchor=north west][inner sep=0.75pt]    {$O$};
    \draw (237.65,83.5) node [anchor=north west][inner sep=0.75pt]  [font=\scriptsize] [align=left] {Air};
    \draw (386.65,98.1) node [anchor=north west][inner sep=0.75pt]  [font=\scriptsize,color={rgb, 255:red, 0; green, 0; blue, 255 }  ,opacity=1 ] [align=left] {Free \\interface};
    \draw (435.5,82.65) node [anchor=north west][inner sep=0.75pt]  [font=\scriptsize,color={rgb, 255:red, 0; green, 0; blue, 255 }  ,opacity=1 ]  {$\partial _{a} D$};
    \draw (363,147.15) node [anchor=north west][inner sep=0.75pt]    {$D$};
    \draw (449.5,265.25) node [anchor=north west][inner sep=0.75pt]  [font=\scriptsize] [align=left] {Gravity};

    \end{tikzpicture}
    \caption{Axially symmetric compressible jet rising from a nozzle}
    \label{jet}
\end{figure}

We now define some critical variables for compressible flows in gravity field. The sound speed $c(\rho)$ and the Mach number $M$ is defined by  
\begin{equation*}
    c(\rho)=\sqrt{p'(\rho)}\qquad\text{ and }\qquad M=\frac{q}{c(\rho)},
\end{equation*}
where $q:=\sqrt{v_{r}^{2}+v_{z}^{2}}$ is the speed of the flow. Then the flow is called subsonic if $q<c(\rho)$ ($M<1$), and supersonic if $q>c(\rho)$ ($M>1$). We denote the interface between compressible fluid and the air by $\partial_{a}D$ and the density on it by $\bar{\rho}_{0}:=\rho|_{\partial_{a}D}$. Note that $\bar{\rho}_{0}>0$ is a constant determined uniquely by the given atmosphere pressure and~\eqref{Apop}. We can deduce from the second equation in~\eqref{axeu2} that 
\begin{equation}\label{Blaw}
    \frac{q^{2}}{2}+h(\rho)+gx_{2}=\frac{p'(\bar{\rho}_{0})}{2}\qquad\text{ in }D,
\end{equation}
where $h(\rho):=\int_{\bar{\rho}_{0}}^{\rho}\frac{p'(s)}{s}\,ds$ denotes the enthalpy of the flow. In addition, we can also show that  
\begin{equation}\label{vort}
    (x_{1}\rho v)\cdot\nabla\left( \frac{\omega}{x_{1}\rho} \right)=0\qquad\text{ in }D,
\end{equation}
where $\omega:=\partial_{x_{1}}v_{z}-\partial_{x_{2}}v_{r}$ is the vorticity of the flow. 

It should be remark that the assumption in~\eqref{Apop} implies the existence of a local critical density $\rho_{\mathit{cr},x_{2}}$ for each fixed point $x=(x_{1},x_{2})$ in the fluid domain $D$. It is called local (depending on the height $x_{2}$) critical density in the sense that one can identify the flow to subsonic ($\rho>\rho_{\mathit{cr},x_{2}}$) or soinc-supersonic ($\rho \leqslant \rho_{\mathit{cr},x_{2}})$ flows. In fact, the inequality $2p'(\rho)+\rho p''(\rho)>0$ implies that the function $s\mapsto\left( \frac{p'(s)}{2}+h(s) \right)$ is increasing for any $s \geqslant \bar{\rho}_{0}$. We denote the unique local critical density by $\rho_{\mathit{cr},x_2}$, which is determined via the identity  
\begin{align}\label{cr}
    \frac{p'(\rho_{\mathit{cr},x_{2}})}{2}+h(\rho_{\mathit{cr},x_{2}})+gx_{2}=\frac{p'(\bar{\rho}_{0})}{2}\qquad\text{ in }D.
\end{align}
Introducing $\rho_{\mathit{cr},x_{2}}$ into~\eqref{Blaw} gives a unique local critical speed $q_{\mathit{cr},x_{2}}$ so that the flow can be classified by subsonic or sonic-supersonic by $q<q_{\mathrm{cr},x_{2}}$ or $q \geqslant q_{\mathrm{cr},x_{2}}$, respectively. Let us note that a major difference between critical variables defined above and that in the classical compressible hydrodynamics (cf.~\cite[pp.7]{MR96477}) is that, in the absence of the gravity, the critical variables are independent of the vertical coordinates and remain a constant value throughout the entire flow region $D$. In contrast, the critical momentum $\rho_{\mathit{cr},x_{2}}q_{\mathit{cr},x_{2}}$ varies at each specific point $(x_{1},x_{2})\in D$ within the water phase. 

Many results on compressible problems are achieved through conformal mappings, we are more inclined to study the problem as a free boundary problem. This has the advantage that more singular (e.g. cusps and corners) geometries can be included. In this paper, we're interested in the shape of the free boundary $\partial_{a}D$ at degenerate points. These are places where the velocity vector field on the free boundary is zero and the governing equation becomes singular. These degeneracies have a major impact on the free surface geometry, which is a typical occurrence for incompressible gravity waves~\cite{MR2810856,MR2995099,MR3225630,MR4595616}. 

It follows from the definition of $h(\rho)$ that $h|_{\partial_{a}D}=0$, and therefore~\eqref{Blaw} gives that 
\begin{equation}\label{fbcd}
    q^{2}=p'(\bar{\rho}_{0})-2gx_{2}\quad\text{ on }\partial_{a}D,
\end{equation}
and that 
\begin{equation}\label{subst}
    \frac{p'(\rho_{\mathit{cr},x_{2}})}{2}+gx_{2}=\frac{p'(\bar{\rho}_{0})}{2}\quad\text{ on }\partial_{a}D,
\end{equation}
where $\rho_{\mathit{cr},x_{2}}$ is defined in~\eqref{cr}. The equation in~\eqref{subst} indicates that $\rho_{\mathit{cr},x_{2}^{\circ}}<\bar{\rho}_{0}$ for every $x^{\circ}=(x_{1}^{\circ},x_{2}^{\circ})\in \partial_{a}D$ with $x_{2}^{\circ}>0$. In other words, the flow is subsonic at the free boundary point $x^{\circ}$ whenever $x_{2}^{\circ}>0$. On the other hand, we deduce from~\eqref{fbcd} that $q^{2}(x^{\mathit{st}})=0$ where $x^{\mathit{st}}=(x_{1}^{\mathit{st}},x_{2}^{\mathit{st}})$ with $x_{2}^{\mathit{st}}:=\frac{p'(\bar{\rho}_{0})}{2g}$. Since $x_{2}^{\mathit{st}}>0$, we may expect that the flow is subsonic at the stagnation point. However, it should be noted that~\eqref{subst} does not define $\rho_{\mathit{cr},x_{2}^{\mathit{st}}}$, so we need to define $\rho_{\mathit{cr},x_{2}^{\mathit{st}}}$ first. Differentiating the equation in~\eqref{subst} with respect to $x_{2}$ on both sides gives
\begin{equation}\label{subst1}
    \frac{p''(\rho_{\mathit{cr},x_{2}})}{2}\frac{d\rho_{\mathit{cr},x_{2}}}{dx_{2}}=-g\quad\text{ on }\partial_{a}D.
\end{equation}
Then the assumption in~\eqref{Apop} imply that $p''(\rho_{\mathit{cr},x_{2}}) >0$ gives $\frac{d\rho_{\mathit{cr},x_{2}}}{dx_{2}}<0$. We obtain that the function $x_{2}\mapsto\rho_{\mathit{cr},x_{2}}$ is strictly decreasing with $\rho_{\mathit{cr},0}=\bar{\rho}_{0}$. Thus, the number
\begin{equation}\label{crid}
    \rho_{\mathit{cr},x_{2}^{\mathit{st}}}:=\inf_{0 \leqslant x_{2} \leqslant x_{2}^{\mathit{st}}}\rho_{\mathit{cr},x_{2}}
\end{equation}
is well defined and satisfies $\rho_{\mathit{cr},x_{2}^{\mathit{st}}}<\rho_{\mathit{cr},0}=\bar{\rho}_{0}$. Consequently, the flow is uniform subsonic at the stagnation point since $\rho(x^{\mathit{st}})=\bar{\rho}_{0}$.
\subsubsection*{The quasilinear free boundary problem of the Bernoulli type}
In this subsection, we formulate the problem into a quasilinear Bernoulli type free boundary problem. The conservation of mass ensures the existence of a Stokes stream function $\psi=\psi(x_{1},x_{2})$ satisfying
\begin{equation*}
    (v_{r},v_{z})=\left( \frac{1}{x_{1}\rho}\pd{\psi}{x_{2}},-\frac{1}{x_{1}\rho}\pd{\psi}{x_{1}} \right).
\end{equation*}
Suppose that $\psi>0$ in $D$ and we extend $\psi$ by the value of zero to the region so that the fluid domain can be identified with the set $\{(x_{1},x_{2}):\psi(x_{1},x_{2})>0\}$, in short $\{\psi>0\}$. In terms of $\psi$, the Bernoulli law~\eqref{Blaw} becomes 
\begin{align}\label{Blaw1}
    \frac{|\nabla\psi|^{2}}{2x_{1}^{2}\rho^{2}}+h(\rho)+gx_{2}=\frac{p'(\bar{\rho}_{0})}{2}\quad\text{ in }\{\psi>0\},
\end{align} 
and~\eqref{fbcd} can be written as 
\begin{align}\label{Blaw2}
    \frac{|\nabla\psi|^{2}}{x_{1}^{2}\bar{\rho}_{0}^{2}}=(p'(\bar{\rho}_{0})-2gx_{2})\quad\text{ on }\partial\{\psi>0\}.
\end{align}
We can regard the equation~\eqref{Blaw1} as \(\mathscr{F}(\rho;x_{2})-t=0\), where 
\begin{equation*}
    \mathscr{F}(\rho;x_{2})=2\rho^{2}\left( \frac{p'(\bar{\rho}_{0})}{2}-h(\rho)-gx_{2} \right)\qquad\text{ and }\qquad t=\frac{|\nabla\psi|^{2}}{x_{1}^{2}}.
\end{equation*}
Then a direct calculation gives that 
\begin{equation*}
    \partial_{\rho}\mathscr{F}(\rho;x_{2})=2\rho\left[ p'(\rho_{\mathit{cr},x_{2}})+2h(\rho_{\mathit{cr},x_{2}})-(p'(\rho)-2h(\rho)) \right],
\end{equation*}
where we used the  equation~\eqref{cr}. Thanks to~\eqref{Apop}, we obtain $\partial_{\rho}\mathscr{F}(\rho;x_{2})<0$ whenever $\rho>\rho_{\mathit{cr},x_{2}}$. Due to the implicit function theorem, the density $\rho$ can be expressed as a continuously differentiable function of $\frac{|\nabla\psi|^{2}}{x_{1}^{2}}$ and $x_{2}$ in a neighborhood of any point $x^{\circ}=(x_{1}^{\circ},x_{2}^{\circ})$ satisfying $\rho(x^{\circ})>\rho_{\mathit{cr},x_{2}^{\circ}}$. Define
\begin{equation*}
    \partial_{1}\rho\left( \frac{|\nabla\psi|^{2}}{x_{1}^{2}};x_{2} \right):=\frac{\partial\rho(t;x_{2})}{\partial t}\Bigg|_{t=|\nabla\psi|^{2}/x_{1}^{2}},
\end{equation*}
and 
\begin{equation*}
    \partial_{2}\rho\left( \frac{|\nabla\psi|^{2}}{x_{1}^{2}};x_{2} \right):=\frac{\partial\rho(\tfrac{|\nabla\psi|^{2}}{x_{1}^{2}};s)}{\partial s}\Bigg|_{s=x_{2}}.
\end{equation*}
It follows from~\eqref{cr} that for any point $x=(x_{1},x_{2})\in\{\psi>0\}$ satisfying $\rho(x)>\rho_{\mathit{cr},x_{2}}$,
\begin{equation*}
    \partial_{1}\rho\left( \frac{|\nabla\psi|^{2}}{x_{1}^{2}};x_{2} \right)=\frac{1}{2\rho\left[ p'(\rho_{\mathit{cr},x_{2}})+2h(\rho_{\mathit{cr},x_{2}})-(p'(\rho)-2h(\rho)) \right]},
\end{equation*}
and 
\begin{equation*}
    \partial_{2}\rho\left( \frac{|\nabla\psi|^{2}}{x_{1}^{2}};x_{2} \right)=\frac{\rho g}{p'(\rho_{\mathit{cr},x_{2}})+2h(\rho_{\mathit{cr},x_{2}})-(p'(\rho)-2h(\rho))}.
\end{equation*}
We see that $\partial_{1}\rho<0$ and $\partial_{2}\rho<0$ for $\rho>\rho_{\mathit{cr},x_{2}}$. Recalling now the equation in~\eqref{vort} and the definition of $\omega$, for irrotational flows we have 
\begin{equation}\label{vort1}
    \operatorname{div}\left( \frac{\nabla\psi}{x_{1}\rho\left( \frac{|\nabla\psi|^{2}}{x_{1}^{2}};x_{2} \right)} \right)=0\qquad\text{ in }\{\psi>0\}.
\end{equation}
The equation~\eqref{vort1} with the free boundary condition~\eqref{Blaw2} describes a steady axisymmetric compressible irrotational flow without swirl under the influence of the gravity and with a free surface $\partial\{\psi>0\}$. 
\begin{remark}
    Combining the equation in~\eqref{vort1} and in~\eqref{Blaw2}, we obtain the following quasilinear free boundary problem of the Bernoulli-type,
    \begin{align}\label{fb1}
        \left\{
            \begin{alignedat}{2}
                &\operatorname{div}\left( \frac{\nabla\psi}{x_{1}\rho\left( \frac{|\nabla\psi|^{2}}{x_{1}^{2}};x_{2} \right)} \right)=0\qquad&&\text{ in the subsonic fluid phase }\{\psi>0\},\\
                &\frac{1}{x_{1}^{2}\bar{\rho}_{0}^{2}}|\nabla\psi|^{2}=(p'(\bar{\rho}_{0})-2gx_{2})\qquad&&\text{ on the free surface }\partial\{\psi>0\}.
            \end{alignedat}
        \right.
    \end{align}
    We concentrate on the singular behavior of the free surface near degenerate points of the free boundary problem~\eqref{fb1}. Due to the degeneracy of the free boundary condition in equation~\eqref{fb1}, these points are distributed along the sets $\{x_{2}=x_{2}^{\mathit{st}}\}$ (stagnation points), $\{x_{1}=0\}$ (non-stagnation axis points) and at the point $(0,x_{2}^{\mathit{st}})$. In the next remark, we rescale the problem so that the degenerate points are located nicely on two axes.
\end{remark}
\begin{remark}\label{Remark: resc}
    Define $x_{2}^{\mathit{st}}:=\frac{p'(\bar{\rho}_{0})}{2g}$ and introduce a new set of coordinates $y_{1}=x_{1}$ and $y_{2}=x_{2}^{\mathit{st}}-x_{2}$. In the new frame, we set
    \begin{equation*}
        u(y_{1},y_{2})=\frac{\psi(y_{1},x_{2}^{\mathit{st}}-y_{2})}{\bar{\rho}_{0}\sqrt{2g}}\qquad\text{ and }\qquad H(t;y_{2})=\rho(2\bar{\rho}_{0}^{2}gt,x_{2}^{\mathit{st}}-y_{2}).
    \end{equation*}
    After direct calculations, one can show that the problem~\eqref{fb1} is equivalent to 
    \begin{align}\label{fb2}
        \left\{
            \begin{alignedat}{2}
                \operatorname{div}\left( \frac{\nabla u}{y_{1}H\left( \frac{|\nabla u|^{2}}{y_{1}^{2}};y_{2} \right)} \right)&=0\qquad\text{ in }\{u>0\},\\
                \frac{1}{y_{1}^{2}}|\nabla u|^{2}&=y_{2}\qquad\text{ on }\partial\{u>0\},
            \end{alignedat}
        \right.
    \end{align}
    Instead of studying problem~\eqref{fb1}, we will focus on the problem~\eqref{fb2} in the rest of the paper. More specifically, we study the shape of the free surface near  degenerate points including
    \begin{enumerate}
        \item [(1).] Stagnation points: free boundary points distributed along $\{y_{2}=0\}$ but $y_{1}\neq 0$ (corresponds to $\{x_{2}=x_{2}^{\mathit{st}}\}$, $x_{1}\neq 0$ for problem~\eqref{fb1}).
        \item [(2).]  Non-stagnation axis points: free boundary points distributed along $\{y_{1}=0\}$ but $y_{2}>0$  (corresponds to $\{x_{1}=0\}$, $x_{2} <x_{2}^{\mathit{st}}$ for problem~\eqref{fb1}).
        \item [(3).] The origin $(0,0)$ ((corresponds to the point $(0,x_{2}^{\mathit{st}}))$ for problem \eqref{fb1}).
    \end{enumerate}
\end{remark}
\subsubsection*{Notations and notions of solutions}
Before we state our main results, we outline some notations and definitions for the sake of convenience. 

We denote a point in physical space $\mathbb{R}^3$ by $(X, Y, Z)$, as well as $(x_{1},x_{2})$ in the space $\mathbb{R} ^{2}$. We denote by $\mathbb{R} _{+}^{2}:=\{(x_{1},x_{2}): x_{1}>0\}$, by $|x|$ the Euclidean norm in $\mathbb{R} ^{2}$, by $B_{r}(x^{\circ}):=\{x\in \mathbb{R} ^{2}: |x-x^{\circ}|<r\}$ the ball of center $x^{\circ}$ and radius $r$, by $B_{r}^{+}(x^{\circ}):=\{x\in \mathbb{R} ^{2}: x_{1}>0, |x-x^{\circ}|<r\}$, by $\partial B_{r}^{+}(x^{\circ}):=\{x\in \mathbb{R} ^{2}: x_{1}>0, |x_{1}-x_{1}^{\circ}|=r\}$. We denote $B_{r}$ by $B_{r}(0)$ as well as $B_{r}^{+}$ by $B_{r}^{+}(0)$. The two dimensional volume of $B_{1}$ is denoted by $\omega_{2}$. 

Given any open set $A\subset \mathbb{R} ^{2}$, we denote by $\chi_{A}$ the characteristic function of $A$. For any real number $a$, we denote by $a^{\pm}:=\max\{\pm a,0\}$ for positive and negative parts of $a$. We use the notation $\mathcal{L}^{2}$ to represent the two dimensional Lebesgue measure and $\mathcal{H}^{s}$ the $s$ dimensional Hausdorff measure. For any smooth open set $E\subset \mathbb{R} ^{2}$, $|\nabla\chi_{E}|$ coincides with the surface measure of $\partial E$. The reduced boundary of $E$ will be denoted by $\partial_{\mathit{red}}E$. For any open subset $U$ of $\mathbb{R} _{+}^{2}$, we define the weighted $L^{p}$-space 
\begin{equation*}
    L_{\mathit{w}}^{p}(U):= \left\{ v\text{ measurable }: \int_{U}\frac{1}{x_{1}}|v|^{p}\,dx<+\infty \right\} 
\end{equation*}
with the norm 
\begin{equation*}
    \|v\|_{L_{\mathit{w}}^{p}(U)}:=\left( \int_{U}\frac{1}{x_{1}}|v|^{p}\,dx \right)^{1/p}.
\end{equation*}
as well as the weighted Sobolev space $W_{\mathit{w}}^{1,p}$
\begin{equation}\label{wsob}
    W_{\mathit{w}}^{1,p}(U):= \left\{ v\in L_{\mathit{w}}^{p}(U): \text{ all weak partial derivative }\nabla v\in L_{\mathit{w}}^{p}(U) \right\}.
\end{equation}
The local spaces 
\begin{equation*}
    L_{\mathit{w},\mathit{loc}}^{p}(U):= \left\{ v\text{ measurable}: \int_{K}\frac{1}{x_{1}}|v|^{p}\,dx<\infty\quad\forall\,K\subset\subset U \right\},
\end{equation*}
and 
\begin{equation*}
    W_{\mathit{w},\mathit{loc}}^{1,p}(U):=\left\{ v\in L_{\mathit{w},\mathit{loc}}^{p}(U):\text{ weak partial derivative }\nabla v\in L_{\mathit{w},\mathit{loc}}^{p}(U)\right\}.
\end{equation*}
Recall the function $H$ defined in~\rmkref{Remark: resc}, we denote by 
\begin{equation}\label{p1H}
    \partial_{1}H\left( \tfrac{|\nabla u|^{2}}{x_{1}^{2}};x_{2} \right):=\frac{\partial H(t;x_{2})}{\partial t}\Bigg|_{t=|\nabla u|^{2}/x_{1}^{2}},
\end{equation}
and 
\begin{equation}\label{p2H}
    \partial_{2}H\left( \tfrac{|\nabla u|^{2}}{x_{1}^{2}};x_{2}\right):=\frac{\partial H(\tfrac{|\nabla u|^{2}}{x_{1}^{2}};s)}{s}\Bigg|_{s=x_{2}}.
\end{equation}
Define 
\[
    F(t;s):=\int_{0}^{t}\frac{1}{H(\tau;s)}\,d\tau\qquad\text{ for }t \geqslant 0,\qquad s \geqslant 0,
\]
and we use the notation
\begin{equation*}
    \partial_{1}F(t;s):=\pd{F(t;s)}{t}=\frac{1}{H(t;s)},
\end{equation*}
and 
\begin{equation*}
    \partial_{2}F(t;s):=\pd{F(t;s)}{s}=\int_{0}^{t}\frac{-\partial_{2}H(\tau;s)}{H^{2}(\tau;s)}\,d\tau,
\end{equation*}
to denote the partial derivatives for the function $F(t;s)$. Define 
\begin{equation*}
    \Phi(t;s)=2t\partial_{1}F(t;s)-F(t;s),
\end{equation*}
and set $\lambda(x_{2}):=\Phi(x_{2};x_{2})$. A direct calculation gives that 
\begin{equation*}
    \lambda(x_{2})=\frac{x_{2}}{\bar{\rho}_{0}}+\int_{0}^{x_{2}}\pd{!}{\tau}\left( \frac{1}{H(\tau;x_{2})} \right)\tau\,d\tau.
\end{equation*}
Moreover, taking derivatives with respect to $x_{2}$ gives 
\begin{equation}\label{lbd'}
    \lambda'(x_{2})=\frac{1}{\bar{\rho}_{0}}-\int_{0}^{x_{2}}\pd{!}{x_{2}}\left( \frac{1}{H(\tau;x_{2})} \right)\,d\tau,
\end{equation}
where we used $\partial_{1}H(x_{2};x_{2})=-\partial_{2}H(x_{2};x_{2})$. Let us now introduce the notion of a \textit{subsonic variational solution} of the free boundary problem
\begin{align}\label{fb}
    \left\{
        \begin{alignedat}{2}
            \operatorname{div}\left( \frac{\nabla u}{x_{1}H\left( \frac{|\nabla u|^{2}}{x_{1}^{2}};x_{2} \right)} \right)&=0\qquad\text{ in }\Omega\cap\{u>0\},\\
            \frac{1}{x_{1}^{2}}|\nabla u|^{2}&=x_{2}\qquad\text{ on }\Omega\cap\partial\{u>0\},
        \end{alignedat}
    \right.
\end{align}
where $\Omega$ is a connected, relative open subset to the right-half plane $\{(x_{1},x_{2}): x_{1} \geqslant 0\}$. Note that compare to the problem~\eqref{fb2}, we stick with the notation $(x_{1},x_{2})$ to avoid notational ambiguity.
\begin{definition}[Subsonic variational solution]
    Let $\Omega\subset \{(x_{1},x_{2}): x_{1} \geqslant 0\}$ that is connected and relatively open. We say that $u\in W_{\mathit{w},\mathit{loc}}^{1,2}(\Omega)$ is a subsonic variational solution of~\eqref{fb}, provided that
    \begin{enumerate}
        \item [(1).] Nonnegativity: $u \geqslant 0$ in $\Omega$ and $u=0$ on $\{x_{1}=0\}$.
        \item [(2).] Regularity: $u\in C^{0}(\Omega)\cap C^{2}(\Omega\cap\{u>0\})$.
        \item [(3).] Compatibility: For any $x^{\circ}\in \Omega\cap\{x_{1}=0\}$, 
        \begin{equation}\label{com}
            \lim_{\substack{x\to x^{\circ},\\ x\in \Omega\cap\{u>0\} }}\frac{1}{x_{1}}\pd{u}{x_{2}}=0\qquad\text{ and }\qquad \lim_{\substack{x\to x^{\circ},\\ x\in \Omega\cap\{u>0\} }}\frac{1}{x_{1}}\pd{u}{x_{1}}\text{ exists }.
        \end{equation}
        \item [(4).] Subsonicity: There exists a uniform $\varepsilon_{0}>0$ so that 
        \begin{equation}\label{subs}
            H\left( \frac{|\nabla u|^{2}}{2\bar{\rho}_{0}^{2}g};x_{2}^{\mathit{st}}-x_{2} \right)-\rho_{\mathit{cr},x_{2}} \geqslant \varepsilon_{0}\qquad\text{ locally in }\Omega,
        \end{equation}
        where $x_{2}^{\mathit{st}}=\frac{p'(\bar{\rho}_{0})}{2}$ and $\rho_{\mathit{cr},x_{2}}$ is the local critical density defined via~\eqref{subst} and~\eqref{crid}.
        \item [(5).] The free surface $\partial\{u>0\}$ is contained in the plane $\{(x_{1},x_{2}): x_{1} \geqslant 0, x_{2} \geqslant 0\}$.
        \item [(6).] First domain variation formula: The first variation with respect to domain variations of the energy 
        \begin{equation}\label{eng}
            E_{F}(v;\Omega):=\int_{\Omega}x_{1}\left[ F( \tfrac{|\nabla v|^{2}}{x_{1}^{2}};x_{2} ) + \lambda(x_{2})\chi_{\left\{ v>0 \right\} }\right]\,dx
        \end{equation}
        vanishes at $v=u$, i.e.,
        \begin{align}\label{fv}
            \begin{split}
                0&=-\frac{d}{d\varepsilon}\bigg|_{\varepsilon=0}E_{F}(u(x+\varepsilon\phi(x));\Omega)\\
                &=\int_{\Omega}x_{1}\left[ F( \tfrac{|\nabla u|^{2}}{x_{1}^{2}};x_{2}) + \lambda(x_{2})\chi_{\left\{ u>0 \right\} }  \right] \operatorname{div}\phi\,dx\\
                &-2\int_{\Omega}\partial_{1}F( \tfrac{|\nabla u|^{2}}{x_{1}^{2}};x_{2})\frac{1}{x_{1}}\nabla uD\phi\nabla u\,dx\\
                &+\int_{\Omega}\left( F(\tfrac{|\nabla u|^{2}}{x_{1}^{2}};x_{2})-2\partial_{1}F\left(\tfrac{|\nabla u|^{2}}{x_{1}^{2}};x_{2}\right)\tfrac{|\nabla u|^{2}}{x_{1}^{2}} + \lambda(x_{2})\chi_{\left\{ u>0 \right\} }\right)\phi_{1}\,dx\\
                &+\int_{\Omega}x_{1}\left( \partial_{2}F(\tfrac{|\nabla u|^{2}}{x_{1}^{2}};x_{2})+\lambda'(x_{2})\chi_{\left\{ u>0 \right\} } \right)\phi_{2}\,dx
            \end{split}
        \end{align}
        for any $\phi=(\phi_{1},\phi_{2})\in C_{0}^{1}(\Omega;\mathbb{R} ^{2})$ such that $\phi_{1}=0$ on $\{x_{1}=0\}$.
    \end{enumerate}
\end{definition}
\begin{remark}
    The connected set $\Omega$ is relatively open to the right half-plane $\{(x_{1},x_{2}) \geqslant 0\}$ means that $\Omega=\tilde{\Omega}\cap\{(x_{1},x_{2}):x_{1} \geqslant 0\}$ for some open set $\tilde{\Omega}\subset \mathbb{R} ^{2}$. Besides, let us remark that our results are completely local so that we do not specify any boundary conditions on $\partial\Omega$.
\end{remark}
\begin{remark}
    The regularity assumption (2) cannot be deduced from other assumptions in the above definition using regularity theory. Therefore, the subsonic variational solutions are roughly speaking weaker than minimizers and viscosity solutions (cf.~\cite{MR772122}).
\end{remark}
\begin{remark}
    The compatibility condition~\eqref{com} is motivated by the fact that the velocity of the fluid particles on the axis of symmetry should be directed along the axis.
\end{remark}
\begin{remark}
    In the standard Sobolev space $W^{1,2}(\Omega)$, the energy functional defined in equation~\eqref{eng} is unbounded. This is evident from $\lambda(x_{2}) \geqslant 0$ and the inequality 
    \begin{equation*}
        E_{F}(v;\Omega) \geqslant -C\int_{\Omega}\frac{|\nabla u|^{2}}{x_{1}}\,dx,
    \end{equation*}
    where the integral on the right-hand side is finite only if $u$ belongs to the weighted Sobolev space $ W_{\mathit{w}}^{1,2}(\Omega)$ (recall~\eqref{wsob}). This is the reason we introduced the weighted Sobolev space. Furthermore, the first equation in~\eqref{fb} is uniformly elliptic in 
    $\Omega$ provided that the subsonicity condition~\eqref{subs} is satisfied.
\end{remark}
\begin{remark}
    A proof of formula \eqref{fv} is provided in Appendix (see~\appdref{app1}). In the incompressible case, the expression simplifies significantly: here, $F(\tfrac{|\nabla u|^{2}}{x_{1}^{2}};x_{2})\equiv\frac{|\nabla u|^{2}}{\bar{\rho}_{0}x_{1}^{2}}$, $\lambda(x_{2})=\frac{x_{2}}{\bar{\rho}_{0}}$. When $\bar{\rho}_{0}\equiv 1$, this formula aligns with the formula for incompressible fluid in~\cite[Definition 3.1]{MR3225630}. Furthermore, if $u$ and $\partial\{u>0\}$ are smooth,  integration by parts reveals that any solution $u$ satisfying~\eqref{fv} also solves the problem \eqref{fb} in the classical sense.
\end{remark}
We last introduce the notion of subsonic weak solutions.
\begin{definition}[Subsonic weak solutions.]\label{defsubweak}
    We define $u\in W_{\mathit{w},\mathit{loc}}^{1,2}(\Omega)$ to be a weak solution of~\eqref{fb} provided that 
    \begin{enumerate}
        \item [(1).] $u$ is a subsonic variational solution of~\eqref{fb}.
        \item [(2).] The topological free boundary $\partial\{u>0\}\cap\Omega\cap\{x_{1}>0\}\cap\{x_{2}\neq 0\}$ is locally a $C^{2,\alpha}$ curve.
    \end{enumerate}
\end{definition}
\begin{remark}
    For any subsonic weak solution of~\eqref{fb}, for each $x^{\circ}\in \Omega\cap\{x_{1}>0\}\cap\{x_{2}\neq 0\}\cap\partial\{u>0\}$, there exists an open neighborhood $V$ of $x^{\circ}$ so that $u\in C^{1}(V\cap\overline{\{u>0\}})$ satisfies 
    \begin{equation*}
        \frac{1}{x_{1}^{2}}|\nabla u|^{2}=x_{2}\quad\text{ on }V\cap\partial_{\mathit{red}}\{u>0\}.
    \end{equation*}
\end{remark}
\section{Main Results}
In this article, we investigate the singular behavior of the free boundary near degenerate points of the free boundary problem~\eqref{fb}. Due to the degeneracy of the free boundary condition in equation~\eqref{fb}, these points are distributed along the sets $\{x_{2}=0\}$, $\{x_{1}=0\}$ or at the origin $(0,0)$ (Recall~\rmkref{Remark: resc}). We first study the singular profile of the free boundary near the stagnation points of the problem~\eqref{fb}. Our first main result provides a complete characterization of the blow-up limits for this problem at such points and is formally stated below.
\begin{theorem}\label{thm:stp}
    Let $u$ be a subsonic weak solution of~\eqref{fb}, and assume that 
    \begin{equation*}
        \frac{|\nabla u|^{2} }{x_{1}^{2}}\leqslant Cx_{2}^{+}\quad\text{ locally in }\Omega\cap\{u>0\},
    \end{equation*}
    where $\Omega\subset\{(x_{1},x_{2}): x_{1} \geqslant 0\}$ so that $\Omega\cap\{x_{2}=0\}\neq\varnothing$. Then for any stagnation points $x^{\circ}=(x_{1}^{\circ},0)$ with $x_{1}^{\circ}\neq 0$ and any blow-up sequence 
    \begin{equation}\label{thm:bls}
        u_{r}(x):=\frac{u(x^{\circ}+rx)}{r^{3/2}}.
    \end{equation}
    The following statements hold:
    \begin{enumerate}
        \item [(1).] Either the blow-up limit $u_{0}:=\lim_{r\to 0^{+}}u_{r}\equiv 0$, or 
        \begin{equation}\label{thm1}
            u_{0}(x)=\frac{\sqrt{2}}{3}(x_{1}^{2}+x_{2}^{2})^{\tfrac{3}{4}}\cos\left( \frac{3}{2}\arctan\left( \frac{x_{1}}{x_{2}} \right) \right)\chi_{\left\{ -\frac{\pi}{3}<\arctan\left( \tfrac{x_{1}}{x_{2}} \right)<\frac{\pi}{3} \right\} }.
        \end{equation} 
        In the case $u_{0}\not\equiv 0$, the weighted density is  uniquely determined by 
        \begin{equation}\label{thmwd}
            \mathcal{D}^{x_{2}}(0^{+}):=\lim_{r\to 0^{+}}r^{-3}\int_{B_{r}(x^{\circ})}x_{2}^{+}\chi_{\left\{ u>0 \right\} }\,dx=\frac{\sqrt{3}}{3}.
        \end{equation}
        \item [(2)] If the blow-up limit $u_{0}\equiv 0$, the weighted density satisfies  either 
        \begin{equation*}
            \mathcal{D}^{x_{2}}(0^{+}):=\lim_{r\to 0^{+}}r^{-3}\int_{B_{r}(x^{\circ})}x_{2}^{+}\chi_{\left\{ u>0 \right\} }\,dx=0,
        \end{equation*}
        or 
        \begin{equation*}
            \mathcal{D}^{x_{2}}(0^{+}):=\lim_{r\to 0^{+}}r^{-3}\int_{B_{r}(x^{\circ})}x_{2}^{+}\chi_{\left\{ u>0 \right\} }\,dx=\frac{2}{3}.
        \end{equation*}
    \end{enumerate}
\end{theorem}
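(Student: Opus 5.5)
The plan follows the Weiss-type blow-up analysis of V\v{a}rv\v{a}ruc\v{a}--Weiss, adapted to the quasilinear operator. The key point is that at a stagnation point $x^{\circ}=(x_1^{\circ},0)$ with $x_1^{\circ}>0$ the weight $x_1$ is comparable to the constant $x_1^{\circ}$. Moreover, since $|\nabla u|^2/x_1^2\le Cx_2^+\to0$, the density satisfies $H(|\nabla u|^2/x_1^2;x_2)\to H(0;0)=\bar\rho_0$. After the scaling $u_r(x)=u(x^{\circ}+rx)/r^{3/2}$, the gradient quotient $|\nabla u|^2/x_1^2$ scales like $r$. The first-order expansions $F(t;s)=t/\bar\rho_0+O(t^2+ts)$ and $\lambda(s)=s/\bar\rho_0+O(s^2)$ then show that, in the limit, $u_r$ sees the incompressible energy
\[
\frac{1}{\bar\rho_0}\int\Big(\frac{|\nabla v|^2}{x_1^{\circ}}+x_1^{\circ}x_2\chi_{\{v>0\}}\Big)\,dx .
\]
Up to the constant factor $x_1^{\circ}$, which is absorbed by normalization, this is the two-dimensional Stokes problem.

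\textbf{Step 1 (monotonicity formula).} For the rescaled problem I would define a Weiss functional
\[
\Phi(r)=r^{-3}\int_{B_r(x^{\circ})}\Big(x_1F\big(\tfrac{|\nabla u|^2}{x_1^2};x_2\big)+x_1\lambda(x_2)\chi_{\{u>0\}}\Big)dx-\frac{3}{2}r^{-4}\int_{\partial B_r(x^{\circ})}\frac{u^2}{x_1\bar\rho_0}\,d\mathcal H^1 .
\]
I would derive $\Phi'(r)\ge 2r^{-3}\int_{\partial B_r}\frac{1}{x_1\bar\rho_0}(\nabla u\cdot\nu-\tfrac{3}{2}u/r)^2\,d\mathcal H^1-Cr^{\alpha-1}$. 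The derivation inserts $\phi(x)=\eta_\epsilon(|x-x^{\circ}|)(x-x^{\circ})$ into the domain variation \eqref{fv}. The error terms come from $x_1-x_1^{\circ}=O(r)$, from $\partial_2F$, from $\lambda'-1/\bar\rho_0$, and from $H-\bar\rho_0$. Using $|\nabla u|^2\le Cx_1^2x_2^+$, which gives $u=O(r^{3/2})$, each error is $O(r^{\beta})$ times the scale-invariant quantities, so it is integrable in $r$. This is the main obstacle. The quasilinear term $\partial_1F\,\nabla uD\phi\nabla u$ differs from its linearization by $O(|\nabla u|^4/x_1^4)$, which I expect to be controlled by $Cr$ via the growth assumption. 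Here the subsonicity condition \eqref{subs} ensures that $H$ is $C^1$ up to $t=0$. The conclusion is that $\Phi(r)+Cr^{\beta}$ is monotone, so $\Phi(0^+)$ exists.

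\textbf{Step 2 (compactness and homogeneity).} The growth bound gives uniform $C^{0,1}$ bounds on $u_r$ locally, so a subsequence converges locally uniformly and weakly in $W^{1,2}$. Integrating the monotonicity inequality from $\sigma r$ to $r$ shows that $u_0$ is homogeneous of degree $3/2$. To pass to the limit in the domain variation I need strong $W^{1,2}$ convergence. I would obtain it as in V\v{a}rv\v{a}ruc\v{a}--Weiss, from $\operatorname{div}(\nabla u_r/(x_1H))=0$ in $\{u_r>0\}$ together with $u_r\Delta u_r$-type identities. Since $H\to\bar\rho_0$ uniformly, the limit equation is the Laplace equation, and the usual argument $\int|\nabla u_r|^2\eta\to\int|\nabla u_0|^2\eta$ applies. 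The limit $\chi_0$ of $\chi_{\{u_r>0\}}$ then exists weakly-$*$, and $u_0$ is a homogeneous variational solution of the incompressible Stokes problem $\Delta u_0=0$, $|\nabla u_0|^2=(x_1^{\circ})^2x_2$.

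\textbf{Step 3 (classification).} I would apply the incompressible classification of homogeneous solutions, in polar coordinates with $u_0$ harmonic on cones. If $u_0\not\equiv0$, the free boundary condition forces $\{u_0>0\}$ to be the cone of opening $2\pi/3$ symmetric about the downward axis, and $u_0$ to be the Stokes corner profile \eqref{thm1}, with constant fixed by the Bernoulli condition after normalization. Computing $\Phi(0^+)$ via Step 1 with the homogeneity identity $\int|\nabla u_0|^2=\frac32\int_{\partial}u_0\partial_\nu u_0$ yields $\Phi(0^+)=\lambda$-weighted density. Then $\int_{B_1}x_2^+\chi_{\text{cone}}$ over the sector gives $\sqrt3/3$. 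If $u_0\equiv0$, then $\Phi(0^+)$ reduces to $\lim r^{-3}\int x_2^+\chi_{\{u>0\}}$. The limit $\chi_0$ is constant in the region $\{x_2>0\}$, since the variation formula with $u_0=0$ gives $\partial_{x_2}$-type constraints, and it is homogeneous of degree $0$. This forces $\chi_0\in\{0,1\}$ on the upper half-ball, giving density $0$ or $\int_{B_1^{\{x_2>0\}}}x_2\,dx=2/3$.
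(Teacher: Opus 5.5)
Your Steps 1 and 2 follow the paper's route. Your $\Phi$ is exactly $M^{x_2}$, and your error terms are essentially the paper's $K_i^{x_2}$, each bounded by $Cr^4$, so $r^{-4}\sum_i K_i^{x_2}$ is integrable. Strong convergence comes from the same identity $\int \frac{|\nabla u_m|^2}{x_1H_m}\eta\,dx=-\int\frac{u_m\nabla u_m\cdot\nabla\eta}{x_1H_m}\,dx$ together with $H_m\to\bar\rho_0$.

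The gap is in Step 3. You only have $\chi_{\{u_r>0\}}\to\chi_0$ weakly-$*$, so $\chi_0$ takes values in $[0,1]$.
\begin{itemize}
\item If $u_0\equiv0$, the limiting identity reduces to $\int\chi_0\operatorname{div}(x_2\phi)\,dx=0$. This gives $\chi_0\equiv c$ on $\{x_2>0\}$ for some constant $c\in[0,1]$. Homogeneity of degree $0$ adds nothing, so the density is $\frac{2c}{3}$ with $c$ undetermined.
\item The nontrivial case has the same defect. Integrating the limiting identity by parts across a ray of $\partial\{u_0>0\}$ gives $|\nabla u_0|^2=(x_1^{\circ})^2(1-c_\pm)x_2$, where $c_\pm$ are the constant values of $\chi_0$ in the two sectors of $\{x_2>0\}\setminus\overline{\{u_0>0\}}$. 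For $u_0=A\varrho^{3/2}\cos(\frac{3}{2}(\theta-\theta_0))$ on $\{|\theta-\theta_0|<\frac{\pi}{3}\}$ this reads $\frac{9}{4}A^2=(x_1^{\circ})^2(1-c_\pm)\cos(\theta_0\pm\frac{\pi}{3})$. That admits a one-parameter family of tilted cones, so neither $\theta_0=0$ nor the value $\frac{\sqrt{3}}{3}$ follows.
\end{itemize}

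The missing ingredient is strong $L^1_{loc}$ convergence of $\chi_{\{u_r>0\}}$, so that $\chi_0$ is a characteristic function. Then $c,c_\pm\in\{0,1\}$. The value $c_\pm=1$ is impossible since $A\neq0$, and $c_+=c_-=0$ forces $\theta_0=0$.

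This is exactly where the hypothesis that $u$ is a \emph{weak} solution enters, and your proposal never uses it. The curve $\partial\{u>0\}$ is $C^{2,\alpha}$ in $\{x_1>0,\,x_2\neq0\}$, and on it $|\nabla u|=x_1\sqrt{x_2}$ and $H=H(x_2;x_2)=\bar\rho_0$. Integrating $\operatorname{div}(\nabla u/(x_1H))=0$ over $B_r(x^{\circ})\cap\{u>0\}$ then yields $\int_{B_r(x^{\circ})}\sqrt{x_2^+}\,d|\nabla\chi_{\{u>0\}}|\le Cr^{3/2}$ (the remark after Corollary~\ref{corbl}). This bound is invariant under the $3/2$-rescaling. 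Hence $\chi_{\{u_r>0\}}$ is bounded in $BV$ on compact subsets of $\{x_2>0\}$ and precompact in $L^1_{loc}$. That is why Lemma~\ref{lemwd} works with $\chi_0$ as a strong $L^1_{loc}$ limit. With this added, your Step 3 goes through as in the paper.
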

Subsonic weak solution (defined in Definition~\ref{defsubweak}) are roughly speaking solutions that are constructed via a domain/inner variation of the energy functional associated with problem \eqref{fb}. Subsonic weak solutions for problem~\eqref{fb} near non-degenerate points (where $x^{\circ}\in\partial\{\psi>0\}$ with $x_{1}^{\circ}\neq 0$ and $x_{2}^{\circ}>0$) were first introduced by Alt, Caffarelli, and Friedman~\cite{MR752578,MR772122}. They established that the free surface of problem~\eqref{fb} has finite $\mathcal{H}^{1}$ Hausdorff measure and is smooth in a neighborhood of every non-degenerate point. The foundational results of Alt, Caffarelli, and Friedman on the quasilinear Bernoulli-type free boundary problem have spurred numerous extensions and generalizations. These include studies involving quasilinear operators in Orlicz spaces~\cite{MR2431665}, one-phase 
$p$-Laplace variants~\cite{MR2133664}, two-phase 
$p$-Laplace analogs~\cite{MR4773610}, strongly nonlinear degenerate elliptic operators~\cite{MR4591831}, and degenerate operators in Orlicz–Sobolev spaces~\cite{MR4163981}. Besides, there are some physical generalizations based on the work of~\cite{MR772122}, including the two dimensional compressible subsonic jet~\cite{MR3842050,MR3814594}, and the compressible subsonic jet for three dimensional axisymmetric flows~\cite{MR4246821}. However, all the mentioned primary studies have been focused on the Hausdorff measure and regularity properties of the reduced free boundary near non-degenerate points, in which case the gradient is strictly positive at every free boundary points. To the best of authors' knowledge, this paper presents the first systematic analysis of the profile of the free surface near free boundary points where the gradient of the solution is vanishing.
\begin{figure}[!ht]
    \center
    \tikzset{every picture/.style={line width=0.75pt}} 

    \begin{tikzpicture}[x=0.75pt,y=0.75pt,yscale=-1,xscale=1]
    
    \draw  [fill={rgb, 255:red, 248; green, 231; blue, 28 }  ,fill opacity=1 ] (286.18,8) -- (364.6,81.13) -- (286.18,154.25) -- (207.75,81.13) -- cycle ;
    \draw    (338.85,107.5) -- (400,169.58) ;
    \draw [shift={(401.4,171)}, rotate = 225.43] [color={rgb, 255:red, 0; green, 0; blue, 0 }  ][line width=0.75]    (10.93,-3.29) .. controls (6.95,-1.4) and (3.31,-0.3) .. (0,0) .. controls (3.31,0.3) and (6.95,1.4) .. (10.93,3.29)   ;
    \draw    (240.2,111) -- (185.61,166.18) ;
    \draw [shift={(184.2,167.6)}, rotate = 314.69] [color={rgb, 255:red, 0; green, 0; blue, 0 }  ][line width=0.75]    (10.93,-3.29) .. controls (6.95,-1.4) and (3.31,-0.3) .. (0,0) .. controls (3.31,0.3) and (6.95,1.4) .. (10.93,3.29)   ;
    \draw  [fill={rgb, 255:red, 126; green, 211; blue, 33 }  ,fill opacity=1 ] (147.25,178.35) .. controls (147.25,173.24) and (151.39,169.1) .. (156.5,169.1) -- (213.5,169.1) .. controls (218.61,169.1) and (222.75,173.24) .. (222.75,178.35) -- (222.75,206.1) .. controls (222.75,211.21) and (218.61,215.35) .. (213.5,215.35) -- (156.5,215.35) .. controls (151.39,215.35) and (147.25,211.21) .. (147.25,206.1) -- cycle ;
    \draw  [fill={rgb, 255:red, 126; green, 211; blue, 33 }  ,fill opacity=1 ] (362.75,181.25) .. controls (362.75,176.14) and (366.89,172) .. (372,172) -- (429.5,172) .. controls (434.61,172) and (438.75,176.14) .. (438.75,181.25) -- (438.75,209) .. controls (438.75,214.11) and (434.61,218.25) .. (429.5,218.25) -- (372,218.25) .. controls (366.89,218.25) and (362.75,214.11) .. (362.75,209) -- cycle ;
    \draw    (183.25,215.85) -- (182.76,324.1) ;
    \draw [shift={(182.75,326.1)}, rotate = 270.26] [color={rgb, 255:red, 0; green, 0; blue, 0 }  ][line width=0.75]    (10.93,-3.29) .. controls (6.95,-1.4) and (3.31,-0.3) .. (0,0) .. controls (3.31,0.3) and (6.95,1.4) .. (10.93,3.29)   ;
    \draw    (404.75,218.75) -- (352.7,322.41) ;
    \draw [shift={(351.8,324.2)}, rotate = 296.66] [color={rgb, 255:red, 0; green, 0; blue, 0 }  ][line width=0.75]    (10.93,-3.29) .. controls (6.95,-1.4) and (3.31,-0.3) .. (0,0) .. controls (3.31,0.3) and (6.95,1.4) .. (10.93,3.29)   ;
    \draw  [fill={rgb, 255:red, 248; green, 231; blue, 28 }  ,fill opacity=1 ] (133.4,358.05) .. controls (133.4,341.67) and (155.79,328.4) .. (183.4,328.4) .. controls (211.01,328.4) and (233.4,341.67) .. (233.4,358.05) .. controls (233.4,374.43) and (211.01,387.7) .. (183.4,387.7) .. controls (155.79,387.7) and (133.4,374.43) .. (133.4,358.05) -- cycle ;
    \draw    (404.75,218.75) -- (468.95,323.3) ;
    \draw [shift={(470,325)}, rotate = 238.45] [color={rgb, 255:red, 0; green, 0; blue, 0 }  ][line width=0.75]    (10.93,-3.29) .. controls (6.95,-1.4) and (3.31,-0.3) .. (0,0) .. controls (3.31,0.3) and (6.95,1.4) .. (10.93,3.29)   ;
    \draw  [fill={rgb, 255:red, 248; green, 231; blue, 28 }  ,fill opacity=1 ] (301.8,353.85) .. controls (301.8,337.47) and (324.19,324.2) .. (351.8,324.2) .. controls (379.41,324.2) and (401.8,337.47) .. (401.8,353.85) .. controls (401.8,370.23) and (379.41,383.5) .. (351.8,383.5) .. controls (324.19,383.5) and (301.8,370.23) .. (301.8,353.85) -- cycle ;
    \draw  [fill={rgb, 255:red, 248; green, 231; blue, 28 }  ,fill opacity=1 ] (420,354.65) .. controls (420,338.27) and (442.39,325) .. (470,325) .. controls (497.61,325) and (520,338.27) .. (520,354.65) .. controls (520,371.03) and (497.61,384.3) .. (470,384.3) .. controls (442.39,384.3) and (420,371.03) .. (420,354.65) -- cycle ;
    
    \draw (256.55,43) node [anchor=north west][inner sep=0.75pt]  [font=\scriptsize] [align=left] {Let $\displaystyle u_{r}$ be \\the blow-up\\sequence\\at the stagnation\\point $\displaystyle x^{\circ }$};
    \draw (192.15,139.82) node [anchor=north west][inner sep=0.75pt]  [font=\footnotesize,rotate=-314.09]  {$r\rightarrow 0^{+}$};
    \draw (362.46,110.27) node [anchor=north west][inner sep=0.75pt]  [font=\footnotesize,rotate=-46.67]  {$r\rightarrow 0^{+}$};
    \draw (162.15,181.5) node [anchor=north west][inner sep=0.75pt]    {$u_{0} \not\equiv 0$};
    \draw (376.75,186.4) node [anchor=north west][inner sep=0.75pt]    {$u_{0} \equiv 0$};
    \draw (145.15,348.35) node [anchor=north west][inner sep=0.75pt]  [font=\tiny]  {$\mathcal{D}^{x_{2}}(0^{+}) =\frac{\sqrt{3}}{3}$};
    \draw (151.45,392.4) node [anchor=north west][inner sep=0.75pt]  [font=\scriptsize] [align=left] {Stokes corner\\flow};
    \draw (320.15,348.35) node [anchor=north west][inner sep=0.75pt]  [font=\tiny]  {$\mathcal{D}^{x_{2}}(0^{+}) =\frac{2}{3}$};
    \draw (331.15,388.9) node [anchor=north west][inner sep=0.75pt]  [font=\scriptsize] [align=left] {Horizontal\\flatness};
    \draw (440.25,348.35) node [anchor=north west][inner sep=0.75pt]  [font=\tiny]  {$\mathcal{D}^{x_{2}}( 0^{+}) =0$};
    \draw (462.05,392.8) node [anchor=north west][inner sep=0.75pt]  [font=\scriptsize] [align=left] {Cusp};

    \end{tikzpicture}
    \caption{The trichotomy principle for stagnation point}
    \label{fig:tp}
\end{figure}
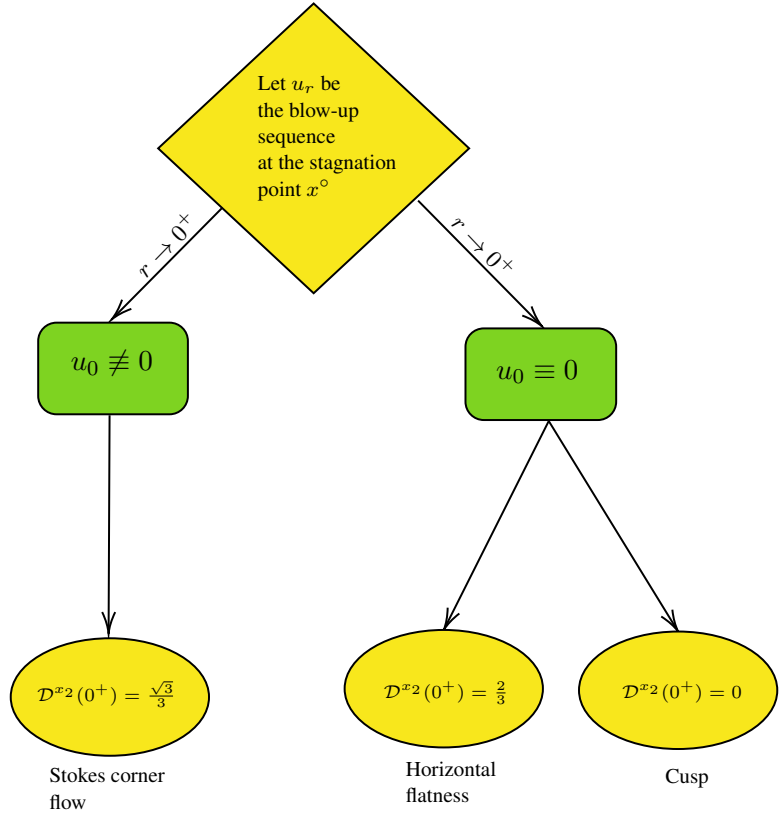

\begin{remark}
    We make some comments on~\thmref{thm:stp}.
    \begin{enumerate}
        \item In Theorem~\ref{thm:stp}, all convergences are strong in $W_{\mathit{loc}}^{1,2}(\mathbb{R} ^{2})$ and locally uniformly in $\mathbb{R} ^{2}$. The term ``weighted density'' introduced in Theorem~\ref{thm:stp} serves as a geometric interpretation of the limit appearing on the right-hand side of the equation in~\eqref{thmwd}, which has two components: The word ``weighted'' corresponds to the integration weight $x_{2}^{+}$ in the expression. The term ``density'' reflects the scenario where, if the weight equals unity (i.e., $x_{2}^{+}=1$), the limit quantifies the volume fraction of $\{u>0\}$ within a ball of radius $r$ as $r\to 0^{+}$. This fraction represents the density of $\{u>0\}$ relative to the entire ball in the limit, implying that the set $\{u>0\}$ occupies the full measure of the ball asymptotically. The superscript $x_{2}$ in the notation $\mathcal{D}^{x_{2}}(0^{+})$ indicates that the degeneracy arises precisely when $x_{2}=0$.
        \item Due to the degeneracy of the free boundary condition
        \begin{equation*}
            \frac{|\nabla u(x_{1},x_{2})|^{2}}{x_{1}^{2}}=x_{2}\quad\text{ on }\Omega\cap\partial\{u>0\}
        \end{equation*}
        at the stagnation point $x^{\circ}=(x_{1}^{\circ},0)$ with $x_{1}^{\circ}\neq 0$, we obtain the invariant scaling defined in equation~\eqref{thm:stp}. Note that in this case the velocity would rescale like $|x_{2}|^{1/2}$. This is the reason we employ the rescaling in equation~\eqref{thm:bls}.
        \item A stagnation point $x^{\circ}=(x_{1}^{\circ},0)\in\partial\{u>0\}$ is called a nontrivial stagnation point if, up to a subsequence of radii, $u_{r}(x)$ defined in equation~\eqref{thm:bls} converges to $u_{0}(x)$ defined in equation~\eqref{thm1}. Conversely, a stagnation point is called a trivial stagnation point if, up to a subsequence of radii, $u_{r}(x)\to u_{0}(x)\equiv 0$ as $r\to 0^{+}$. Furthermore, a trivial stagnation point is called with non-zero density if $u_{0}(x)=0$ and the ``weighted density'' equals to $\frac{2}{3}$, and with zero density if $u_{0}(x)=0$ and the ``weighted density'' equals to $0$ (see Figure~\ref{classtp} for the classification). 
        \item In this work, we establish (presented in~\propref{propcc}) that if the free surface is assumed to be an injective curve, then near each non-degenerate stagnation point, the singular asymptotic behavior is uniquely given by the Stokes corner flow, characterized by a corner with a $120^{\circ}$ opening angle. In contrast, for degenerate trivial stagnation points, the singular asymptotics is a cusp. Finally, in the case of degenerate nontrivial stagnation points, the singular asymptotics is described by a horizontal flat.
        \item  Prior to this work, the possible forms of blow-up limits for the sequence defined in equation~\eqref{thm:bls} for axisymmetric compressible problem~\eqref{fb} were undetermined, and their mutual exclusivity remained unresolved. Theorem~\ref{thm:stp} establishes a \emph{trichotomy principle} for such limits at stagnation points of the free boundary problem~\eqref{fb}, classifying them into three distinct, mutually exclusive asymptotic configurations (see Figure~\ref{fig:tp}).
        \item Degenerate stagnation points—both trivial and nontrivial—are physically implausible at stagnation points of the free boundary problem. To exclude such pathological cases, we prove in Lemma~\ref{lem:cuspstp} that trivial degenerate stagnation points cannot occur under the strong Bernstein assumption
        \begin{equation*}
            \frac{|\nabla u|^{2}}{x_{1}^{2}} \leqslant x_{2}^{+}\quad\text{ locally in }\Omega\cap\{u>0\}.
        \end{equation*}
        From the point view of physics, this corresponds to the situation that the flow or water region is situated beneath the air region.
        \item The exclusion of non-trivial degenerate stagnation points represents a non-trivial challenge. This problem is closely related to analyzing free boundary points for non-minimizers with maximal weighted density, for which results exist in the context of elliptic Bernoulli problems~\cite{MR4850026,MR2748622} and their parabolic counterparts~\cite{MR1989835}. However, prior analyses in this area have universally assumed the existence of a positive lower bound for the gradient on the free boundary. While cases where the gradient vanishes have been only studied for elliptic equations~\cite{MR2810856,MR3225630} and semilinear elliptic equations~\cite{MR2995099}, no existing literature addresses the quasilinear elliptic Bernoulli problem in this context. To address this gap, we introduce a novel frequency formula suited to degenerate non-trivial stagnation singularities on the axis of symmetry in~\secref{Sec:fre}. This is achieved through the application of compensated compactness framework for compressible Euler equations, which facilitates taking limits in the domain variation formula.
    \end{enumerate}
\end{remark}
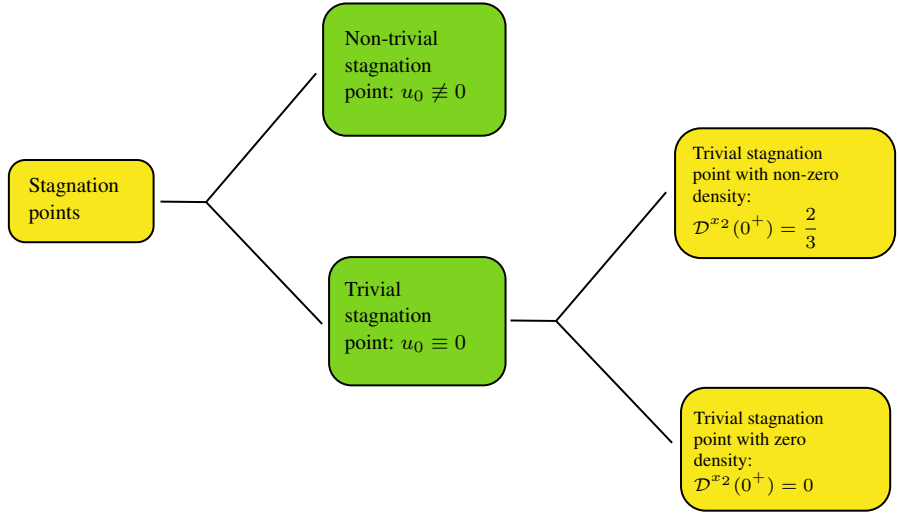
\begin{figure}[!ht]
    \center
    \tikzset{every picture/.style={line width=0.75pt}} 

    \begin{tikzpicture}[x=0.75pt,y=0.75pt,yscale=-1,xscale=1]
    
    \draw  [fill={rgb, 255:red, 248; green, 231; blue, 28 }  ,fill opacity=1 ] (79.5,133.8) .. controls (79.5,129.22) and (83.22,125.5) .. (87.8,125.5) -- (144.2,125.5) .. controls (148.78,125.5) and (152.5,129.22) .. (152.5,133.8) -- (152.5,158.7) .. controls (152.5,163.28) and (148.78,167) .. (144.2,167) -- (87.8,167) .. controls (83.22,167) and (79.5,163.28) .. (79.5,158.7) -- cycle ;
    \draw    (156,146.5) -- (179,146.75) ;
    \draw    (179,146.75) -- (234.5,81.75) ;
    \draw    (179,146.75) -- (237.5,208.25) ;
    
    \draw  [fill={rgb, 255:red, 126; green, 211; blue, 33 }  ,fill opacity=1 ] (238,58.18) .. controls (238,51.73) and (243.23,46.5) .. (249.68,46.5) -- (318.32,46.5) .. controls (324.77,46.5) and (330,51.73) .. (330,58.18) -- (330,101.57) .. controls (330,108.02) and (324.77,113.25) .. (318.32,113.25) -- (249.68,113.25) .. controls (243.23,113.25) and (238,108.02) .. (238,101.57) -- cycle ;
    \draw  [fill={rgb, 255:red, 126; green, 211; blue, 33 }  ,fill opacity=1 ] (241,185.83) .. controls (241,179.57) and (246.07,174.5) .. (252.33,174.5) -- (318.67,174.5) .. controls (324.93,174.5) and (330,179.57) .. (330,185.83) -- (330,227.92) .. controls (330,234.18) and (324.93,239.25) .. (318.67,239.25) -- (252.33,239.25) .. controls (246.07,239.25) and (241,234.18) .. (241,227.92) -- cycle ;
    \draw    (332.5,206.5) -- (355.5,206.75) ;
    \draw    (355.5,206.75) -- (411,141.75) ;
    \draw    (355.5,206.75) -- (414,268.25) ;
    
    \draw  [fill={rgb, 255:red, 248; green, 231; blue, 28 }  ,fill opacity=1 ] (415.5,122.6) .. controls (415.5,115.37) and (421.37,109.5) .. (428.6,109.5) -- (513.4,109.5) .. controls (520.63,109.5) and (526.5,115.37) .. (526.5,122.6) -- (526.5,161.9) .. controls (526.5,169.13) and (520.63,175) .. (513.4,175) -- (428.6,175) .. controls (421.37,175) and (415.5,169.13) .. (415.5,161.9) -- cycle ;
    \draw  [color={rgb, 255:red, 0; green, 0; blue, 0 }  ,draw opacity=1 ][fill={rgb, 255:red, 248; green, 231; blue, 28 }  ,fill opacity=1 ] (418.5,252.9) .. controls (418.5,246.05) and (424.05,240.5) .. (430.9,240.5) -- (511.6,240.5) .. controls (518.45,240.5) and (524,246.05) .. (524,252.9) -- (524,290.1) .. controls (524,296.95) and (518.45,302.5) .. (511.6,302.5) -- (430.9,302.5) .. controls (424.05,302.5) and (418.5,296.95) .. (418.5,290.1) -- cycle ;
    
    \draw (88,133) node [anchor=north west][inner sep=0.75pt]  [font=\scriptsize] [align=left] {Stagnation\\points};
    \draw (247.5,58.77) node [anchor=north west][inner sep=0.75pt]  [font=\scriptsize] [align=left] {Non-trivial \\stagnation\\point: $\displaystyle u_{0} \not\equiv 0$};
    \draw (247.5,185.27) node [anchor=north west][inner sep=0.75pt]  [font=\scriptsize] [align=left] {Trivial \\stagnation\\point: $\displaystyle u_{0} \equiv 0$};
    \draw (423,117.27) node [anchor=north west][inner sep=0.75pt]  [font=\tiny] [align=left] {Trivial stagnation\\point with non-zero\\density:\\$\displaystyle \mathcal{D}^{x_{2}}( 0^{+}) =\frac{2}{3}$};
    \draw (423.5,250.77) node [anchor=north west][inner sep=0.75pt]  [font=\tiny] [align=left] {Trivial stagnation\\point with zero\\density:\\$\displaystyle \mathcal{D}^{x_{2}}( 0^{+}) =0$};

    \end{tikzpicture}
    \caption{Classification of stagnation points}
    \label{classtp}
\end{figure}
We next investigate the non-stagnation points on the axis of symmetry. A major challenge in this scenario arises because the first equation in~\eqref{fb} reduces to a singular quasilinear elliptic equation along the axis of symmetry.
\begin{theorem}\label{thm:ad}
    Let $u$ be a subsonic weak solution of~\eqref{fb}, and assume that 
    \begin{equation*}
        \left|\frac{|\nabla u|^{2} }{x_{1}^{2}}-x_{2}\right|\leqslant Cx_{1}\quad\text{ locally in }\Omega\cap\{u>0\},
    \end{equation*}
    where $\Omega\subset\{(x_{1},x_{2}): x_{1} \geqslant 0\}$ so that $\Omega\cap\{x_{1}=0\}\neq\varnothing$. Then for any non-stagnation axis points $x^{\circ}=(0,x_{2}^{\circ})$ with $x_{2}^{\circ}>0$ and any blow-up sequence 
    \begin{equation*}
        u_{r}(x):=\frac{u(x^{\circ}+rx)}{r^{2}}.
    \end{equation*}
    The following statements hold:
    \begin{enumerate}
        \item [(1).] Either the blow-up limit $u_{0}:=\lim_{r\to 0^{+}}u_{r}\equiv 0$, or 
        \begin{equation*}
            u_{0}(x_{1},x_{2})=\alpha x_{1}^{2},
        \end{equation*}
        where $\alpha>0$ is a uniquely determined constant. In the case $u_{0}\not\equiv 0$, the weighted density is uniquely determined by 
        \begin{equation*}
            \mathcal{D}^{x_{1}}(0^{+}):=\lim_{r\to 0^{+}}r^{-3}\int_{B_{r}^{+}(x^{\circ})}x_{1}\chi_{\left\{ u>0 \right\} }\,dx=\frac{2}{3}.
        \end{equation*}
        \item If the blow-up limit $u_{0}\equiv 0$, then either 
        \begin{equation*}
            \mathcal{D}^{x_{1}}(0^{+})=\lim_{r\to 0^{+}}r^{-3}\int_{B_{r}^{+}(x^{\circ})}x_{1}\chi_{\left\{ u>0 \right\} }\,dx=0,
        \end{equation*}
        or 
        \begin{equation*}
            \mathcal{D}^{x_{1}}(0^{+})=\lim_{r\to 0^{+}}r^{-3}\int_{B_{r}^{+}(x^{\circ})}x_{1}\chi_{\left\{ u>0 \right\} }\,dx=\frac{2}{3}.
        \end{equation*}
    \end{enumerate}
\end{theorem}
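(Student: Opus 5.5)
We follow the strategy of Theorem~\ref{thm:stp}, adapted to the axis, where the singular weight $1/x_1$ drives the degeneracy instead of the vanishing of $x_2$. Near $x^\circ=(0,x_2^\circ)$ with $x_2^\circ>0$ we freeze the coefficients. Since $H$ is continuous and the flow is uniformly subsonic by \eqref{subs}, we have $H(\tfrac{|\nabla u|^2}{x_1^2};x_2)\to H(x_2^\circ;x_2^\circ)=\bar\rho_0$ along the free boundary. The hypothesis $\bigl|\tfrac{|\nabla u|^2}{x_1^2}-x_2\bigr|\le Cx_1$ shows that near $x^\circ$ the argument of $H$ stays within $O(r)$ of $x_2^\circ$ in $B_r^+(x^\circ)$. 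It also gives the growth bound $|\nabla u|\le C x_1$, hence $u\le Cx_1^2$, so $u_r(x)=u(x^\circ+rx)/r^2$ is uniformly bounded in $W^{1,2}_{w,loc}$ and in $C^{0,1}_{loc}$.

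\textbf{Step 1: monotonicity formula.} For $x^\circ$ we set
\[
\Psi(r)=r^{-3}\int_{B_r^+(x^\circ)}x_1\Bigl[F\bigl(\tfrac{|\nabla u|^2}{x_1^2};x_2\bigr)+\lambda(x_2)\chi_{\{u>0\}}\Bigr]dx-\frac{c_\circ}{r^{4}}\int_{\partial B_r^+(x^\circ)}\frac{u^2}{x_1}\,d\mathcal H^1 ,
\]
with $c_\circ$ chosen so that $\alpha x_1^2$-type functions give zero boundary term. In the frozen case $F=t/\bar\rho_0$ and $\lambda=x_2^\circ/\bar\rho_0$, so this is the axisymmetric Weiss functional of V\u{a}rv\u{a}ruc\u{a}--Weiss, homogeneous of degree $2$. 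We insert $\phi(x)=\eta_\kappa(|x-x^\circ|)(x-x^\circ)$ into \eqref{fv}, which is admissible because $\phi_1=x_1\eta_\kappa=0$ on the axis. Letting $\kappa\to0$ gives $\frac{d}{dr}\Psi(r)\ge 2r^{-2}\int_{\partial B_r^+}\frac{1}{x_1}\bigl(\nabla u\cdot\nu-\tfrac{2u}{r}\bigr)^2\,d\mathcal H^1-\mathcal E(r)$. The error $\mathcal E(r)$ collects the discrepancies $F-\partial_1F\,t$ versus the frozen quadratic form, the $\phi_1$ and $\phi_2$ terms of \eqref{fv}, and the variation of $\lambda(x_2)$. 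Using the hypothesis together with $C^1$ regularity of $H$ and \eqref{lbd'}, we will show $\mathcal E(r)\le Cr^{\beta-1}$ for some $\beta>0$, which is integrable. Then $\Psi(r)+Cr^\beta$ is monotone and $\Psi(0^+)$ exists. \textbf{This step is the main obstacle.} The quasilinear structure gives no exact cancellation, and we must check that every remainder is $O(r)$ relative to the leading $r^3$ scaling despite the $1/x_1$ weight. We first aim to control the terms with $\phi_1=x_1\eta$ by the hypothesis applied with $\tfrac{|\nabla u|^2}{x_1^2}=x_2^\circ+O(r)$.

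\textbf{Step 2: blow-ups are homogeneous.} Integrating the monotonicity estimate between $\sigma r$ and $\rho r$ and rescaling shows that $\int_{B_\rho^+\setminus B_\sigma^+}\tfrac{1}{x_1}|x\cdot\nabla u_r-2u_r|^2\to0$, so every limit $u_0$ is homogeneous of degree $2$. To pass to the limit in the nonlinear domain variation we need strong $W^{1,2}_{w,loc}$ convergence. We obtain it by comparing the energies of $u_r$ and $u_0$ via the equation $\operatorname{div}(\nabla u_r/(x_1H))=0$ on $\{u_r>0\}$, whose coefficient tends to the constant $1/\bar\rho_0$. The key fact used is that $u_r\,\mathrm{div}(\cdot)=0$ measure-wise, as in the incompressible case, plus the uniform ellipticity from \eqref{subs}. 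The limit $u_0$ is then a homogeneous variational solution of the frozen problem $\operatorname{div}(\nabla u/x_1)=0$ with $|\nabla u|^2/x_1^2=x_2^\circ$, and $\chi_{\{u_r>0\}}\to\chi_0$ in $L^1_{loc}$ with $\chi_0\in\{0,1\}$ a.e., $\chi_0$ homogeneous of degree $0$.

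\textbf{Step 3: classification.} In polar coordinates centered at $x^\circ$, $\operatorname{div}(\nabla u/x_1)=0$ with $u=r^2f(\theta)$ becomes an ODE in $\theta$. Its solutions vanishing on the axis and nonnegative on $\{x_1>0\}$ are multiples of $x_1^2$ on each component of $\{u_0>0\}$. Because $x_1^2$ does not vanish away from the axis, a nontrivial $u_0$ must equal $\alpha x_1^2$ on the whole half-plane. In that case the free boundary condition in the variational sense, together with the limit of \eqref{com}, fixes $\alpha$ uniquely, with $\alpha^2\cdot4=x_2^\circ$ after normalization. Evaluating $\Psi(0^+)$ for this profile gives the density $\int_{B_1^+}x_1\,dx=\tfrac23$. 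If $u_0\equiv0$, then the limit of \eqref{fv} reduces to $\int x_1\chi_0\operatorname{div}\phi-\chi_0\phi_1=0$, i.e.\ $\operatorname{div}(x_1\chi_0\,\cdot)$ vanishes against admissible fields. For a degree-$0$ homogeneous $\chi_0$ this forces $\chi_0\equiv0$ or $\chi_0\equiv1$ on $\{x_1>0\}$, because any interface ray off the axis would carry a nonzero boundary term $x_1\nu$. This yields $\mathcal D^{x_1}(0^+)\in\{0,\tfrac23\}$, since $\Psi(0^+)$ equals $\lambda(x_2^\circ)\mathcal D^{x_1}(0^+)$ when the Dirichlet part vanishes. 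Uniqueness of the density, independent of the subsequence, follows from the existence of $\Psi(0^+)$.
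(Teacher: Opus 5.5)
Your proposal follows the paper's route. With $c_\circ=2/\bar\rho_0$, your $\Psi$ is exactly $M^{x_1}(r)=r^{-3}I(r)-2r^{-4}J(r)$ of Proposition~\ref{propadp}, and Step~2 is Lemma~\ref{lemad}. Your angular ODE, which is $f''-\cot\theta\,f'+2f=0$ with $\theta$ measured from the axis, is equivalent to the paper's Legendre equation for the conjugate potential $w_0$. One step fails as written, and two others need repair.

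\textbf{The failing step: determining $\alpha$.} The variational free boundary condition cannot see $\alpha$, because for $u_0=\alpha x_1^2$ the set $\{u_0>0\}$ is the whole half-plane. With $\chi_0\equiv1$, the gradient part of \eqref{fvadu0} equals $\frac{4\alpha^2}{\bar\rho_0}\int_{\mathbb{R}^2_+}\bigl[\partial_2(x_1\phi_2)-\partial_1(x_1\phi_1)\bigr]\,dx$. The $\chi_0$-part equals $\frac{x_2^\circ}{\bar\rho_0}\int_{\mathbb{R}^2_+}\operatorname{div}(x_1\phi)\,dx$. Both vanish for every $\alpha\ge0$, since $x_1\phi$ vanishes on $\{x_1=0\}$. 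Likewise, \eqref{com} only says that $\lim x_1^{-1}\partial_1u$ exists, not what it is.

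What pins $\alpha$ is the hypothesis of the theorem. On each compact subset of $\{u_0>0\}$ one has $u_m>0$ for large $m$, and there $|\nabla u_m|^2/x_1^2=x_2^\circ+O(r_m)$. This passes to the limit (by strong convergence, or by interior elliptic estimates) and gives $|\nabla u_0|^2=x_2^\circ x_1^2$ in $\{u_0>0\}$, i.e.\ $\alpha=\sqrt{x_2^\circ}/2$.

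The same observation gives $\int_{B_1^+}x_1^{-1}|\nabla u_m|^2\,dx\ge\frac{x_2^\circ}{2}\int_{B_1^+}x_1\chi_{\{u_m>0\}}\,dx$ for large $m$. So under this hypothesis, $u_0\equiv0$ in fact forces $\mathcal{D}^{x_1}(0^+)=0$. Your $\nabla\chi_0=0$ argument for the trivial case is correct but then unnecessary, and you no longer need $\chi_0$ to be a characteristic function.

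\textbf{Repair in Step~1.} Termwise freezing is wrong at the axis. The quantity $t=|\nabla u|^2/x_1^2$ is invariant under $u\mapsto u(x^\circ+rx)/r^2$ and stays near $x_2^\circ$ in the fluid. Hence $F(t;x_2)$ is not close to $t/\bar\rho_0$, nor is $\lambda(x_2)$ close to $x_2^\circ/\bar\rho_0$. A term-by-term comparison therefore produces non-integrable errors of order $r^{-1}$, which cancel only in combination.

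No comparison is needed. With $\phi_1=x_1\eta$, the $\phi_1$-term of \eqref{fv} is absorbed exactly into the $\operatorname{div}\phi$ and $D\phi$ terms. This gives \eqref{tya}, whose only remainder is the $\phi_2$-term $K_1^{x_1}=O(r^4)$. The other errors, $K_2^{x_1}$ and $K_3^{x_1}$, come only from normalizing $J$ by $\bar\rho_0$ instead of $H$. They are $O(r^4)$ because $\bigl|\frac{1}{H(t;x_2)}-\frac{1}{H(x_2;x_2)}\bigr|\le C|t-x_2|\le Cx_1$ cancels the weight $1/x_1$, together with $u\le Cr^2$ and $|\nabla u|\le Cr$. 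So $\beta=1$. Also, your square term should carry the factor $r^{-3}$, not $r^{-2}$.

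\textbf{Repair in Step~3.} ``Vanishing on the axis'' says nothing about a cone component $\{\theta_a<\theta<\theta_b\}$ with $0<\theta_a<\theta_b<\pi$. You can exclude such components as follows. The solutions of your ODE are $\sin^2\theta\,\bigl(A+B\,Q_1'(\cos\theta)\bigr)$, and $Q_1''(s)=2(1-s^2)^{-2}>0$. Hence a nontrivial solution has at most one zero in $(0,\pi)$. The paper's finite-energy argument for $\beta=0$ has the same blind spot, since it only sees components that reach the axis.
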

\begin{figure}[!ht]
    \center
    \tikzset{every picture/.style={line width=0.75pt}} 

    \begin{tikzpicture}[x=0.75pt,y=0.75pt,yscale=-1,xscale=1]
    
    \draw  [fill={rgb, 255:red, 248; green, 231; blue, 28 }  ,fill opacity=1 ] (296.18,27.2) -- (374.6,100.32) -- (296.18,173.45) -- (217.75,100.32) -- cycle ;
    \draw    (348.85,126.7) -- (410,188.78) ;
    \draw [shift={(411.4,190.2)}, rotate = 225.43] [color={rgb, 255:red, 0; green, 0; blue, 0 }  ][line width=0.75]    (10.93,-3.29) .. controls (6.95,-1.4) and (3.31,-0.3) .. (0,0) .. controls (3.31,0.3) and (6.95,1.4) .. (10.93,3.29)   ;
    \draw    (250.2,130.2) -- (195.61,185.38) ;
    \draw [shift={(194.2,186.8)}, rotate = 314.69] [color={rgb, 255:red, 0; green, 0; blue, 0 }  ][line width=0.75]    (10.93,-3.29) .. controls (6.95,-1.4) and (3.31,-0.3) .. (0,0) .. controls (3.31,0.3) and (6.95,1.4) .. (10.93,3.29)   ;
    \draw  [fill={rgb, 255:red, 126; green, 211; blue, 33 }  ,fill opacity=1 ] (157.25,197.55) .. controls (157.25,192.44) and (161.39,188.3) .. (166.5,188.3) -- (223.5,188.3) .. controls (228.61,188.3) and (232.75,192.44) .. (232.75,197.55) -- (232.75,225.3) .. controls (232.75,230.41) and (228.61,234.55) .. (223.5,234.55) -- (166.5,234.55) .. controls (161.39,234.55) and (157.25,230.41) .. (157.25,225.3) -- cycle ;
    \draw  [fill={rgb, 255:red, 126; green, 211; blue, 33 }  ,fill opacity=1 ] (372.75,200.45) .. controls (372.75,195.34) and (376.89,191.2) .. (382,191.2) -- (439.5,191.2) .. controls (444.61,191.2) and (448.75,195.34) .. (448.75,200.45) -- (448.75,228.2) .. controls (448.75,233.31) and (444.61,237.45) .. (439.5,237.45) -- (382,237.45) .. controls (376.89,237.45) and (372.75,233.31) .. (372.75,228.2) -- cycle ;
    \draw    (193.25,235.05) -- (192.76,343.3) ;
    \draw [shift={(192.75,345.3)}, rotate = 270.26] [color={rgb, 255:red, 0; green, 0; blue, 0 }  ][line width=0.75]    (10.93,-3.29) .. controls (6.95,-1.4) and (3.31,-0.3) .. (0,0) .. controls (3.31,0.3) and (6.95,1.4) .. (10.93,3.29)   ;
    \draw    (414.75,237.95) -- (362.7,341.61) ;
    \draw [shift={(361.8,343.4)}, rotate = 296.66] [color={rgb, 255:red, 0; green, 0; blue, 0 }  ][line width=0.75]    (10.93,-3.29) .. controls (6.95,-1.4) and (3.31,-0.3) .. (0,0) .. controls (3.31,0.3) and (6.95,1.4) .. (10.93,3.29)   ;
    \draw  [fill={rgb, 255:red, 248; green, 231; blue, 28 }  ,fill opacity=1 ] (143.4,377.25) .. controls (143.4,360.87) and (165.79,347.6) .. (193.4,347.6) .. controls (221.01,347.6) and (243.4,360.87) .. (243.4,377.25) .. controls (243.4,393.63) and (221.01,406.9) .. (193.4,406.9) .. controls (165.79,406.9) and (143.4,393.63) .. (143.4,377.25) -- cycle ;
    \draw    (414.75,237.95) -- (478.95,342.5) ;
    \draw [shift={(480,344.2)}, rotate = 238.45] [color={rgb, 255:red, 0; green, 0; blue, 0 }  ][line width=0.75]    (10.93,-3.29) .. controls (6.95,-1.4) and (3.31,-0.3) .. (0,0) .. controls (3.31,0.3) and (6.95,1.4) .. (10.93,3.29)   ;
    \draw  [fill={rgb, 255:red, 248; green, 231; blue, 28 }  ,fill opacity=1 ] (311.8,373.05) .. controls (311.8,356.67) and (334.19,343.4) .. (361.8,343.4) .. controls (389.41,343.4) and (411.8,356.67) .. (411.8,373.05) .. controls (411.8,389.43) and (389.41,402.7) .. (361.8,402.7) .. controls (334.19,402.7) and (311.8,389.43) .. (311.8,373.05) -- cycle ;
    \draw  [fill={rgb, 255:red, 248; green, 231; blue, 28 }  ,fill opacity=1 ] (430,373.85) .. controls (430,357.47) and (452.39,344.2) .. (480,344.2) .. controls (507.61,344.2) and (530,357.47) .. (530,373.85) .. controls (530,390.23) and (507.61,403.5) .. (480,403.5) .. controls (452.39,403.5) and (430,390.23) .. (430,373.85) -- cycle ;
    
    \draw (268.05,54.7) node [anchor=north west][inner sep=0.75pt]  [font=\scriptsize] [align=left] {Let $\displaystyle u_{r}$ be \\the blow-up\\sequence\\at the \\non-stagnation\\axis point $\displaystyle x^{\circ }$};
    \draw (202.15,159.02) node [anchor=north west][inner sep=0.75pt]  [font=\footnotesize,rotate=-314.09]  {$r\rightarrow 0^{+}$};
    \draw (372.46,129.47) node [anchor=north west][inner sep=0.75pt]  [font=\footnotesize,rotate=-46.67]  {$r\rightarrow 0^{+}$};
    \draw (172.15,200.7) node [anchor=north west][inner sep=0.75pt]    {$u_{0} \not\equiv 0$};
    \draw (386.75,205.6) node [anchor=north west][inner sep=0.75pt]    {$u_{0} \equiv 0$};
    \draw (164.15,367.55) node [anchor=north west][inner sep=0.75pt]  [font=\tiny]  {$\mathcal{D}^{x_{1}}(0^{+}) =\frac{2}{3}$};
    \draw (330.65,367.55) node [anchor=north west][inner sep=0.75pt]  [font=\tiny]  {$\mathcal{D}^{x_{1}}( 0^{+}) =\frac{2}{3}$};
    \draw (341.15,408.1) node [anchor=north west][inner sep=0.75pt]  [font=\scriptsize] [align=left] {Horizontal\\flatness};
    \draw (450.65,367.55) node [anchor=north west][inner sep=0.75pt]  [font=\tiny]  {$\mathcal{D}^{x_{1}}( 0^{+}) =0$};
    \draw (472.05,411.5) node [anchor=north west][inner sep=0.75pt]  [font=\scriptsize] [align=left] {Cusp};
    \draw (178.05,412.5) node [anchor=north west][inner sep=0.75pt]  [font=\scriptsize] [align=left] {Cusp};

    \end{tikzpicture}
    \caption{The trichotomy principal for non-stagnation axis points}
    \label{fig: trpad}
\end{figure}
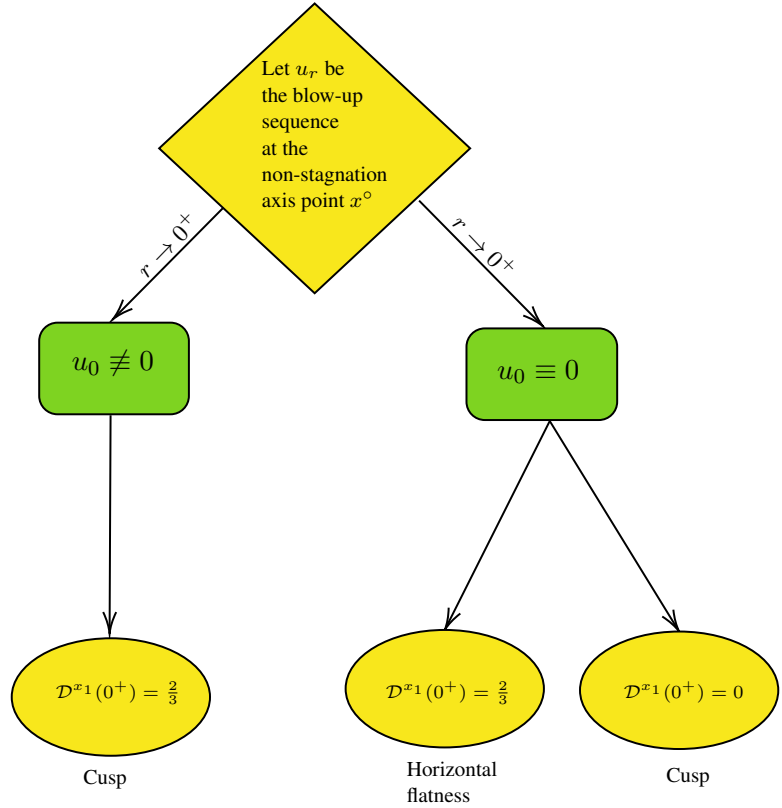
\begin{remark}
    We now remark several special points in~\thmref{thm:ad}, contrasting their properties with those established in the preceding theorem with respect to stagnation points to highlight key differences in the results.
    \begin{enumerate}
        \item In~\thmref{thm:ad}, all convergence occurs in the weighted Sobolev space $W_{\mathit{w},\mathit{loc}}^{1,2}(\mathbb{R} _{+}^{2})$ and locally uniform in $\mathbb{R}_{+} ^{2}$. This is due to the degeneracy of the quasilinear operator on the axis of symmetry $\{x_{1}=0\}$.
        \item It is important to note that the rescaling chosen for non-stagnation axis points differs from that applied to stagnation points away from the axis of symmetry. This distinction arises primarily due to the degeneracy of the free boundary condition $|\nabla u|^{2}=x_{1}^{2}x_{2}$. Specifically, when the non-stagnation axis point $x^{\circ}$ is not a stagnation point, the invariant scaling suggested by the free boundary condition is given $u(x^{\circ}+rx)/r^{2}$.
        \item Following the classification in the analysis of stagnation points, we classify non-stagnation axis points into two categories: nontrivial non-stagnation axis points and trivial axis non-stagnation points. The latter class is further subdivided into trivial non-stagnation axis points with zero density and trivial non-stagnation axis  points with non-zero density. Since the distinction between trivial/non-trivial densities relies on weighted densities, one may define a trivial non-stagnation axis point with zero density if it satisfies $u_{0}=0$ and $\mathcal{D}^{x_{1}}(0^{+})=0$, and with non-zero density if $u_{0}=0$ and $\mathcal{D}^{x_{1}}(0^{+})=\frac{2}{3}$ (see Figure~\ref{Classaxi}). In~\propref{propccad}, we give explicit singular asymptotics of the free surface for each possible non-stagnation axis point.
        \item Compared to the singular asymptotics of the free boundary at the non-degenerate stagnation points away from the axis of symmetry, which corresponds to the fact that $\{u_{0}>0\}$ is a cone solution (with opening angle of $\tfrac{2}{3}\pi$, cf. Corollary~\ref{corbl}), the singular profile at the non-degenerate axis point is cusp like (which has dramatic difference to any cone solution). From the point of view of mathematics, this is mainly due to the fact that the singularities appearing on the axis of symmetry forces that the governing equations for the blow-up limit $u_{0}$ is no longer Laplace, but a degenerate elliptic equation of the form $\operatorname{div}\left( \frac{1}{x_{1}}\nabla u_{0} \right)=0$ (cf. Corollary~\ref{corblad}). To solve the partial differential equation governing $u_{0}$, we employ the velocity potential for incompressible flow, which reduces the problem to a Legendre differential equation. The solutions are thus expressed in terms of Legendre polynomials. This is the main difference to the case of stagnation points.
        \item Similar to the trichotomy principle we have displayed for stagnation points, we have the corresponding trichotomy for non-stagnation axis points (see Figure~\ref{fig: trpad}). Let us remark that the trichotomy framework established for compressible axisymmetric free boundary problems applies equally to the incompressible case~\cite{MR3225630}. Consequently, our analysis along the axis of symmetry constitutes a compressible counterpart to the existing incompressible theory.
        \item For degenerate nontrivial stagnation points, we apply the developed frequency formula (a similar one to~\secref{Sec:fre} for nontrivial degenerate stagnation points) $u_{0}=0$ and $\mathcal{D}^{x_{1}}(0^{+})=\frac{2}{3}$ cannot occur. The question of whether the analogous case $u_{0}=0$ and $\mathcal{D}^{x_{1}}(0^{+})=\frac{2}{3}$ persists for axisymmetric degenerate points remains unresolved, even in the incompressible setting. We will defer this investigation to future studies.
    \end{enumerate}
\end{remark}
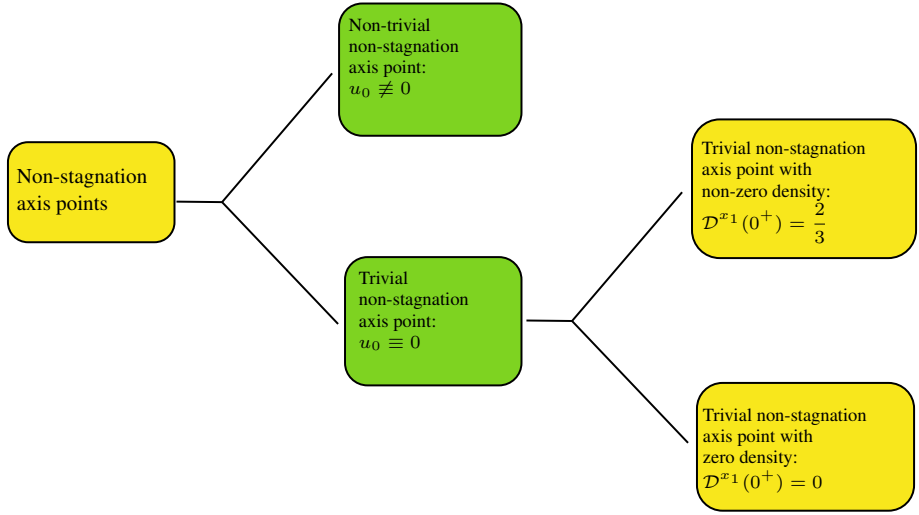
\begin{figure}[!ht]
    \center
    \tikzset{every picture/.style={line width=0.75pt}} 

    \begin{tikzpicture}[x=0.75pt,y=0.75pt,yscale=-1,xscale=1]
    
    \draw  [fill={rgb, 255:red, 248; green, 231; blue, 28 }  ,fill opacity=1 ] (79,133.6) .. controls (79,128.02) and (83.52,123.5) .. (89.1,123.5) -- (152.9,123.5) .. controls (158.48,123.5) and (163,128.02) .. (163,133.6) -- (163,163.9) .. controls (163,169.48) and (158.48,174) .. (152.9,174) -- (89.1,174) .. controls (83.52,174) and (79,169.48) .. (79,163.9) -- cycle ;
    \draw    (164,153.5) -- (187,153.75) ;
    \draw    (187,153.75) -- (242.5,88.75) ;
    \draw    (187,153.75) -- (245.5,215.25) ;
    
    \draw  [fill={rgb, 255:red, 126; green, 211; blue, 33 }  ,fill opacity=1 ] (246,65.18) .. controls (246,58.73) and (251.23,53.5) .. (257.68,53.5) -- (326.32,53.5) .. controls (332.77,53.5) and (338,58.73) .. (338,65.18) -- (338,108.57) .. controls (338,115.02) and (332.77,120.25) .. (326.32,120.25) -- (257.68,120.25) .. controls (251.23,120.25) and (246,115.02) .. (246,108.57) -- cycle ;
    \draw  [fill={rgb, 255:red, 126; green, 211; blue, 33 }  ,fill opacity=1 ] (249,192.83) .. controls (249,186.57) and (254.07,181.5) .. (260.33,181.5) -- (326.67,181.5) .. controls (332.93,181.5) and (338,186.57) .. (338,192.83) -- (338,234.92) .. controls (338,241.18) and (332.93,246.25) .. (326.67,246.25) -- (260.33,246.25) .. controls (254.07,246.25) and (249,241.18) .. (249,234.92) -- cycle ;
    \draw    (340.5,213.5) -- (363.5,213.75) ;
    \draw    (363.5,213.75) -- (419,148.75) ;
    \draw    (363.5,213.75) -- (422,275.25) ;
    
    \draw  [fill={rgb, 255:red, 248; green, 231; blue, 28 }  ,fill opacity=1 ] (424,126) .. controls (424,118.27) and (430.27,112) .. (438,112) -- (523,112) .. controls (530.73,112) and (537,118.27) .. (537,126) -- (537,168) .. controls (537,175.73) and (530.73,182) .. (523,182) -- (438,182) .. controls (430.27,182) and (424,175.73) .. (424,168) -- cycle ;
    \draw  [fill={rgb, 255:red, 248; green, 231; blue, 28 }  ,fill opacity=1 ] (426.5,257.9) .. controls (426.5,250.78) and (432.28,245) .. (439.4,245) -- (523.1,245) .. controls (530.22,245) and (536,250.78) .. (536,257.9) -- (536,296.6) .. controls (536,303.72) and (530.22,309.5) .. (523.1,309.5) -- (439.4,309.5) .. controls (432.28,309.5) and (426.5,303.72) .. (426.5,296.6) -- cycle ;
    
    \draw (82,135.5) node [anchor=north west][inner sep=0.75pt]  [font=\scriptsize] [align=left] {Non-stagnation\\axis points};
    \draw (249.5,59.27) node [anchor=north west][inner sep=0.75pt]  [font=\tiny] [align=left] {Non-trivial \\non-stagnation\\axis point: \\$\displaystyle u_{0} \not\equiv 0$};
    \draw (254.5,187.33) node [anchor=north west][inner sep=0.75pt]  [font=\tiny] [align=left] {Trivial \\non-stagnation\\axis point:\\ $\displaystyle u_{0}\equiv 0$};
    \draw (427.5,121.77) node [anchor=north west][inner sep=0.75pt]  [font=\tiny] [align=left] {Trivial non-stagnation\\axis point with \\non-zero density:\\$\displaystyle \mathcal{D}^{x_{1}}( 0^{+}) =\frac{2}{3}$};
    \draw (428,256.27) node [anchor=north west][inner sep=0.75pt]  [font=\tiny] [align=left] {Trivial non-stagnation\\axis point with \\zero density:\\$\displaystyle \mathcal{D}^{x_{1}}( 0^{+}) =0$};

    \end{tikzpicture}
    \caption{Classification of non-stagnation axis point}
    \label{Classaxi}
\end{figure}
Finally, we examine the intersection of stagnation points and axisymmetric degenerate points, which yields the unique point $x^{\circ}=(0,0)$. Our analysis in this case establishes the following result.
\begin{theorem}\label{thm:orig}
    Let $u$ be a subsonic variational solution of~\eqref{fb}, and assume that 
    \begin{equation*}
        \frac{|\nabla u|^{2}}{x_{1}^{2}} \leqslant Cx_{2}^{+}\quad\text{ locally in }\Omega\cap\{u>0\}\text{ with }(0,0)\in \Omega.
    \end{equation*}
    Consider the blow-up sequence 
    \begin{equation}\label{thm: org}
        u_{r}(x):=\frac{u(rx)}{r^{5/2}}.
    \end{equation}
    Then the following statements hold.
    \begin{enumerate}
        \item [(1).] Either the blow-up limit $u_{0}\equiv 0$ or 
        \begin{align}\label{thm:blor}
            \begin{split}
                u_{0}(x)=\beta_{0}(x_{1}^{2}+x_{2}^{2})^{1/4}P_{3/2}'\left(-\tfrac{x_{2}}{\sqrt{x_{1}^{2}+x_{2}^{2}}}\right)\chi_{\left\{\pi-\theta^{*}<\arctan\left(\tfrac{x_{1}}{x_{2}}\right)<\pi \right\} },
            \end{split}
        \end{align}
        where $\beta_{0}>0$ is a unique positive constant, $\theta^{*}=\arccos s^{*}$, $s^{*}\in(-1,0)$ is the unique solution $s\in(0,1)$ of $P_{3/2}'(s)=0$ (where $P_{3/2}$ is the Legendre function of the first kind). When the blow-up limit $u_{0}$ has the form~\eqref{thm:blor},
        \begin{align*}
            \mathcal{D}^{x_{1}x_{2}}(0^{+})&:=\lim_{r\to 0^{+}}r^{-4}\int_{B_{r}^{+}}x_{1}x_{2}\chi_{\left\{ u>0 \right\} }\,dx\\
            &=\int_{B_{1}^{+}\cap \left\{  \pi-\theta^{*}<\arctan\left( \tfrac{x_{1}}{x_{2}} \right)<\pi\right\}  }x_{1}x_{2}^{+}\,dx:=m_{0}.
        \end{align*} 
        \item If $u_{0}(x)\equiv 0$, then either 
        \begin{equation*}
            \mathcal{D}^{x_{1}x_{2}}(0^{+})=\lim_{r\to 0^{+}}r^{-4}\int_{B_{r}^{+}}x_{1}x_{2}\chi_{\left\{ u>0 \right\} }\,dx=0,
        \end{equation*}
        or 
        \begin{equation*}
            \mathcal{D}^{x_{1}x_{2}}(0^{+})=\lim_{r\to 0^{+}}r^{-4}\int_{B_{r}^{+}}x_{1}x_{2}\chi_{\left\{ u>0 \right\} }\,dx=\int_{B_{1}}x_{1}x_{2}^{+}\,dx=\frac{1}{8}.
        \end{equation*}
    \end{enumerate}
    If we further assume the following strong Bernstein estimate that
    \begin{equation*}
        \frac{|\nabla u|^{2}}{x_{1}^{2}} \leqslant x_{2}^{+}\quad\text{ locally in }\Omega\cap\{u>0\}\text{ with }(0,0)\in\Omega,
    \end{equation*}
    and that the free boundary $B_{1}^{+}\cap\partial\{u>0\}$ is in a neighborhood of $0$ a continuous injective curve $\sigma:I\to \mathbb{R} ^{2}$, where $I$ is an interval of $\mathbb{R} $ containing $0$, such that $\sigma=(\sigma_{1},\sigma_{2})$ and $\sigma(0)=0$. Then $\sigma_{1}(t)\neq 0$ in $[0,t_{1})\setminus\{0\}$, 
    \begin{equation*}
        \lim_{t\to 0^{+}}\frac{\sigma_{2}(t)}{\sigma_{1}(t)}=0,
    \end{equation*}
    and 
    \begin{equation*}
        \frac{u(rx)}{\|u\|_{L_{\mathit{w}}^{2}(\partial B_{r}^{+})}}\to \beta x_{1}^{2}x_{2}^{+}\quad\text{ as }r\to 0^{+},
    \end{equation*}
    where $\beta$ is a unique positive constant given by 
    \begin{equation*}
        \beta:=\frac{1}{\sqrt{\int_{\partial B_{1}^{+}}x_{1}^{3}(x_{2}^{+})^{2}\,d\mathcal{H}^{1}}}.
    \end{equation*} 
\end{theorem}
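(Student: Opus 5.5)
The proof follows the scheme used for Theorems~\ref{thm:stp} and~\ref{thm:ad}, adapted to the doubly degenerate scaling at the origin. The starting point is a Weiss-type monotonicity formula at $0$ for the quasilinear problem, in the homogeneity $5/2$ dictated by $|\nabla u|^2=x_1^2x_2$. Concretely, I would take
\begin{equation*}
    \Phi(r):=r^{-5}\int_{B_r^+}\frac{1}{x_1}\Big(\frac{|\nabla u|^2}{\bar\rho_0}+\bar\rho_0^{-1}x_1^2x_2\chi_{\{u>0\}}\Big)dx-\frac{5}{2}r^{-6}\int_{\partial B_r^+}\frac{u^2}{\bar\rho_0 x_1}\,d\mathcal H^1,
\end{equation*}
plus the compressible corrections coming from $F-\partial_1F(0;0)t$ and $\lambda(x_2)-\lambda'(0)x_2$. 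I would then derive $\Phi'(r)\ge 2r^{-5}\int_{\partial B_r^+}\frac{1}{\bar\rho_0x_1}(\nabla u\cdot\nu-\frac{5}{2r}u)^2\,d\mathcal H^1-Cr^{\alpha-1}$. To get this inequality, insert $\phi(x)=x\eta_\kappa(|x|)$ into the first variation \eqref{fv}; note that $\phi_1=0$ on $\{x_1=0\}$, so this choice is admissible. The Bernstein bound $|\nabla u|^2\le Cx_1^2x_2^+$ gives $|\nabla u|^2/x_1^2=O(r)$ on $B_r$. Hence $H(t;x_2)=\bar\rho_0+O(r)$, and all the compressible corrections are of relative order $r$, so they enter only as an integrable error. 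The same Bernstein bound gives $u\le Cr^{5/2}$ and $|\nabla u_r|\le Cx_1\sqrt{x_2^+}$. Consequently $u_r$ is bounded in $W^{1,2}_{w,loc}$ and locally uniformly Lipschitz away from the axis, and $\Phi(0^+)$ exists.

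Next comes compactness of blow-ups. Along a subsequence, $u_r\to u_0$ locally uniformly. For the limit to be useful I need strong convergence in $W^{1,2}_{w,loc}$ and the passage of \eqref{fv} to the limit. Here I expect the main obstacle: the operator degenerates both at $x_1=0$ and, in the free boundary condition, at $x_2=0$. My plan is to prove strong convergence by comparing $\int x_1^{-1}|\nabla u_r|^2\eta$ with $\int x_1^{-1}|\nabla u_0|^2\eta$. On $\{u_0>0\}$ I would use the equation, which converges to $\operatorname{div}(x_1^{-1}\nabla u_0)=0$. On $\{u_0=0\}$ I would use the Bernstein bound together with the compensated compactness argument mentioned in the introduction, which handles the quasilinear flux. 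Once \eqref{fv} passes to the limit, the limit $\chi_0$ of $\chi_{\{u_r>0\}}$ satisfies an incompressible variational identity with weight $x_1x_2$. The monotonicity formula then makes $u_0$ homogeneous of degree $5/2$ and $\Phi(0^+)=\Phi_0(1)$. Moreover, $\Phi(0^+)$ equals a constant multiple of $\mathcal D^{x_1x_2}(0^+)$, because for a homogeneous solution the energy terms cancel via integration by parts.

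The classification of homogeneous solutions follows. On $\{u_0>0\}$ we have $\operatorname{div}(x_1^{-1}\nabla u_0)=0$ with $u_0=r^{5/2}g(\theta)$, which is the Stokes stream function equation. Writing $u_0=r^{5/2}\sin\theta\,f'$-type and substituting $s=\cos$ of the polar angle reduces it to Legendre's equation of index $3/2$. The solutions that vanish on the axis and are positive are $\beta_0 r^{5/2}\sin^2\vartheta\,P_{3/2}'(\cdot)$-type, which up to normalisation is \eqref{thm:blor}, supported on a cone whose edge is a zero of $P_{3/2}'$. On the free edge the free boundary condition $|\nabla u_0|^2=x_1^2x_2$ has to hold. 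This forces $x_2>0$ on the edge and fixes $\beta_0$, and the Bernstein bound rules out cones meeting $\{x_2<0\}$. Connectedness of cone components, together with the degenerate weight, leaves only the cone $\pi-\theta^*<\arctan(x_1/x_2)<\pi$. If instead $u_0\equiv0$, the limit identity says $\chi_0$ is a $0$-homogeneous function with $\int x_1\,\chi_0\,(\ldots)\operatorname{div}\phi$ stationary. Testing with radial and angular fields forces $\chi_0$ to be constant, equal to $0$ or $1$, on $\{x_2>0\}$, which gives density $0$ or $\int_{B_1}x_1x_2^+=1/8$.

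For the strong Bernstein part I would argue by contradiction, in the same way as the analogous corner exclusion at stagnation points. With $|\nabla u|^2\le x_1^2x_2^+$ and an injective curve, the Pointed-bubble blow-up cannot occur. The reason is that an injective curve with nontrivial blow-up would have to approach the cone edge, and comparing with the strong Bernstein bound along the edge gives a contradiction, since there the equality $|\nabla u_0|^2=x_1^2x_2$ would have to be approached from the wrong side, as in Lemma~\ref{lem:cuspstp}. Density $0$ would force a cusp along the axis, incompatible with $\sigma_1\ne0$, so the density is $1/8$. Then I would use the frequency formula of \secref{Sec:fre} to show that $u(rx)/\|u\|_{L^2_w(\partial B_r^+)}$ converges to a homogeneous solution of $\operatorname{div}(x_1^{-1}\nabla v)=0$ in $\{x_2>0\}$ vanishing on $\{x_2=0\}$ and on the axis, with frequency at least $5/2$. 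The lowest such solution is $x_1^2x_2^+$, with frequency $3$, and the normalization gives $\beta$. Flatness of this limit yields $\sigma_2/\sigma_1\to0$.
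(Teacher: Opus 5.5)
Your route for parts (1)–(2) is the paper's: the Weiss-type functional of Proposition~\ref{prop0}, homogeneity and strong convergence as in Lemma~\ref{lemo}, the density identity, and the Legendre classification. There is a normalization slip: the scale-invariant quantities are $r^{-4}\int_{B_r^+}$ and $r^{-5}\int_{\partial B_r^+}$, not $r^{-5}$ and $r^{-6}$. The genuine gaps are in the last part.

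\textbf{First gap: reducing to the degenerate case.} You try to deduce from the strong Bernstein estimate and the curve assumption alone that the origin must be a trivial point of density $\frac18$. The paper never does this. \S\ref{Sec:fre} and the paper's closing corollary work under the standing hypothesis $u_0\equiv0$, $M^{x_1x_2}(0^+)=\frac{1}{8\bar\rho_0}$, which the printed statement leaves implicit. The frequency bound $N=D-V\ge\frac52$ rests on \eqref{Mx12rp1}, which uses exactly this value. Your two exclusions also fail on their own terms.
\begin{itemize}
\item Lemma~\ref{lem:cuspstp} rules out only the zero-density case, and its mechanism needs $u_0\equiv0$: the measures $\operatorname{div}(\nabla u_m/(x_1H_m))$ tend to zero while bounding a weighted length of the free boundary from above. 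For a nontrivial blow-up nothing tends to zero. Equality $|\nabla u_0|^2=x_1^2x_2$ on an edge does not contradict $|\nabla u_0|^2\le x_1^2x_2^+$ inside; the Stokes corner of Theorem~\ref{thm:stp} satisfies the analogous inequality strictly inside its cone.
\item Your exclusion of density zero uses $\sigma_1\neq0$, which is one of the conclusions, so it is circular.
\end{itemize}

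\textbf{Second gap: strong convergence of the normalized blow-ups.} You invoke compensated compactness where it is not needed and omit it where it is. At scale $r^{5/2}$, the bound $|\nabla u_r|\le Cx_1\sqrt{x_2^+}$ gives locally uniform convergence. The identity $\int\frac{|\nabla u_m|^2\eta}{x_1H_m}\,dx=-\int\frac{u_m\nabla u_m\cdot\nabla\eta}{x_1H_m}\,dx$ together with $H_m\to\bar\rho_0$ then gives strong convergence, as in Lemma~\ref{lemo}.

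For $v_m=u(r_mx)/\|u\|_{L^{2}_{\mathit{w}}(\partial B_{r_m}^+)}$, however, there is no Lipschitz bound, since $\|u\|_{L^{2}_{\mathit{w}}(\partial B_r^+)}=o(r^{5/2})$ at a degenerate point. There is only the $W^{1,2}_{\mathit{w}}$ bound coming from $D(r_m)\to N(0^+)$, which itself requires $V(r)\to0$ (Proposition~\ref{propv0}). To get $v_0\operatorname{div}(\nabla v_0/x_1)=0$, the paper proves strong convergence in Proposition~\ref{prop:cc}. It applies compensated compactness for the axisymmetric Euler system to $(H_m,\partial_2\tilde v_m/(x_1H_m),-\partial_1\tilde v_m/(x_1H_m))$ to pass to the limit in $\partial_1v_m\partial_2v_m$. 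It then recovers the diagonal terms from $\nabla v_m\cdot x-N(0^+)v_m\to0$ in $L^2$, which the frequency formula provides.

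\textbf{Order of the final steps.} Your last two steps run in the wrong order. The flatness $\sigma_2/\sigma_1\to0$ comes from density $\frac18$ plus the curve assumption (the curve-case proposition of \S\ref{sec:0}), not from $v_0$; $W^{1,2}_{\mathit{w}}$ convergence does not locate the free boundary. That flatness, together with $v_0\operatorname{div}(\nabla v_0/x_1)=0$, $v_0\ge0$ and homogeneity of degree $N(0^+)\ge\frac52$, is what lets one conclude that $v_0$ solves the equation in the whole quarter plane. Hence $v_0=\beta x_1^2x_2^+$ and $N(0^+)=3$. As written, you simply assert that the limit solves $\operatorname{div}(x_1^{-1}\nabla v)=0$ in all of $\{x_2>0\}$.
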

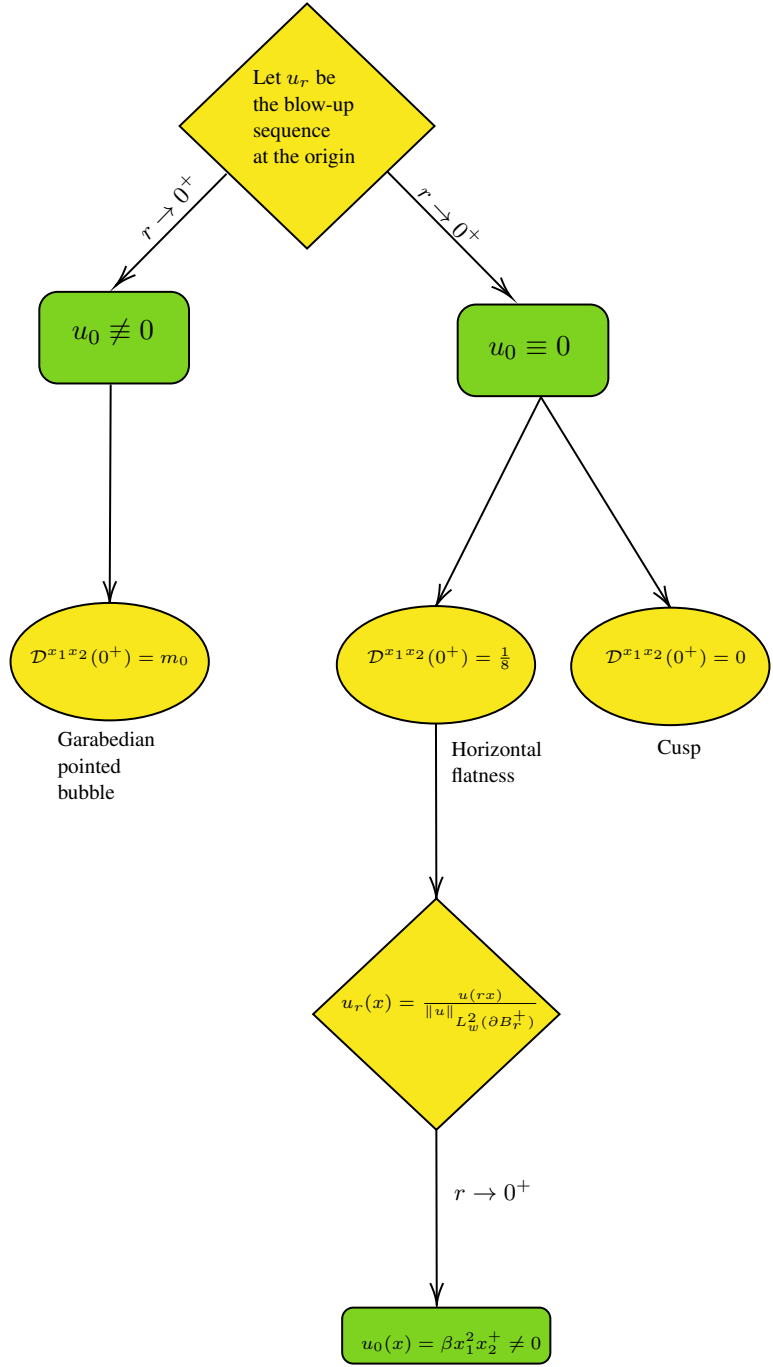
\begin{figure}[!ht]
    \center
    \tikzset{every picture/.style={line width=0.75pt}} 

    \begin{tikzpicture}[x=0.75pt,y=0.75pt,yscale=-1,xscale=1]
    
    \draw  [fill={rgb, 255:red, 248; green, 231; blue, 28 }  ,fill opacity=1 ] (321.8,18.75) -- (386.1,80.6) -- (321.8,142.45) -- (257.5,80.6) -- cycle ;
    \draw    (362.35,103.7) -- (423.5,165.78) ;
    \draw [shift={(424.9,167.2)}, rotate = 225.43] [color={rgb, 255:red, 0; green, 0; blue, 0 }  ][line width=0.75]    (10.93,-3.29) .. controls (6.95,-1.4) and (3.31,-0.3) .. (0,0) .. controls (3.31,0.3) and (6.95,1.4) .. (10.93,3.29)   ;
    \draw    (281.2,104.7) -- (226.61,159.88) ;
    \draw [shift={(225.2,161.3)}, rotate = 314.69] [color={rgb, 255:red, 0; green, 0; blue, 0 }  ][line width=0.75]    (10.93,-3.29) .. controls (6.95,-1.4) and (3.31,-0.3) .. (0,0) .. controls (3.31,0.3) and (6.95,1.4) .. (10.93,3.29)   ;
    \draw  [fill={rgb, 255:red, 126; green, 211; blue, 33 }  ,fill opacity=1 ] (186.75,173.55) .. controls (186.75,168.44) and (190.89,164.3) .. (196,164.3) -- (253,164.3) .. controls (258.11,164.3) and (262.25,168.44) .. (262.25,173.55) -- (262.25,201.3) .. controls (262.25,206.41) and (258.11,210.55) .. (253,210.55) -- (196,210.55) .. controls (190.89,210.55) and (186.75,206.41) .. (186.75,201.3) -- cycle ;
    \draw  [fill={rgb, 255:red, 126; green, 211; blue, 33 }  ,fill opacity=1 ] (397.75,179.95) .. controls (397.75,174.84) and (401.89,170.7) .. (407,170.7) -- (464.5,170.7) .. controls (469.61,170.7) and (473.75,174.84) .. (473.75,179.95) -- (473.75,207.7) .. controls (473.75,212.81) and (469.61,216.95) .. (464.5,216.95) -- (407,216.95) .. controls (401.89,216.95) and (397.75,212.81) .. (397.75,207.7) -- cycle ;
    \draw    (222.75,211.05) -- (222.26,319.3) ;
    \draw [shift={(222.25,321.3)}, rotate = 270.26] [color={rgb, 255:red, 0; green, 0; blue, 0 }  ][line width=0.75]    (10.93,-3.29) .. controls (6.95,-1.4) and (3.31,-0.3) .. (0,0) .. controls (3.31,0.3) and (6.95,1.4) .. (10.93,3.29)   ;
    \draw    (439.75,217.45) -- (387.7,321.11) ;
    \draw [shift={(386.8,322.9)}, rotate = 296.66] [color={rgb, 255:red, 0; green, 0; blue, 0 }  ][line width=0.75]    (10.93,-3.29) .. controls (6.95,-1.4) and (3.31,-0.3) .. (0,0) .. controls (3.31,0.3) and (6.95,1.4) .. (10.93,3.29)   ;
    \draw  [fill={rgb, 255:red, 248; green, 231; blue, 28 }  ,fill opacity=1 ] (172.25,350.95) .. controls (172.25,334.57) and (194.64,321.3) .. (222.25,321.3) .. controls (249.86,321.3) and (272.25,334.57) .. (272.25,350.95) .. controls (272.25,367.33) and (249.86,380.6) .. (222.25,380.6) .. controls (194.64,380.6) and (172.25,367.33) .. (172.25,350.95) -- cycle ;
    \draw    (439.75,217.45) -- (503.95,322) ;
    \draw [shift={(505,323.7)}, rotate = 238.45] [color={rgb, 255:red, 0; green, 0; blue, 0 }  ][line width=0.75]    (10.93,-3.29) .. controls (6.95,-1.4) and (3.31,-0.3) .. (0,0) .. controls (3.31,0.3) and (6.95,1.4) .. (10.93,3.29)   ;
    \draw  [fill={rgb, 255:red, 248; green, 231; blue, 28 }  ,fill opacity=1 ] (336.8,352.55) .. controls (336.8,336.17) and (359.19,322.9) .. (386.8,322.9) .. controls (414.41,322.9) and (436.8,336.17) .. (436.8,352.55) .. controls (436.8,368.93) and (414.41,382.2) .. (386.8,382.2) .. controls (359.19,382.2) and (336.8,368.93) .. (336.8,352.55) -- cycle ;
    \draw  [fill={rgb, 255:red, 248; green, 231; blue, 28 }  ,fill opacity=1 ] (455,353.35) .. controls (455,336.97) and (477.39,323.7) .. (505,323.7) .. controls (532.61,323.7) and (555,336.97) .. (555,353.35) .. controls (555,369.73) and (532.61,383) .. (505,383) .. controls (477.39,383) and (455,369.73) .. (455,353.35) -- cycle ;
    \draw    (386.8,382.2) -- (387,468.5) ;
    \draw [shift={(387,470.5)}, rotate = 269.87] [color={rgb, 255:red, 0; green, 0; blue, 0 }  ][line width=0.75]    (10.93,-3.29) .. controls (6.95,-1.4) and (3.31,-0.3) .. (0,0) .. controls (3.31,0.3) and (6.95,1.4) .. (10.93,3.29)   ;
    \draw  [fill={rgb, 255:red, 248; green, 231; blue, 28 }  ,fill opacity=1 ] (387,470.5) -- (449.15,528.88) -- (387,587.25) -- (324.85,528.88) -- cycle ;
    \draw    (387,587.25) -- (387.2,673.55) ;
    \draw [shift={(387.2,675.55)}, rotate = 269.87] [color={rgb, 255:red, 0; green, 0; blue, 0 }  ][line width=0.75]    (10.93,-3.29) .. controls (6.95,-1.4) and (3.31,-0.3) .. (0,0) .. controls (3.31,0.3) and (6.95,1.4) .. (10.93,3.29)   ;
    \draw  [fill={rgb, 255:red, 126; green, 211; blue, 33 }  ,fill opacity=1 ] (339.5,682.65) .. controls (339.5,679.53) and (342.03,677) .. (345.15,677) -- (438.85,677) .. controls (441.97,677) and (444.5,679.53) .. (444.5,682.65) -- (444.5,699.6) .. controls (444.5,702.72) and (441.97,705.25) .. (438.85,705.25) -- (345.15,705.25) .. controls (342.03,705.25) and (339.5,702.72) .. (339.5,699.6) -- cycle ;
    
    \draw (293.05,50.7) node [anchor=north west][inner sep=0.75pt]  [font=\scriptsize] [align=left] {Let $\displaystyle u_{r}$ be \\the blow-up\\sequence\\at the origin};
    \draw (232.15,131.52) node [anchor=north west][inner sep=0.75pt]  [font=\footnotesize,rotate=-314.09]  {$r\rightarrow 0^{+}$};
    \draw (384.46,105.97) node [anchor=north west][inner sep=0.75pt]  [font=\footnotesize,rotate=-46.67]  {$r\rightarrow 0^{+}$};
    \draw (201.65,176.7) node [anchor=north west][inner sep=0.75pt]    {$u_{0} \not\equiv 0$};
    \draw (411.75,185.1) node [anchor=north west][inner sep=0.75pt]    {$u_{0} \equiv 0$};
    \draw (180.65,341.55) node [anchor=north west][inner sep=0.75pt]  [font=\tiny]  {$\mathcal{D}^{x_{1} x_{2}}( 0^{+}) =m_{0}$};
    \draw (350.65,341.55) node [anchor=north west][inner sep=0.75pt]  [font=\tiny]  {$\mathcal{D}^{x_{1} x_{2}}( 0^{+}) =\frac{1}{8}$};
    \draw (393.15,389.6) node [anchor=north west][inner sep=0.75pt]  [font=\scriptsize] [align=left] {Horizontal\\flatness};
    \draw (470.75,341.55) node [anchor=north west][inner sep=0.75pt]  [font=\tiny]  {$\mathcal{D}^{x_{1} x_{2}}( 0^{+}) =0$};
    \draw (497.05,389) node [anchor=north west][inner sep=0.75pt]  [font=\scriptsize] [align=left] {Cusp};
    \draw (194.55,385) node [anchor=north west][inner sep=0.75pt]  [font=\scriptsize] [align=left] {Garabedian\\pointed\\bubble};
    \draw (337.5,514.65) node [anchor=north west][inner sep=0.75pt]  [font=\tiny]  {$u_{r}( x) =\frac{u( rx)}{\|u\|_{L_{\mathit{w}}^{2}( \partial B_{r}^{+})}}$};
    \draw (347.5,687.4) node [anchor=north west][inner sep=0.75pt]  [font=\tiny]  {$u_{0}( x) =\beta x_{1}^{2} x_{2}^{+} \neq 0$};
    \draw (394.5,610.9) node [anchor=north west][inner sep=0.75pt]  [font=\footnotesize]  {$r\rightarrow 0^{+}$};

    \end{tikzpicture}
    \caption{The analysis flow at the origin}
    \label{fig: ori}
\end{figure}
Analogous to the analysis for axisymmetric degenerate points, the convergence established in~\thmref{thm:orig} holds within  $W_{\mathit{w},\mathit{loc}}^{1,2}(\mathbb{R} _{+}^{2})$ and locally uniform in $\mathbb{R} _{+}^{2}$. Furthermore, the rescaling employed in Equation~\eqref{thm: org} is indicated by the degeneracy of the free boundary condition. The trichotomy framework (illustrated in Figure~\ref{fig: ori}) parallels prior classifications. The origin are categorized into non-trivial origin and trivial origin classes. Within the trivial origin class, a further distinction is made between trivial origin with non-zero density and trivial origin with zero density, which are differentiated by their weighted density (see Figure~\ref{Classori}). We note that the novel contribution of our analysis for nontrivial degenerate axis stagnation points in the introduction of a nonlinear frequency formula and the use of a compensated compactness result for compressible Euler equations. We defer this discussion to the subsequent section, where we outline the general approach to the proofs of~\thmref{thm:stp},~\thmref{thm:ad} and~\thmref{thm:orig}. 

\begin{figure}[!ht]
    \center
    \tikzset{every picture/.style={line width=0.75pt}} 

    \begin{tikzpicture}[x=0.75pt,y=0.75pt,yscale=-1,xscale=1]
    
    \draw  [fill={rgb, 255:red, 248; green, 231; blue, 28 }  ,fill opacity=1 ] (89.5,126) .. controls (89.5,122.41) and (92.41,119.5) .. (96,119.5) -- (133.5,119.5) .. controls (137.09,119.5) and (140,122.41) .. (140,126) -- (140,145.5) .. controls (140,149.09) and (137.09,152) .. (133.5,152) -- (96,152) .. controls (92.41,152) and (89.5,149.09) .. (89.5,145.5) -- cycle ;
    \draw    (140,137) -- (163,137.25) ;
    \draw    (163,137.25) -- (218.5,72.25) ;
    \draw    (163,137.25) -- (221.5,198.75) ;
    
    \draw  [fill={rgb, 255:red, 126; green, 211; blue, 33 }  ,fill opacity=1 ] (219,56.18) .. controls (219,51.66) and (222.66,48) .. (227.18,48) -- (276.82,48) .. controls (281.34,48) and (285,51.66) .. (285,56.18) -- (285,86.57) .. controls (285,91.09) and (281.34,94.75) .. (276.82,94.75) -- (227.18,94.75) .. controls (222.66,94.75) and (219,91.09) .. (219,86.57) -- cycle ;
    \draw  [fill={rgb, 255:red, 126; green, 211; blue, 33 }  ,fill opacity=1 ] (227.5,186.66) .. controls (227.5,182.43) and (230.93,179) .. (235.16,179) -- (276.84,179) .. controls (281.07,179) and (284.5,182.43) .. (284.5,186.66) -- (284.5,215.09) .. controls (284.5,219.32) and (281.07,222.75) .. (276.84,222.75) -- (235.16,222.75) .. controls (230.93,222.75) and (227.5,219.32) .. (227.5,215.09) -- cycle ;
    \draw    (287.5,200) -- (310.5,200.25) ;
    \draw    (310.5,200.25) -- (366,135.25) ;
    \draw    (310.5,200.25) -- (369,261.75) ;
    
    \draw  [fill={rgb, 255:red, 248; green, 231; blue, 28 }  ,fill opacity=1 ] (370,118.6) .. controls (370,110.26) and (376.76,103.5) .. (385.1,103.5) -- (453.4,103.5) .. controls (461.74,103.5) and (468.5,110.26) .. (468.5,118.6) -- (468.5,163.9) .. controls (468.5,172.24) and (461.74,179) .. (453.4,179) -- (385.1,179) .. controls (376.76,179) and (370,172.24) .. (370,163.9) -- cycle ;
    \draw  [fill={rgb, 255:red, 248; green, 231; blue, 28 }  ,fill opacity=1 ] (374,254.4) .. controls (374,245.89) and (380.89,239) .. (389.4,239) -- (453.6,239) .. controls (462.11,239) and (469,245.89) .. (469,254.4) -- (469,300.6) .. controls (469,309.11) and (462.11,316) .. (453.6,316) -- (389.4,316) .. controls (380.89,316) and (374,309.11) .. (374,300.6) -- cycle ;
    
    \draw (96.5,129) node [anchor=north west][inner sep=0.75pt]  [font=\scriptsize] [align=left] {Origin};
    \draw (223.5,55.77) node [anchor=north west][inner sep=0.75pt]  [font=\tiny] [align=left] {Non-trivial \\origin\\$\displaystyle u_{0} \not\equiv 0$};
    \draw (231.16,187) node [anchor=north west][inner sep=0.75pt]  [font=\tiny] [align=left] {Trivial \\origin:\\ $\displaystyle u_{0} \equiv 0$};
    \draw (374.9,108.07) node [anchor=north west][inner sep=0.75pt]  [font=\tiny] [align=left] {Trivial origin \\with \\non-zero \\density:\\$\displaystyle \mathcal{D}^{x_{1}x_{2}}( 0^{+}) =\frac{2}{3}$};
    \draw (379.3,247.47) node [anchor=north west][inner sep=0.75pt]  [font=\tiny] [align=left] {Trivial origin \\with \\zero \\density:\\$\displaystyle \mathcal{D}^{x_{1}x_{2}}( 0^{+}) =0$};

    \end{tikzpicture}
    \caption{Possible cases at the origin}
    \label{Classori}
\end{figure}
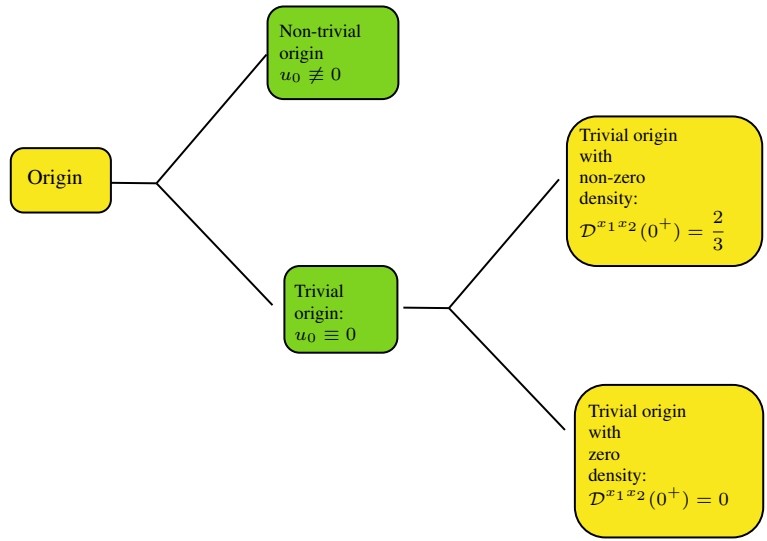

We now highlight the main contributions of this work. This paper focuses on analyzing singular profiles of the free boundary near degenerate points in the context of one-phase quasilinear Bernoulli free boundary problems. Here, a degenerate point refers to a free boundary point where the gradient of the solution vanishes. We summarize as follows.
\begin{itemize}
    \item The study on degenerate points for Bernoulli-type free boundary problems arises naturally in physical situations, the most famous one is in two dimensional gravity water waves. A notable connection exists with the Stokes conjecture—a seminal problem posed by G.G Stokes in 1880 \cite{MR2858161}—which remains deeply intertwined with the analysis of degenerate free boundary behavior (see \cite{MR4238496} for an overview). The conjecture was resolved for the incompressible gravity waves in \cite{MR666110,MR2810856}, and subsequent work extended its implications to more complex physical scenarios~\cite{MR2995099,MR3225630,MR4595616}.
    
    However, the previously mentioned works have been restricted to the incompressible case, with few considerations for the compressible one. To the best of our knowledge, only the recent work \cite{du2024proofstokesconjecturecompressible} addresses degenerate points in two dimensional compressible gravity water waves, but no existing literature treats the Stokes conjecture for compressible flows in higher dimensional models (e.g. three dimensional axisymmetric case). From this perspective, our results establish the first analysis of the Stokes conjecture for compressible axisymmetric flows in the influence of the gravity.
    \item In this paper, we develop for the first time several  monotonicity formulas for quasilinear elliptic equations of the Bernoulli type. They are particularly constructed for stagnation points away from the axis of symmetry, for  degenerate points on the axis that are non-stagnation and for the origin. The monotonicity formula for stagnation points  can be viewed as an extension of the two dimensional compressible case~\cite{du2024proofstokesconjecturecompressible} to the three dimensional axisymmetric case. We point out that the monotonicity formulas for axis free boundary points that are non-stagnation and for the origin are novel in the context for monotonicity formulas for Bernoulli-type free boundary problems~\cite{MR1620644,MR1759450}. Furthermore, we establish a new frequency formula which deals degenerate stagnation points at the origin with nontrivial density, together with the compensated compactness for compressible Euler system, we prove the strong convergence of subsonic variational solutions. The elegant mathematical theory of compensated compactness method was developed by Murat~\cite{MR506997} and Tartar~\cite{MR584398} in 1970s to solve the weak convergence in nonlinear partial differential equations. This compactness argument helps us to avoid possible concentration issues that can lead to the failure of strong convergence.
    \item Our results establish a complete, mutually exclusive classification of blow-up limits for three distinct types of degenerate points on the free boundary of the quasilinear Bernoulli free boundary problem \eqref{fb}. Such classifications are fundamental to the study of free boundary problems. In fact, the classification of mutually exclusive blow-up limits is core in the area of  free boundary problems, not only in Bernoulli-type \cite{MR1759450}, but also in other types of free boundary problems, like obstacle problems \cite{MR454350,MR3904453,MR4848670}. The classification of the blow-up limits in this paper paves the way for investigating finer geometric properties of the free boundary of the problem~\eqref{fb}, such as rectifiability, etc.
\end{itemize}
\subsubsection*{Related literatures on axisymmetric free surface flows.}
We conclude the introduction with a brief review of mathematical results pertaining to axisymmetric free surface flows. The axisymmetric model serves as an effective framework for studying the dynamics of three dimensional waves \cite{MR112435}. This model simplifies the complex three dimensional problem by reducing it to a two dimensional setting while retaining key features of the original system. Consequently, it has been a valuable tool in the analysis of free surface flows, enabling mathematicians to derive significant insights into the behavior of such flows. The earliest study of axisymmetric motion can be traced back to the work of Gilbarg and Serrin \cite{MR67650}, who investigated the uniqueness of axisymmetric subsonic flows past a given body. This pioneering research laid the foundation for subsequent studies in the field. Following their work, numerous existence and uniqueness results for global axisymmetric solutions through an infinitely long nozzle have been established. Notably, Xie and Xin \cite{MR2375709,MR2644144,MR2607929} demonstrate the existence and uniqueness of such solutions under certain upstream conditions. Additionally,  quantitative properties for solutions of axisymmetric flows in infinitely long nozzle was investigated in \cite{MR2737815,MR3537905}.
From the perspective of free boundary problems, Alt, Caffarelli, and Friedman investigated axially symmetric jets with gravity for irrotational incompressible waves \cite{MR647374,MR682265,MR642623} (see \cite{MR702728} for axisymmetric rotational flows). They established the existence and uniqueness of the jet and addressed the regularity of the free boundary, building upon the classical work of Alt and Caffarelli \cite{MR618549}. Their ideas have been extensively developed, leading to numerous researches on this topic. For further reading on incompressible axisymmetric free boundary problems, we refer the reader to the works \cite{MR4127792,MR4246821,MR4595616,MR4808256}.
\section{Strategy of the proof}
In this section, we outline the basic idea and the fundamental approach to proving Theorems \ref{thm:stp}, \ref{thm:ad}, and \ref{thm:orig}. Firstly, it should be noted that the free boundary problem \eqref{fb} admits a variational structure \cite{MR752578,MR772122}, suggesting that solutions to the problem~\eqref{fb} can be regarded as critical points of the corresponding energy functional. The minimizers of the energy functional can easily be shown to exist for the energy functional. However, we focus not on minimizers but on subsonic variational solutions of the problem~\eqref{fb}, since non-minimizers are better suited for analyzing singularities in free boundary problems \cite{MR2915865,MR4238496}. These solutions satisfy the equation in the sense of first domain variations (cf.~\eqref{fv}), while additionally enforcing conditions such as nonnegativity, a uniform subsonicity constraint, and compatibility conditions for $\nabla u/x_{1}$ along the axis of symmetry. We then aim to classify subsonic variational solutions at three distinct types of free boundary points. However, we encounter an initial challenge: the governing equation for axisymmetric compressible irrotational flows is quasilinear, and thus the associated energy functional is nonhomogeneous (cf. Equation \eqref{eng}). To overcome this difficulty, we decompose the energy into its homogeneous component (cf. Equation~\eqref{engEH}). The key contribution of this work is the discovery that:
\begin{enumerate}
    \item The homogeneous component \textit{enables the construction of a monotonicity formula} while preserving the scaling invariance of the equation, thereby enabling blow-up analysis.
    \item The discrepancy between the homogeneous component and the original energy is controlled near all degenerate points.
\end{enumerate}
Motivated by the recent work of~\cite{du2024proofstokesconjecturecompressible}, we introduce the Weiss boundary-adjusted $L^{2}$ energy 
\begin{equation*}
    J(r):=\int_{\partial B_{r}^{+}(x^{\circ})}\frac{u^{2}}{x_{1}\bar{\rho}_{0}}\,d\mathcal{H}^{1}
\end{equation*}
to facilitate our analysis. Based on these key observations, we establish  three distinct monotonicity formulas (presented in Proposition~\ref{propstg},~\ref{propadp} and~\ref{prop0}), each worked for the specific geometry of stagnation points, axisymmetric degenerate points, and the origin, respectively. The monotonicity formulas serve as preliminary tools for blow-up analysis. Under the growth assumption we imposed on each specific degenerate points, we are able to prove that the homogeneity of blow-up limits (presented in Lemma~\ref{lemstp},~\ref{lemad} and~\ref{lemo}). We now choose the analysis for the non-stagnation axis  points for example. We classify each axis stagnation points as non-stagnation degenerate stagnation points or axis non-degenerate stagnation points, and we study specifically the non-trivial degenerate axis stagnation points. In this case, we study 
the rescaled solutions of the new form
\begin{align*}
	\psi_{r}(x):=\frac{\psi(x^{\circ}+rx)}{\|\psi\|_{L_{\mathit{w}}^{2}(\partial B_{r}^{+})} }.
\end{align*}
We aim to study the limit of $\psi_{r}$ as $r\to 0^{+}$. To do so, we first construct a new frequency formula (presented in~\lemref{lemma:freq}) to examine the compactness of $\psi_{r}$. We apply our frequency formula to prove that $\psi_{r}$ is bounded in $W_{\mathit{w}}^{1,2}(B_{1})$ (presented in~\propref{prop:propfre} and~\propref{prop:freq}). This allows $\psi_{r}$ to have a weak limit $\psi_{0}$ in the space $W_{\mathit{w}}^{1,2}(B_{1})$. However, in order to pass to the limit in the domain variation formula for $\psi_{r}$, we need the strong convergence. Here we encounter our another obstacle: how to improve the weak convergence of $\psi_{r}$ to strong convergence? In the case of incompressible axisymmetric flows, the concentration compactness method is applied~\cite{MR1176677}, which relies on the structure of incompressible Euler equations and achieves convergence of the non--linear quadratic terms $\operatorname{div}((u_{1}^{m},u_{2}^{m})\otimes(u_{1}^{m},u_{2}^{m}))$ in the sense of distributions as $m\to+\infty$. To preserve such property in compressible flows, we adopt the ideas of compensated compactness for the compressible Euler system~\cite{MR2291790}. The elegant mathematical theory of compensated compactness method was developed by Murat~\cite{MR506997} and Tartar~\cite{MR584398} in 1970s to solve nonlinear partial differential equations. This compactness argument helps us to avoid possible concentration issues that can lead to the failure of strong convergence. The strong convergence of $\psi_{r}$ to $\psi_{0}$ in $W_{\mathit{w}}^{1,2}(B_{1}^{+}\setminus\{0\})$ gives us the governing equation of $\psi_{0}$ and proves the last part of~\thmref{thm:orig}.
\section{Monotonicity formulas for axisymmetric compressible problems}
In this section, we present the primary tools for analyzing stagnation points and degenerate points of the problem~\eqref{fb}. These tools are monotonicity formulas. Prior to introducing them, we establish several preparatory identities.
\begin{lemma}[Poho\v{z}aev-type identity]
    Let $x^{\circ}\in \Omega$ be such that $x_{1}^{\circ}x_{2}^{\circ}=0$, and let 
    \begin{align}\label{d}
        \delta=\left\{
            \begin{alignedat}{2}
                &\min\{|x_{1}^{\circ}|,\operatorname{dist}(x^{\circ},\partial\Omega)\}/2\qquad&&\text{ if }x_{1}^{\circ}>0,\\
                &\operatorname{dist}(x^{\circ},\partial\Omega)/2\qquad&&\text{ if }x_{1}^{\circ}=0.
            \end{alignedat}
        \right.
    \end{align}
    Then the following identities hold for a.e. $r\in(0,\delta)$:

    \noindent \textup{(1).} If $x_{1}^{\circ}\neq 0$ and $x_{2}^{\circ}=0$, then 
    \begin{align}\label{tys}
        \begin{split}
            &3E_{F}(u;B_{r}(x^{\circ}))-rE_{F}(u;\partial B_{r}(x^{\circ}))\\
            &=3\int_{B_{r}(x^{\circ})}\frac{|\nabla u|^{2}}{x_{1}H(\tfrac{|\nabla u|^{2}}{x_{1}^{2}};x_{2})}\,dx-2r\int_{\partial B_{r}(x^{\circ})}\frac{(\nabla u\cdot\nu)^{2}}{x_{1}H(\tfrac{|\nabla u|^{2}}{x_{1}^{2}};x_{2})}\,d\mathcal{H}^{1}\\
            &-\sum_{i=1}^{3}K_{i}^{x_{2}}(r),
        \end{split}
    \end{align}
    where 
    \begin{align}\label{K1x2r}
        \begin{split}
            K_{1}^{x_{2}}(r)&=\int_{B_{r}(x^{\circ})}x_{1}\Bigg[ \int_{0}^{\tfrac{|\nabla u|^{2}}{x_{1}^{2}}}\tfrac{\partial}{\partial\tau}\left( \tfrac{1}{H(\tau;x_{2})} \right)\tau\,d\tau\\
            &\qquad\qquad\qquad-\left(  \int_{0}^{x_{2}}\tfrac{\partial}{\partial\tau}\left( \tfrac{1}{H(\tau;x_{2})} \right)\tau\,d\tau\right)\chi_{\left\{ u>0 \right\} }\Bigg]\,dx,
        \end{split}
    \end{align}
    \begin{align}\label{K2x2r}
        \begin{split}
            K_{2}^{x_{2}}(r)&=\int_{B_{r}(x^{\circ})}x_{1}\Bigg[ \int_{0}^{\tfrac{|\nabla u|^{2}}{x_{1}^{2}}}\tfrac{\partial}{\partial x_{2}}\left( \tfrac{1}{H(\tau;x_{2})} \right)x_{2}\,d\tau\\
            &\qquad\qquad\qquad-\left( \int_{0}^{x_{2}}\tfrac{\partial}{\partial x_{2}}\left( \tfrac{1}{H(\tau;x_{2})} \right)x_{2}\,d\tau \right)\chi_{\left\{ u>0 \right\} } \Bigg]\,dx,
        \end{split}
    \end{align}
    and
    \begin{align}\label{K3x2r}
        \begin{split}
            K_{3}^{x_{2}}(r)&=\int_{B_{r}(x^{\circ})}(x_{1}-x_{1}^{\circ})\Bigg[ F(\tfrac{|\nabla u|^{2}}{x_{1}^{2}};x_{2})-\frac{2}{H(\tfrac{|\nabla u|^{2}}{x_{1}^{2}};x_{2})}\frac{|\nabla u|^{2}}{x_{1}^{2}}\\
            &\qquad\qquad\qquad\qquad\quad+\lambda(x_{2})\chi_{\left\{ u>0 \right\} } \Bigg]\,dx.
        \end{split}
    \end{align}
    \noindent \textup{(2).} If $x_{1}^{\circ}=0$ and $x_{2}^{\circ}\neq 0$, then 
    \begin{align}\label{tya}
        \begin{split}
            &3E_{F}(u;B_{r}^{+}(x^{\circ}))-rE_{F}(u;\partial B_{r}^{+}(x^{\circ}))\\
            &=4\int_{B_{r}^{+}(x^{\circ})}\frac{|\nabla u|^{2}}{x_{1}H(\tfrac{|\nabla u|^{2}}{x_{1}^{2}};x_{2})}\,dx-2r\int_{\partial B_{r}^{+}(x^{\circ})}\frac{(\nabla u\cdot\nu)^{2}}{x_{1}H(\tfrac{|\nabla u|^{2}}{x_{1}^{2}};x_{2})}\,d\mathcal{H}^{1}\\
            &-K_{1}^{x_{1}}(r),
        \end{split}
    \end{align}
    where 
    \begin{equation}\label{K1x1r}
        K_{1}^{x_{1}}(r)=\int_{B_{r}^{+}(x^{\circ})}x_{1}(x_{2}-x_{2}^{\circ})\left[ \partial_{2}F(\tfrac{|\nabla u|^{2}}{x_{1}^{2}};x_{2})+\lambda'(x_{2})\chi_{\left\{ u>0 \right\} } \right]\,dx.
    \end{equation}
    \noindent \textup{(3).} If $x_{1}^{\circ}=x_{2}^{\circ}=0$, then 
    \begin{align}\label{ty0}
        \begin{split}
            &4E_{F}(u;B_{r}^{+})-rE_{F}(u;\partial B_{r}^{+})\\
            &=5\int_{ B_{r}^{+}}\frac{|\nabla u|^{2}}{x_{1}H(\tfrac{|\nabla u|^{2}}{x_{1}^{2}};x_{2})}\,dx-2r\int_{\partial B_{r}^{+}}\frac{\left( \nabla u\cdot\nu \right)^{2}}{x_{1}H(\tfrac{|\nabla u|^{2}}{x_{1}^{2}};x_{2})}\,d\mathcal{H}^{1}\\
            &-K_{1}^{x_{2}}(r)-K_{1}^{x_{1}x_{2}}(r),
        \end{split}
    \end{align}
    where 
    \begin{equation}\label{K1x12r}
        K_{1}^{x_{1}x_{2}}(r)=\int_{B_{r}^{+}}x_{1}x_{2}\left[ \partial_{2}F(\tfrac{|\nabla u|^{2}}{x_{1}^{2}};x_{2})+\left(\lambda'(x_{2})-\frac{1}{\bar{\rho}_{0}}\right)\chi_{\left\{ u>0 \right\} } \right]\,dx.
    \end{equation}

    \noindent In the equations~\eqref{tys}, \eqref{tya} and \eqref{ty0}, $E_{F}(u;B_{r}^{+}(x^{\circ}))$ is defined in equation \eqref{eng} with $\Omega$ replaced by $B_{r}^{+}(x^{\circ})$ (with respect to $B_{r}(x^{\circ})$ if $x_{1}^{\circ}\neq 0$). Similarly, $E_{F}(u;\partial B_{r}^{+}(x^{\circ}))$ is defined by 
    \begin{equation*}
        E_{F}(u;\partial B_{r}^{+}(x^{\circ}))=\int_{\partial B_{r}^{+}(x^{\circ})}x_{1}\left[ F(\tfrac{|\nabla u|^{2}}{x_{1}^{2}};x_{2})+\lambda(x_{2})\chi_{\left\{ u>0 \right\} } \right]\,d\mathcal{H}^{1}.
    \end{equation*}    
\end{lemma}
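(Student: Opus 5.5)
The plan is to plug the radial field $\phi_{\varepsilon}(x)=\eta_{\varepsilon}(|x-x^{\circ}|)(x-x^{\circ})$ into the first domain variation formula \eqref{fv}. Here $\eta_{\varepsilon}$ is a Lipschitz cutoff with $\eta_{\varepsilon}=1$ on $[0,r-\varepsilon]$, $\eta_\varepsilon=0$ on $[r,\infty)$, and linear in between. The choice of $\delta$ in \eqref{d} keeps $\operatorname{supp}\phi_\varepsilon\subset\Omega$, and away from the axis it keeps $x_1$ bounded below. When $x_1^\circ=0$ we have $\phi_1=x_1\eta_\varepsilon=0$ on $\{x_1=0\}$, so $\phi_\varepsilon$ is admissible after a standard approximation of Lipschitz fields by $C^1_0$ ones. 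We then let $\varepsilon\to0$. For a.e. $r$ (Lebesgue points of the relevant $L^1$ radial functions, which are integrable since $u\in W^{1,2}_{w,loc}$ and $F,\partial_2F,\lambda$ are locally bounded by subsonicity), the terms with $\eta_\varepsilon'$ converge to boundary integrals over $\partial B_r^{(+)}(x^\circ)$.

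The computation of the individual terms goes as follows. We have $\operatorname{div}\phi=2\eta+r\eta'$ and $D\phi=\eta I+\eta'|x-x^\circ|\,\nu\otimes\nu$. The first two lines of \eqref{fv} therefore give
\[
2\!\int_{B_r}\! x_1[F+\lambda\chi]-r\!\int_{\partial B_r}\! x_1[F+\lambda\chi]-2\!\int_{B_r}\!\tfrac{|\nabla u|^2}{x_1H}+2r\!\int_{\partial B_r}\!\tfrac{(\nabla u\cdot\nu)^2}{x_1H}.
\]
The third line, with $\phi_1=x_1-x_1^\circ$, yields $\int(x_1-x_1^\circ)[F-2\partial_1F\,t+\lambda\chi]$, where $t=|\nabla u|^2/x_1^2$. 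The fourth line, with $\phi_2=x_2-x_2^\circ$, yields $\int x_1(x_2-x_2^\circ)[\partial_2F+\lambda'\chi]$. Setting the sum to zero and rearranging, the only remaining task is to rewrite the extra terms in the form stated in each case.

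The three cases are handled as follows.
\begin{itemize}
\item[(1)] $x_2^\circ=0$, $x_1^\circ\ne0$. Split $x_1-x_1^\circ=x_1-x_1^\circ$ as is, which is exactly $K_3^{x_2}$. The $\phi_2$ term $\int x_1x_2[\partial_2F+\lambda'\chi]$ must be combined with $\int x_1[F+\lambda\chi]$ to produce the coefficient $3$. Write $F(t;x_2)=t/\bar\rho_0+\int_0^t\partial_\tau(1/H)\tau\,d\tau$ by integrating by parts. Then $x_1F=\frac{|\nabla u|^2}{x_1\bar\rho_0}+x_1\int_0^t\partial_\tau(1/H)\tau$ and $x_1\lambda\chi=\frac{x_1x_2}{\bar\rho_0}\chi+x_1\chi\int_0^{x_2}\partial_\tau(1/H)\tau$. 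Also $\partial_2F\cdot x_2=\int_0^t\partial_{x_2}(1/H)x_2\,d\tau$, and \eqref{lbd'} gives $x_2\lambda'$. Subtracting the "non-homogeneous" parts then produces $K_1^{x_2}$ and $K_2^{x_2}$. The homogeneous parts combine into $3E_F$ versus $3\int|\nabla u|^2/(x_1H)$ once one uses $2t\partial_1F=2t/H$.
\item[(2)] $x_1^\circ=0$. Here $\phi_1=x_1\eta$, and the third line contributes $\int x_1[F-2t/H+\lambda\chi]$. This adds one more $E_F$ (giving $3E_F$) and one more $-2\int|\nabla u|^2/(x_1H)$ (giving the coefficient $4$ after moving terms). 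The $\phi_2$ term is left as $K_1^{x_1}$.
\item[(3)] The origin. We combine both mechanisms. The third line gives the extra $E_F$, and the $\phi_2$ term $\int x_1x_2[\partial_2F+\lambda'\chi]$ is split as $\frac{1}{\bar\rho_0}\int x_1x_2\chi$ plus $K_1^{x_1x_2}$. The piece $\frac1{\bar\rho_0}x_1x_2\chi$ is absorbed into $E_F$ as in case (1), leaving $K_1^{x_2}$ and giving coefficients $4$ and $5$.
\end{itemize}

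The main obstacle is bookkeeping. We must verify precisely that the decomposition of $F$ and $\lambda$ into a homogeneous part (quadratic in $\nabla u$, linear in $x_2$) and a remainder reproduces exactly $K_1,K_2,K_1^{x_1x_2}$ with the right signs, and in case (1) that $\lambda'$ via \eqref{lbd'} matches the $x_2$-derivative structure of $K_2^{x_2}$. A second, technical point is justifying the admissibility of $\phi_\varepsilon$ near the axis in cases (2) and (3), where the weight $1/x_1$ is singular. For this we use that $\phi_1/x_1=\eta$ is bounded, so every integrand in \eqref{fv} is controlled by $|\nabla u|^2/x_1$ times a bounded factor and is integrable in $W^{1,2}_{w}$.
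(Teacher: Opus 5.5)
\paragraph{Verdict.} Your strategy is the paper's: test \eqref{fv} with the radial field $(x-x^{\circ})\eta(|x-x^{\circ}|)$, let the cutoff tend to $\chi_{[0,r]}$ to obtain the identity containing $2E_F-rE_F(u;\partial B_r)$, then rewrite the extra terms. Your treatment of case (2), of $K_2^{x_2}$, $K_3^{x_2}$ and $K_1^{x_1x_2}$, and of admissibility near the axis is fine.

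\paragraph{The gap.} The step that fails is the decomposition of $F$ on which $K_1^{x_2}$ rests (cases (1) and (3)). The formula $F(t;x_2)=t/\bar\rho_0+\int_0^t\tau\,\partial_\tau(1/H)\,d\tau$ is false. Both sides vanish at $t=0$, but differentiating in $t$ gives $1/H(t;x_2)$ on the left and $1/\bar\rho_0+t\,\partial_t(1/H(t;x_2))$ on the right. These already differ at $t=0$ whenever $H(0;x_2)\neq\bar\rho_0$. Integrating $\partial_\tau\bigl(\tau/H(\tau;x_2)\bigr)=1/H+\tau\,\partial_\tau(1/H)$ instead gives
\[
F(t;x_2)=\frac{t}{H(t;x_2)}-\int_0^t\tau\,\partial_\tau\Bigl(\frac{1}{H(\tau;x_2)}\Bigr)\,d\tau .
\]
The constant $\bar\rho_0$ enters only in $\lambda$, through $H(x_2;x_2)=\bar\rho_0$; that is why your formula for $\lambda$ is correct. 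With your version of $F$, two things go wrong:
\begin{itemize}
\item the extra copy of $E_F$ would contribute $\frac{1}{\bar\rho_0}\int|\nabla u|^2/x_1$ rather than $\int|\nabla u|^2/(x_1H)$, leaving an uncancelled term $\int\frac{|\nabla u|^2}{x_1}\bigl(\frac{1}{\bar\rho_0}-\frac{1}{H}\bigr)$;
\item the remainder from $F$ and the remainder from $\lambda$ would add, instead of forming the difference that defines $K_1^{x_2}$.
\end{itemize}
So the bookkeeping cannot close as written.

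\paragraph{The fix.} With the correct formula, $E_F(u;B)=E_H(u;B)-K_1^{x_2}(r)$, where
\[
E_H(u;B)=\int_B x_1\Bigl[\frac{|\nabla u|^2}{x_1^2H}+\frac{x_2}{\bar\rho_0}\chi_{\{u>0\}}\Bigr]\,dx .
\]
This is exactly the paper's device. In case (1) the limiting identity reads
\[
3E_F-rE_F(u;\partial B_r(x^{\circ}))=E_F+2\int\frac{|\nabla u|^2}{x_1H}-2r\int_{\partial B_r(x^{\circ})}\frac{(\nabla u\cdot\nu)^2}{x_1H}-K_3^{x_2}-\frac{1}{\bar\rho_0}\int x_1x_2\chi_{\{u>0\}}-K_2^{x_2}.
\]
Substituting $E_F=E_H-K_1^{x_2}$, the two $\frac{1}{\bar\rho_0}\int x_1x_2\chi_{\{u>0\}}$ terms cancel and \eqref{tys} follows. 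The same substitution in case (3) gives \eqref{ty0}.
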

\begin{proof}
    Let us use $\phi(x):=(x-x^{\circ})\eta(|x-x^{\circ}|)$ as the test function in the first domain variation formula~\eqref{fv}, where $\eta\in C^{0}([0,r])\cap C^{1}([0,r))$, $\eta(r)=0$ is a cut off function. A simple computation gives 
    \begin{equation*}
        \operatorname{div}\phi=2\eta(|x-x^{\circ}|)+|x-x^{\circ}|\eta'(|x-x^{\circ}|),
    \end{equation*}
    and 
    \begin{equation*}
        D\phi= \begin{pmatrix}
            \eta(|x-x^{\circ}|)&0\\
            0&\eta(|x-x^{\circ}|)
        \end{pmatrix} +\eta'(|x-x^{\circ}|)(x-x^{\circ})\otimes(x-x^{\circ}).
    \end{equation*}
    Consequently, we have 
    \begin{align*}
        0&=2\int_{\Omega}\left[ x_{1}F( \tfrac{|\nabla u|^{2}}{x_{1}^{2}};x_{2} ) +x_{1}\lambda(x_{2})\chi_{\left\{ u>0 \right\} } \right]\eta(|x-x^{\circ}|)\,dx\\
        &+\int_{\Omega}|x-x^{\circ}|\left[ x_{1}F( \tfrac{|\nabla u|^{2}}{x_{1}^{2}};x_{2} ) +x_{1}\lambda(x_{2})\chi_{\left\{ u>0 \right\} } \right]\eta'(|x-x^{\circ}|) \,dx\\
        &-2\int_{\Omega}\frac{|\nabla u|^{2}\eta(|x-x^{\circ}|)}{x_{1}H( \tfrac{|\nabla u|^{2}}{x_{1}^{2}};x_{2} )}\,dx\\
        &-2\int_{\Omega}\frac{\eta'(|x-x^{\circ}|)|x-x^{\circ}|}{x_{1}H( \tfrac{|\nabla u|^{2}}{x_{1}^{2}};x_{2} )}\left( \nabla u\cdot\frac{x-x^{\circ}}{|x-x^{\circ}|} \right)^{2}\,dx\\
        &+\int_{\Omega}(x_{1}-x_{1}^{\circ})\left[ F(\tfrac{|\nabla u|^{2}}{x_{1}^{2}};x_{2})-2\frac{1}{H(\tfrac{|\nabla u|^{2}}{x_{1}^{2}};x_{2})}\frac{|\nabla u|^{2}}{x_{1}^{2}} \right]\eta(|x-x^{\circ}|)\,dx\\
        &+\int_{\Omega}(x_{1}-x_{1}^{\circ})\eta(|x-x^{\circ}|)\lambda(x_{2})\chi_{\left\{ u>0 \right\} }\,dx\\
        &+\int_{\Omega}x_{1}(x_{2}-x_{2}^{\circ})\left[\partial_{2}F(\tfrac{|\nabla u|^{2}}{x_{1}^{2}};x_{2})+\lambda'(x_{2})\chi_{\left\{ u>0 \right\} } \right]\eta(|x-x^{\circ}|)\,dx.
    \end{align*}
    Choose a sequence of $\eta_{m}$ with $\eta_{m}=1$ for $s\in[0,r]$, $\eta_{m}(s)=0$ for $s>r+\tfrac{1}{m}$, and $\eta_{m}(s)$ is linear between $(r,r+\tfrac{1}{m})$. Given that $\nu=\frac{x-x^{\circ}}{r}$ for $x\in \partial B_{r}^{+}(x^{\circ})$, we have that 
    \begin{align*}
        0&=2\int_{B_{r}^{+}(x^{\circ})}\left[ x_{1}F(\tfrac{|\nabla u|^{2}}{x_{1}^{2}};x_{2})+x_{1}\lambda(x_{2})\chi_{\left\{ u>0 \right\} } \right]\,dx\\
        &-m\int_{r}^{r+1/m}\int_{\partial B_{s}^{+}(x^{\circ})}s\left[ x_{1}F(\tfrac{|\nabla u|^{2}}{x_{1}^{2}};x_{2})+x_{1}\lambda(x_{2})\chi_{\left\{ u>0 \right\} } \right]\,d\mathcal{H}^{1}ds\\
        &-2\int_{B_{r}^{+}(x^{\circ})}\frac{|\nabla u|^{2}}{H(\tfrac{|\nabla u|^{2}}{x_{1}^{2}};x_{2})}\,dx\\
        &+2m\int_{r}^{r+1/m}\int_{\partial B_{s}^{+}(x^{\circ})}\frac{\left( \nabla u\cdot\nu \right)^{2}}{x_{1}H(\tfrac{|\nabla u|^{2}}{x_{1}^{2}};x_{2})}\,d\mathcal{H}^{1}ds\\
        &+\int_{B_{r}^{+}(x^{\circ})}(x_{1}-x_{1}^{\circ})\left[ F(\tfrac{|\nabla u|^{2}}{x_{1}^{2}};x_{2})-2\frac{1}{H(\tfrac{|\nabla u|^{2}}{x_{1}^{2}};x_{2})}\frac{|\nabla u|^{2}}{x_{1}^{2}}\right]\,dx\\
        &+\int_{B_{r}^{+}(x^{\circ})}(x_{1}-x_{1}^{\circ})\lambda(x_{2})\chi_{\left\{ u>0 \right\} } \,dx\\
        &+\int_{B_{r}^{+}(x^{\circ})}x_{1}(x_{2}-x_{2}^{\circ})\left[\partial_{2}F(\tfrac{|\nabla u|^{2}}{x_{1}^{2}};x_{2})+\lambda'(x_{2})\chi_{\left\{ u>0 \right\} } \right]\,dx.
    \end{align*}
    Passing to the limit as $m\to\infty$, we obtain for a.e. $r\in (0,\delta)$ that 
    \begin{align}\label{pi1}
        \begin{split}
            0&=2\int_{B_{r}^{+}(x^{\circ})}\left[ x_{1}F(\tfrac{|\nabla u|^{2}}{x_{1}^{2}};x_{2})+x_{1}\lambda(x_{2})\chi_{\left\{ u>0 \right\} } \right]\,dx\\
            &-r\int_{\partial B_{r}^{+}(x^{\circ})}\left[ x_{1}F(\tfrac{|\nabla u|^{2}}{x_{1}^{2}};x_{2})+x_{1}\lambda(x_{2})\chi_{\left\{ u>0 \right\} } \right]\,d\mathcal{H}^{1}\\
            &-2\int_{B_{r}^{+}(x^{\circ})}\frac{|\nabla u|^{2}}{x_{1}H(\tfrac{|\nabla u|^{2}}{x_{1}^{2}};x_{2})}\,dx+2r\int_{\partial B_{r}^{+}(x^{\circ})}\frac{\left( \nabla u\cdot\nu \right)^{2}}{x_{1}H(\tfrac{|\nabla u|^{2}}{x_{1}^{2}};x_{2})}\,d\mathcal{H}^{1}\\
            &+\int_{B_{r}^{+}(x^{\circ})}(x_{1}-x_{1}^{\circ})\left[F(\tfrac{|\nabla u|^{2}}{x_{1}^{2}};x_{2}) -\frac{2}{H(\tfrac{|\nabla u|^{2}}{x_{1}^{2}};x_{2})}\frac{|\nabla u|^{2}}{x_{1}^{2}} \right] \,dx\\
            &+\int_{B_{r}^{+}(x^{\circ})}(x_{1}-x_{1}^{\circ})+\lambda(x_{2})\chi_{\left\{ u>0 \right\} }\,dx\\
            &+\int_{B_{r}^{+}(x^{\circ})}x_{1}(x_{2}-x_{2}^{\circ})\left[\partial_{2}F(\tfrac{|\nabla u|^{2}}{x_{1}^{2}};x_{2})+\lambda'(x_{2})\chi_{\left\{ u>0 \right\} } \right]\,dx.
        \end{split}
    \end{align}
    We are now ready to prove identities in the  equations~\eqref{tys},~\eqref{tya} and~\eqref{ty0}.  We first prove the identity in~\eqref{tys}. Let us define  
    \begin{equation}\label{engEH}
        E_{H}(u;B_{r}^{+}(x^{\circ}))=\int_{B_{r}^{+}(x^{\circ})}x_{1}\left[ \frac{|\nabla u|^{2}}{x_{1}^{2}H(\tfrac{|\nabla u|^{2}}{x_{1}^{2}};x_{2})}+\frac{x_{2}}{\bar{\rho}_{0}}\chi_{\left\{ u>0 \right\} } \right]\,dx.
    \end{equation}
    Note that $E_{H}(u;B_{r}^{+}(x^{\circ}))$ is well defined for each $r\in(0,\delta)$, since variational solution $u\in W_{\mathit{w},\mathit{loc}}^{1,2}(\Omega)$. We then deduce from the identity in~\eqref{pi1} that for a.e. $r\in(0,\delta)$,
    \begin{align}\label{tys1}
        \begin{split}
            &3E_{F}(u;B_{r}(x^{\circ}))-rE_{H}(u;B_{r}(x^{\circ}))\\
            &=3\int_{B_{r}(x^{\circ})}\frac{|\nabla u|^{2}}{x_{1}H(\tfrac{|\nabla u|^{2}}{x_{1}^{2}};x_{2})}\,dx\\
            &-2r\int_{\partial B_{r}(x^{\circ})}\frac{(\nabla u\cdot\nu)^{2}}{x_{1}H(\tfrac{|\nabla u|^{2}}{x_{1}^{2}};x_{2})}\,d\mathcal{H}^{1}\\
            &+E_{F}(u;B_{r}(x^{\circ}))-E_{H}(u;B_{r}(x^{\circ}))\\
            &-\int_{B_{r}(x^{\circ})}(x_{1}-x_{1}^{\circ})\left[ F(\tfrac{|\nabla u|^{2}}{x_{1}^{2}};x_{2})-\frac{2}{H(\tfrac{|\nabla u|^{2}}{x_{1}^{2}};x_{2})}\frac{|\nabla u|^{2}}{x_{1}^{2}}\right]\,dx\\
            &-\int_{B_{r}(x^{\circ})}(x_{1}-x_{1}^{\circ})\lambda(x_{2})\chi_{\left\{ u>0 \right\} }\,dx+\frac{1}{\bar{\rho}_{0}}\int_{B_{r}(x^{\circ})}x_{1}x_{2}\chi_{\left\{ u>0 \right\} }\,dx\\ 
            &-\int_{B_{r}(x^{\circ})}x_{1}x_{2}\left[ \partial_{2}F(\tfrac{|\nabla u|^{2}}{x_{1}^{2}};x_{2})+\lambda'(x_{2})\chi_{\left\{ u>0 \right\} } \right]\,dx.
        \end{split}
    \end{align}
    Set 
    \begin{equation}\label{defK1x2r}
        K_{1}^{x_{2}}(r):=\begin{cases}
            E_{H}(u;B_{r}^{+}(x^{\circ}))-E_{F}(u;B_{r}^{+}(x^{\circ}))&\text{ if }x_{1}^{\circ}=0,\\
            E_{H}(u;B_{r}(x^{\circ}))-E_{F}(u;B_{r}(x^{\circ})),&\text{ if }x_{1}^{\circ}\neq 0.
        \end{cases}
    \end{equation}
    and 
    \begin{equation}\label{defK2x2r}
        K_{2}^{x_{2}}(r)=\int_{B_{r}(x^{\circ})}x_{1}x_{2}\left[ \partial_{2}F(\tfrac{|\nabla u|^{2}}{x_{1}^{2}};x_{2})+\left(  \lambda'(x_{2})-\frac{1}{\bar{\rho}_{0}}\right)\chi_{\left\{ u>0 \right\} } \right]\,dx.
    \end{equation}
    From equations~\eqref{defK1x2r} and~\eqref{defK2x2r} and direct calculation, it follows that
    \begin{align}\label{EH-EF}
        \begin{split}
            K_{1}^{x_{2}}(r)&=\int_{B_{r}^{+}(x^{\circ})}x_{1}\int_{0}^{\tfrac{|\nabla u|^{2}}{x_{1}^{2}}}\pd{!}{\tau}\left( \frac{\tau}{H(\tau;x_{2})} \right)\,d\tau dx\\
            &-\int_{B_{r}^{+}(x^{\circ})}x_{1}\int_{0}^{\tfrac{|\nabla u|^{2}}{x_{1}^{2}}}\frac{1}{H(\tau;x_{2})}\,d\tau dx\\
            &-\int_{B_{r}^{+}(x^{\circ})}x_{1}\left[\int_{0}^{x_{2}}\pd{!}{\tau}\left(\frac{1}{H(\tau;x_{2})} \right) \tau\,d\tau \right]\chi_{\left\{ u>0 \right\} }\,dx\\
            &=\int_{B_{r}^{+}(x^{\circ})}x_{1}\Bigg\{ \int_{0}^{\tfrac{|\nabla u|^{2}}{x_{1}^{2}}}\tfrac{\partial}{\partial\tau}\left( \tfrac{1}{H(\tau;x_{2})} \right)\tau\,d\tau \\
            &\qquad\qquad\qquad-\left[  \int_{0}^{x_{2}}\tfrac{\partial}{\partial\tau}\left( \tfrac{1}{H(\tau;x_{2})} \right)\tau\,d\tau\right]\chi_{\left\{ u>0 \right\} }\Bigg\}\,dx,
        \end{split}
    \end{align}
    and 
    \begin{align}\label{K2x2r1}
        \begin{split}
            K_{2}^{x_{2}}(r)&=\int_{B_{r}(x^{\circ})}x_{1}x_{2}\Bigg[ \int_{0}^{\tfrac{|\nabla u|^{2}}{x_{1}^{2}}}\tfrac{\partial}{\partial x_{2}}\left( \tfrac{1}{H(\tau;x_{2})} \right)\,d\tau\\
            &\qquad\qquad\qquad-\left( \int_{0}^{x_{2}}\tfrac{\partial}{\partial x_{2}}\left( \tfrac{1}{H(\tau;x_{2})} \right)\,d\tau \right)\chi_{\left\{ u>0 \right\} } \Bigg]\,dx,
        \end{split}
    \end{align}
    where we used the equation~\eqref{lbd'}. Then the identity~\eqref{tys} is a combination of equations in~\eqref{tys1}, \eqref{defK1x2r}, \eqref{defK2x2r}, \eqref{EH-EF} and \eqref{K2x2r1}.

    Now we prove the identity in~\eqref{tya}. We observe from the identity in~\eqref{pi1} that for a.e. $r\in(0,\delta)$
    \begin{align*}
        &3E_{F}(u;B_{r}^{+}(x^{\circ}))-rE_{H}(u;\partial B_{r}^{+}(x^{\circ}))\\
        &=4\int_{B_{r}^{+}(x^{\circ})}\frac{|\nabla u|^{2}}{x_{1}H(\tfrac{|\nabla u|^{2}}{x_{1}^{2}};x_{2})}\,dx-2r\int_{\partial B_{r}^{+}(x^{\circ})}\frac{(\nabla u\cdot\nu)^{2}}{x_{1}H(\tfrac{|\nabla u|^{2}}{x_{1}^{2}};x_{2})}\,d\mathcal{H}^{1}\\
        &-\int_{B_{r}^{+}(x^{\circ})}x_{1}(x_{2}-x_{2}^{\circ})\left[\partial_{2}F(\tfrac{|\nabla u|^{2}}{x_{1}^{2}};x_{2})+\lambda'(x_{2})\chi_{\left\{ u>0 \right\} } \right] \,dx.
    \end{align*}
    This immediately proves the identity in~\eqref{tya} by defining $K_{1}^{x_{1}}(r)$ as specified in~\eqref{K1x1r}.

    We finish our proof by proving the identity~\eqref{ty0}. It follows from the identity in~\eqref{pi1} that for a.e. $r\in(0,\delta)$
    \begin{align*}
        &4E_{F}(u;B_{r}^{+})-rE_{F}(u;\partial B_{r}^{+})\\
        &=5\int_{B_{r}^{+}}\frac{|\nabla u|^{2}}{x_{1}H(\tfrac{|\nabla u|^{2}}{x_{1}^{2}};x_{2})}\,dx-2r\int_{\partial B_{r}^{+}}\frac{\left( \nabla u\cdot\nu \right)^{2}}{x_{1}H(\tfrac{|\nabla u|^{2}}{x_{1}^{2}};x_{2})}\,d\mathcal{H}^{1}\\
        &-K_{1}^{x_{2}}(r)-\int_{B_{r}^{+}}x_{1}\left(\partial_{2}F(\tfrac{|\nabla u|^{2}}{x_{1}^{2}};x_{2})+\lambda'(x_{2})\chi_{\left\{ u>0 \right\} } \right)x_{2}\,dx\\
        &+\frac{1}{\bar{\rho}_{0}}\int_{B_{r}^{+}}x_{1}x_{2}\chi_{\left\{ u>0 \right\} }\,dx.
    \end{align*}
    Define $K_{1}^{x_{1}x_{2}}(r)$ as in~\eqref{K1x12r} and we obtain~\eqref{ty0}.
\end{proof}
\begin{remark}
    Let us remark that the energy functional  $E_{H}(u; B_{r}^{+}(x^{\circ}))$ defined in equation~\eqref{engEH} plays a pivotal role for two key reasons:
    \begin{enumerate}
        \item [(1)] When $H(\tfrac{|\nabla u|^{2}}{x_{1}^{2}};x_{2})\equiv\bar{\rho}_{0}$, the functional reduces to the energy for the incompressible axisymmetric problem (cf.~\cite{MR3225630}).
        \item [(2)] Unlike the functional $E_{F}(u;B_{r}^{+}(x^{\circ}))$ defined in~\eqref{eng}, the functional $E_{H}(u;B_{r}^{+}(x^{\circ}))$ inherently satisfies a scaling invariance property. More specifically, 
        \begin{itemize}
            \item [(a).] If $x^{\circ}=(x_{1}^{\circ},0)$ with $x_{1}^{\circ}\neq 0$, then for subsonic variational solution $u\in W^{1,2}(B_{R}(x^{\circ}))$ with $R>0$, we set $u_{R}(x):=\frac{u(x^{\circ}+Rx)}{R^{3/2}}$ and we have 
            \begin{align*}
                &E_{H}(u;B_{r})\\
                &=R^{3}\int_{B_{r/R}}\frac{|\nabla u_{R}|^{2}}{(x_{1}^{\circ}+Rx_{1})H_{R}(x)}+\frac{(x_{1}^{\circ}+Rx_{1})x_{2}}{\bar{\rho}_{0}}\chi_{\left\{ u_{R}>0 \right\} }\,dx,
            \end{align*}
            where $H_{R}(x):=H(\tfrac{R|\nabla u_{R}|^{2}}{(x_{1}^{\circ}+Rx_{1})^{2}};Rx_{2})$. Define
            \begin{align*}
                &E_{H}(u_{R};B_{r/R})\\
                &:=\int_{B_{r/R}}\frac{|\nabla u_{R}|^{2}}{(x_{1}^{\circ}+Rx_{1})H_{R}(x)}+\frac{(x_{1}^{\circ}+Rx_{1})x_{2}}{\bar{\rho}_{0}}\chi_{\left\{ u_{R}>0 \right\} }\,dx.
            \end{align*} 
            In this way, we see that $E_{H}(u;B_{r})$ has the scaling property in the following sense.
            \begin{equation*}
                r^{-3}E_{H}(u;B_{r})=(r/R)^{-3}E_{H}(u_{R};B_{r/R}).
            \end{equation*}
            If we assume further that $u_{R}$ converges strongly in $W_{\mathit{loc}}^{1,2}(\mathbb{R} ^{2})$ to a blow-up limit $u_{0}$, it follows that 
            $u_{0}$ is a subsonic variational solution of 
            \begin{equation*}
                E_{H}(v):=\frac{1}{\bar{\rho}_{0}}\int_{\mathbb{R} ^{2}}\left[ \frac{|\nabla v|^{2}}{x_{1}^{\circ}}+x_{1}^{\circ}x_{2}\chi_{\left\{ v>0 \right\} }\right]\,dx.
            \end{equation*}
            This suggests that the singular asymptotics at $x^{\circ}$ exhibit Stokes-type behavior (cf.~\cite{du2024proofstokesconjecturecompressible}).
            \item [(b).] If $x^{\circ}=(0,x_{2}^{\circ})$ with $x_{2}^{\circ}\neq 0$, a similar argument as previous shows that 
            \begin{align*}
                &E_{H}(u;B_{r}^{+})\\
                &=R^{3}\int_{B_{r/R}^{+}}\frac{|\nabla u_{R}|^{2}}{x_{1}H_{R}(x)}+\frac{x_{1}(x_{2}^{\circ}+Rx_{2})}{\bar{\rho}_{0}}\chi_{\left\{ u_{R}>0 \right\} }\,dx,
            \end{align*}
            where $u_{R}:=\frac{u(x^{\circ}+Rx)}{R^{2}}$ and $H_{R}(x):=H(\tfrac{|\nabla u_{R}|^{2}}{x_{1}^{2}};x_{2}^{\circ}+Rx_{2})$ in this case. It follows that  
            \begin{equation*}
                r^{-3}E_{H}(u;B_{r}^{+})=(r/R)^{-3}E_{H}(u_{R};B_{r/R}^{+}).
            \end{equation*}
           In contrast to our recent investigation of singularities in compressible waves~\cite{du2024proofstokesconjecturecompressible}, the phenomenon under consideration represents a novel development in the study of degenerate points for axisymmetric compressible flows. We will rigorously analyze this case in~\secref{sec:ad}.
            \item [(c).] If $x^{\circ}=(0,0)$, let us consider $u_{R}(x)=\frac{u(Rx)}{R^{5/2}}$ and $H_{R}(x):=H(\tfrac{R|\nabla u_{R}|^{2}}{x_{1}^{2}};Rx_{2})$, then the corresponding situation becomes 
            \begin{equation*}
                r^{-4}E_{H}(u;B_{r}^{+})=(r/R)^{-4}E_{H}(u;B_{r/R}^{+}).
            \end{equation*}
            This particular case is also considered in~\secref{sec:0}.
        \end{itemize}
    \end{enumerate}
\end{remark}
\begin{lemma}[Energy identity]
    Let $x^{\circ}\in \Omega$, and let $\delta$ be defined as in~\eqref{d}. Then 
    \begin{equation}\label{eni}
        \int_{B_{r}^{+}(x^{\circ})}\frac{|\nabla u|^{2}}{x_{1}H(\tfrac{|\nabla u|^{2}}{x_{1}^{2}};x_{2})}\,dx=\int_{\partial B_{r}^{+}(x^{\circ})}\frac{u\nabla u\cdot\nu}{x_{1}H(\tfrac{|\nabla u|^{2}}{x_{1}^{2}};x_{2})}\,d\mathcal{H}^{1}.
    \end{equation}
\end{lemma}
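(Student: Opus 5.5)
The plan is to test the equation $\operatorname{div}\bigl(\nabla u/(x_{1}H)\bigr)=0$ in $\{u>0\}$ against $u$ itself, after cutting off away from the free boundary and away from the axis. Concretely, for small $\varepsilon>0$ set $u_{\varepsilon}:=(u-\varepsilon)^{+}$. It vanishes in a neighbourhood of $\partial\{u>0\}$, since $u$ is continuous, and it is supported in $\{u>\varepsilon\}$, where $u\in C^{2}$ by the regularity assumption (2). Multiplying the equation by $u_{\varepsilon}$ and integrating over $B_{r}^{+}(x^{\circ})\cap\{u>\varepsilon\}$, the divergence theorem gives
\[
\int_{B_{r}^{+}(x^{\circ})\cap\{u>\varepsilon\}}\frac{|\nabla u|^{2}}{x_{1}H}\,dx=\int_{\partial B_{r}^{+}(x^{\circ})}\frac{(u-\varepsilon)^{+}\,\nabla u\cdot\nu}{x_{1}H}\,d\mathcal{H}^{1}
\]
for a.e. $r\in(0,\delta)$. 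The contribution of the level set $\{u=\varepsilon\}$ vanishes because $u_{\varepsilon}=0$ there. To avoid worrying about regularity of that level set, I would instead use the Lipschitz test $\eta(u)$ with $\eta$ smooth, $\eta=0$ on $[0,\varepsilon]$, and then pass to the limit.

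When $x_{1}^{\circ}=0$ the half ball meets the axis $\{x_{1}=0\}$, and there is a possible boundary term on $\{x_{1}=0\}\cap B_{r}(x^{\circ})$. I would remove it by integrating over $\{x_{1}>s\}$ and letting $s\to0$. The boundary integrand is $u_{\varepsilon}\,\partial_{1}u/(x_{1}H)$. Here $H$ is bounded below by the subsonicity assumption (4), $\partial_{1}u/x_{1}$ has a limit at the axis by the compatibility condition~\eqref{com}, and $u=0$ on the axis with $u$ continuous. So this term tends to zero, since $u_{\varepsilon}=0$ near the axis. Indeed $\{u>\varepsilon\}$ stays away from $\{x_{1}=0\}$ locally, by continuity and $u=0$ there. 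The axis is therefore harmless once $\varepsilon>0$ is fixed.

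Finally I let $\varepsilon\to0$. On the left, monotone convergence applies because the integrand is nonnegative, and the integral is finite since $u\in W^{1,2}_{w,loc}$ and $H\ge\varepsilon_{0}>0$. This yields $\int_{B_{r}^{+}\cap\{u>0\}}$, which equals the integral over all of $B_{r}^{+}$ because $\nabla u=0$ a.e. on $\{u=0\}$. On the right, dominated convergence applies on the circle for a.e. $r$, using that the traces of $u$ and $|\nabla u|/\sqrt{x_{1}}$ are in $L^{2}$ on $\partial B_{r}^{+}$ for a.e. $r$ by Fubini. This gives~\eqref{eni}.

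The main obstacle is the justification near the axis when $x_{1}^{\circ}=0$. The weight $1/x_{1}$ makes integrability of the boundary trace on $\partial B_{r}^{+}$ delicate: one needs $\int_{\partial B_{r}^{+}}u|\nabla u|/x_{1}<\infty$ for a.e. $r$. This follows from Cauchy--Schwarz together with $u^{2}/x_{1}$ and $|\nabla u|^{2}/x_{1}$ being integrable on a.e. circle, which the coarea/Fubini argument applied to the weighted space $W^{1,2}_{w,loc}$ provides.
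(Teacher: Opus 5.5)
Your argument is correct, and its core is the same as the paper's: test the equation with $u$ and integrate by parts. The regularization, however, is genuinely different.

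The paper integrates $u\operatorname{div}\bigl(\nabla u/(x_{1}H)\bigr)$ directly over $B_{r}^{+}(x^{\circ})$. In the axis case it cuts off geometrically at $\{x_{1}=\varepsilon\}$ and kills the resulting boundary term with the compatibility condition \eqref{com}, using that $\partial_{1}u/x_{1}$ stays bounded while $u\to0$. It also integrates by parts across $\partial\{u>0\}$ without comment, even though there $u$ is only continuous and the equation is known only inside $\{u>0\}$.

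Your truncation $(u-\varepsilon)^{+}$ (or $\eta(u)$) cuts off in the range of $u$ instead. For fixed $\varepsilon$, the field $(u-\varepsilon)^{+}\nabla u/(x_{1}H)$ is Lipschitz on $\overline{B_{r}^{+}(x^{\circ})}$. It vanishes near the free boundary, and also near the axis by uniform continuity and $u=0$ on $\{x_{1}=0\}$. So the divergence theorem applies with no extra terms. The limit $\varepsilon\to0$ then needs only monotone convergence on the left, and Fubini plus dominated convergence on almost every circle on the right.

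What your route buys:
\begin{itemize}
\item The free-boundary contribution is justified rather than assumed, and no regularity of $\partial\{u>0\}$ is needed. The argument therefore works for variational solutions, not only weak ones.
\item Condition \eqref{com} is actually unnecessary. Your sentence invoking it is superfluous, since the axis term is identically zero once $\varepsilon>0$ is fixed.
\item You obtain the identity for a.e.\ $r\in(0,\delta)$, which is the natural form and all that the monotonicity formulas use.
\end{itemize}
The paper's route is shorter, but it relies on \eqref{com} and on unstated regularity at the free boundary.
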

\begin{proof}
    Let us consider the following two cases: 
    
    \noindent \textit{Case} 1. $x_{1}^{\circ}>0$. A direct computation gives 
    \begin{align*}
        0&=\int_{B_{r}(x^{\circ})}u \operatorname{div}\left( \frac{\nabla u}{x_{1}H(\tfrac{|\nabla u|^{2}}{x_{1}^{2}};x_{2})} \right)\,dx\\
        &=-\int_{B_{r}(x^{\circ})}\frac{|\nabla u|^{2}}{x_{1}H(\tfrac{|\nabla u|^{2}}{x_{1}^{2}};x_{2})}\,dx+\int_{\partial B_{r}(x^{\circ})}\frac{u\nabla u\cdot\nu}{x_{1}H(\tfrac{|\nabla u|^{2}}{x_{1}^{2}};x_{2})}\,d\mathcal{H}^{1}.
    \end{align*}
    \noindent\textit{Case} 2. $x_{1}^{\circ}=0$. Then 
    \begin{align*}
        0&=\int_{B_{r}^{+}(x^{\circ})}u \operatorname{div}\left( \frac{\nabla u}{x_{1}H(\tfrac{|\nabla u|^{2}}{x_{1}^{2}};x_{2})} \right)\,dx\\
        &=-\int_{B_{r}^{+}(x^{\circ})}\frac{|\nabla u|^{2}}{x_{1}H(\tfrac{|\nabla u|^{2}}{x_{1}^{2}};x_{2})}\,dx+\int_{\partial B_{r}^{+}(x^{\circ})}\frac{u\nabla u\cdot\nu}{x_{1}H(\tfrac{|\nabla u|^{2}}{x_{1}^{2}};x_{2})}\,d\mathcal{H}^{1}\\
        &-\lim_{\varepsilon\to 0^{+}}\int_{B_{r}(x^{\circ})\cap\{x_{1}=\varepsilon\}}\frac{u\partial_{1}u}{x_{1}H(\tfrac{|\nabla u|^{2}}{x_{1}^{2}};x_{2})}\,d\mathcal{H}^{1}.
    \end{align*}
    The last limit equals to zero due to the compatible condition~\eqref{com}.
\end{proof}
By employing a Poho\v{z}aev-type identity and the energy identity, we establish the following monotonicity formulas. They are presented in~\propref{propstg},~\propref{propadp} and~\propref{prop0} and work for the stagnation points, the non-stagnation axis  points and the origin, respectively.
\begin{proposition}[Monotonicity formula for the stagnation points]\label{propstg}
    Let $u$ be a subsonic variational solution to~\eqref{fb}, let $x^{\circ}=(x_{1}^{\circ},0)\in \Omega$ with $x_{1}^{\circ}\neq 0$, and let $\delta$ defined as in~\eqref{d}. Define, for any $r\in(0,\delta)$, 
    \begin{equation}\label{Ir}
        I(r)=\int_{B_{r}(x^{\circ})}x_{1}\left[ F(\tfrac{|\nabla u|^{2}}{x_{1}^{2}};x_{2})+\lambda(x_{2})\chi_{\left\{ u>0 \right\} } \right]\,dx,
    \end{equation}
    \begin{equation}\label{Jr}
        J(r)=\int_{\partial B_{r}(x^{\circ})}\frac{1}{x_{1}\bar{\rho}_{0}}u^{2}\,d\mathcal{H}^{1},
    \end{equation}
    \begin{equation}\label{Mx2r}
        M^{x_{2}}(r)=r^{-3}I(r)-\frac{3}{2}r^{-4}J(r).
    \end{equation}
    Then for a.e. $r\in(0,\delta)$,
    \begin{align}\label{Mx2r1}
        \begin{split}
            (M^{x_{2}}(r))'&=2r^{-3}\int_{\partial B_{r}(x^{\circ})}\frac{1}{x_{1}H(\tfrac{|\nabla u|^{2}}{x_{1}^{2}};x_{2})}\left( \nabla u\cdot\nu-\frac{3}{2}\frac{u}{r} \right)^{2}\,d\mathcal{H}^{1}\\
            &+\sum_{i=1}^{6}r^{-4}K_{i}^{x_{2}}(r),
        \end{split}
    \end{align}
    where $K_{i}^{x_{2}}$ for $i=1$, $2$ and $3$ are defined in equations~\eqref{K1x2r},~\eqref{K2x2r} and~\eqref{K3x2r}, respectively, and 
    \begin{equation*}
        K_{4}^{x_{2}}(r)=3\int_{\partial B_{r}(x^{\circ})}\left( \frac{1}{x_{1}H(\tfrac{|\nabla u|^{2}}{x_{1}^{2}};x_{2})}-\frac{1}{x_{1}\bar{\rho}_{0}} \right)u\nabla u\cdot\nu\,d\mathcal{H}^{1},
    \end{equation*}
    \begin{equation*}
        K_{5}^{x_{2}}(r)=\frac{9}{2r}\int_{\partial B_{r}(x^{\circ})}\left( \frac{1}{x_{1}\bar{\rho}_{0}}-\frac{1}{x_{1}H(\tfrac{|\nabla u|^{2}}{x_{1}^{2}};x_{2})} \right)u^{2}\,d\mathcal{H}^{1},
    \end{equation*}
    and 
    \begin{equation*}
        K_{6}^{x_{2}}(r)=\frac{3}{2r}\int_{\partial B_{r}(x^{\circ})}\frac{x_{1}-x_{1}^{\circ}}{x_{1}^{2}\bar{\rho}_{0}}u^{2}\,d\mathcal{H}^{1}.
    \end{equation*}
\end{proposition}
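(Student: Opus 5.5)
The plan is to differentiate $M^{x_{2}}(r)=r^{-3}I(r)-\tfrac32 r^{-4}J(r)$ directly and then rewrite the result using the Poho\v{z}aev identity~\eqref{tys} and the energy identity~\eqref{eni}. For a.e. $r\in(0,\delta)$ we have $I'(r)=E_{F}(u;\partial B_{r}(x^{\circ}))$, so
\begin{equation*}
(r^{-3}I(r))'=r^{-4}\bigl(rE_{F}(u;\partial B_{r}(x^{\circ}))-3I(r)\bigr).
\end{equation*}
By~\eqref{tys}, the bracket equals
\begin{equation*}
-3\int_{B_{r}(x^{\circ})}\frac{|\nabla u|^{2}}{x_{1}H}\,dx+2r\int_{\partial B_{r}(x^{\circ})}\frac{(\nabla u\cdot\nu)^{2}}{x_{1}H}\,d\mathcal{H}^{1}+\sum_{i=1}^{3}K_{i}^{x_{2}}(r).
\end{equation*}
We then apply~\eqref{eni} (Case 1, since $x_{1}^{\circ}>0$ and $r<\delta\le x_{1}^{\circ}/2$) to replace the volume Dirichlet term by $\int_{\partial B_{r}}\frac{u\nabla u\cdot\nu}{x_{1}H}$.

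Next we compute $J'(r)$. We write $J(r)=r\int_{\partial B_{1}}\frac{u(x^{\circ}+ry)^{2}}{(x_{1}^{\circ}+ry_{1})\bar\rho_{0}}\,d\mathcal{H}^{1}(y)$ and differentiate. This yields three contributions:
\begin{itemize}
\item $J(r)/r$ from the prefactor;
\item $\int_{\partial B_{r}}\frac{2u\nabla u\cdot\nu}{x_{1}\bar\rho_{0}}$ from differentiating $u^{2}$;
\item $-\int_{\partial B_{r}}\frac{(x_{1}-x_{1}^{\circ})u^{2}}{r\,x_{1}^{2}\bar\rho_{0}}$ from differentiating the weight $1/x_{1}$, using $y_{1}=(x_{1}-x_{1}^{\circ})/r$.
\end{itemize}
Hence $(-\tfrac32 r^{-4}J)'=6r^{-5}J-\tfrac32 r^{-4}J'$, which produces the terms $\tfrac92 r^{-5}J$, $-3r^{-4}\int\frac{u\nabla u\cdot\nu}{x_{1}\bar\rho_{0}}$, and $+\tfrac32 r^{-5}\int\frac{(x_{1}-x_{1}^{\circ})u^{2}}{x_{1}^{2}\bar\rho_{0}}$. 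The last of these is exactly $r^{-4}K_{6}^{x_{2}}(r)$.

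Collecting everything, we complete the square with the weight $\frac{1}{x_{1}H}$:
\begin{equation*}
2r^{-3}\int_{\partial B_{r}}\frac{1}{x_{1}H}\Bigl(\nabla u\cdot\nu-\tfrac{3}{2}\tfrac{u}{r}\Bigr)^{2}=2r^{-3}\int\frac{(\nabla u\cdot\nu)^{2}}{x_{1}H}-6r^{-4}\int\frac{u\nabla u\cdot\nu}{x_{1}H}+\tfrac92 r^{-5}\int\frac{u^{2}}{x_{1}H}.
\end{equation*}
Our expression contains $-3r^{-4}\int\frac{u\nabla u\cdot\nu}{x_{1}H}$ (from the energy identity) and $-3r^{-4}\int\frac{u\nabla u\cdot\nu}{x_{1}\bar\rho_{0}}$ (from $J'$). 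Writing the second as $-3r^{-4}\int\frac{u\nabla u\cdot\nu}{x_{1}H}$ plus a correction, we obtain the needed $-6r^{-4}\int\frac{u\nabla u\cdot\nu}{x_{1}H}$. The correction is $3r^{-4}\int\bigl(\frac{1}{x_{1}H}-\frac{1}{x_{1}\bar\rho_{0}}\bigr)u\nabla u\cdot\nu$, which is precisely $r^{-4}K_{4}^{x_{2}}(r)$.

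In the same way, $\tfrac92 r^{-5}\int\frac{u^{2}}{x_{1}\bar\rho_{0}}$ equals $\tfrac92 r^{-5}\int\frac{u^{2}}{x_{1}H}$ plus $r^{-4}K_{5}^{x_{2}}(r)$. This gives~\eqref{Mx2r1}.

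The main obstacle is bookkeeping and justification rather than a genuine estimate, in three places:
\begin{enumerate}
\item \textbf{Validity for a.e. $r$.} The identities~\eqref{tys} and~\eqref{eni} must hold for a.e. $r$. Here $u\in W^{1,2}_{\mathit{w},\mathit{loc}}$, so boundary traces of $|\nabla u|^{2}$ and the absolute continuity of $I$ and $J$ follow from Fubini and the coarea formula.
\item \textbf{Sign conventions.} One must check that the signs of $K_{1}^{x_{2}},K_{2}^{x_{2}},K_{3}^{x_{2}}$ carried over from~\eqref{tys} match the statement. I would verify this against~\eqref{tys1} and~\eqref{defK1x2r}.
\item \textbf{The energy identity at $x_{1}^{\circ}>0$.} In~\eqref{eni}, integrating by parts across the free boundary requires $u=0$ there and $u\in C^{2}$ in $\{u>0\}$. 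Since $u$ is a subsonic variational solution, I would handle this by testing with $(u-\varepsilon)^{+}$ and letting $\varepsilon\to0$.
\end{enumerate}
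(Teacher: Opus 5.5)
Your proposal is correct and follows the paper's proof essentially verbatim. The paper likewise combines the Poho\v{z}aev identity~\eqref{tys} with the energy identity~\eqref{eni} to obtain $(r^{-3}I(r))'$ as in~\eqref{Ir1st}, and computes $(r^{-4}J(r))'$ exactly as you do, including the weight-derivative term that becomes $K_{6}^{x_{2}}$. It then completes the square with weight $1/(x_{1}H)$, absorbing the $\bar{\rho}_{0}$-versus-$H$ discrepancies into $K_{4}^{x_{2}}$ and $K_{5}^{x_{2}}$; your justification of~\eqref{eni} by testing with $(u-\varepsilon)^{+}$ is a sound refinement of a step the paper performs formally.
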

\begin{proof}
    Notice that for a.e. $r\in(0,\delta)$, 
    \begin{align*}
        (r^{-3}I(r))'&=-3r^{-4}\int_{B_{r}(x^{\circ})}x_{1}\left[ F(\tfrac{|\nabla u|^{2}}{x_{1}^{2}};x_{2})+\lambda(x_{2})\chi_{\left\{ u>0 \right\} } \right]\,dx\\
        &+r^{-3}\int_{\partial B_{r}(x^{\circ})}x_{1}\left[ F(\tfrac{|\nabla u|^{2}}{x_{1}^{2}};x_{2})+\lambda(x_{2})\chi_{\left\{ u>0 \right\} } \right]\,d\mathcal{H}^{1}\\
        &=-3 r^{-4}E_{F}(u;B_{r}(x^{\circ}))+r^{-3}E_{F}(u;\partial B_{r}(x^{\circ})).
    \end{align*}
    Thanks to the Poho\v{z}aev-type identity in~\eqref{tys}, we obtain 
    \begin{align}\label{Ir1st}
        \begin{split}
            (r^{-3}I(r))'&=2r^{-3}\int_{\partial B_{r}(x^{\circ})}\frac{(\nabla u\cdot\nu)^{2}}{x_{1}H(\tfrac{|\nabla u|^{2}}{x_{1}^{2}};x_{2})}\,d\mathcal{H}^{1}\\
            &-3r^{-4}\int_{\partial B_{r}(x^{\circ})}\frac{u\nabla u\cdot\nu}{x_{1}H(\tfrac{|\nabla u|^{2}}{x_{1}^{2}};x_{2})}\,d\mathcal{H}^{1}\\
            &+r^{-4}\sum_{i=1}^{3}K_{i}^{x_{2}}(r),
        \end{split}
    \end{align}
    where we also used~\eqref{eni}. Moreover, a direct calculation gives that 
    \begin{align}\label{Jr1st}
        \begin{split}
            (r^{-4}J(r))'&=2r^{-4}\int_{\partial B_{r}(x^{\circ})}\frac{u\nabla u\cdot\nu}{x_{1}\bar{\rho}_{0}}\,d\mathcal{H}^{1}\\
            &-3r^{-5}\int_{\partial B_{r}(x^{\circ})}\frac{u^{2}}{x_{1}\bar{\rho}_{0}}\,d\mathcal{H}^{1}\\
            &-r^{-5}\int_{\partial B_{r}(x^{\circ})}\frac{x_{1}-x_{1}^{\circ}}{x_{1}^{2}\bar{\rho}_{0}}u^{2}\,d\mathcal{H}^{1}.
        \end{split}
    \end{align} 
    Then the monotonicity formula~\eqref{Mx2r1} follows from~\eqref{Ir1st} and~\eqref{Jr1st}.
\end{proof}
The following monotonicity formula holds for the axis-degenerate points.
\begin{proposition}[Monotonicity formula for the non-stagnation axis  points]\label{propadp}
    Let $u$ be a subsonic variational solution to \eqref{fb}, let $x^{\circ}=(0,x_{2}^{\circ})\in \Omega$ with $x_{2}^{\circ}\neq 0$, and let $\delta$ defined as in~\eqref{d}. Define, for any $r\in(0,\delta)$, 
    \begin{equation}\label{Mx1r}
        M^{x_{1}}(r)=r^{-3}I(r)-2r^{-4}J(r),
    \end{equation}
    where $I(r)$ and $J(r)$ are defined in~\eqref{Ir} and~\eqref{Jr}, respectively. Then for a.e. $r\in(0,\delta)$, 
    \begin{align}\label{Mx1r1}
        \begin{split}
            (M^{x_{1}}(r))'&=2r^{-3}\int_{\partial B_{r}^{+}(x^{\circ})}\frac{1}{x_{1}H(\tfrac{|\nabla u|^{2}}{x_{1}^{2}};x_{2})}\left( \nabla u\cdot\nu-2\frac{u}{r} \right)^{2}\,d\mathcal{H}^{1}\\
            &+\sum_{i=1}^{3}r^{-4}K_{i}^{x_{2}}(r),
        \end{split}
    \end{align}
    where $K_{1}^{x_{1}}$ is defined in equation~\eqref{K1x1r} and 
    \begin{equation}\label{K2x1r}
        K_{2}^{x_{1}}(r)=4\int_{\partial B_{r}^{+}(x^{\circ})}\left( \frac{1}{x_{1}H(\tfrac{|\nabla u|^{2}}{x_{1}^{2}};x_{2})}-\frac{1}{x_{1}\bar{\rho}_{0}} \right)u\nabla u\cdot\nu\,d\mathcal{H}^{1},
    \end{equation}
    and 
    \begin{equation}\label{K3x1r}
        K_{3}^{x_{1}}(r)=\frac{8}{r}\int_{\partial B_{r}^{+}(x^{\circ})}\left( \frac{1}{x_{1}\bar{\rho}_{0}}-\frac{1}{x_{1}H(\tfrac{|\nabla u|^{2}}{x_{1}^{2}};x_{2})} \right)u^{2}\,d\mathcal{H}^{1}.
    \end{equation}
\end{proposition}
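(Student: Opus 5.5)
The plan mirrors the proof of \propref{propstg}, with the homogeneity exponent $\tfrac32$ replaced by $2$ and the full ball $B_{r}(x^{\circ})$ replaced by the half ball $B_{r}^{+}(x^{\circ})$. I will differentiate the two pieces of $M^{x_{1}}(r)=r^{-3}I(r)-2r^{-4}J(r)$ separately and then recombine them into a perfect square plus error terms.

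\emph{Step 1: the derivative of $r^{-3}I(r)$.} Since $x_{1}^{\circ}=0$, the ball $B_{r}(x^{\circ})\cap\{x_{1}\ge0\}$ is $B_{r}^{+}(x^{\circ})$. For a.e.\ $r\in(0,\delta)$,
\[
(r^{-3}I(r))'=-3r^{-4}E_{F}(u;B_{r}^{+}(x^{\circ}))+r^{-3}E_{F}(u;\partial B_{r}^{+}(x^{\circ})).
\]
I then substitute the Poho\v{z}aev-type identity \eqref{tya}. This turns the right-hand side into
\[
2r^{-3}\int_{\partial B_{r}^{+}}\frac{(\nabla u\cdot\nu)^{2}}{x_{1}H}\,d\mathcal H^{1}-4r^{-4}\int_{B_{r}^{+}}\frac{|\nabla u|^{2}}{x_{1}H}\,dx+r^{-4}K_{1}^{x_{1}}(r).
\]
Next I apply the energy identity \eqref{eni} in its case $x_{1}^{\circ}=0$; this is where the compatibility condition \eqref{com} kills the boundary flux on the axis. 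It converts the bulk term into $-4r^{-4}\int_{\partial B_{r}^{+}}\frac{u\nabla u\cdot\nu}{x_{1}H}\,d\mathcal H^{1}$.

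\emph{Step 2: the derivative of $r^{-4}J(r)$.} I write $J(r)=r\int_{\partial B_{1}^{+}}\frac{u(x^{\circ}+ry)^{2}}{(ry_{1})\bar\rho_{0}}\,d\mathcal H^{1}(y)$. The factor $r$ from the arclength cancels the factor $r$ in $x_{1}=ry_{1}$, so the weight scales exactly. This gives
\[
(r^{-4}J(r))'=2r^{-4}\int_{\partial B_{r}^{+}}\frac{u\nabla u\cdot\nu}{x_{1}\bar\rho_{0}}\,d\mathcal H^{1}-4r^{-5}\int_{\partial B_{r}^{+}}\frac{u^{2}}{x_{1}\bar\rho_{0}}\,d\mathcal H^{1}.
\]
There is no analogue of $K_{6}^{x_{2}}$ here, precisely because $x_{1}^{\circ}=0$. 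Since $u=0$ on the axis and $u$ is $C^{1}$ inside with the compatibility bounds, $u^{2}/x_{1}$ is integrable near $\{x_{1}=0\}$ and the differentiation is legitimate for a.e.\ $r$.

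\emph{Step 3: recombination.} I form $(M^{x_{1}})'=(r^{-3}I)'-2(r^{-4}J)'$. I add and subtract $\frac{1}{x_{1}\bar\rho_{0}}$ versus $\frac{1}{x_{1}H}$ so that the cross terms and the $u^{2}$ terms carry the weight $\frac{1}{x_{1}H}$, completing the square
\[
2r^{-3}\int_{\partial B_{r}^{+}}\frac{1}{x_{1}H}\Big(\nabla u\cdot\nu-\frac{2u}{r}\Big)^{2}\,d\mathcal H^{1}
=2r^{-3}\!\int\!\frac{(\nabla u\cdot\nu)^{2}}{x_{1}H}-8r^{-4}\!\int\!\frac{u\nabla u\cdot\nu}{x_{1}H}+8r^{-5}\!\int\!\frac{u^{2}}{x_{1}H}.
\]
The bookkeeping works as follows. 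The cross terms are $-4r^{-4}\int\frac{u\nabla u\cdot\nu}{x_{1}H}$ from Step 1 and $-4r^{-4}\int\frac{u\nabla u\cdot\nu}{x_{1}\bar\rho_{0}}$ from Step 2, whose sum equals $-8r^{-4}\int\frac{u\nabla u\cdot\nu}{x_{1}H}+r^{-4}K_{2}^{x_{1}}(r)$. The $u^{2}$ term $8r^{-5}\int\frac{u^{2}}{x_{1}\bar\rho_{0}}$ equals $8r^{-5}\int\frac{u^{2}}{x_{1}H}+r^{-4}K_{3}^{x_{1}}(r)$. This reproduces \eqref{Mx1r1}; the sum there should read $\sum_{i=1}^{3}r^{-4}K_{i}^{x_{1}}(r)$.

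\emph{Main obstacle.} The algebra is routine. The delicate point is justifying the a.e.\ differentiability and the use of \eqref{tya} and \eqref{eni} up to the singular axis, where the weight $1/x_{1}$ blows up. I would handle this by integrating over $B_{r}^{+}(x^{\circ})\cap\{x_{1}>\varepsilon\}$ and letting $\varepsilon\to0$. The contributions from $\{x_{1}=\varepsilon\}$ vanish because \eqref{com} bounds $\nabla u/x_{1}$, so that $u\,\partial_{1}u/x_{1}=O(x_{1}^{2})$, and because $u\in W^{1,2}_{w,loc}$.
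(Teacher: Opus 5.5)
Your proposal is correct and follows the paper's proof essentially step for step. You differentiate $r^{-3}I$ using the Poho\v{z}aev identity \eqref{tya} together with the energy identity \eqref{eni}, and you differentiate $r^{-4}J$ using that $x_1^\circ=0$ makes the weight $1/x_1$ scale exactly, so no $K_6$-type term appears; you then complete the square, with the $H$ versus $\bar\rho_0$ discrepancies collected into $K_2^{x_1}$ and $K_3^{x_1}$. You are also right that the sum in \eqref{Mx1r1} should read $\sum_{i=1}^{3}r^{-4}K_{i}^{x_{1}}(r)$.
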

\begin{proof}
    In the case $x_{1}^{\circ}=0$ and $x_{2}^{\circ}\neq 0$, we apply the Poho\v{z}aev-type identity~\eqref{tya}, and it follows from~\eqref{eni} that 
    \begin{align}\label{Ir1ad}
        \begin{split}
            (r^{-3}I(r))'&=2r^{-3}\int_{\partial B_{r}^{+}(x^{\circ})}\frac{(\nabla u\cdot\nu)^{2}}{x_{1}H(\tfrac{|\nabla u|^{2}}{x_{1}^{2}};x_{2})}\,d\mathcal{H}^{1}\\
            &-4r^{-4}\int_{\partial B_{r}^{+}(x^{\circ})}\frac{u\nabla u\cdot\nu}{x_{1}H(\tfrac{|\nabla u|^{2}}{x_{1}^{2}};x_{2})}\,d\mathcal{H}^{1}\\
            &+r^{-4}K_{1}^{x_{1}}(r),
        \end{split}
    \end{align}
    and 
    \begin{align}\label{Jr1ad}
        \begin{split}
            (r^{-4}J(r))'&=2r^{-4}\int_{\partial B_{r}(x^{\circ})}\frac{u\nabla u\cdot\nu}{x_{1}\bar{\rho}_{0}}\,d\mathcal{H}^{1}\\
            &-4r^{-5}\int_{\partial B_{r}(x^{\circ})}\frac{u^{2}}{x_{1}\bar{\rho}_{0}}\,d\mathcal{H}^{1}.
        \end{split}
    \end{align}
    Then~\eqref{Mx1r1} follows from~\eqref{Ir1ad} and~\eqref{Jr1ad}, with $K_{2}^{x_{1}}(r)$ and $K_{3}^{x_{1}}(r)$ defined in~\eqref{K2x1r} and~\eqref{K3x1r}, respectively.
\end{proof}
Finally, we establish the following monotonicity formula for the origin.
\begin{proposition}[Monotonicity formula for the origin]\label{prop0}
    Let $u$ be a subsonic variational solution to \eqref{fb}, let $x^{\circ}=(0,0)\in \Omega$, and let $\delta$ defined as in~\eqref{d}. Define, for any $r\in(0,\delta)$, 
    \begin{equation}\label{Mx12r}
        M^{x_{1}x_{2}}(r)=r^{-4}I(r)-\frac{5}{2}r^{-5}J(r).
    \end{equation}
    Then for a.e. $r\in(0,\delta)$,
    \begin{align}\label{Mx12r1}
        \begin{split}
            (M^{x_{1}x_{2}}(r))'&=2r^{-4}\int_{\partial B_{r}^{+}}\frac{1}{x_{1}H(\tfrac{|\nabla u|^{2}}{x_{1}^{2}};x_{2})}\left( \nabla u\cdot\nu-\frac{5}{2}\frac{u}{r} \right)^{2}\,d\mathcal{H}^{1}\\
            &+r^{-5}K_{1}^{x_{2}}(r)+r^{-5}\sum_{i=1}^{3}K_{i}^{x_{1}x_{2}}(r),
        \end{split}
    \end{align}
    where $K_{1}^{x_{2}}(r)$ and $K_{1}^{x_{1}x_{2}}(r)$ are defined in~\eqref{K1x2r} and~\eqref{K1x12r}, respectively, and 
    \begin{equation}\label{K2x12r}
        K_{2}^{x_{1}x_{2}}(r)=5\int_{\partial B_{r}^{+}}\left( \frac{1}{x_{1}H(\tfrac{|\nabla u|^{2}}{x_{1}^{2}};x_{2})}-\frac{1}{x_{1}\bar{\rho}_{0}} \right)u\nabla u\cdot\nu\,d\mathcal{H}^{1},
    \end{equation}
    and 
    \begin{equation}\label{K3x12r}
        K_{3}^{x_{1}x_{2}}(r)=\frac{25}{2r}\int_{\partial B_{r}^{+}}\left( \frac{1}{x_{1}\bar{\rho}_{0}}-\frac{1}{x_{1}H(\tfrac{|\nabla u|^{2}}{x_{1}^{2}};x_{2})} \right)u^{2}\,d\mathcal{H}^{1}.
    \end{equation}
\end{proposition}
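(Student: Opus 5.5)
The argument follows the same scheme as \propref{propstg} and \propref{propadp}, now at the degree $5/2$ homogeneity natural at the origin. We differentiate the two pieces of $M^{x_{1}x_{2}}(r)$ separately, rewrite the derivative of $r^{-4}I(r)$ with the Poho\v{z}aev-type identity~\eqref{ty0} and the energy identity~\eqref{eni}, and finally complete a square.

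\textbf{Step 1: the volume term.} Since $I(r)=E_{F}(u;B_{r}^{+})$ is absolutely continuous in $r$, for a.e.\ $r\in(0,\delta)$
\begin{equation*}
(r^{-4}I(r))'=-r^{-5}\bigl(4E_{F}(u;B_{r}^{+})-rE_{F}(u;\partial B_{r}^{+})\bigr).
\end{equation*}
Inserting~\eqref{ty0} gives three contributions: $-5r^{-5}\int_{B_{r}^{+}}\frac{|\nabla u|^{2}}{x_{1}H}$, the boundary term $2r^{-4}\int_{\partial B_{r}^{+}}\frac{(\nabla u\cdot\nu)^{2}}{x_{1}H}$, and the remainder $r^{-5}(K_{1}^{x_{2}}+K_{1}^{x_{1}x_{2}})$. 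Then~\eqref{eni}, applied at $x^{\circ}=0$, turns the first contribution into $-5r^{-5}\int_{\partial B_{r}^{+}}\frac{u\nabla u\cdot\nu}{x_{1}H}$. The boundary contribution on $\{x_{1}=0\}$ is already handled inside~\eqref{eni} by the compatibility condition~\eqref{com}.

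\textbf{Step 2: the boundary term.} Scaling $J(r)=r\int_{\partial B_{1}^{+}}\frac{u(rz)^{2}}{rz_{1}\bar{\rho}_{0}}\,d\mathcal{H}^{1}(z)$ shows that the weight $1/x_{1}$ is homogeneous of degree $-1$, so it cancels the arc-length factor. Hence $J(r)=\int_{\partial B_{1}^{+}}\frac{u(rz)^{2}}{z_{1}\bar{\rho}_{0}}$, and $J'(r)=2\int_{\partial B_{r}^{+}}\frac{u\nabla u\cdot\nu}{x_{1}\bar{\rho}_{0}}$. Unlike the stagnation case, there is no $K_{6}$-type term, since the centre lies on the axis. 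Therefore
\begin{equation*}
(r^{-5}J)'=2r^{-5}\int_{\partial B_{r}^{+}}\frac{u\nabla u\cdot\nu}{x_{1}\bar{\rho}_{0}}-5r^{-6}\int_{\partial B_{r}^{+}}\frac{u^{2}}{x_{1}\bar{\rho}_{0}}.
\end{equation*}
The mixed term $u\nabla u\cdot\nu/(x_{1}H)$ must be justified near $x_{1}=0$. The compatibility condition~\eqref{com} gives $|\nabla u|\lesssim x_{1}$ near the axis and $u\lesssim x_{1}^{2}$, so all the boundary integrals converge.

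\textbf{Step 3: complete the square.} Combining the two steps, $(M^{x_{1}x_{2}})'$ contains
\begin{equation*}
2r^{-4}\int\frac{(\nabla u\cdot\nu)^{2}}{x_{1}H}-5r^{-5}\int\frac{u\nabla u\cdot\nu}{x_{1}H}-5r^{-5}\int\frac{u\nabla u\cdot\nu}{x_{1}\bar{\rho}_{0}}+\frac{25}{2}r^{-6}\int\frac{u^{2}}{x_{1}\bar{\rho}_{0}}.
\end{equation*}
The target square $2r^{-4}\int\frac{1}{x_{1}H}\bigl(\nabla u\cdot\nu-\frac{5}{2}\frac{u}{r}\bigr)^{2}$ contains the cross term $-10r^{-5}\int\frac{u\nabla u\cdot\nu}{x_{1}H}$ and the term $\frac{25}{2}r^{-6}\int\frac{u^{2}}{x_{1}H}$. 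Both are matched after replacing $1/\bar{\rho}_{0}$ by $1/H$, and the discrepancies are exactly $r^{-5}K_{2}^{x_{1}x_{2}}$ and $r^{-5}K_{3}^{x_{1}x_{2}}$ as defined in~\eqref{K2x12r} and~\eqref{K3x12r}.

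\textbf{Main obstacle.} The only delicate point is the integrability near the axis that justifies~\eqref{eni} and the differentiation of $J$. The rest is the same bookkeeping as in \propref{propstg}.
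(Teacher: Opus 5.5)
Your proposal is correct and follows the paper's proof. It differentiates $r^{-4}I(r)$ via the Poho\v{z}aev identity~\eqref{ty0} and the energy identity~\eqref{eni}, differentiates $r^{-5}J(r)$ directly (with no $K_{6}$-type term, since $1/x_{1}$ is homogeneous about the origin), and completes the square, and the errors from replacing $1/\bar{\rho}_{0}$ by $1/H$ are exactly $r^{-5}K_{2}^{x_{1}x_{2}}(r)$ and $r^{-5}K_{3}^{x_{1}x_{2}}(r)$; your use of~\eqref{com} to justify integrability near the axis is a small step the paper leaves implicit.
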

\begin{proof}
    For the origin, we use the Poho\v{z}eav-type identity~\eqref{ty0}. A direct calculation gives that 
    \begin{align}\label{Ir10}
        \begin{split}
            (r^{-4}I(r))&=2r^{-4}\int_{\partial B_{r}^{+}}\frac{(\nabla u\cdot\nu)^{2}}{x_{1}H(\tfrac{|\nabla u|^{2}}{x_{1}^{2}};x_{2})}\,d\mathcal{H}^{1}\\
            &-5r^{-5}\int_{\partial B_{r}^{+}}\frac{u\nabla u\cdot\nu}{x_{1}H(\tfrac{|\nabla u|^{2}}{x_{1}^{2}};x_{2})}\,d\mathcal{H}^{1}\\
            &+r^{-5}K_{1}^{x_{2}}(r)+r^{-5}K_{1}^{x_{1}x_{2}}(r),
        \end{split}
    \end{align}
    and 
    \begin{align}\label{Jr10}
        \begin{split}
            (r^{-5}J(r))'&=2r^{-5}\int_{\partial B_{r}^{+}}\frac{1}{x_{1}\bar{\rho}_{0}}u\nabla u\cdot\nu\,d\mathcal{H}^{1}\\
            &-5r^{-6}\int_{\partial B_{r}^{+}}\frac{1}{x_{1}\bar{\rho}_{0}}u^{2}\,d\mathcal{H}^{1}.
        \end{split}
    \end{align}
    Then~\eqref{Mx12r1} follows directly from~\eqref{Ir10} and~\eqref{Jr10}, with $K_{2}^{x_{1}x_{2}}(r)$ and $K_{3}^{x_{1}x_{2}}(r)$ defined in~\eqref{K2x12r} and~\eqref{K3x12r}.
\end{proof}
\section{The blow-up analysis at the stagnation points}
In this section, we analyze the singular asymptotics near the stagnation points of the problem~\eqref{fb}. To be specific, we define 
\begin{equation*}
    S^{u}= \left\{ (x_{1},x_{2})\in \Omega\cap\partial\{u>0\}: x_{1}>0, x_{2}=0 \right\}.
\end{equation*}
Our first result states that if one considers the rescaling $u_{r}(x):=\frac{u(x^{\circ}+rx)}{r^{3/2}}$ for each $x^{\circ}\in S^{u}$, then the blow-up limit $u_{0}$ is a homogeneous function of degree $3/2$.
\begin{lemma}\label{lemstp}
    Let $u$ be a subsonic variational solution of~\eqref{fb}, let $x^{\circ}\in \Omega$ and assume that 
    \begin{equation}\label{gastp}
        \frac{|\nabla u|^{2}}{x_{1}^{2}} \leqslant Cx_{2}^{+}\quad\text{ locally in }\Omega.
    \end{equation}
    Then 
    \begin{enumerate}
        \item [(1).] The limit $M^{x_{2}}(0^{+}):=\lim_{r\to 0^{+}}M^{x_{2}}(r)$ exists and is finite (Recall the definition of $M^{x_{2}}(r)$ in~\eqref{Mx2r}).
        \item [(2).] Let $r_{m}\to 0^{+}$ as $m\to \infty$ be a vanishing sequence so that the blow-up sequence 
        \begin{equation}\label{blstp}
            u_{m}(x):=\frac{u(x^{\circ}+r_{m}x)}{r_{m}^{3/2}}
        \end{equation}
        converges weakly in $W_{\mathit{loc}}^{1,2}(\mathbb{R} _{2})$ to a blow-up limit $u_{0}$, then $u_{0}$ is a homogeneous function of degree $3/2$, i.e., $u_{0}(\lambda x)=\lambda^{3/2}u_{0}(x)$ for all $\lambda>0$.
        \item [(3).] Let $u_{m}$ be a converging sequence as in (2). Then $u_{m}$ converges to $u_{0}$ strongly in $W_{\mathit{loc}}^{1,2}(\mathbb{R} ^{2})$.
    \end{enumerate}
\end{lemma}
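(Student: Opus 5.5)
The plan is to use the monotonicity formula of \propref{propstg} together with the growth assumption \eqref{gastp}, treating the error terms $K_{i}^{x_{2}}$ as integrable perturbations.

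\textbf{Step 1: the error terms are integrable.} Since $x_{1}^{\circ}>0$ and $r<\delta\leqslant x_{1}^{\circ}/2$, the weight $x_{1}$ is comparable to $x_{1}^{\circ}$ on $B_{r}(x^{\circ})$. The bound \eqref{gastp} gives $|\nabla u|^{2}\leqslant C x_{2}^{+}\leqslant Cr$ in $B_{r}(x^{\circ})$. Since $u(x^{\circ})=0$ and the gradient is controlled along segments, this yields $u\leqslant Cr^{3/2}$.

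Recall that $H(t;s)$ is $C^{1}$ in the subsonic regime \eqref{subs} and that $H(0;0)=\bar\rho_{0}$. Hence on $B_{r}(x^{\circ})$
\[
\Bigl|\tfrac{1}{H(|\nabla u|^{2}/x_{1}^{2};x_{2})}-\tfrac{1}{\bar\rho_{0}}\Bigr|\leqslant C\bigl(|\nabla u|^{2}/x_{1}^{2}+|x_{2}|\bigr)\leqslant Cr .
\]
The integrands in $K_{1}^{x_{2}}$ and $K_{2}^{x_{2}}$ are of the form $\int_{0}^{t}\partial(\cdot)\,\tau\,d\tau$, or $x_{2}\int_0^t\partial_{x_2}(\cdot)\,d\tau$, with $t\leqslant Cr$. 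They are therefore $O(r^{2})$ pointwise, which gives $|K_{1}^{x_{2}}|+|K_{2}^{x_{2}}|\leqslant Cr^{4}$.

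For $K_{3}^{x_{2}}$ the integrand is $O(r)$, since $F\sim t/\bar\rho_0$ and $\lambda(x_{2})=O(|x_{2}|)$. The extra factor $|x_{1}-x_{1}^{\circ}|\leqslant r$ then gives $|K_{3}^{x_{2}}|\leqslant Cr^{4}$.

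The boundary terms are handled the same way:
\begin{itemize}
\item $|K_{4}^{x_{2}}|\leqslant C\cdot r\cdot r^{3/2}\cdot r^{1/2}\cdot r=Cr^{4}$;
\item $|K_{5}^{x_{2}}|\leqslant Cr^{-1}\cdot r\cdot r^{3}\cdot r=Cr^{4}$;
\item $|K_{6}^{x_{2}}|\leqslant Cr^{-1}\cdot r\cdot r^{3}\cdot r=Cr^{4}$.
\end{itemize}
Thus $\sum_{i} r^{-4}|K_{i}^{x_{2}}(r)|\leqslant C$, which is integrable on $(0,\delta)$.

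This is the step I expect to be most delicate. Two points need care. First, $\lambda$ and $\partial_2F$ must be Lipschitz near $x_{2}=0$, which can be read off from \eqref{lbd'}. Second, the bound $u\leqslant Cr^{3/2}$ requires integrating $|\nabla u|\leqslant C\sqrt{x_2^+}$ from the free boundary point. This works because $u=0$ outside $\{u>0\}$ and $u$ is continuous, so one can integrate along paths inside $\{u>0\}$ starting from $\partial\{u>0\}$.

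\textbf{Step 2: existence of the limit (part (1)).} By \eqref{Mx2r1}, $M^{x_{2}}(r)-\int_{0}^{r}\sum_{i} s^{-4}K_{i}^{x_{2}}(s)\,ds$ is nondecreasing, and the integral converges by Step 1. The quantity $M^{x_{2}}$ is bounded, since $r^{-3}I(r)$ and $r^{-4}J(r)$ are bounded by the Step 1 estimates. Hence the limit $M^{x_{2}}(0^{+})$ exists and is finite.

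\textbf{Step 3: homogeneity (part (2)).} Integrate \eqref{Mx2r1} from $r_{m}\rho_{1}$ to $r_{m}\rho_{2}$ and rescale. The error contribution is $\int_{r_m\rho_1}^{r_m\rho_2}C\,ds\to0$. Using $H\geqslant c>0$, this gives
\[
\int_{B_{\rho_{2}}\setminus B_{\rho_{1}}}|x|^{-3}\Bigl(\nabla u_{m}\cdot x-\tfrac32 u_{m}\Bigr)^{2}\,dx\to 0 .
\]
The sequence $u_{m}$ is bounded in $W^{1,2}_{loc}$ by the growth bounds. By weak lower semicontinuity and compactness, $u_{m}\to u_{0}$ in $L^{2}_{loc}$ and $\nabla u_{m}\rightharpoonup\nabla u_{0}$, so $x\cdot\nabla u_{0}=\tfrac32u_{0}$ almost everywhere, which is degree-$3/2$ homogeneity.

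\textbf{Step 4: strong convergence (part (3)).} I would show $\limsup\int|\nabla u_{m}|^{2}\eta\leqslant\int|\nabla u_{0}|^{2}\eta$ for cutoffs $\eta$. Each $u_{m}$ satisfies the rescaled equation $\operatorname{div}(a_{m}\nabla u_{m})=0$ in $\{u_{m}>0\}$, where
\[
a_{m}=\bigl((x_{1}^{\circ}+r_mx_1)\,H_m\bigr)^{-1}\to (x_{1}^{\circ}\bar\rho_{0})^{-1}
\]
uniformly with rate $O(r_{m})$. Apply the energy identity \eqref{eni} with test function $u_{m}\eta$, and use $u_{m}\to u_{0}$ locally uniformly: $u_m$ is uniformly Hölder-$1/2$ continuous because $|\nabla u_m|\leqslant C$ locally. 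This gives
\[
\int a_{m}|\nabla u_{m}|^{2}\eta=-\int a_{m}u_{m}\nabla u_{m}\cdot\nabla\eta\to-\int\tfrac{1}{x_1^\circ\bar\rho_0}u_{0}\nabla u_{0}\cdot\nabla\eta .
\]
Next, $u_{0}$ is harmonic in $\{u_{0}>0\}$, as the limit of uniformly elliptic equations with converging coefficients on compact subsets of the open set $\{u_0>0\}$. It is also continuous and vanishes on the boundary of that set. So the same identity holds for $u_{0}$: test with $(u_{0}-\epsilon)^{+}\eta$ and let $\epsilon\to0$.

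Comparing the two identities gives convergence of the norms. Combined with weak convergence, this yields strong convergence in $W^{1,2}_{loc}$.
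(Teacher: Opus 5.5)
Your proposal is correct and follows the paper's proof step by step: $O(r^{4})$ bounds on $K_{1}^{x_{2}},\dots,K_{6}^{x_{2}}$ from the growth condition and the Lipschitz dependence of $1/H$, then integration of \eqref{Mx2r1} over $[r_{m}\sigma,r_{m}\tau]$ for homogeneity, then the energy identity tested with $u_{m}\eta$, compared against the harmonicity of $u_{0}$ in $\{u_{0}>0\}$, for strong convergence (your $(u_{0}-\epsilon)^{+}\eta$ justification and your use of uniform rather than $L^{2}$ convergence of $1/H_{m}$ are slightly more careful than the paper). There are two cosmetic slips: the rescaled weight is $|x|^{-5}$, not $|x|^{-3}$, and dropping the factor $1/\bigl((x_{1}^{\circ}+r_{m}x_{1})H_{m}\bigr)$ requires $H_{m}$ bounded above, not below, which holds since $H_{m}\to\bar\rho_{0}$.
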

\begin{proof}
    The proof is divided into three steps.

    \noindent\textit{Step} 1. We will verify the integrability of $r\mapsto r^{-4}\sum_{i=1}^{6}K_{i}^{x_{2}}(r)$. By the uniform subsonic condition for variational solution $u$, we have that 
        \begin{equation}\label{esm1}
            \left|\pd{!}{\tau}\left( \frac{1}{H(\tau;x_{2})} \right) \right|\leqslant C,\quad\text{ and }\quad\left|\pd{!}{x_{2}}\left( \frac{1}{H(\tau;x_{2})} \right)\right|\leqslant C\quad\text{ in }B_{r}(x^{\circ}),
        \end{equation}
        This together with the growth assumption~\eqref{gastp} gives that 
        \begin{align}\label{esmstp1}
            \begin{split}
                |K_{1}^{x_{2}}(r)| &\leqslant C\int_{B_{r}(x^{\circ})}|x_{1}|\left( \int_{0}^{\tfrac{|\nabla u|^{2}}{x_{1}^{2}}}\tau\,d\tau \right)+|x_{1}|\left( \int_{0}^{x_{2}}\tau\,d\tau \right)\,dx \\
                & \leqslant C\int_{B_{r}(x^{\circ})}\left[  \left( \frac{|\nabla u|^{2}}{x_{1}^{2}} \right)^{2}+x_{2}^{2}\right]\,dx\\
                &\leqslant Cr^{4},
            \end{split}
        \end{align}
        and 
        \begin{align}\label{esmstp2}
            \begin{split}
                |K^{x_{2}}(r)| &\leqslant C\int_{B_{r}(x^{\circ})}|x_{1}|\left( \int_{0}^{\tfrac{|\nabla u|^{2}}{x_{1}^{2}}}x_{2}\,d\tau \right)+|x_{1}|\left( \int_{0}^{x_{2}}x_{2}\,d\tau \right)\,dx\\
                & \leqslant Cr^{4}.
            \end{split}
        \end{align}
        Here, we used the fact that $|x_{1}| \leqslant C$ and $|x_{2}| \leqslant Cr$ in $B_{r}(x^{\circ})$ for each $x^{\circ}\in S^{u}$. Thanks to the fact that $H(\tfrac{|\nabla u|^{2}}{x_{1}^{2}};x_{2})$ is uniformly bounded from above and below by a constant $C$ depending only on $\bar{\rho}_{0}$, in $B_{r}(x^{\circ})$ with $x^{\circ}\in S^{u}$, we have  
        \begin{align}\label{esmstp3}
            \begin{split}
                |K_{3}^{x_{2}}(r)| &\leqslant Cr\int_{B_{r}(x^{\circ})}\frac{|\nabla u|^{2}}{x_{1}^{2}}+|x_{2}|+\left( \int_{0}^{x_{2}}\tau\,d\tau \right)\,dx \\
                &\leqslant Cr^{4}.
            \end{split}
        \end{align}
        Observe that 
        \begin{align}\label{esmH}
            \begin{split}
                &\frac{1}{H(\tfrac{|\nabla u|^{2}}{x_{1}^{2}};x_{2})}-\frac{1}{\bar{\rho}_{0}}\\
                &=\int_{0}^{1}\frac{d}{d\theta}\left( \frac{1}{H(\tfrac{\theta|\nabla u|^{2}}{x_{1}^{2}};x_{2})} \right)\,d\theta+\int_{0}^{1}\frac{d}{ds}\left( \frac{1}{H(0;sx_{2})} \right)\,ds\\
                &=\left(\int_{0}^{1}\frac{-\partial_{1}H(\tfrac{\theta|\nabla u|^{2}}{x_{1}^{2}};x_{2})}{H^{2}(\tfrac{\theta|\nabla u|^{2}}{x_{1}^{2}};x_{2})}\,d\theta\right)\frac{|\nabla u|^{2}}{x_{1}^{2}}+\left( \int_{0}^{1}\frac{-\partial_{2}H(0;sx_{2})}{H^{2}(0;sx_{2})}\,ds \right)x_{2},
            \end{split}
        \end{align}
        where $\theta$, $s\in(0,1)$ and $\partial_{1}H$, $\partial_{2}H$ are defined in equations~\eqref{p1H} and~\eqref{p2H} respectively. Thanks to~\eqref{esm1} and the growth condition~\eqref{gastp}, we obtain 
        \begin{equation}\label{esmH1}
            \left|\frac{1}{H(\tfrac{|\nabla u|^{2}}{x_{1}^{2}};x_{2})}-\frac{1}{\bar{\rho}_{0}}\right| \leqslant Cr\qquad\text{ in }B_{r}(x^{\circ}).
        \end{equation}
        It follows that 
    \begin{equation}\label{esmstp4}
        |K_{4}^{x_{2}}(r)| \leqslant Cr\int_{\partial B_{r}(x^{\circ})}\frac{1}{|x_{1}|}|u||\nabla u|\,d\mathcal{H}^{1} \leqslant Cr^{4},
    \end{equation}
    and 
    \begin{equation}\label{esmstp5}
        |K_{5}^{x_{2}}(r)| \leqslant C\int_{\partial B_{r}(x^{\circ})}\frac{1}{|x_{1}|}u^{2}\,d\mathcal{H}^{1} \leqslant Cr^{4}.
    \end{equation}
    Finally, given that $|x_{1}-x_{1}^{\circ}| \leqslant Cr$ on $\partial B_{r}(x^{\circ})$, we have that 
    \begin{equation}\label{esmstp6}
        |K_{6}^{x_{2}}(r)| \leqslant C\int_{\partial B_{r(x^{\circ})}}\frac{1}{x_{1}^{2}}u^{2}\,d\mathcal{H}^{1} \leqslant Cr^{4}.
    \end{equation}
    Collecting the estimates~\eqref{esmstp1},~\eqref{esmstp2},~\eqref{esmstp3},~\eqref{esmstp4},~\eqref{esmstp5} and~\eqref{esmstp6} together, we have that 
    \begin{equation*}
        \left|r^{-4}\sum_{i=1}^{6}K_{i}^{x_{2}}(r)\right| \leqslant C_{0}.
    \end{equation*}
    This proves the integrability of the function $r\mapsto\sum_{i=1}^{6}r^{-4}K_{i}^{x_{2}}(r)$. Moreover, we have that the function $r\mapsto M^{x_{2}}(r)$ has a right limit $M^{x_{2}}(0^{+})$. The finiteness of $M^{x_{2}}(0^{+})$ follows from the growth condition~\eqref{gastp} and $u(x^{\circ})=0$.

    \noindent\textit{Step} 2. For any $0<\sigma<\tau<\infty$ and any vanishing sequence $r_{m}\to 0^{+}$, we integrate the equation~\eqref{Mx2r1} from $r_{m}\sigma$ to $r_{m}\tau$ and we get 
    \begin{align}\label{rescl1}
        \begin{split}
            &2\int_{B_{\tau}\setminus B_{\sigma}}|x|^{-5}\frac{(\nabla u_{m}\cdot x-\tfrac{3}{2}u_{m})^{2}}{(x_{1}^{\circ}+r_{m}x_{1})H(\tfrac{r_{m}|\nabla u_{m}|^{2}}{(x_{1}^{\circ}+r_{m}x_{1})^{2}};r_{m}x_{2})}\,dx\\
            &=M^{x_{2}}(r_{m}\tau)-M^{x_{2}}(r_{m}\sigma)-\int_{r_{m}\sigma}^{r_{m}\tau}r^{-4}\sum_{i=1}^{6}K_{i}^{x_{2}}(r)\,dr.
        \end{split}
    \end{align}
    It follows from the existence and finiteness of $M^{x_{2}}(0^{+})$, and that the function $r\mapsto r^{-4}\sum_{i=1}^{6}K_{i}^{x_{2}}(r)$ is integrable for $r\in(0,\delta)$ that 
    \begin{equation*}
        \lim_{m\to \infty}\left( M^{x_{2}}(r_{m}\tau)-M^{x_{2}}(r_{m}\sigma)-\int_{r_{m}\sigma}^{r_{m}\tau}r^{-4}\sum_{i=1}^{6}K_{i}^{x_{2}}(r)\,dr \right)=0.
    \end{equation*}
    On the other hand, since
    \begin{equation}\label{limhm}
        \lim_{m\to\infty}H\bigg(\tfrac{r_{m}|\nabla u_{m}|^{2}}{(x_{1}^{\circ}+r_{m}x_{1})^{2}};r_{m}x_{2}\bigg)=H(0;0)=\bar{\rho}_{0}.
    \end{equation}
    By passing to the limit as $m\to\infty$ in equation~\eqref{rescl1} and applying the weak lower semicontinuity property in $W^{1,2}$, we deduce that $u_{0}$ is a homogeneous function of degree $3/2$.

    \noindent\textit{Step} 3. Let $u_{m}$ be the sequence defined in~\eqref{blstp}. We first prove that 
    \begin{equation}\label{cvghm}
        h_{m}:=\int_{B_{1}}\left|\frac{1}{H(\tfrac{r_{m}|\nabla u_{m}|^{2}}{(x_{1}^{\circ}+r_{m}x_{1})^{2}};r_{m}x_{2})}-\frac{1}{\bar{\rho}_{0}}\right|^{2}\,dx\to 0,
    \end{equation}
    as $m\to \infty$. Thanks to~\eqref{limhm}, we have 
    \begin{align*}
        h_{m}& \leqslant C\int_{B_{1}}\left|\frac{1}{H_{m}}-\frac{1}{\bar{\rho}_{0}}\right|\,dx\\
        & \leqslant C\int_{B_{1}}\int_{0}^{1}\frac{\left|\partial_{1}H(\tfrac{r_{m}\theta|\nabla u_{m}|^{2}}{(x_{1}^{\circ}+r_{m}x_{1})^{2}};r_{m}x_{2})\right|}{H^{2}(\tfrac{r_{m}\theta|\nabla u_{m}|^{2}}{(x_{1}^{\circ}+r_{m}x_{1})^{2}};r_{m}x_{2})}\,d\theta\cdot\frac{r_{m}|\nabla u_{m}|^{2}}{(x_{1}^{\circ}+r_{m}x_{1})^{2}}\,dx\\
        &+C\int_{B_{1}}\int_{0}^{1}\frac{\left|\partial_{2}H(0;r_{m}sx_{2})\right|}{H^{2}(0;r_{m}sx_{2})}\,ds\cdot r_{m}x_{2}\,dx,
    \end{align*}
    where $\theta$, $s\in(0,1)$. It follows from the subsonic condition~\eqref{subs} for subsonic variational solution that 
    \begin{equation*}
        h_{m} \leqslant Cr_{m}\int_{B_{1}}\left[  \frac{|\nabla u_{m}|^{2}}{(x_{1}^{\circ}+r_{m}x_{1})^{2}}+x_{2}\right]\,dx \leqslant Cr_{m},
    \end{equation*}
    where we have applied the growth assumption~\eqref{gastp} in the last inequality. This proves~\eqref{cvghm} as passing to the limit as $m\to\infty$. We now infer from the weak $L^{2}$ convergence of $\nabla u_{m}$ and the strong convergence of $H_{m}$ that 
    \begin{align*}
        \begin{split}
            0&=\int_{\mathbb{R} ^{2}}\frac{\nabla u_{m}\cdot\nabla \eta}{(x_{1}^{\circ}+r_{m}x_{1})H(\tfrac{r_{m}|\nabla u_{m}|^{2}}{(x_{1}^{\circ}+r_{m}x_{1})^{2}};r_{m}x_{2})}\,dx\\
            &\to \int_{\mathbb{R} ^{2}}\frac{\nabla u_{0}\cdot\nabla \eta}{x_{1}^{\circ}\bar{\rho}_{0}}\,dx\quad\text{ for each }\eta\in C_{0}^{\infty}(\mathbb{R} ^{2}).
        \end{split}
    \end{align*}
    Since $x^{\circ}\in S^{u}$, $x_{1}^{\circ}>0$ and we may deduce that $u_{0}$ is harmonic in the set $\{u_{0}>0\}$ in the weak sense. Additionally, since $u_{m}$ converges to $u_{0}$ strongly in $L_{\mathit{loc}}^{2}(\mathbb{R} ^{2})$, we have 
    \begin{equation*}
        \lim_{m\to\infty}\int_{\mathbb{R} ^{2}}\frac{u_{m}\nabla u_{m}\cdot\nabla \eta}{(x_{1}^{\circ}+r_{m}x_{1})H(\tfrac{r_{m}|\nabla u_{m}|^{2}}{(x_{1}^{\circ}+r_{m}x_{1})^{2}};r_{m}x_{2})}\,dx=\int_{\mathbb{R} ^{2}}\frac{u_{0}\nabla u_{0}\cdot\nabla \eta}{x_{1}^{\circ}\bar{\rho}_{0}}\,dx.
    \end{equation*}
    Consequently, for any test function $\eta\in C_{0}^{\infty}(\mathbb{R} ^{2})$,
    \begin{align*}
        &o(1)+\frac{1}{x_{1}^{\circ}\bar{\rho}_{0}}\int_{\mathbb{R} ^{2}}|\nabla u_{m}|^{2}\eta\,dx\\
        &=\int_{\mathbb{R} ^{2}}\frac{|\nabla u_{m}|^{2}}{(x_{1}^{\circ}+r_{m}x_{1})H(\tfrac{r_{m}|\nabla u_{m}|^{2}}{(x_{1}^{\circ}+r_{m}x_{1})^{2}};r_{m}x_{2})}\eta\,dx\\
        &=-\int_{\mathbb{R} ^{2}}\frac{u_{m}\nabla u_{m}\nabla \eta}{(x_{1}^{\circ}+r_{m}x_{1})H(\tfrac{r_{m}|\nabla u_{m}|^{2}}{(x_{1}^{\circ}+r_{m}x_{1})^{2}};r_{m}x_{2})}\,dx\\
        &\to-\int_{\mathbb{R} ^{2}}\frac{u_{0}\nabla u_{0}\cdot\nabla \eta}{x_{1}^{\circ}\bar{\rho}_{0}}\,dx\quad\text{ as }m\to\infty\\
        &=\frac{1}{x_{1}^{\circ}\bar{\rho}_{0}}\int_{\mathbb{R} ^{2}}|\nabla u_{0}|^{2}\eta\,dx.
    \end{align*}
    This proves the strong convergence of $u_{0}$.
\end{proof}
The following lemma establishes the explicit value of $M^{x_{1}}(0^{+})$ and characterizes the variational equation governing $u_{0}$.
\begin{lemma}[Weighted density at the stagnation points]\label{lemwd}
    Let $u$ be a subsonic variational solution of~\eqref{fb}, let $x^{\circ}\in S^{u}$ and suppose that $u$ satisfies the growth assumption~\eqref{gastp}. Then 
    \begin{enumerate}
        \item [(1).] $M^{x_{2}}(0^{+})$ takes the value 
        \begin{equation}\label{wden}
            M^{x_{2}}(0^{+})=\frac{x_{1}^{\circ}}{\bar{\rho}_{0}}\lim_{r\to 0^{+}}r^{-3}\int_{B_{r}(x^{\circ})}x_{2}^{+}\chi_{\left\{ u>0 \right\} }\,dx,
        \end{equation}
        and in particular $M^{x_{2}}(0^{+})=0$ implies that $u_{0}\equiv 0$ in $\mathbb{R} ^{2}$ for each blow-up limit $u_{0}$ of $u_{m}$ defined in equation~\eqref{blstp}.
        \item [(2).] Let $u_{m}$ be a blow-up sequence defined in equation~\eqref{blstp}, let $u_{0}$ be the $W_{\mathit{loc}}^{1,2}$ strong limit of $u_{m}$ and let $\chi_{0}$ be the $L_{\mathit{loc}}^{1}$ strong limit of $\chi_{\left\{ u_{m}>0 \right\} }$. Then $u_{0}$ is a homogeneous solution of
        \begin{align}\label{fvu0}
            \begin{split}
                0&=\frac{1}{x_{1}^{\circ}\bar{\rho}_{0}}\int_{\mathbb{R} ^{2}}\left( |\nabla u_{0}|^{2} \operatorname{div}\phi-2\nabla u_{0}D\phi\nabla u_{0} \right)\,dx\\
                &+\frac{x_{1}^{\circ}}{\bar{\rho}_{0}}\int_{\mathbb{R} ^{2}}(x_{2}\chi_{0} \operatorname{div}\phi+\chi_{0}\phi_{2})\,dx,
            \end{split}
        \end{align}
        for each $\phi=(\phi_{1},\phi_{2})\in C_{0}^{1}(\mathbb{R} ^{2};\mathbb{R} ^{2})$.
    \end{enumerate}
\end{lemma}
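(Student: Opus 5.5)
For (1), I will compute $M^{x_{2}}(0^{+})$ along a blow-up sequence and use the strong convergence from Lemma~\ref{lemstp}(3). Fix $r_{m}\to0^{+}$ such that $u_{m}\to u_{0}$ strongly in $W^{1,2}_{\mathit{loc}}$, and (by $BV$/weak-$*$ compactness of characteristic functions) $\chi_{\{u_{m}>0\}}\to\chi_{0}$ weakly-$*$ in $L^{\infty}$. Rescaling gives
\begin{equation*}
M^{x_{2}}(r_{m})=\int_{B_{1}}(x_{1}^{\circ}+r_{m}x_{1})\Big[r_{m}^{-1}F(\cdot)+r_{m}^{-1}\lambda(r_{m}x_{2})\chi_{\{u_{m}>0\}}\Big]dx-\tfrac32\int_{\partial B_{1}}\frac{u_{m}^{2}}{(x_{1}^{\circ}+r_{m}x_{1})\bar\rho_{0}}\,d\mathcal H^{1}.
\end{equation*}
Since $F(t;s)=t/\bar\rho_{0}+O(t^{2}+ts)$ and $\lambda(s)=s/\bar\rho_{0}+O(s^{2})$ by \eqref{esm1}, the growth bound \eqref{gastp} makes the error terms $O(r_{m})$. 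The limit is therefore
\begin{equation*}
\frac{1}{\bar\rho_{0}}\Big[\int_{B_{1}}\Big(\frac{|\nabla u_{0}|^{2}}{x_{1}^{\circ}}+x_{1}^{\circ}x_{2}\chi_{0}\Big)dx-\frac32\int_{\partial B_{1}}\frac{u_{0}^{2}}{x_{1}^{\circ}}\,d\mathcal H^{1}\Big].
\end{equation*}
The factor $x_{2}\chi_{0}$ is nonnegative, because $u=0$ below $\{x_{2}=0\}$ locally by item (5); this gives $x_{2}\chi_{0}=x_{2}^{+}\chi_{0}$. Homogeneity of degree $3/2$ (Lemma~\ref{lemstp}(2)) together with the energy identity \eqref{eni}, passed to the limit as $\int_{B_{1}}|\nabla u_{0}|^{2}=\int_{\partial B_{1}}u_{0}\nabla u_{0}\cdot\nu=\tfrac32\int_{\partial B_{1}}u_{0}^{2}$, cancels the gradient and boundary terms. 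What remains is $M^{x_{2}}(0^{+})=\frac{x_{1}^{\circ}}{\bar\rho_{0}}\int_{B_{1}}x_{2}^{+}\chi_{0}$, which equals $\frac{x_{1}^{\circ}}{\bar\rho_{0}}\lim_{m} r_{m}^{-3}\int_{B_{r_{m}}(x^{\circ})}x_{2}^{+}\chi_{\{u>0\}}$. Since $M^{x_{2}}(0^{+})$ exists independently of the sequence, the full limit in \eqref{wden} exists.

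For the implication in (1), suppose $M^{x_{2}}(0^{+})=0$. Then $\chi_{0}=0$ a.e. in $\{x_{2}>0\}$. Next I will show $\chi_{0}=1$ a.e. on $\{u_{0}>0\}$; this follows from uniform convergence of $u_{m}$, since the gradient bound makes the $u_{m}$ locally Lipschitz uniformly. Hence $u_{0}=0$ a.e. in $\{x_{2}>0\}$. Combined with $u_{0}=0$ in $\{x_{2}<0\}$, this gives $u_{0}\equiv0$.

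For (2), I will pass to the limit in the first variation \eqref{fv} written for $u_{m}$, using a rescaled test field $\phi(\frac{x-x^{\circ}}{r_{m}})$, and divide by the correct power of $r_{m}$ (namely $r_{m}^{2}$ after scaling). In \eqref{fv}, the terms $F$ and $2\partial_{1}F\nabla uD\phi\nabla u/x_{1}$ converge to the Dirichlet-type terms with coefficient $1/(x_{1}^{\circ}\bar\rho_{0})$. This uses strong convergence of $\nabla u_{m}$, which makes the quadratic forms converge in $L^{1}_{\mathit{loc}}$, together with $H_{m}\to\bar\rho_{0}$ uniformly. The $\lambda\chi\operatorname{div}\phi$ term converges to $\frac{x_{1}^{\circ}}{\bar\rho_{0}}x_{2}\chi_{0}\operatorname{div}\phi$. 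The $\phi_{1}$-term carries an extra factor $r_{m}$ relative to the others, so it vanishes in the limit. In the $\phi_{2}$-term, the identity $\lambda'(0)=1/\bar\rho_{0}$ gives $\frac{x_{1}^{\circ}}{\bar\rho_{0}}\chi_{0}\phi_{2}$; the contribution of $\partial_{2}F=O(t)$ is $O(r_{m})$ after scaling.

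I expect the main obstacle to be the bookkeeping of powers of $r_{m}$: one must check that every nonhomogeneous remainder, namely $F-t/\bar\rho_{0}$, $\lambda-x_{2}/\bar\rho_{0}$ and $\partial_{2}F$, is of higher order. These checks rest on \eqref{esm1} and \eqref{gastp}. I also need to confirm that $\chi_{\{u_{m}>0\}}$ converges strongly in $L^{1}_{\mathit{loc}}$ (as the statement assumes) rather than only weakly; the argument in (2) needs only weak-$*$ convergence, since $\chi$ enters linearly.
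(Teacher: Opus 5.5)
Your proposal is correct and follows essentially the same route as the paper. You rescale $M^{x_2}$ and discard the nonhomogeneous remainders: the paper does this via $E_F=E_H-K_1^{x_2}$ with $|K_1^{x_2}(r)|\le Cr^4$, you via Taylor expansion of $F$ and $\lambda$. You then cancel the Dirichlet and boundary terms using $3/2$-homogeneity and the energy identity, and for (2) pass to the limit in the rescaled first variation, where the $\phi_1$-term is of higher order. One small correction: $u\equiv 0$ in $\{x_2<0\}$ near $x^{\circ}$ needs \eqref{gastp}, which forces $\nabla u=0$ there, together with continuity, not item (5) alone; and your uniform-convergence argument for $M^{x_2}(0^+)=0\Rightarrow u_0\equiv 0$ supplies a step the paper leaves unproved.
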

\begin{proof}
    (1). It follows from~\eqref{Mx2r} that for each vanishing sequence $r_{m}\to 0^{+}$ and each $r>0$
    \begin{align*}
        \begin{split}
            M^{x_{2}}(rr_{m})&=r^{-3}E_{H}(u_{m};B_{r})-\frac{3}{2}r^{-4}\int_{\partial B_{r}}\frac{1}{(x_{1}^{\circ}+rx_{1})\bar{\rho}_{0}}u_{m}^{2}\,d\mathcal{H}^{1}\\
            &-(rr_{m})^{-3}K_{1}^{x_{2}}(rr_{m})
        \end{split}
    \end{align*}
    where 
    \begin{align*}
        E_{H}(u_{m};B_{r})&:=\int_{B_{r}}(x_{1}^{\circ}+r_{m}x_{1})\Bigg[ \frac{|\nabla u_{m}|^{2}}{(x_{1}^{\circ}+r_{m}x_{1})^{2}H(\tfrac{r_{m}|\nabla u_{m}|^{2}}{(x_{1}^{\circ}+r_{m}x_{1})^{2}};r_{m}x_{2})}\\
        &\qquad\qquad\qquad\qquad\qquad+\frac{x_{2}}{\bar{\rho}_{0}}\chi_{\left\{ u_{m}>0 \right\} } \Bigg]\,dx.
    \end{align*}
    Thanks to~\eqref{esmstp1}, we have that $|K_{1}^{x_{2}}(rr_{m})| \leqslant C(rr_{m})^{4}$, and this implies that 
    \[
        \lim_{m\to \infty}r^{-3}r_{m}^{-3}K_{1}^{x_{2}}(rr_{m})=0\qquad\forall\,r>0.
    \]
    Moreover, it follows from~\eqref{cvghm}, $u_{m}$ converges strongly to $u_{0}$ in $W_{\mathit{loc}}^{1,2}$, and $u_{0}$ is harmonic in $\{u_{0}>0\}$, $u_{0}\equiv 0$ on $\{x_{2} \leqslant 0\}$ that 
    \begin{align*}
        0&=\lim_{m\to\infty}r^{-3}E_{H}(u_{m};B_{r})\\
        &-\lim_{m\to\infty}\left(\frac{3}{2}r^{-4}\int_{\partial B_{r}}\frac{u_{m}^{2}}{(x_{1}^{\circ}+rx_{1})\bar{\rho}_{0}}\,d\mathcal{H}^{1}-M^{x_{2}}(rr_{m}) \right)\\
        &=\frac{r^{-3}}{x_{1}^{\circ}\bar{\rho}_{0}}\int_{\partial B_{r}}u_{0}\left[ \nabla u_{0}\cdot\nu-\frac{3}{2}\frac{u_{0}}{r}\right]\,d\mathcal{H}^{1}\\
        &+\lim_{m\to \infty}\left( r^{-3}\frac{x_{1}^{\circ}}{\bar{\rho}_{0}}\int_{B_{r}}x_{2}\chi_{\left\{ u_{m}>0 \right\} }\right)\\
        &=\frac{x_{1}^{\circ}}{\bar{\rho}_{0}}\lim_{m\to \infty}r^{-3}\int_{B_{r}}x_{2}\chi_{\left\{ u_{m}>0 \right\} },
    \end{align*} 
    where we used the fact that $u_{0}\in W_{\mathit{loc}}^{1,2}(\mathbb{R} ^{2})$ is a homogeneous function of degree $3/2$ in the last equality. Then~\eqref{wden} follows immediately from the fact that $M^{x_{2}}(0^{+})$ exists and is finite (recall~\lemref{lemstp} (1)).

    (2). Let $\phi\in C_{0}^{1}(\mathbb{R} ^{2};\mathbb{R} ^{2})$ and set $\phi_{m}(x)=\phi(\tfrac{x-x^{\circ}}{r_{m}})$. It follows from~\eqref{fv} that 
    \begin{align*}
        0&=\int_{\Omega}x_{1}\left[ F( \tfrac{|\nabla u|^{2}}{x_{1}^{2}};x_{2}) + \lambda(x_{2})\chi_{\left\{ u>0 \right\} }  \right] \operatorname{div}\phi_{m}\,dx\\
        &-2\int_{\Omega}\frac{\nabla uD\phi_{m}\nabla u}{x_{1}H( \tfrac{|\nabla u|^{2}}{x_{1}^{2}};x_{2})}\,dx\\
        &+\int_{\Omega}\left[ F(\tfrac{|\nabla u|^{2}}{x_{1}^{2}};x_{2})-\tfrac{2|\nabla u|^{2}}{x_{1}^{2}H(\tfrac{|\nabla u|^{2}}{x_{1}^{2}};x_{2})}\right]\phi_{1}(\tfrac{x-x^{\circ}}{r_{m}})\,dx\\
        &+\int_{\Omega}+ \lambda(x_{2})\chi_{\left\{ u>0 \right\} }\phi_{1}\left( \frac{x-x^{\circ}}{r_{m}} \right)\,dx\\
        &+\int_{\Omega}x_{1}\left[\partial_{2}F(\tfrac{|\nabla u|^{2}}{x_{1}^{2}};x_{2})+\lambda'(x_{2})\chi_{\left\{ u>0 \right\} } \right]\phi_{2}(\tfrac{x-x^{\circ}}{r_{m}})\,dx.
    \end{align*}
    Changing variables gives 
    \begin{align*}
        \begin{split}
            0&=\int_{\Omega_{m}}\frac{|\nabla u_{m}|^{2}\operatorname{div}\phi}{(x_{1}^{\circ}+r_{m}x_{1})H(\tfrac{r_{m}|\nabla u_{m}|^{2}}{(x_{1}^{\circ}+r_{m}x_{1})^{2}};r_{m}x_{2})}\,dx\\
            &+\int_{\Omega_{m}}\frac{(x_{1}^{\circ}+rx_{1})x_{2}}{\bar{\rho}_{0}}\chi_{\left\{ u_{m}>0 \right\} }  \operatorname{div}\phi\,dx\\
            &-2\int_{\Omega}\frac{\nabla u_{m}D\phi \nabla u_{m}}{(x_{1}^{\circ}+r_{m}x_{1})H(\tfrac{r_{m}|\nabla u_{m}|^{2}}{(x_{1}^{\circ}+r_{m}x_{1})^{2}};r_{m}x_{2})}\,dx\\
            &+\sum_{i=1}^{3}I_{i}(r_{m}),
        \end{split}
    \end{align*}
    where 
    \begin{align*}
        I_{1}(r_{m})&=r_{m}^{-1}\int_{B_{r}}(x_{1}^{\circ}+r_{m}x_{1})\Bigg[\int_{0}^{r_{m}x_{2}}\tfrac{\partial}{\partial\tau}\left( \tfrac{1}{H(\tau;r_{m}x_{2})} \right)\tau\,d\tau\Bigg] \chi_{\left\{ u_{m}>0 \right\} }\operatorname{div}\phi\,dx\\
        &-r_{m}^{-1}\int_{B_{r}}(x_{1}^{\circ}+r_{m}x_{1})\left[\int_{0}^{\tfrac{r_{m}|\nabla u_{m}|^{2}}{(x_{1}^{\circ}+r_{m}x_{1})^{2}}}\tfrac{\partial}{\partial\tau}\left( \tfrac{1}{H(\tau;r_{m}x_{2})} \right)\tau\,d\tau \right]\operatorname{div}\phi\,dx,
    \end{align*}
    \begin{flalign*}
        &\ I_{2}(r_{m})=\int_{\Omega_{m}}F\Bigg(\tfrac{r_{m}|\nabla u_{m}|^{2}}{(x_{1}^{\circ}+r_{m}x_{1})^{2}};r_{m}x_{2}\Bigg)\phi_{1}\,dx+\int_{\Omega_{m}}\lambda(r_{m}x_{2})\phi_{1}\,dx\\
        &-2r_{m}\int_{\Omega_{m}}\frac{|\nabla u_{m}|^{2}}{(x_{1}^{\circ}+r_{m}x_{1})^{2}H(\tfrac{r_{m}|\nabla u_{m}|^{2}}{(x_{1}^{\circ}+r_{m}x_{1})^{2}};r_{m}x_{2})}\phi_{1}\,dx&
    \end{flalign*}
    and 
    \begin{flalign*}
        &\ I_{3}(r_{m})=\int_{\Omega_{m}}(x_{1}^{\circ}+r_{m}x_{1})\partial_{2}F(\tfrac{r_{m}|\nabla u_{m}|^{2}}{(x_{1}^{\circ}+r_{m}x_{1})^{2}};r_{m}x_{2})\phi_{2}\,dx\\
        &+\int_{\Omega_{m}}\left[\left( \frac{1}{\bar{\rho}_{0}}-\int_{0}^{r_{m}x_{2}}\tfrac{\partial}{\partial x_{2}}\left( \tfrac{1}{H(\tau;r_{m}x_{2})} \right) \,d\tau\right)\chi_{\left\{ u_{m}>0 \right\} } \right]\phi_{2}\,dx. &
    \end{flalign*}
    Then~\eqref{fvu0} follows from the strong convergence of $u_{m}$ in $W_{\mathit{loc}}^{1,2}(\mathbb{R} ^{2})$,~\eqref{cvghm}, and a similar calculation in~\eqref{esmstp1},~\eqref{esmstp2}. This finishes the proof of the lemma.
\end{proof}
As a direct corollary of~\lemref{lemwd}, we have 
\begin{corollary}[The blow-up limits for the stagnation points]\label{corbl}
    Let $u$ be a subsonic variational solution of~\eqref{fb}, let $x^{\circ}\in S^{u}$ and let $\delta$ be given as in~\eqref{d}. Suppose that $u$ satisfies~\eqref{gastp}. Then 
    \begin{enumerate}
        \item [(1).] The all possible value of weighted density defined in~\eqref{wden} are 
        \begin{equation*}
            M^{x_{2}}(0^{+})\in \left\{ \frac{\sqrt{3}x_{1}^{\circ}}{3\bar{\rho}_{0}},\frac{2x_{1}^{\circ}}{3\bar{\rho}_{0}},0\right\} 
        \end{equation*}
        \item [(2).] If $M^{x_{2}}(0^{+})=\frac{\sqrt{3}x_{1}^{\circ}}{3\bar{\rho}_{0}}$, then 
        \begin{align}\label{blstp1}
            \begin{split}
                &\lim_{r\to 0^{+}}\frac{u(x^{\circ}+rx)}{r^{3/2}}\\
                &=\frac{\sqrt{2}x_{1}^{\circ}\bar{\rho}_{0}}{3}(x_{1}^{2}+x_{2}^{2})^{3/4}\cos\left( \frac{3}{2}\arctan\left( \frac{x_{1}}{x_{2}} \right) \right)\chi_{\left\{ -\tfrac{\pi}{3}<\arctan(\tfrac{x_{1}}{x_{2}})<\tfrac{\pi}{3} \right\} }
            \end{split}
        \end{align} 
        strongly in $W_{\mathit{loc}}^{1,2}(\mathbb{R} ^{2})$ and locally uniformly in $\mathbb{R} ^{2}$. 
        \item [(3).] If $M^{x_{2}}(0^{+})=\frac{2x_{1}^{\circ}}{3\bar{\rho}_{0}}$, then 
        \begin{equation}\label{blstp2}
            \lim_{r\to 0^{+}}\frac{u(x^{\circ}+rx)}{r^{3/2}}= 0,
        \end{equation}
        strongly in $W_{\mathit{loc}}^{1,2}(\mathbb{R} ^{2})$ and locally uniformly in $\mathbb{R} ^{2}$.
        \item [(4).] If $M^{x_{2}}(0^{+})=0$, then 
        \begin{equation}\label{blstp3}
            \lim_{r\to 0^{+}}\frac{u(x^{\circ}+rx)}{r^{3/2}}= 0,
        \end{equation}
        strongly in $W_{\mathit{loc}}^{1,2}(\mathbb{R} ^{2})$ and locally uniformly in $\mathbb{R} ^{2}$.
    \end{enumerate}
\end{corollary}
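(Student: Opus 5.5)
The plan is to reduce Corollary~\ref{corbl} to the classification of homogeneous solutions of the limit domain-variation identity~\eqref{fvu0}, following the incompressible Stokes-wave argument (Varvaruca--Weiss) after rescaling. By Lemma~\ref{lemstp}, every blow-up limit $u_0$ along a sequence $r_m\to0^+$ is homogeneous of degree $3/2$, and $u_m\to u_0$ strongly in $W^{1,2}_{loc}$. By Lemma~\ref{lemwd}, $u_0$ satisfies~\eqref{fvu0}, is harmonic in $\{u_0>0\}$, and vanishes on $\{x_2\le0\}$. The last point holds because the free boundary lies in $\{x_2\ge0\}$ and the growth assumption~\eqref{gastp} forces $\nabla u=0$ where $x_2\le0$. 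Moreover, $M^{x_2}(0^+)=\frac{x_1^\circ}{\bar\rho_0}\int_{B_1}x_2^+\chi_0\,dx$. This uses the scaling of~\eqref{wden} and the strong $L^1$ convergence of $\chi_{\{u_m>0\}}$, which follows from the fact that $\chi_0$ is constant on connected components, as explained below. The problem is therefore identical to that of~\cite{MR2810856} for the Laplacian with weight $x_2$, up to the constants $x_1^\circ$ and $\bar\rho_0$.

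\emph{Classification.} I write $u_0=r^{3/2}g(\theta)$ in polar coordinates, with $\theta$ measured from the positive $x_2$-axis. Harmonicity gives $g''+\frac94 g=0$ on each component of $\{g>0\}$. Each component is therefore an arc of length exactly $2\pi/3$, and $g=c\cos(\frac32(\theta-\theta_0))$ on it. Since $u_0=0$ on $\{x_2\le 0\}$, a component must lie inside $(-\pi/2,\pi/2)$. An arc of length $2\pi/3$ fits there, and two such arcs cannot fit disjointly, so there is at most one. Next I plug radial and compactly supported test fields $\phi$ into~\eqref{fvu0}, localized near each ray of $\partial\{u_0>0\}$. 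This gives the free boundary condition $|\nabla u_0|^2=(x_1^\circ)^2x_2$ on the rays, and hence, by homogeneity, determines $c$.

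Localizing $\phi$ away from $\{u_0>0\}$ shows that $\chi_0$ is locally constant on $\{u_0=0\}\cap\{x_2>0\}$, with value $0$ or $1$. I then test with $\phi$ supported near the boundary rays between the regions. The boundary term $\int x_2\chi_0\,\mathrm{div}\phi+\chi_0\phi_2$ forces the symmetric position $\theta_0=0$, which gives the opening $(-\pi/3,\pi/3)$ as in~\eqref{blstp1}. It also forces $\chi_0=0$ outside the cone. (Otherwise the jump of $\chi_0$ across the ray would not be compensated by the gradient term, since the transmission condition would require $|\nabla u_0|^2$ to be proportional to the jump.)

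If $u_0\equiv0$, the same identity with the gradient terms absent says that $x_2\chi_0$ has vanishing distributional "domain variation". Hence $\chi_0$ is constant in $\{x_2>0\}$, so either $\chi_0\equiv1$ there or $\chi_0\equiv0$. Computing $\frac{x_1^\circ}{\bar\rho_0}\int_{B_1}x_2^+\chi_0\,dx$ in the three cases gives the three values. For the nontrivial cone I compute $\int_{B_1\cap\{|\theta|<\pi/3\}}x_2\,dx=\frac13\cdot 2\sin(\pi/3)=\frac{\sqrt3}{3}$. For the full half-disc I get $\frac23$, and for the empty case $0$. This proves (1).

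For (2)--(4), the limit $M^{x_2}(0^+)$ is independent of the sequence, so the value determines the case. Each case has a unique limit profile, so the full family $r\to0^+$ converges (subsequence-independence). Locally uniform convergence comes from the uniform Lipschitz/H\"older bound $|\nabla u_m|\le C|x_2|^{1/2}$, which is inherited from~\eqref{gastp}, together with Arzel\`a--Ascoli.

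The main obstacle is the step of passing from~\eqref{fvu0} to the precise structure of $\chi_0$: showing that it is a characteristic function that equals $1$ exactly on the cone, or is a.e. constant in the degenerate case. For this I would follow the argument of~\cite{MR2810856}, proving first that $\chi_0\in\{0,1\}$ a.e. from the strong $L^1$ limit, and then using homogeneity to reduce to a one-dimensional distributional identity in $\theta$.
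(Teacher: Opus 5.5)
Your route is the paper's: homogeneity and strong $W^{1,2}_{loc}$ convergence from Lemma~\ref{lemstp}, the limit identity \eqref{fvu0} from Lemma~\ref{lemwd}, and the ODE $g''+\frac94 g=0$ giving a single $120^\circ$ cone. You then read the free-boundary condition off \eqref{fvu0} and compute the three density integrals. You also supply steps the paper's sketch skips: why the cone is symmetric about the $x_2$-axis, why $\chi_0=0$ outside it, why the whole family $r\to0^+$ converges, and the locally uniform convergence. Those steps are fine.

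\textbf{The gap.} The one step that fails as written is how you get $\chi_0\in\{0,1\}$. You say strong $L^1$ convergence of $\chi_{\{u_m>0\}}$ follows from $\chi_0$ being constant on connected components. Later you say $\chi_0\in\{0,1\}$ follows from the strong $L^1$ limit. This is circular, and the first implication is false: a weak-$*$ limit of characteristic functions can equal a constant $\kappa\in(0,1)$ on a component of the interior of $\{u_0=0\}\cap\{x_2>0\}$.

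Your whole classification depends on excluding this. Take $u_0\equiv0$ and $\chi_0=\kappa\,\chi_{\{x_2>0\}}$. Then \eqref{fvu0} holds, because the jump across $\{x_2=0\}$ carries the weight $x_2=0$, and the density is $\frac{2\kappa x_1^\circ}{3\bar\rho_0}$, a continuum of values. In the nontrivial case the transmission conditions become $\frac94c^2=(x_1^\circ)^2\cos(\theta_0\pm\frac\pi3)(1-\kappa_\pm)$, which admit tilted cones with $\kappa_+\neq\kappa_-$.

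\textbf{The fix.} You need compactness of $\chi_{\{u_m>0\}}$ from an independent source.
\begin{itemize}
\item Take it as an input, which is what the paper does: Lemma~\ref{lemwd}(2) assumes $\chi_0$ is a strong $L^1_{loc}$ limit, and the sketch uses this implicitly.
\item For weak solutions, derive it from the estimate $\int_{B_r(x^\circ)}\sqrt{x_2^+}\,d|\nabla\chi_{\{u>0\}}|\le Cr^{3/2}$ in the remark after the corollary. This estimate is invariant under the $3/2$-rescaling, so it gives uniform BV bounds for $\chi_{\{u_m>0\}}$ on $B_R\cap\{x_2>\varepsilon\}$. The strip $\{|x_2|<\varepsilon\}$ has small measure, and $\chi_{\{u_m>0\}}=0$ below it.
\end{itemize}
Note that the density formula $M^{x_2}(0^+)=\frac{x_1^\circ}{\bar\rho_0}\int_{B_1}x_2^+\chi_0\,dx$ itself needs only weak-$*$ convergence.

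\textbf{The constant.} Your condition $|\nabla u_0|^2=(x_1^\circ)^2x_2$ is the correct consequence of \eqref{fvu0}: the two terms there are in ratio $(x_1^\circ)^2$, or equivalently rescale $|\nabla u|^2=x_1^2x_2$ directly. With $x_2=\varrho/2$ on the rays it gives $c=\frac{\sqrt2\,x_1^\circ}{3}$. The extra factor $\bar\rho_0$ in \eqref{blstp1} comes from the condition $|\nabla u_0|^2=(x_1^\circ)^2\bar\rho_0^2x_2$ in the paper's \eqref{eqsu0}, which does not follow from \eqref{fvu0}. So your argument proves (2) with $c=\frac{\sqrt2\,x_1^\circ}{3}$, and you should state that constant rather than the displayed one. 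Parts (1), (3) and (4) are unaffected, since the densities do not depend on $c$.
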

\begin{proof}
    The idea of the proof is similar to that in~\cite[Theorem3.8 (ii)]{MR3225630}. We give a sketch here. If $u_{m}$ is a blow-up sequence defined in~\eqref{blstp} and $u_{m}$ converges to a blow-up limit $u_{0}$ strongly in $W_{\mathit{loc}}^{1,2}(\mathbb{R} ^{2})$. Then it follows from~\lemref{lemstp} and~\lemref{lemwd} that 
    \begin{align}\label{eqsu0}
        \left\{
            \begin{alignedat}{2}
                \Delta u_{0}&=0\quad&&\text{ in }B_{1}\cap\{u_{0}>0\},\\
                |\nabla u_{0}|^{2}&=(x_{1}^{\circ})^{2}\bar{\rho}_{0}^{2}x_{2}\quad&&\text{ on }B_{1}\cap\partial\{u_{0}>0\},\\
                u_{0}&=0\quad&&\text{ on }B_{1}\cap\partial\{u_{0}>0\},\\
                u_{0}&\geqslant 0\quad&&\text{ in }B_{1}\\
                u_{0}&=0\quad&&\text{ in }B_{1}\cap\{x_{2} \leqslant 0\},\\
                u_{0}(x)&=|x|^{3/2}u_{0}(\tfrac{x}{|x|})\quad&&\text{ in }B_{1}.
            \end{alignedat}
        \right.
    \end{align}
    The strategy of solving~\eqref{eqsu0} is as follows. Assume first that $u_{0}$ is non-trivial and we introduce the polar coordinates as $\varrho=\sqrt{x_{1}^{2}+x_{2}^{2}}$ and $\theta=\arctan(\tfrac{x_{1}}{x_{2}})$. Then $u_{0}(\varrho,\theta)=\varrho^{3/2}f(\theta)$ due to the last equation in~\eqref{eqsu0}. The Laplacian in polar coordinates gives
    \begin{equation*}
        f''(\theta)+\frac{9}{4}f(\theta)=0\quad\text{ in }\{f(\theta)>0\}.
    \end{equation*}
    The general solution is $f(\theta)=A\cos(\tfrac{3}{2}\theta+\theta_{1})$ for some $A$ and $\theta$ up to be determined. Since $u_{0} \geqslant 0$, we have that $\{f(\theta)>0\}$ is a cone of opening $120^{\circ}$. Moreover, the free boundary conditions on $\partial\{u_{0}>0\}$ give that 
    \begin{equation*}
        u_{0}(\varrho,\theta)=\frac{\sqrt{2}x_{1}^{\circ}\bar{\rho}_{0}}{3}\varrho^{3/2}\cos\left( \frac{3}{2}\theta \right)\chi_{\{-\pi/3<\theta<\pi/3\}}.
    \end{equation*}
    This together with the computation of the weighted density
    \begin{equation*}
        M^{x_{2}}(0^{+})=\frac{x_{1}^{\circ}}{\bar{\rho}_{0}}\int_{B_{1}}x_{2}\chi_{ \left\{  -\pi/3<\theta<\pi/3\right\} }\,dx=\frac{\sqrt{3}x_{1}^{\circ}}{3\bar{\rho}_{0}},
    \end{equation*}
    proves~\eqref{blstp1}. On the other hand, if $u_{0}\equiv 0$ is a trivial solution, then we have~\eqref{blstp2} and~\eqref{blstp3}. The difference is that when $\chi_{0}=1$, the weighted density is not degenerate and 
    \begin{equation*}
        M^{x_{2}}(0^{+})=\frac{x_{1}^{\circ}}{\bar{\rho}_{0}}\int_{B_{1}}x_{2}^{+}\,dx=\frac{2x_{1}^{\circ}}{3\bar{\rho}_{0}}.
    \end{equation*}
\end{proof}
\begin{remark}
    For any weak solution $u$ of~\eqref{fb} satisfies the growth condition~\eqref{gastp} for any $x^{\circ}\in S^{u}$, one has that $\chi_{\left\{ u>0 \right\} }$ is locally in $\Omega\cap\{x_{1}>0\}\cap\{x_{2}>0\}$ is a function of bounded variation, and the total variation measure $|\nabla\chi_{\left\{ u>0 \right\} }|$ satisfies
    \begin{equation*}
        \int_{B_{r}(x^{\circ})}\sqrt{x_{2}^{+}}d|\nabla\chi_{\left\{ u>0 \right\} }| \leqslant Cr^{3/2},
    \end{equation*} 
    for any $x^{\circ}\in S^{u}$ and any $r\in(0,r_{0})$ with $r_{0}$ sufficiently small. In fact, integrating by parts gives 
    \begin{align*}
        0&=\int_{B_{r}(x^{\circ})\cap\{u>0\}}\operatorname{div}\left( \frac{\nabla u}{x_{1}H(\tfrac{|\nabla u|^{2}}{x_{1}^{2}};x_{2})} \right)\,dx\\
        & \leqslant \int_{\partial B_{r}(x^{\circ})\cap\{u>0\}}\frac{|\nabla u|}{x_{1}H(\tfrac{|\nabla u|^{2}}{x_{1}^{2}};x_{2})}\,d\mathcal{H}^{1}\\
        &-\int_{B_{r}(x^{\circ})\cap \partial_{\mathit{red}}\{u>0\} }\frac{|\nabla u|}{x_{1}H(\tfrac{|\nabla u|^{2}}{x_{1}^{2}};x_{2})}\,d\mathcal{H}^{1} \\
        & \leqslant Cr^{3/2}-\frac{1}{\bar{\rho}_{0}}\int_{B_{r}^{+}(x^{\circ})\cap \partial_{\mathit{red}}\{u>0\} }\sqrt{x_{2}^{+}}\,d\mathcal{H}^{1},
    \end{align*}
    where we used $|\nabla u|=x_{1}\sqrt{x_{2}^{+}}$ on $\partial_{\mathit{red}}\{u>0\}$ and $H=\bar{\rho}_{0}$ on $\partial\{u>0\}$.
\end{remark}
\begin{figure}[!ht]
    \centering
    \tikzset{every picture/.style={line width=0.75pt}} 

    \begin{tikzpicture}[x=0.75pt,y=0.75pt,yscale=-1,xscale=1]
    
    \draw    (70.29,109.11) -- (249.29,109.6) ;
    \draw [shift={(251.29,109.61)}, rotate = 180.16] [color={rgb, 255:red, 0; green, 0; blue, 0 }  ][line width=0.75]    (10.93,-3.29) .. controls (6.95,-1.4) and (3.31,-0.3) .. (0,0) .. controls (3.31,0.3) and (6.95,1.4) .. (10.93,3.29)   ;
    \draw [color={rgb, 255:red, 0; green, 0; blue, 255 }  ,draw opacity=1 ]   (151.54,109.11) .. controls (169.43,91.57) and (201.12,85.95) .. (238.83,85.67) ;
    \draw [color={rgb, 255:red, 0; green, 0; blue, 255 }  ,draw opacity=1 ]   (151.54,109.11) .. controls (139.43,95.57) and (115.71,84.71) .. (70.29,83.86) ;
    \draw  [dash pattern={on 4.5pt off 4.5pt}] (191.43,69.86) -- (151.23,108.8) -- (111.71,69) ;
    \draw    (69.29,222.61) -- (248.29,223.1) ;
    \draw [shift={(250.29,223.11)}, rotate = 180.16] [color={rgb, 255:red, 0; green, 0; blue, 0 }  ][line width=0.75]    (10.93,-3.29) .. controls (6.95,-1.4) and (3.31,-0.3) .. (0,0) .. controls (3.31,0.3) and (6.95,1.4) .. (10.93,3.29)   ;
    \draw [color={rgb, 255:red, 0; green, 0; blue, 255 }  ,draw opacity=1 ]   (150.54,222.61) .. controls (189.57,222.86) and (230.31,197.93) .. (246.6,187) ;
    \draw [color={rgb, 255:red, 0; green, 0; blue, 255 }  ,draw opacity=1 ]   (150.54,222.61) .. controls (184.14,222.57) and (223.34,187.4) .. (232.2,177.4) ;
    \draw    (301.29,109.61) -- (480.29,110.1) ;
    \draw [shift={(482.29,110.11)}, rotate = 180.16] [color={rgb, 255:red, 0; green, 0; blue, 0 }  ][line width=0.75]    (10.93,-3.29) .. controls (6.95,-1.4) and (3.31,-0.3) .. (0,0) .. controls (3.31,0.3) and (6.95,1.4) .. (10.93,3.29)   ;
    \draw [color={rgb, 255:red, 0; green, 0; blue, 255 }  ,draw opacity=1 ]   (382.54,109.61) .. controls (347.86,109.86) and (308.66,75.74) .. (301.8,62.6) ;
    \draw [color={rgb, 255:red, 0; green, 0; blue, 255 }  ,draw opacity=1 ]   (382.54,109.61) .. controls (341.4,108.8) and (306.6,90.2) .. (292.2,75) ;
    \draw    (313.29,221.61) -- (492.29,222.1) ;
    \draw [shift={(494.29,222.11)}, rotate = 180.16] [color={rgb, 255:red, 0; green, 0; blue, 0 }  ][line width=0.75]    (10.93,-3.29) .. controls (6.95,-1.4) and (3.31,-0.3) .. (0,0) .. controls (3.31,0.3) and (6.95,1.4) .. (10.93,3.29)   ;
    \draw  [draw opacity=0] (468.1,180.36) .. controls (464.3,185.76) and (459.87,190.88) .. (454.82,195.59) .. controls (418.64,229.33) and (364.15,229.72) .. (333.13,196.45) .. controls (328.06,191.01) and (323.93,185.01) .. (320.72,178.62) -- (398.65,135.36) -- cycle ; \draw  [color={rgb, 255:red, 0; green, 0; blue, 255 }  ,draw opacity=1 ] (468.1,180.36) .. controls (464.3,185.76) and (459.87,190.88) .. (454.82,195.59) .. controls (418.64,229.33) and (364.15,229.72) .. (333.13,196.45) .. controls (328.06,191.01) and (323.93,185.01) .. (320.72,178.62) ;  
    
    \draw (255.5,105.9) node [anchor=north west][inner sep=0.75pt]  [font=\scriptsize]  {$x_{1}$};
    \draw (146.79,109.69) node [anchor=north west][inner sep=0.75pt]  [font=\scriptsize]  {$x_{1}^{\circ }$};
    \draw (141.37,90.83) node [anchor=north west][inner sep=0.75pt]  [font=\tiny]  {$120^{\circ }$};
    \draw (178.29,96.11) node [anchor=north west][inner sep=0.75pt]  [font=\scriptsize]  {$u=0$};
    \draw (100,95.54) node [anchor=north west][inner sep=0.75pt]  [font=\scriptsize]  {$u=0$};
    \draw (139.43,71.69) node [anchor=north west][inner sep=0.75pt]  [font=\scriptsize]  {$u >0$};
    \draw (254.5,219.4) node [anchor=north west][inner sep=0.75pt]  [font=\scriptsize]  {$x_{1}$};
    \draw (145.79,223.19) node [anchor=north west][inner sep=0.75pt]  [font=\scriptsize]  {$x_{1}^{\circ }$};
    \draw (197,212.47) node [anchor=north west][inner sep=0.75pt]  [font=\scriptsize]  {$u=0$};
    \draw (135,207.33) node [anchor=north west][inner sep=0.75pt]  [font=\scriptsize]  {$u=0$};
    \draw (215.92,191.33) node [anchor=north west][inner sep=0.75pt]  [font=\scriptsize,rotate=-322.91]  {$u >0$};
    \draw (486.5,106.4) node [anchor=north west][inner sep=0.75pt]  [font=\scriptsize]  {$x_{1}$};
    \draw (377.79,110.19) node [anchor=north west][inner sep=0.75pt]  [font=\scriptsize]  {$x_{1}^{\circ }$};
    \draw (371.29,91.76) node [anchor=north west][inner sep=0.75pt]  [font=\scriptsize]  {$u=0$};
    \draw (299.34,99.59) node [anchor=north west][inner sep=0.75pt]  [font=\scriptsize]  {$u=0$};
    \draw (292.5,59.91) node [anchor=north west][inner sep=0.75pt]  [font=\scriptsize,rotate=-41.76]  {$u >0$};
    \draw (498.5,218.4) node [anchor=north west][inner sep=0.75pt]  [font=\scriptsize]  {$x_{1}$};
    \draw (385.04,223.44) node [anchor=north west][inner sep=0.75pt]  [font=\scriptsize]  {$x_{1}^{\circ }$};
    \draw (434.54,211.11) node [anchor=north west][inner sep=0.75pt]  [font=\scriptsize]  {$u=0$};
    \draw (317.75,210.29) node [anchor=north west][inner sep=0.75pt]  [font=\scriptsize]  {$u=0$};
    \draw (376.35,196.19) node [anchor=north west][inner sep=0.75pt]  [font=\scriptsize]  {$u >0$};
    \draw (123.8,133.6) node [anchor=north west][inner sep=0.75pt]  [font=\scriptsize] [align=left] {Stokes corner};
    \draw (357.8,131.2) node [anchor=north west][inner sep=0.75pt]  [font=\scriptsize] [align=left] {Cusp (left)};
    \draw (126.37,242.74) node [anchor=north west][inner sep=0.75pt]  [font=\scriptsize] [align=left] {Cusp (right)};
    \draw (358.05,245.95) node [anchor=north west][inner sep=0.75pt]  [font=\scriptsize] [align=left] {Horizontal flatness};

    \end{tikzpicture}
    \caption{Possible asymptotics at stagnation points}
\end{figure}
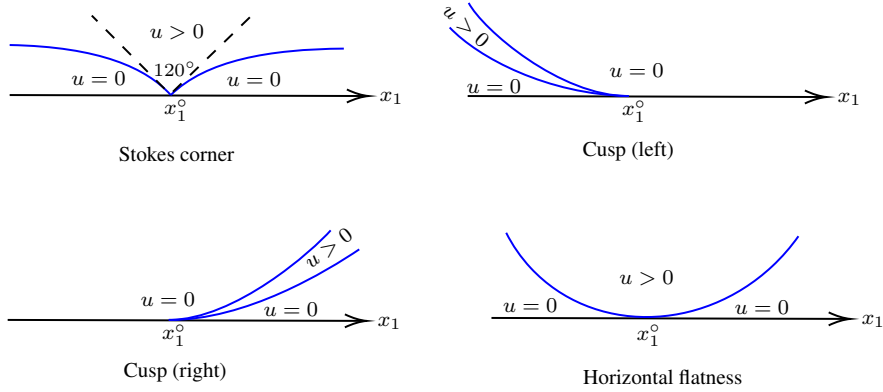
Finally, when assuming that the free surface is a continuous injective curve, we have
\begin{proposition}[Curve case]\label{propcc}
    Let $u$ be a subsonic weak solution of~\eqref{fb}, let $x^{\circ}\in S^{u}$ and suppose that $u$ satisfies~\eqref{gastp}. Assume further that $\partial\{u>0\}\cap B_{r}(x^{\circ})$ is in a neighborhood of $x^{\circ}$ a continuous injective curve $\sigma:I\to \mathbb{R} ^{2}$, where $0\in I$ and $\sigma(0)=(\sigma_{1}(0),\sigma_{2}(0))=x^{\circ}$. Then 
    \begin{enumerate}
        \item [(1).] If $M^{x_{2}}(0^{+})=\frac{\sqrt{3}x_{1}^{\circ}}{3\bar{\rho}_{0}}$, then we have that $\sigma_{1}(t)\neq x_{1}^{\circ}$ in $(-t_{1},t_{1})\setminus\{0\}$ and that either
        \begin{equation*}
            \lim_{t\to 0^{+}}\frac{\sigma_{2}(t)}{\sigma_{1}(t)-x_{1}^{\circ}}=\frac{1}{\sqrt{3}}\quad\text{ and }\quad \lim_{t\to 0^{-}}\frac{\sigma_{2}(t)}{\sigma_{1}(t)-x_{1}^{\circ}}=-\frac{1}{\sqrt{3}},
        \end{equation*}
        or 
        \begin{equation*}
            \lim_{t\to 0^{+}}\frac{\sigma_{2}(t)}{\sigma_{1}(t)-x_{1}^{\circ}}=-\frac{1}{\sqrt{3}}\quad\text{ and }\quad \lim_{t\to 0^{-}}\frac{\sigma_{2}(t)}{\sigma_{1}(t)-x_{1}^{\circ}}=\frac{1}{\sqrt{3}}
        \end{equation*}
        \item [(2).] If $M^{x_{2}}(0^{+})=\frac{2x_{1}^{\circ}}{3\bar{\rho}_{0}}$, then we have that $\sigma_{1}(t)\neq x_{1}^{\circ}$ in $(-t_{1},t_{1})\setminus\{0\}$, $\sigma_{1}(t)-x_{1}^{\circ}$ does change its sign at $t=0$, and 
        \begin{equation*}
            \lim_{t\to 0}\frac{\sigma_{2}(t)}{\sigma_{1}(t)-x_{1}^{\circ}}=0.
        \end{equation*}
        \item [(3).] If $M^{x_{2}}(0^{+})=0$, then $\sigma_{1}(t)\neq x_{1}^{\circ}$ in $(-t_{1},t_{1})\setminus\{0\}$, $\sigma_{1}(t)-x_{1}^{\circ}$ does not change its sign at $t=0$, and 
        \begin{equation*}
            \lim_{t\to 0}\frac{\sigma_{2}(t)}{\sigma_{1}(t)-x_{1}^{\circ}}=0.
        \end{equation*}
    \end{enumerate}
\end{proposition}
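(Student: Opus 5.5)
The plan is to follow the incompressible argument of \cite[Theorem 3.8 / Proposition on curves]{MR3225630}. The three values of $M^{x_{2}}(0^{+})$ from Corollary~\ref{corbl} are translated into statements about the blow-up limits $u_{0}$ and $\chi_{0}$. Local uniform convergence of $u_{m}$ is then combined with the weak-solution regularity of $\partial\{u>0\}$ away from $\{x_{2}=0\}$ to read off the tangent directions of $\sigma$.

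First I show that $\sigma_{1}(t)\neq x_{1}^{\circ}$ for small $t\neq 0$. Suppose $\sigma(t_{k})$ lies on the vertical line $\{x_{1}=x_{1}^{\circ}\}$ with $t_{k}\to 0$, and set $r_{k}=|\sigma(t_{k})-x^{\circ}|$. The rescaled free boundary then contains the point $(0,\pm1)$. At $(0,-1)$ we have $u_{m}\equiv 0$ near that point, since $u=0$ on $\{x_{2}\le 0\}$ in the limit. At $(0,1)$ we use nondegeneracy: the remark after Corollary~\ref{corbl} gives the BV bound $\int\sqrt{x_{2}^{+}}\,d|\nabla\chi_{\{u>0\}}|\le Cr^{3/2}$, and the free boundary condition $|\nabla u|=x_{1}\sqrt{x_{2}}$ holds there. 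By the standard argument, free boundary points of $u_{m}$ converge to points of $\partial\{u_{0}>0\}$ in $\{x_{2}>0\}$, or to points where $\chi_{0}$ jumps, with $\int_{\partial_{red}}\sqrt{x_2}\,d\mathcal H^1>0$.

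In case (1), $u_{0}$ is the Stokes corner, so $\partial\{u_{0}>0\}\cap\{x_{2}>0\}$ consists of the two rays at angle $\pm\pi/3$ from the vertical, i.e.\ slope $\pm1/\sqrt{3}$. A limit point on the vertical axis is then impossible, because $u_{0}>0$ there forces $u_{m}>0$ nearby by uniform convergence. Since the curve is connected, injective, and passes through $x^{\circ}$, the two branches $t\to0^{\pm}$ must accumulate on the two rays, one on each ray. This gives the two sign alternatives.

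In cases (2) and (3), $u_{0}\equiv0$ and $\chi_{0}\in\{1,0\}$ a.e.\ in $\{x_{2}>0\}$. This follows from the density values $\frac{2x_1^\circ}{3\bar\rho_0}$ and $0$, together with the fact that in the limit domain-variation identity \eqref{fvu0} with $u_0=0$ we get $\operatorname{div}(x_{2}\chi_{0}e)$-type constraints. These force $\chi_{0}$ to be constant on each component of $\{x_2>0\}$, and homogeneity forces it to be a union of cones.

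The key step in cases (2) and (3) is to show that any accumulation point of $\sigma(t)/|\sigma(t)-x^{\circ}|$ lies on $\{x_{2}=0\}$, which gives $\sigma_{2}/(\sigma_{1}-x_{1}^{\circ})\to0$. I argue by contradiction. Suppose a free-boundary point of $u_m$ sits at a point $z$ with $z_{2}>0$. Integrate the equation over a small ball around $z$, as in the remark after Corollary~\ref{corbl}. The boundary flux $\int|\nabla u_{m}|/(x_1H)$ over the sphere tends to $0$ because $u_{m}\to0$ strongly in $W^{1,2}$, while the free-boundary term is bounded below by $\sqrt{z_{2}}\,\mathcal H^{1}(\partial_{red}\{u_{m}>0\}\cap B_\rho(z))\gtrsim \rho$, since the curve crosses the ball. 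This is a contradiction.

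The sign behaviour then follows from $\chi_{0}$. If $\chi_{0}=1$ (case (2)), the fluid fills the whole upper half ball in the limit, so the curve must exit on both sides and $\sigma_{1}-x_{1}^{\circ}$ changes sign, giving horizontal flatness. If $\chi_{0}=0$ (case (3)), both branches must leave on the same side, otherwise the region between them above the curve would carry positive density; this gives a cusp.

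The main obstacle is this contradiction step in the degenerate cases, namely controlling the flux and the length of the free boundary uniformly in $m$ when $u_0\equiv 0$. I would first try the divergence-theorem estimate above, combined with the $C^{2,\alpha}$ regularity of the free boundary of weak solutions off $\{x_2=0\}$.
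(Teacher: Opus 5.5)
Your outline matches the argument the paper has in mind. The paper omits the proof and defers to the incompressible curve-case result \cite[Theorem 3.10]{MR4595616} (in the line of V\v{a}rv\v{a}ruc\v{a}--Weiss), which goes as you propose: blow up at the scale $|\sigma(t)-x^{\circ}|$; use Corollary~\ref{corbl} and local uniform convergence to exclude $\{u_{0}>0\}$; integrate the equation against the free boundary condition $|\nabla u_{m}|=(x_{1}^{\circ}+r_{m}x_{1})\sqrt{x_{2}}$ to exclude free boundary points in $\{x_{2}>0\}$ where $\nabla u_{m}\to 0$; and get the sign statements from $u\equiv 0$ in $\{x_{2}<0\}$, the value of $\chi_{0}$, and the connectedness of each branch's set of limiting directions.

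When you write it out, two points need care.
In case (1), neither uniform convergence nor the $\sqrt{x_{2}}$-weighted flux bound excludes the horizontal directions $(\pm1,0)$ (the weight degenerates on $\{x_{2}=0\}$) or both branches approaching the same ray. You need the same $\chi_{0}$/topology argument there that you use in cases (2)--(3).
In the flux argument, your length lower bound must be for the one-sided part $\partial_{\mathit{red}}\{u_{m}>0\}$ of the crossing arc, not the topological boundary. The free boundary condition, and hence the lower bound on the boundary term, is only available on $\partial_{\mathit{red}}\{u_{m}>0\}$.
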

The proof is similar to~\cite[Theorem 3.10]{MR4595616} so we omit it. The next lemma claims that if $u$ is a subsonic weak solution which satisfies the strong Bernstein estimates, then under the curve assumption the cusp singularity does not exist.
\begin{lemma}\label{lem:cuspstp}
    Let $u$ be a weak solution of~\eqref{fb} and assume $u$ satisfies the strong Bernstein estimate 
    \begin{equation*}
        \frac{|\nabla u|^{2}}{x_{1}^{2}} \leqslant x_{2}^{+}\quad\text{ in }\Omega.
    \end{equation*}
    Let $x^{\circ}\in S^{u}$. Then $M^{x_{2}}(0^{+})=0$ implies that $u\equiv 0$ in $B_{\delta}(x^{\circ})$ for some $\delta>0$.
\end{lemma}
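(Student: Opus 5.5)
This follows the incompressible argument (cf.~\cite[Lemma 4.4]{MR2810856} and \cite{MR3225630}), which combines the divergence identity with the BV bound on $\chi_{\{u>0\}}$ from the Remark above. The idea is to show that zero weighted density forces the boundary flux through small spheres to vanish faster than any admissible rate, so $\{u>0\}$ cannot reach $x^{\circ}$.

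\emph{Step 1.} I will assume $u\not\equiv 0$ near $x^{\circ}$ and derive a contradiction. By \lemref{lemwd}(1), $M^{x_{2}}(0^{+})=0$ gives
\[
r^{-3}\int_{B_{r}(x^{\circ})}x_{2}^{+}\chi_{\{u>0\}}\,dx\to 0 .
\]
Since $\frac{|\nabla u|^{2}}{x_{1}^{2}}\leqslant x_{2}^{+}$ and $x_{1}\simeq x_{1}^{\circ}$ near $x^{\circ}$, we get $|\nabla u|\leqslant C\sqrt{x_{2}^{+}}$. Integrating this along segments from free boundary points yields $u\leqslant Cr^{3/2}$ on $B_{r}(x^{\circ})$.

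\emph{Step 2.} Next I integrate the equation over $B_{r}(x^{\circ})\cap\{u>0\}$, exactly as in the Remark preceding \propref{propcc}. The free boundary is $C^{2,\alpha}$ away from $\{x_{2}=0\}$ (weak solution), $H=\bar\rho_{0}$ there, and $|\nabla u|=x_{1}\sqrt{x_{2}}$ on $\partial_{red}\{u>0\}$. This gives
\[
\frac{1}{\bar\rho_{0}}\int_{B_{r}(x^{\circ})\cap\partial_{red}\{u>0\}}\sqrt{x_{2}^{+}}\,d\mathcal H^{1}=\int_{\partial B_{r}(x^{\circ})\cap\{u>0\}}\frac{\nabla u\cdot\nu}{x_{1}H}\,d\mathcal H^{1}
\leqslant C\int_{\partial B_{r}(x^{\circ})}\sqrt{x_{2}^{+}}\,\chi_{\{u>0\}}\,d\mathcal H^{1}.
\]
The strong Bernstein estimate is what lets the flux be bounded pointwise by $\sqrt{x_{2}}$ with the \emph{sharp} constant. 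The aim is to compare this with the left side and exploit the constant $1$ rather than a generic $C$.

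\emph{Step 3.} This is the main obstacle: converting zero density into smallness of the sphere integral, while also bounding the free boundary term from below. On one side, integrating in $r$ and using coarea together with Step 1 gives
\[
\int_{0}^{\rho}\int_{\partial B_{r}}\sqrt{x_{2}^{+}}\chi_{\{u>0\}}\,d\mathcal H^{1}dr=o(\rho^{5/2}).
\]
On the other side I would use the isoperimetric-type inequality in each small ball: because $x^{\circ}\in\partial\{u>0\}$ and $\{u>0\}$ has vanishing weighted density, the relative boundary of $\{u>0\}$ inside $B_{r}$ dominates the trace of $\{u>0\}$ on $\partial B_{r}$. The resulting differential inequality for
\[
f(r)=\int_{B_{r}}\sqrt{x_{2}^{+}}\,d|\nabla\chi_{\{u>0\}}|
\]
then reads $f(r)\leqslant C\,(\text{sphere term})$. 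Combining the two sides should give $f(\rho)=o(\rho^{3/2})$.

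\emph{Step 4.} I will then show the boundary mass cannot be that small at a free boundary point that is approached by $\{u>0\}$. The tool is non-degeneracy: take a point $y\in\{u>0\}$ close to $x^{\circ}$ with $y_{2}\sim r$, and compare via the equation and the free boundary condition (or a barrier built from the Stokes-type profile). This should yield $f(r)\geqslant cr^{3/2}$, contradicting Step 3. Hence $\{u>0\}\cap B_{\delta}(x^{\circ})=\varnothing$.

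A fallback for the lower bound, if non-degeneracy is delicate, is to apply the same divergence identity on a subdomain bounded by a level curve $\{u=\varepsilon\}$. There the flux equals the full boundary integral, and letting $\varepsilon\to 0$ forces the flux, and hence $u$, to vanish.
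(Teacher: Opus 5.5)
\textbf{There is a genuine gap: your Step~4 is where the whole lemma lives, and it is not proved.}

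\textbf{Steps 1--3 are fine but only give the easy half.} Write $f(r)=\int_{B_r(x^{\circ})\cap\partial_{\mathit{red}}\{u>0\}}\sqrt{x_2^+}\,d\mathcal{H}^1$ and $\Phi(r)=\int_{\partial B_r(x^{\circ})}\sqrt{x_2^+}\,\chi_{\{u>0\}}\,d\mathcal{H}^1$. The flux identity gives $f\leqslant\Phi$, and zero density gives $\int_0^\rho\Phi\,dr=o(\rho^{5/2})$. Hence $f(\rho)=o(\rho^{3/2})$ follows from monotonicity of $f$ alone. The ``relative boundary dominates the trace'' inequality you invoke is not an isoperimetric inequality, is not justified, and is not needed here. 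This is the same information the paper extracts after blow-up: $u_0\equiv 0$ forces the free-boundary measure of $u_m$ in $B_2$ to tend to zero.

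\textbf{Why Step~4 fails.} The content of the lemma is the opposite bound.
\begin{itemize}
\item Non-degeneracy is not available for variational or weak solutions; it is a minimizer property.
\item Any estimate $\sup_{B_r(x^{\circ})}u\geqslant cr^{3/2}$ would by itself contradict $u_0\equiv 0$, so it is as hard as the lemma.
\item The mechanism you suggest, a point $y\in\{u>0\}\cap B_r(x^{\circ})$ with $y_2\sim r$, does not exist in exactly the configuration that must be excluded. In a cusp, $\{u>0\}\cap B_r(x^{\circ})\subset\{x_2=o(r)\}$, and $f(r)=o(r^{3/2})$ is consistent with everything in Steps 1--3.
\item The level-set fallback is too vague to assess.
\end{itemize}

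\textbf{What the paper does.} The missing idea is how to turn the flux inequality into a contradiction for thin positivity sets hugging $\{x_2=0\}$. The paper argues geometrically.
\begin{itemize}
\item In the blow-up, take a connected component $V_m$ of $\{u_m>0\}$ touching the origin. By the maximum principle it reaches $\partial A$, where $A=(-1,1)\times(0,1)$.
\item If $V_m$ reaches $\partial A$ at heights bounded away from $0$, the vanishing free-boundary measure is contradicted at once.
\item Otherwise, apply the flux identity on $V_m\cap A$ with the sharp constant. This uses $H_m=\bar\rho_0$ on the reduced free boundary and $H_m\geqslant\bar\rho_0$ on $\partial A$; the latter is where the strong Bernstein estimate and the monotonicity of $t\mapsto H(t;\cdot)$ enter. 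The result is $\int_{\partial_{\mathit{red}}V_m\cap A}\sqrt{x_2^+}\,d\mathcal{H}^1\leqslant\int_{V_m\cap\partial A}\sqrt{x_2^+}\,d\mathcal{H}^1$.
\item This contradicts the fact that the low-lying trace $V_m\cap\partial A$ uniquely minimizes $\int_{\partial D}\sqrt{x_2}\,d\mathcal{H}^1$ among open $D$ with $D=V_m$ on $\partial A$.
\end{itemize}

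\textbf{An alternative repair of your own route.} You could close Steps 2--3 with an isoperimetric inequality used in the correct direction. Take the weighted isoperimetric inequality on $\{x_2>0\}$ with weight $x_2^{1/2}$ (homogeneous dimension $5/2$) and apply it to $\{u>0\}\cap B_r(x^{\circ})$. With $g(r)=\int_{B_r(x^{\circ})}\sqrt{x_2^+}\chi_{\{u>0\}}\,dx$, so that $g'=\Phi$, this gives $g(r)^{3/5}\leqslant C(f(r)+\Phi(r))\leqslant 2Cg'(r)$. Hence $(g^{2/5})'\geqslant c$, and since $g>0$ for $r>0$, $g(r)\geqslant cr^{5/2}$. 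On the other hand, zero density and Cauchy--Schwarz give $g(r)=o(r^{5/2})$, a contradiction.
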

\begin{proof}
    Suppose towards a contradiction that $x^{\circ}\in \partial\{u>0\}$, and let us take a blowup sequence 
    \begin{equation*}
        u_{m}(x)=\frac{u(x^{\circ}+r_{m}x)}{r_{m}^{3/2}}
    \end{equation*}
    converging weakly in $W_{\mathit{loc}}^{1,2}(\mathbb{R}^{2})$ to a blow-up limit $u_{0}$. Then $M^{x_{2}}(0^{+})= 0$ implies that $u_{0}\equiv 0$ in $\mathbb{R} ^{2}$. Consequently, the nonnegative Radon measure $\operatorname{div}(\tfrac{1}{x_{1}H}\nabla u)$ satisfies 
    \begin{equation*}
        \left( \operatorname{div}\left( \frac{\nabla u_{m}}{(x_{1}^{\circ}+r_{m}x_{1})H(\tfrac{r_{m}|\nabla u_{m}|^{2}}{(x_{1}^{\circ}+r_{m}x_{1})^{2}};r_{m}x_{2})} \right) \right)(B_{2})\to 0,
    \end{equation*}
    and 
    \begin{align*}
        &\left( \operatorname{div}\left( \frac{\nabla u_{m}}{(x_{1}^{\circ}+r_{m}x_{1})H(\tfrac{r_{m}|\nabla u_{m}|^{2}}{(x_{1}^{\circ}+r_{m}x_{1})^{2}};r_{m}x_{2})} \right) \right)(B_{2}) \\
        &\geqslant \int_{B_{2}\cap\partial_{\mathit{red}}\{u_{m}>0\} }\sqrt{x_{2}^{+}}\,d\mathcal{H}^{1}.
    \end{align*}
    On the other hand, there is at least one connected component $V_{m}$ of $\{u_{m}>0\}$ touching the origin and containing, by the maximum principle, a point $x=(x_{1},x_{2})\in\partial A$, where $A=(-1,1)\times(0,1)$. If 
	\begin{align*}
		\max\{x_{2}:x\in V_{m}\cap\partial A\}\not\rightarrow 0\quad\text{ as }m\to+\infty,
	\end{align*}
	we immediately reach a contradiction. On the other hand, if 
	\begin{align*}
		\max\{x_{2}\colon x\in V_{m}\cap\partial A\}\rightarrow 0,
	\end{align*} 
	then 
	\begin{align*}
		0 &=\left( \operatorname{div}\left( \frac{\nabla u_{m}}{(x_{1}^{\circ}+r_{m}x_{1})H_{m}} \right) \right)(V_{m}\cap A)\\
        &
		=\int_{\partial_{\mathrm{red}}V_{m}\cap A}\frac{\nabla u_{m}\cdot\nu}{(x_{1}^{\circ}+r_{m}x_{1})H_{m}}\, d\mathcal{H}^{1}+\int_{V_{m}\cap\partial A}\frac{\nabla u_{m}\cdot\nu}{(x_{1}^{\circ}+r_{m}x_{1})H_{m}}\, d\mathcal{H}^{1}.
	\end{align*}
	Here $H_{m}:=H(\tfrac{r_{m}|\nabla u_{m}|^{2}}{(x_{1}^{\circ}+r_{m}x_{1})^{2}};r_{m}x_{2})$. Given that $u$ is a subsonic weak solution, we infer that $\frac{r_{m}|\nabla u_{m}|^{2}}{(x_{1}^{\circ}+r_{m}x_{2})^{2}}=r_{m}x_{2}$ on the reduced free boundary, and therefore 
    \[
        H_{m}=H(r_{m}x_{2};r_{m}x_{2})=\bar{\rho}_{0}\quad\text{ on }\quad \partial_{\mathrm{red}}V_{m}\cap A.
    \]
    It follows from the growth condition and $t\mapsto H(t;\cdot)$ is a strictly decreasing function that $H_{m}\geqslant \bar{\rho}_{0}$ on $V_{m}\cap\partial A$. Consequently,
	\begin{align*}
		0 &\leqslant-\int_{\partial_{\mathrm{red}}V_{m}\cap A}\frac{|\nabla u_{m}|}{(x_{1}^{\circ}+r_{m}x_{2})H_{m}}\,d\mathcal{H}^{1}+\int_{V_{m}\cap\partial A}\frac{|\nabla u_{m}|}{(x_{1}^{\circ}+r_{m}x_{1})H_{m}}\,d\mathcal{H}^{1}\\
		&\leqslant-\int_{\partial_{\mathrm{red}}V_{m}\cap A}\sqrt{x_{2}^{+}}\,d\mathcal{H}^{1}+\int_{V_{m}\cap\partial A}\sqrt{x_{2}^{+}}\,d\mathcal{H}^{1}.
	\end{align*}
	However, this contradicts to the fact that $\int_{V_{m}\cap\partial A}\sqrt{x_{2}^{+}}dS$ is the unique minimizer of $\int_{\partial D}\sqrt{x_{2}}\,dS$ with respect to all open sets $D$ with $D=V_{m}$ on $\partial A$. Thus $V_{m}$ cannot touch the origin, a contradiction.
\end{proof}
\begin{remark}
    The results established in~\lemref{lemstp},~\lemref{lemwd}, Corollary~\ref{corbl}, and~\propref{propcc} hold under the stronger growth condition
    \begin{equation*}
        \frac{|\nabla u|^{2}}{x_{1}^{2}} \leqslant C\left( |x_{2}|+|x_{1}-x_{1}^{\circ}| \right)\quad\text{ locally in }\Omega.
    \end{equation*}
    However, in our setting, the constraint that $\{u>0\}$ is contained in $\{x_{2} \geqslant 0\}$ (recall the definition of variational solutions (5))—corresponding to the physically motivated scenario where the water region lies below the air region—renders assumption~\eqref{gastp} more natural and better suited to the domain configuration under study.
\end{remark}
\section{The blow-up analysis at the non-stagnation axis  points}\label{sec:ad}
In this section, we aim to analyze the singular asymptotic near the non-stagnation axis  points of the problem \eqref{fb}. We define 
\begin{equation*}
    A^{u}= \left\{ (x_{1},x_{2})\in \Omega\cap\partial\{u>0\}: x_{1}=0, x_{2}>0 \right\}. 
\end{equation*}
For any $x^{\circ}\in A^{u}$, let us consider the rescaling $u_{r}(x):=\frac{u(x^{\circ}+rx)}{r^{2}}$. Similar to~\lemref{lemstp}, we have the following results concerning the limit $\lim_{r\to 0^{+}}u_{r}$.
\begin{lemma}\label{lemad}
    Let $u$ be a subsonic variational solution of~\eqref{fb}, and assume the growth assumption 
    \begin{equation}\label{gasad}
        \left|\frac{|\nabla u|^{2}}{x_{1}^{2}}-x_{2}\right| \leqslant C|x_{1}-x_{1}^{\circ}|\quad\text{ in }B_{r}^{+}(x^{\circ}).
    \end{equation}
    Then for each $x^{\circ}\in A^{u}$,
    \begin{enumerate}
        \item [(1).] The limit $M^{x_{1}}(0^{+}):=\lim_{r\to 0^{+}}M^{x_{1}}(r)$ exists and is finite (Recall equation~\eqref{Mx1r} for the definition of $M^{x_{1}}(r)$).
        \item [(2).] Let $r_{m}\to 0^{+}$ as $m\to\infty$ be a vanishing sequence so that the blow-up sequence 
        \begin{equation}\label{bladp}
            u_{m}(x):=\frac{u(x^{\circ}+r_{m}x)}{r_{m}^{2}}
        \end{equation}
        converges weakly in $W_{\mathit{w},\mathit{loc}}^{1,2}(\mathbb{R} _{+}^{2})$ to a blow-up limit $u_{0}$, then $u_{0}$ is a homogeneous function of degree $2$. Moreover, if $u_{m}$ converges to $u_{0}$ weakly, then the convergence is also strong.
    \end{enumerate}
\end{lemma}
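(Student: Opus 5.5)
The proof follows the three steps of \lemref{lemstp}, now built on the monotonicity formula of \propref{propadp} and carried out in the weighted space $W_{\mathit{w},\mathit{loc}}^{1,2}$. Throughout, $x^{\circ}=(0,x_{2}^{\circ})$ with $x_{2}^{\circ}>0$. On $B_{r}^{+}(x^{\circ})$ the growth assumption~\eqref{gasad} gives
\begin{equation*}
    \frac{|\nabla u|^{2}}{x_{1}^{2}}=x_{2}+O(x_{1})=x_{2}^{\circ}+O(r).
\end{equation*}
So the argument $t=|\nabla u|^{2}/x_{1}^{2}$ of $H$ stays close to $x_{2}$, which is bounded and away from $0$. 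This has three consequences. First, $|\nabla u|\leqslant Cx_{1}$. Second, integrating from the axis, where $u=0$, gives $u\leqslant Cx_{1}r$ on $B_{r}^{+}(x^{\circ})$. Third, $u\leqslant Cr^{2}$ there, since $u(x^{\circ})=0$ and $|\nabla u|\leqslant Cx_{1}\leqslant Cr$.

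\emph{Step 1: integrability of the error terms.} I will show that $r^{-4}\big(K_{1}^{x_{1}}+K_{2}^{x_{1}}+K_{3}^{x_{1}}\big)(r)$ is bounded, hence integrable near $0$.

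For $K_{1}^{x_{1}}$, uniform subsonicity makes $\partial_{2}F$ and $\lambda'$ bounded. Together with $|x_{2}-x_{2}^{\circ}|\leqslant r$ this gives
\begin{equation*}
    |K_{1}^{x_{1}}(r)|\leqslant Cr\int_{B_{r}^{+}}x_{1}\,dx\leqslant Cr^{4}.
\end{equation*}

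$K_{2}^{x_{1}}$ and $K_{3}^{x_{1}}$ are the delicate terms, because $1/H-1/\bar{\rho}_{0}$ no longer tends to zero at $x^{\circ}$; there it equals $1/H(x_{2}^{\circ};x_{2}^{\circ})-1/\bar{\rho}_{0}$. I plan to use the identity $H(x_{2};x_{2})=\bar{\rho}_{0}$, valid because the Bernoulli relation forces $\rho=\bar{\rho}_{0}$ when $q^{2}$ equals its free-boundary value. Expanding $1/H(t;x_{2})$ around $t=x_{2}$, as in~\eqref{esmH} but centered at $t=x_{2}$ instead of $t=0$, and using~\eqref{gasad}, gives
\begin{equation*}
    \Big|\frac{1}{H}-\frac{1}{\bar{\rho}_{0}}\Big|\leqslant C\Big|\frac{|\nabla u|^{2}}{x_{1}^{2}}-x_{2}\Big|\leqslant Cx_{1}\leqslant Cr.
\end{equation*}
With $u\leqslant Cx_{1}r$, $|\nabla u|\leqslant Cx_{1}$ and $\mathcal{H}^{1}(\partial B_{r}^{+})\leqslant Cr$, this yields
\begin{equation*}
    |K_{2}^{x_{1}}(r)|\leqslant C\int_{\partial B_{r}^{+}}\frac{r\,(x_{1}r)\,x_{1}}{x_{1}}\,d\mathcal{H}^{1}\leqslant Cr^{4},
\end{equation*}
and similarly $|K_{3}^{x_{1}}(r)|\leqslant Cr^{4}$. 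Hence $M^{x_{1}}$ is monotone up to an integrable perturbation, so $M^{x_{1}}(0^{+})$ exists. It is finite because $r^{-3}I(r)$ and $r^{-4}J(r)$ are bounded by the same growth bounds.

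\emph{Step 2: homogeneity.} I will integrate~\eqref{Mx1r1} from $r_{m}\sigma$ to $r_{m}\tau$ and change variables, exactly as in~\eqref{rescl1}. Because the weight $1/x_{1}$ is invariant under this scaling up to powers of $r_{m}$, the left side becomes
\begin{equation*}
    \int_{B_{\tau}^{+}\setminus B_{\sigma}^{+}}|x|^{-5}\frac{(\nabla u_{m}\cdot x-2u_{m})^{2}}{x_{1}H_{m}}\,dx,
\end{equation*}
with $H_{m}=H(|\nabla u_{m}|^{2}/x_{1}^{2};x_{2}^{\circ}+r_{m}x_{2})$, which is bounded above and below. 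The right side tends to $0$. Weak lower semicontinuity of the weighted $L^{2}$ norm then gives $\nabla u_{0}\cdot x=2u_{0}$ a.e. Since $u_{0}(x^{\circ}\text{-rescaled origin})=0$, it follows that $u_{0}$ is homogeneous of degree $2$.

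\emph{Step 3: strong convergence, the main obstacle.} Here $H_{m}$ does not tend to $\bar{\rho}_{0}$. Instead, \eqref{gasad} gives $|\nabla u_{m}|^{2}/x_{1}^{2}=x_{2}^{\circ}+O(r_{m})$ in the rescaled variables, so $H_{m}\to H(x_{2}^{\circ};x_{2}^{\circ})=\bar{\rho}_{0}$ uniformly on $\{u_{m}>0\}$, now by the estimate of Step~1. The limit equation is therefore $\operatorname{div}\big(\nabla u_{0}/(x_{1}\bar{\rho}_{0})\big)=0$ in $\{u_{0}>0\}$. I will then repeat the argument of \lemref{lemstp} with weight $1/x_{1}$: test the equation with $u_{m}\eta$, pass to the limit in $\int u_{m}\nabla u_{m}\cdot\nabla\eta/(x_{1}H_{m})$, and conclude
\begin{equation*}
    \int\frac{|\nabla u_{m}|^{2}\eta}{x_{1}}\to\int\frac{|\nabla u_{0}|^{2}\eta}{x_{1}}.
\end{equation*}
Passing to the limit in $u_{m}\nabla u_{m}/x_{1}$ needs strong $L_{\mathit{w}}^{2}$ convergence of $u_{m}$. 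I expect this to be the hardest point, because of the degenerate weight near the axis. The bound $u_{m}\leqslant Cx_{1}$ controls $u_{m}^{2}/x_{1}\leqslant Cx_{1}$, which gives uniform integrability up to $\{x_{1}=0\}$. Boundary terms on $\{x_{1}=\varepsilon\}$ vanish by~\eqref{com}, as in the energy identity~\eqref{eni}.
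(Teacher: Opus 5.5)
Your proposal is correct and follows the paper's proof essentially step for step. You bound $|K_i^{x_1}(r)|\leqslant Cr^{4}$, controlling $K_2^{x_1}$ and $K_3^{x_1}$ by expanding $1/H$ around $t=x_2$ and using $H(x_2;x_2)=\bar{\rho}_0$; you get degree-$2$ homogeneity from the rescaled monotonicity identity and weak lower semicontinuity; and you get strong convergence from $1/H_m\to 1/\bar{\rho}_0$ together with testing the equation by $u_m\eta$. Your extra details also fill in points the paper leaves implicit: restricting the convergence of $H_m$ to $\{u_{m}>0\}$, which is the only place it actually holds; the bound $u\leqslant Cx_1 r$, which gives uniform integrability of $u_m^2/x_1$ up to the axis; and disposing of the axis boundary terms via \eqref{com}.
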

\begin{remark}
    Let $u$ be a subsonic variational solution of~\eqref{fb}. Then the growth assumption~\eqref{gasad} implies that 
    \begin{equation*}
        |\nabla u(x)| \leqslant Cr\quad\text{ and }\quad u(x) \leqslant Cr^{2}\quad\text{ in }B_{r}^{+}(x^{\circ}).
    \end{equation*}
\end{remark}
\begin{proof}
    The idea of the proof follows exactly the same as in~\lemref{lemstp}. 
    We first prove the existence and finiteness of $M^{x_{1}}(0^{+})$. Recalling $K_{1}^{x_{1}}(r)$ defined in the equation~\eqref{K1x1r}, we may deduce from the uniform subsonic condition~\eqref{subs} that 
    \begin{equation}\label{esmad1}
        \left|\partial_{2}F(\tfrac{|\nabla u|^{2}}{x_{1}^{2}};x_{2})\right| \leqslant C\frac{|\nabla u|^{2}}{x_{1}^{2}} \leqslant C\qquad\text{ in }B_{r}^{+}(x^{\circ}).
    \end{equation}
    Similarly, it follows from~\eqref{lbd'} that 
    \begin{equation}\label{esmad2}
        |\lambda'(x_{2})| \leqslant C\quad\text{ in }B_{r}^{+}(x^{\circ}).
    \end{equation}
    Thus, we may deduce from~\eqref{esmad1} and~\eqref{esmad2} that 
    \begin{equation}\label{esmad3}
        |K_{1}^{x_{1}}(r)| \leqslant C\int_{B_{r}^{+}(x^{\circ})}|x_{1}||x_{2}-x_{2}^{\circ}|\,dx \leqslant Cr^{4}.
    \end{equation}
    Moreover, since $H(x_{2};x_{2})=\bar{\rho}_{0}$, we obtain that 
    \begin{align*}
        \begin{split}
            &\frac{1}{x_{1}H(\tfrac{|\nabla u|^{2}}{x_{1}^{2}};x_{2})}-\frac{1}{x_{1}\bar{\rho}_{0}}\\
            &=\frac{1}{x_{1}H(\tfrac{|\nabla u|^{2}}{x_{1}^{2}};x_{2})}-\frac{1}{x_{1}H(x_{2};x_{2})}\\
            &=\frac{1}{x_{1}}\left( \int_{0}^{1}\frac{-\partial_{1}H(\tfrac{\theta|\nabla u|^{2}}{x_{1}^{2}}+(1-\theta)x_{2};x_{2})}{H^{2}(\tfrac{\theta|\nabla u|^{2}}{x_{1}^{2}}+(1-\theta)x_{2};x_{2})}\,d\theta \right)\left( \frac{|\nabla u|^{2}}{x_{1}^{2}}-x_{2} \right).
        \end{split}
    \end{align*} 
    Thus, we have 
    \begin{equation}\label{esmad4}
        |K_{2}^{x_{1}}(r)| \leqslant C\int_{\partial B_{r}^{+}(x^{\circ})}|u||\nabla u|\,d\mathcal{H}^{1} \leqslant Cr^{4},
    \end{equation}
    and 
    \begin{equation}\label{esmad5}
        |K_{3}^{x_{1}}(r)| \leqslant C\int_{\partial B_{r}^{+}(x^{\circ})}u^{2}\,d\mathcal{H}^{1} \leqslant Cr^{4}.
    \end{equation}
    It follows from~\eqref{esmad3},~\eqref{esmad4} and~\eqref{esmad5} that 
    \begin{equation*}
        \left|r^{-4}\sum_{i=1}^{3}K_{i}^{x_{1}}(r)\right| \leqslant C.
    \end{equation*}
    Thus the limit $M^{x_{1}}(0^{+})$ exists and is finite. Now for any $0<\sigma<\tau<\infty$ and any vanishing sequence $r_{m}\to 0^{+}$ as $m\to\infty$, we integrate the equation~\eqref{Mx1r} from $r_{m}\sigma$ to $r_{m}\tau$ to get 
    \begin{align}\label{resclad}
        \begin{split}
            &2\int_{B_{\tau}^{+}\setminus B_{\sigma}^{+}}|x|^{-5}\frac{(\nabla u_{m}\cdot x-2u_{m})^{2}}{x_{1}H(\tfrac{|\nabla u_{m}|^{2}}{x_{1}^{2}};x_{2}^{\circ}+r_{m}x_{2})}\,dx \\
            &=M^{x_{1}}(r_{m}\tau)-M^{x_{1}}(r_{m}\sigma)\\
            &-\int_{r_{m}\sigma}^{r_{m}\tau}r^{-4}\sum_{i=1}^{3}K_{i}^{x_{1}}(r)\,dr.
        \end{split}
    \end{align}
    The existence of $M^{x_{1}}(0^{+})$ and the integrability of $r\mapsto\sum_{i=1}^{3}K_{i}^{x_{1}}(r)$ imply that the right hand side of the equation~\eqref{resclad} converges to $0$ as $m\to\infty$.  We now prove that 
    \begin{equation}\label{limadhm}
        \lim_{m\to\infty}\frac{1}{H\left(\tfrac{|\nabla u_{m}|^{2}}{x_{1}^{2}};x_{2}^{\circ}+r_{m}x_{2}\right)}=\frac{1}{\bar{\rho}_{0}}.
    \end{equation}
    Indeed, since $H(x_{2}^{\circ}+r_{m}x_{2};x_{2}^{\circ}+r_{m}x_{2})=\bar{\rho}_{0}$, we have by a direct calculation that 
    \begin{align}\label{hm-h0ad}
        \begin{split}
            &\frac{1}{H(\tfrac{|\nabla u_{m}|^{2}}{x_{1}^{2}};x_{2}^{\circ}+r_{m}x_{2})}-\frac{1}{H(x_{2}^{\circ}+r_{m}x_{2};x_{2}^{\circ}+r_{m}x_{2})}\\
            &=\int_{0}^{1}\frac{d}{d\theta}\left( \frac{1}{H\left(\tfrac{\theta|\nabla u_{m}|^{2}}{x_{1}^{2}}+(1-\theta)(x_{2}^{\circ}+r_{m}x_{2});x_{2}^{\circ}+r_{m}x_{2}\right)} \right)\,d\theta\\
            &=\int_{0}^{1}\frac{-\partial_{1} H}{H^{2}}\left( \frac{|\nabla u_{m}|^{2}}{x_{1}^{2}}-(x_{2}^{\circ}+r_{m}x_{2}) \right)\,d\theta,
        \end{split}
    \end{align}
    it now follows from~\eqref{gasad} that 
    \begin{equation*}
        \left|\frac{|\nabla u_{m}|^{2}}{x_{1}^{2}}-(x_{2}^{\circ}+r_{m}x_{2})\right| \leqslant Cr_{m}\quad\text{ in }B_{1}^{+}.
    \end{equation*}
    This implies that 
    \begin{equation*}
        \left|\frac{1}{H\left(\tfrac{|\nabla u_{m}|^{2}}{x_{1}^{2}};x_{2}^{\circ}+r_{m}x_{2}\right)}-\frac{1}{H(x_{2}^{\circ}+r_{m}x_{2};x_{2}^{\circ}+r_{m}x_{2})}\right| \leqslant Cr_{m},
    \end{equation*}
    which proves~\eqref{limadhm} by passing to the limit as $m\to\infty$. Therefore, passing to the limit as $m\to\infty$ in equation~\eqref{resclad}, we have that $u_{0}$ is a homogeneous function of degree $2$. It remains to prove that $u_{m}$ converges $u_{0}$ strongly in $W_{\mathit{loc}}^{1,2}$. To this end, we first show that 
    \begin{equation*}
        \frac{1}{H\left(\tfrac{|\nabla u_{m}|^{2}}{x_{1}^{2}};x_{2}^{\circ}+r_{m}x_{2}\right)}\to\frac{1}{\bar{\rho}_{0}}\quad\text{ strongly in }L_{\mathit{loc}}^{2}(\mathbb{R} _{+}^{2}).
    \end{equation*}
    Thanks to~\eqref{limadhm}, it suffices to prove that 
    \begin{equation}\label{cvgadhm1}
        \int_{B_{1}}\left|\frac{1}{H\left(\tfrac{|\nabla u_{m}|^{2}}{x_{1}^{2}};x_{2}^{\circ}+r_{m}x_{2}\right)}-\frac{1}{\bar{\rho}_{0}}\right|\,dx\to 0\quad\text{ as }m\to\infty.
    \end{equation}
    A same calculation as in~\eqref{hm-h0ad} gives that 
    \begin{equation*}
        \int_{B_{1}}\left|\frac{1}{H\left(\tfrac{|\nabla u_{m}|^{2}}{x_{1}^{2}};x_{2}^{\circ}+r_{m}x_{2}\right)}-\frac{1}{\bar{\rho}_{0}}\right|\,dx \leqslant Cr_{m}\omega_{2},
    \end{equation*}
    where $\omega_{2}=|B_{1}|$, and this gives~\eqref{cvgadhm1} by passing to the limit as $m\to\infty$. The strong $L^{2}$ convergence of $H(|\nabla u_{m}|^{2}/x_{1}^{2};x_{2}^{\circ}+r_{m}x_{2})$ to $\bar{\rho}_{0}$ and the weak $L_{\mathit{w}}^{2}$ convergence of $\nabla u_{m}$ give that
    \begin{equation*}
        \lim_{m\to\infty}\int_{\mathbb{R} _{+}^{2}}\frac{u_{m}\nabla u_{m}\cdot\nabla\eta}{x_{1}H\left(\tfrac{|\nabla u_{m}|^{2}}{x_{1}^{2}};x_{2}^{\circ}+r_{m}x_{2}\right)}\,dx=\int_{\mathbb{R} _{+}^{2}}\frac{u_{0}\nabla u_{0}\cdot\nabla\eta}{x_{1}\bar{\rho}_{0}}\,dx,
    \end{equation*}
    for any test function $\eta\in C_{0}^{\infty}(\mathbb{R} ^{2})$. Therefore,
    \begin{align*}
        &o(1)+\int_{\mathbb{R}_{+} ^{2}}\frac{1}{x_{1}\bar{\rho}_{0}}|\nabla u_{m}|^{2}\eta\,dx\\
        &=\int_{\mathbb{R}_{+} ^{2}}\frac{|\nabla u_{m}|^{2}}{x_{1}H(\tfrac{|\nabla u_{m}|^{2}}{x_{1}^{2}};x_{2}^{\circ}+r_{m}x_{2})}\eta\,dx\\
        &=-\int_{\mathbb{R}_{+} ^{2}}\frac{u_{m}\nabla u_{m}\nabla \eta}{x_{1}H(\tfrac{|\nabla u_{m}|^{2}}{x_{1}^{2}};x_{2}^{\circ}+r_{m}x_{2})}\,dx\\
        &=\to-\int_{\mathbb{R}_{+} ^{2}}\frac{u_{0}\nabla u_{0}\cdot\nabla \eta}{x_{1}\bar{\rho}_{0}}\,dx\\
        &=\int_{\mathbb{R}_{+} ^{2}}\frac{1}{x_{1}\bar{\rho}_{0}}|\nabla u_{0}|^{2}\eta\,dx.
    \end{align*}
    Here we used the fact that $\operatorname{div}(\frac{1}{x_{1}}\nabla u_{0})=0$ in $\{u_{0}>0\}$ in the distribution sense.
\end{proof}
We now turn to study the weighted density $M^{x_{1}}(0^{+})$ at the non-stagnation axis points.
\begin{lemma}[Weighted density at the non-stagnation axis points]
    Let $u$ be a subsonic variational solution of~\eqref{fb}, let $x^{\circ}\in A^{u}$ and assume that $u$ satisfies the growth assumption~\eqref{gasad}. Then 
    \begin{enumerate}
        \item [(1).] $M^{x_{1}}(0^{+})$ takes the value 
        \begin{equation}\label{wdenad}
            M^{x_{1}}(0^{+})=\frac{x_{2}^{\circ}}{\bar{\rho}_{0}}\lim_{r\to 0^{+}}r^{-3}\int_{B_{r}^{+}(x^{\circ})}x_{1}\chi_{\left\{ u>0 \right\} }\,dx,
        \end{equation}
        and $M^{x_{1}}(0^{+})=0$ implies that $u_{0}\equiv 0$ in $\mathbb{R} ^{2}$ for $u_{m}$ defined in equation~\eqref{bladp}.
        \item [(2).] Let $u_{m}$ be a blow-up sequence defined in the equation~\eqref{bladp}, let $u_{0}$ be the $W_{\mathit{w},\mathit{loc}}^{1,2}$ strong limit of $u_{m}$ and let $\chi_{0}$ be the $L_{\mathit{loc}}^{1}$ strong limit of $\chi_{\left\{ u_{m}>0 \right\} }$. Then $u_{0}$ is a homogeneous solution of 
        \begin{align}\label{fvadu0}
            \begin{split}
                0&=\frac{1}{\bar{\rho}_{0}}\int_{\mathbb{R} _{+}^{2}}\frac{1}{x_{1}}(|\nabla u_{0}|^{2}\operatorname{div}\phi-2\nabla u_{0}D\phi\nabla u_{0})-\frac{1}{x_{1}^{2}}|\nabla u_{0}|^{2}\phi_{1}\,dx\\
                &+\frac{x_{2}^{\circ}}{\bar{\rho}_{0}}\int_{\mathbb{R} _{+}^{2}}(x_{1}\chi_{0}\operatorname{div}\phi+\chi_{0}\phi_{1})\,dx
            \end{split}
        \end{align}
        for each $\phi=(\phi_{1},\phi_{2})\in C_{0}^{1}(\mathbb{R} ^{2};\mathbb{R} ^{2})$ so that $\phi_{1}=0$ on $\{x_{1}=0\}$.
    \end{enumerate}
\end{lemma}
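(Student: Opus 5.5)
The plan is to follow the proof of \lemref{lemwd} closely, replacing the Stokes scaling by the axis scaling $u_m(x)=u(x^{\circ}+r_m x)/r_m^{2}$ and the Laplacian by the degenerate operator $\operatorname{div}(\frac{1}{x_1}\nabla\cdot)$.

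\emph{Part (1).} For fixed $r>0$ we rewrite $M^{x_1}(rr_m)$ in the rescaled variables. Using $I=E_H-K_1^{x_2}$ and the homogeneity of $E_H$ from part (b) of the remark following the Poho\v{z}aev identity, we get
\begin{equation*}
M^{x_1}(rr_m)=r^{-3}E_H(u_m;B_r^+)-2r^{-4}\int_{\partial B_r^+}\frac{u_m^2}{x_1\bar\rho_0}\,d\mathcal{H}^1-(rr_m)^{-3}K_1^{x_2}(rr_m).
\end{equation*}
Here
\begin{equation*}
E_H(u_m;B_r^+)=\int_{B_r^+}\frac{|\nabla u_m|^2}{x_1H_m}+\frac{x_1(x_2^{\circ}+r_mx_2)}{\bar\rho_0}\chi_{\{u_m>0\}}\,dx .
\end{equation*}
Under \eqref{gasad} the quantities $|\nabla u|^2/x_1^2$ and $x_2$ are bounded near $x^{\circ}$. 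The integrand of $K_1^{x_2}$ is $x_1$ times the difference of two $\tau$-integrals whose upper limits, $|\nabla u|^2/x_1^2$ and $x_2$, differ by $O(x_1)$. Its total is therefore $O(x_1^2)$, so $|K_1^{x_2}(s)|\le Cs^4$, and the last term tends to $0$.

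By \lemref{lemad}, $u_m\to u_0$ strongly in $W^{1,2}_{w,loc}$ and $1/H_m\to 1/\bar\rho_0$ in $L^2_{loc}$. The gradient term thus converges to $\frac{1}{\bar\rho_0}\int_{B_r^+}\frac{|\nabla u_0|^2}{x_1}$. By the energy identity for the limit, which uses $\operatorname{div}(\frac1{x_1}\nabla u_0)=0$ in $\{u_0>0\}$ and compatibility on the axis, this equals $\frac1{\bar\rho_0}\int_{\partial B_r^+}\frac{u_0\nabla u_0\cdot\nu}{x_1}$. Degree-2 homogeneity gives $\nabla u_0\cdot\nu=2u_0/r$, which cancels the $J$ term. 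Hence
\begin{equation*}
M^{x_1}(0^+)=\frac{x_2^{\circ}}{\bar\rho_0}\lim_m r^{-3}\int_{B_r^+}x_1\chi_{\{u_m>0\}}\,dx .
\end{equation*}
Scaling back gives \eqref{wdenad}, since the $r_mx_2$ correction is $O(r_m)$.

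If $M^{x_1}(0^+)=0$, then $\chi_{\{u_m>0\}}\to0$ in $L^1_{loc}$ with weight $x_1$, so $|\{u_0>0\}|=0$ and $u_0\equiv0$ by continuity. Continuity of $u_0$ follows from $u_m\le C|x|^2$ and the uniform gradient bound $|\nabla u_m|\le C$ on compacts, which come from the remark after \lemref{lemad}.

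\emph{Part (2).} Take $\phi\in C^1_0$ with $\phi_1=0$ on the axis and insert $\phi_m(x)=\phi((x-x^{\circ})/r_m)$ into \eqref{fv}. After changing variables, every term scales like $r_m^{3}$, because $x_1$ rescales by $r_m$ and $\nabla u$ scales like $r_m$. We split $F=\frac{t}{H}-\int_0^t\partial_\tau(1/H)\tau\,d\tau$ and $\lambda(x_2)=\frac{x_2}{\bar\rho_0}+(\text{$\tau$-integral term})$.
\begin{itemize}
\item The principal terms converge by the strong convergence and the $L^2$ convergence of $1/H_m$. They give $\frac1{\bar\rho_0}\int\frac1{x_1}(|\nabla u_0|^2\operatorname{div}\phi-2\nabla u_0D\phi\nabla u_0)$ and $\frac{x_2^{\circ}}{\bar\rho_0}\int x_1\chi_0\operatorname{div}\phi$.
\item The $\phi_1$ term, $\int (F-2t\partial_1F+\lambda\chi)\phi_1$, converges to $\frac{1}{\bar\rho_0}\int(-\frac{|\nabla u_0|^2}{x_1^2}+x_2^{\circ}\chi_0)\phi_1$.
\item The $\phi_2$ term, $x_1(\partial_2F+\lambda'\chi)$, is bounded by \eqref{esmad1}--\eqref{esmad2}. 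After rescaling it carries an extra factor $r_m$ and vanishes.
\end{itemize}

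\textbf{Main obstacle.} The delicate step is the $\phi_1$ term together with the remainder pieces, because the weight $1/x_1^2$ is singular on the axis. We must show that the non-principal remainders, namely the $\tau$-integral parts of $F$ and $\lambda$ multiplied by $x_1\operatorname{div}\phi$ or by $\phi_1$, vanish as $m\to\infty$. The approach is the difference estimate already used for $K_1^{x_2}$: the two $\tau$-integrals have upper limits $|\nabla u|^2/x_1^2$ and $x_2$ that differ by $O(x_1)$ by \eqref{gasad}. Combined with $\phi_1=O(x_1)$ near the axis, this gives integrable bounds of order $r_m$. Dominated convergence then makes these pieces vanish and yields \eqref{fvadu0}.
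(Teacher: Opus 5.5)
Your proposal is correct and follows the paper's proof essentially step for step. In part (1) you rewrite $M^{x_1}(rr_m)$ through the scaling of $E_H$, discard the $K_1^{x_2}$ remainder, and cancel the boundary term using the energy identity and degree-$2$ homogeneity; in part (2) you insert $\phi((x-x^{\circ})/r_m)$ into \eqref{fv} and show that the $\tau$-integral remainders and the $\phi_2$ term are $O(r_m)$. Your direct bound $|K_1^{x_2}(s)|\le Cs^4$ is a cleaner form of the paper's estimate of that remainder: the two upper limits differ by $O(x_1)$ on $\{u>0\}$, and the integrand vanishes a.e. on $\{u=0\}$, a case you should state explicitly.
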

\begin{proof}
    (1). It follows from the definition of $M^{x_{1}}(r)$ that
    \begin{align}\label{rescalad1}
        \begin{split}
            M^{x_{1}}(rr_{m})&=r^{-3}E_{H}(u_{m};B_{r}^{+})\\
            &-2r^{-4}\int_{\partial B_{r}^{+}}\frac{1}{x_{1}\bar{\rho}_{0}}u_{m}^{2}\,d\mathcal{H}^{1}\\
            &+r^{-3}\int_{B_{r}^{+}}x_{1}\int_{0}^{x_{2}^{\circ}+r_{m}x_{2}}\tfrac{\partial}{\partial\tau}\left( \tfrac{1}{H(\tau;x_{2}^{\circ}+r_{m}x_{2})} \right)\tau\,d\tau\chi_{\left\{ u_{m}>0 \right\} }\,dx\\
            &-r^{-3}\int_{B_{r}^{+}}x_{1}\int_{0}^{\tfrac{|\nabla u_{m}|^{2}}{x_{1}^{2}}}\tfrac{\partial}{\partial\tau}\left( \tfrac{1}{H(\tau;x_{2}^{\circ}+r_{m}x_{2})} \right)\tau\,d\tau dx,
        \end{split}
    \end{align}   
    where 
    \begin{align*}
        E_{H}(u_{m};B_{r}^{+})&=\int_{B_{r}^{+}}x_{1}\Bigg[ \frac{|\nabla u_{m}|^{2}}{x_{1}^{2}H(\tfrac{|\nabla u_{m}|^{2}}{x_{1}^{2}};x_{2}^{\circ}+r_{m}x_{2})}\\
        &\qquad\qquad\qquad+\frac{(x_{2}^{\circ}+r_{m}x_{2})}{\bar{\rho}_{0}}\chi_{\left\{ u_{m}>0 \right\} } \Bigg]\,dx.
    \end{align*}
    Since $\frac{\partial}{\partial\tau}\left( \frac{1}{H(\tau;x_{2})} \right) \geqslant 0$, we have that
    \begin{align*}
        &\int_{B_{r}^{+}}x_{1}\int_{0}^{x_{2}^{\circ}+r_{m}x_{2}}\tfrac{\partial}{\partial\tau}\left( \tfrac{1}{H(\tau;x_{2}^{\circ}+r_{m}x_{2})} \right)\tau\,d\tau\chi_{\left\{ u_{m}>0 \right\} }\,dx\\
        & \leqslant \int_{B_{r}^{+}}x_{1}\int_{0}^{x_{2}^{\circ}+r_{m}x_{2}}\tfrac{\partial}{\partial\tau}\left( \tfrac{1}{H(\tau;x_{2}^{\circ}+r_{m}x_{2})} \right)\tau\,d\tau\,dx.
    \end{align*} 
    It follows from~\eqref{gasad} that 
    \begin{equation*}
        \left|\frac{|\nabla u_{m}|^{2}}{x_{1}^{2}}-(x_{2}^{\circ}+r_{m}x_{2})\right| \leqslant Cr_{m}\qquad\text{ in }B_{r}^{+}.
    \end{equation*}
    Therefore, we obtain  
    \begin{equation*}
        \left|r^{-3}\int_{B_{r}^{+}}x_{1}\int_{0}^{x_{2}^{\circ}+r_{m}x_{2}-\tfrac{|\nabla u_{m}|^{2}}{x_{1}^{2}}}\tfrac{\partial}{\partial\tau}\left( \tfrac{1}{H(\tau;x_{2}^{\circ}+r_{m}x_{2})} \right)\tau\,d\tau\,dx\right| \leqslant Cr_{m}^{2}.
    \end{equation*} 
    Passing to the limit as $m\to\infty$ in equation~\eqref{rescalad1} gives
    \begin{align*}
        M^{x_{1}}(0^{+})&=\frac{1}{\bar{\rho}_{0}}\left(r^{-3}\int_{B_{r}^{+}}\frac{|\nabla u_{0}|^{2}}{x_{1}}\,dx-2r^{-4}\int_{\partial B_{r}^{+}}\frac{1}{x_{1}}u_{0}^{2}\,d\mathcal{H}^{1}\right)\\
        &+\frac{x_{2}^{\circ}}{\bar{\rho}_{0}}\lim_{m\to\infty}r^{-3}\int_{B_{r}^{+}}x_{1}\chi_{\left\{ u_{m}>0 \right\} }\,dx.
    \end{align*}
    Note that 
    \begin{align*}
        \int_{B_{r}^{+}}\frac{|\nabla u_{0}|^{2}}{x_{1}}\,dx&=-\int_{B_{r}^{+}}u_{0}\operatorname{div}\left( \frac{\nabla u_{0}}{x_{1}} \right)\,dx+\int_{\partial B_{r}^{+}}\frac{u_{0}\nabla u_{0}\cdot\nu}{x_{1}}\,d\mathcal{H}^{1}\\
        &=\int_{\partial B_{r}^{+}}\frac{u_{0}\nabla u_{0}\cdot\nu}{x_{1}}\,d\mathcal{H}^{1},
    \end{align*}
    we have 
    \begin{align*}
        M^{x_{1}}(0^{+})&=\frac{r^{-3}}{\bar{\rho}_{0}}\int_{\partial B_{r}^{+}}\frac{u_{0}}{x_{1}}\left[ \nabla u_{0}\cdot\nu-2\frac{u_{0}}{r} \right]\,d\mathcal{H}^{1}\\
        &+\frac{x_{2}^{\circ}}{\bar{\rho}_{0}}\lim_{m\to\infty}r^{-3}\int_{B_{r}^{+}}\frac{x_{1}(x_{2}^{\circ}+r_{m}x_{2})}{\bar{\rho}_{0}}\chi_{\left\{ u_{m}>0 \right\} }\,dx\\
        &=\frac{x_{2}^{\circ}}{\bar{\rho}_{0}}\lim_{m\to\infty}r^{-3}\int_{B_{r}^{+}}\frac{x_{1}(x_{2}^{\circ}+r_{m}x_{2})}{\bar{\rho}_{0}}\chi_{\left\{ u_{m}>0 \right\} }\,dx,
    \end{align*}
    where we used the homogeneity of the blow-up limit $u_{0}$. This proves~\eqref{wdenad}.

    (2). Let now $\phi=(\phi_{1},\phi_{2})\in C_{0}^{1}(\mathbb{R} ^{2};\mathbb{R} ^{2})$ with $\phi_{1}=0$ on $\{x_{1}=0\}$, and we set $\phi_{m}(x)=\phi(\tfrac{x-x^{\circ}}{r_{m}})$. Then it follows from the definition of subsonic variational solution that 
    \begin{align*}
        0&=\int_{\Omega}x_{1}\left[ F( \tfrac{|\nabla u|^{2}}{x_{1}^{2}};x_{2}) + \lambda(x_{2})\chi_{\left\{ u>0 \right\} }  \right] \operatorname{div}\phi_{m}\,dx\\
        &-2\int_{\Omega}\frac{\nabla uD\phi_{m}\nabla u}{x_{1}H( \tfrac{|\nabla u|^{2}}{x_{1}^{2}};x_{2})}\,dx\\
        &+\int_{\Omega}\left[ F(\tfrac{|\nabla u|^{2}}{x_{1}^{2}};x_{2})-\frac{2|\nabla u|^{2}}{x_{1}^{2}H(\tfrac{|\nabla u|^{2}}{x_{1}^{2}};x_{2})}\right]\phi_{1}(\tfrac{x-x^{\circ}}{r_{m}})\,dx\\
        &+\int_{\Omega}\lambda(x_{2})\chi_{\left\{ u>0 \right\} }\phi_{1}\left( \tfrac{x_{1}-x_{1}^{\circ}}{r_{m}} \right)\,dx\\
        &+\int_{\Omega}x_{1}\phi_{2}(\tfrac{x-x^{\circ}}{r_{m}})\left[\partial_{2}F(\tfrac{|\nabla u|^{2}}{x_{1}^{2}};x_{2})+\lambda'(x_{2})\chi_{\left\{ u>0 \right\} } \right]\,dx.
    \end{align*}
    Changing the variables, we have  
    \begin{align*}
        0&=\int_{\Omega_{m}}\frac{|\nabla u_{m}|^{2}}{x_{1}H(\tfrac{|\nabla u_{m}|^{2}}{x_{1}^{2}};x_{2}^{\circ}+r_{m}x_{2})}\operatorname{div}\phi\,dx\\
        &-2\int_{\Omega_{m}}\frac{\nabla u_{m}D\phi\nabla u_{m}}{x_{1}H(\tfrac{|\nabla u_{m}|^{2}}{x_{1}^{2}};x_{2}^{\circ}+r_{m}x_{2})}\,dx\\
        &-\int_{\Omega_{m}}\frac{|\nabla u_{m}|^{2}\phi_{1}}{x_{1}^{2}H(\tfrac{|\nabla u_{m}|^{2}}{x_{1}^{2}};x_{2}^{\circ}+r_{m}x_{2})}\,dx\\
        &+\frac{1}{\bar{\rho}_{0}}\int_{\Omega_{m}}x_{1}(x_{2}^{\circ}+r_{m}x_{2})\chi_{\left\{ u_{m}>0 \right\} }\operatorname{div}\phi\,dx\\
        &+\frac{1}{\bar{\rho}_{0}}\int_{\Omega_{m}}(x_{2}^{\circ}+r_{m}x_{2})\chi_{\left\{ u_{m}>0 \right\} }\phi_{1}\,dx\\
        &+\sum_{i=1}^{3}I_{i}(r_{m}),
    \end{align*}
    where 
    \begin{align*}
        I_{1}(r_{m})=&\int_{\Omega_{m}}x_{1}\int_{0}^{x_{2}^{\circ}+r_{m}x_{2}}\tfrac{\partial}{\partial\tau}\left( \tfrac{1}{H(\tau;x_{2}^{\circ}+r_{m}x_{2})} \right)\tau\,d\tau\chi_{\left\{ u_{m}>0 \right\} }\operatorname{div}\phi\,dx \\
        &-\int_{\Omega_{m}}x_{1}\int_{0}^{\tfrac{|\nabla u_{m}|^{2}}{x_{1}^{2}}}\tfrac{\partial}{\partial\tau}\left( \tfrac{1}{H(\tau;x_{2}^{\circ}+r_{m}x_{2})} \right)\tau\,d\tau \operatorname{div}\phi\,dx, 
    \end{align*}
    and 
    \begin{align*}
        I_{2}(r_{m})=&\int_{\Omega_{m}}x_{1}\int_{0}^{x_{2}^{\circ}+r_{m}x_{2}}\tfrac{\partial}{\partial\tau}\left( \tfrac{1}{H(\tau;x_{2}^{\circ}+r_{m}x_{2})} \right)\tau\,d\tau\chi_{\left\{ u_{m}>0 \right\} }\phi_{1}\,dx \\
        &-\int_{\Omega_{m}}x_{1}\int_{0}^{\tfrac{|\nabla u_{m}|^{2}}{x_{1}^{2}}}\tfrac{\partial}{\partial\tau}\left( \tfrac{1}{H(\tau;x_{2}^{\circ}+r_{m}x_{2})} \right)\tau\,d\tau \phi_{1}\,dx, 
    \end{align*}
    and 
    \begin{align*}
       I_{3}(r_{m})=&r_{m}\int_{\Omega_{m}}x_{1}\phi_{2}\partial_{2}F\left( \tfrac{|\nabla u_{m}|^{2}}{x_{1}^{2}};x_{2}^{\circ}+r_{m}x_{2} \right)\,dx\\
       &+r_{m}\int_{\Omega_{m}}x_{1}\phi_{2}\lambda'(x_{2}^{\circ}+r_{m}x_{2})\phi_{2}\,dx. 
    \end{align*}
    A similar argument as in the proof of (1) gives that $|I_{i}(r_{m})| \leqslant Cr_{m}$ for $i=1$, $2$ and $3$. Therefore, the equation in~\eqref{fvadu0} follows by passing to the limit in the above identity and using the strong convergence $u_{m}$ and $H(\tfrac{|\nabla u_{m}|^{2}}{x_{1}^{2}};x_{2}^{\circ}+r_{m}x_{2})$.
\end{proof}
As a direct corollary, one has 
\begin{corollary}[The blow-up limits for non-stagnation axis points]\label{corblad}
    Let $u$ be a subsonic variational solution of~\eqref{fb}, let $x^{\circ}\in A^{u}$ and let $\delta$ be given as in~\eqref{d}. Suppose that $u$ satisfies the growth condition~\eqref{gasad}. Then 
    \begin{enumerate}
        \item [(1).] The all possible values of weighted density defined in~\eqref{wdenad} are 
        \begin{equation*}
            M^{x_{1}}(0^{+})\in \left\{ \frac{2x_{2}^{\circ}}{3\bar{\rho}_{0}},0 \right\} 
        \end{equation*}
        \item [(2).] If $M^{x_{1}}(0^{+})=\frac{2x_{2}^{\circ}}{3\bar{\rho}_{0}}$, then either 
        \begin{equation*}
            \frac{u(x^{\circ}+rx)}{r^{2}}\to \alpha x_{1}^{2},\qquad \alpha>0,
        \end{equation*}
        as $r\to 0^{+}$, strongly in $W_{\mathit{w},\mathit{loc}}^{1,2}(\mathbb{R} _{+}^{2})$ and locally uniformly in $\mathbb{R} _{+}^{2}$, or 
        \begin{equation*}
            \frac{u(x^{\circ}+rx)}{r^{2}}\to 0\quad\text{ as }r\to 0^{+},
        \end{equation*}
        strongly in $W_{\mathit{w},\mathit{loc}}^{1,2}(\mathbb{R} _{+}^{2})$ and locally uniformly in $\mathbb{R} _{+}^{2}$.
        \item [(3).] If $M^{x_{1}}(0^{+})=0$, then 
        \begin{equation*}
            \frac{u(x^{\circ}+rx)}{r^{2}}\to 0\quad\text{ as }r\to 0^{+},
        \end{equation*}
        strongly in $W_{\mathit{w},\mathit{loc}}^{1,2}(\mathbb{R} _{+}^{2})$ and locally uniformly in $\mathbb{R} _{+}^{2}$.     
    \end{enumerate}
\end{corollary}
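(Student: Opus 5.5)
Following the scheme of Corollary~\ref{corbl}, I would start from a blow-up sequence $u_{m}$ as in~\eqref{bladp}. By Lemma~\ref{lemad} it converges, along a subsequence, strongly in $W_{\mathit{w},\mathit{loc}}^{1,2}(\mathbb{R}_{+}^{2})$ to a function $u_{0}$ that is homogeneous of degree $2$. The lemma on the weighted density then shows that $u_{0}$ satisfies the limiting domain variation identity~\eqref{fvadu0} with some $\chi_{0}$, and that $M^{x_{1}}(0^{+})$ is given by~\eqref{wdenad}. Testing~\eqref{fvadu0} with $\phi$ supported in $\{u_{0}>0\}$, and away from $\{u_{0}>0\}$, gives the following system. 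First, $\operatorname{div}(\tfrac{1}{x_{1}}\nabla u_{0})=0$ in $\{u_{0}>0\}$. Second, $\tfrac{1}{x_{1}^{2}}|\nabla u_{0}|^{2}=x_{2}^{\circ}$ on $\partial\{u_{0}>0\}\cap\{x_{1}>0\}$. Third, $u_{0}\ge0$ and $u_{0}=0$ on $\{x_{1}=0\}$. Fourth, $u_{0}=|x|^{2}u_{0}(x/|x|)$. In addition, $\chi_{0}\in\{0,1\}$ a.e. on $\{u_{0}=0\}$, and $\chi_0$ is constant there, by homogeneity and the same identity (as in \cite[Theorem 3.8]{MR3225630}).

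For the nontrivial case $u_{0}\not\equiv0$, I would pass to polar coordinates $x_{1}=\varrho\sin\theta$, $x_{2}=\varrho\cos\theta$, $\theta\in(0,\pi)$, and write $u_{0}=\varrho^{2}f(\theta)$. The operator $\operatorname{div}(\tfrac1{x_1}\nabla\cdot)$ is the Stokes stream-function operator. Setting $f(\theta)=\sin^{2}\theta\, g(\cos\theta)$, or passing to the velocity potential, reduces the equation to a Legendre-type ODE in $s=\cos\theta$ of degree $1$. Its solutions that vanish on the axis are spanned by $\sin^{2}\theta$ (that is, $u_{0}=c\,x_{1}^{2}$) and by a second solution involving $Q_{1}$, which is logarithmically singular at $s=\pm1$.

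The hard step is to show that a positivity component cannot be a proper cone $\{\theta_{1}<\theta<\theta_{2}\}$ with a free boundary ray at which $|\nabla u_{0}|^{2}=x_{2}^{\circ}x_{1}^{2}$ holds. Note that $x_{1}^{2}$ has zero gradient at $x_1=0$ only; along an interior ray the condition demands $|f'(\theta)|^{2}=x_{2}^{\circ}\sin^{2}\theta$ where $f=0$. I would compute the general solution explicitly, namely
\[
f(\theta)=A\sin^{2}\theta+B\bigl(\cos\theta+\tfrac12\sin^{2}\theta\log\tfrac{1+\cos\theta}{1-\cos\theta}\bigr)\,,
\]
and check through the Sturm-type sign analysis that no interval $(\theta_1,\theta_2)\subset(0,\pi)$ supports a positive solution vanishing at both ends and satisfying the Bernoulli condition. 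This should use the fact that the positive solution vanishing at an interior angle must also vanish on the axis, which is incompatible with degree-$2$ growth. This case analysis is the main obstacle. If it fails, a fallback is to integrate~\eqref{fvadu0} directly against $\phi=x\eta$ to rule out free rays.

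Once this is done, the only nontrivial limit is $u_{0}=\alpha x_{1}^{2}$ with $\{u_{0}>0\}=\mathbb{R}_{+}^{2}$, so $\chi_{0}=1$. The value of $\alpha$ is then fixed by the identity~\eqref{fvadu0}, since the energy pairing with $\phi$ forces $\alpha^{2}$ to match $x_{2}^{\circ}$ through the $\chi_{0}$ terms. Formula~\eqref{wdenad} gives $M^{x_{1}}(0^{+})=\tfrac{x_{2}^{\circ}}{\bar\rho_{0}}\int_{B_{1}^{+}}x_{1}\,dx=\tfrac{2x_{2}^{\circ}}{3\bar\rho_{0}}$. If instead $u_{0}\equiv0$, then $\chi_{0}\in\{0,1\}$ gives $M^{x_{1}}(0^{+})\in\{0,\tfrac{2x_{2}^{\circ}}{3\bar\rho_{0}}\}$. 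Since $M^{x_{1}}(0^{+})$ does not depend on the subsequence, in case $M^{x_1}(0^+)=0$ every limit is $0$. Local uniform convergence follows from the bounds $|\nabla u|\le Cr$ and $u\le Cr^{2}$ in the remark after Lemma~\ref{lemad}, via Arzelà–Ascoli. This proves (1)–(3).
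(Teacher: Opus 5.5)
Your classification of the homogeneous solutions is essentially the paper's. You work with the stream function and the pair $\sin^2\theta$, $g(\theta)=\cos\theta+\tfrac12\sin^2\theta\log\tfrac{1+\cos\theta}{1-\cos\theta}=(1-s^2)Q_1'(s)$. The paper works with the velocity potential and $P_1,Q_1$.

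\textbf{The gap: fixing $\alpha$.} The identity \eqref{fvadu0} cannot fix $\alpha$. For $u_0=\alpha x_1^2$ and $\chi_0\equiv1$, \eqref{fvadu0} holds for every $\alpha\in\mathbb{R}$. Up to the factor $1/\bar\rho_0$, the gradient terms reduce to $4\alpha^2\int_{\mathbb{R}^2_+}\bigl(x_1(\partial_2\phi_2-\partial_1\phi_1)-\phi_1\bigr)\,dx$. The $\chi_0$-terms reduce to $x_2^\circ\int_{\mathbb{R}^2_+}\operatorname{div}(x_1\phi)\,dx$. Both vanish separately, because $\int_{\mathbb{R}^2_+}x_1\partial_1\phi_1\,dx=-\int_{\mathbb{R}^2_+}\phi_1\,dx$.

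The Bernoulli condition you extract is also empty here, since $\{u_0>0\}$ has no free boundary in $\{x_1>0\}$. So as written you only get subsequential convergence to some $\alpha x_1^2$. You do not get the convergence as $r\to0^+$ that (2) asserts. The paper's sketch also just appeals to \eqref{fvadu0} at this point, so this must be made explicit.

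What pins $\alpha$ down is the growth hypothesis \eqref{gasad} itself. In blow-up variables it reads $\bigl|\,|\nabla u_m|^2/x_1^2-(x_2^\circ+r_mx_2)\bigr|\le Cr_mx_1$ on $\{u_m>0\}$. A.e. convergence of $\nabla u_m$ (from the strong convergence) and local uniform convergence then give $|\nabla u_0|^2=x_2^\circ x_1^2$ on $\{u_0>0\}$. Hence $\alpha=\sqrt{x_2^\circ}/2$.

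The same bound gives $\int_K x_1^{-1}|\nabla u_m|^2\ge \tfrac{x_2^\circ}{2}\int_K x_1\chi_{\{u_m>0\}}$ for $K\Subset\mathbb{R}^2_+$ and large $m$. So a limit $u_0\equiv0$ forces $\chi_0=0$. Every blow-up limit is therefore determined by $M^{x_1}(0^+)$, and the whole family $u_r$ converges. Alternatively, you can use that the set of limit points of $r\mapsto u_r$ in $C^0_{\mathit{loc}}$ is connected, hence a single point inside a two-point set.

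\textbf{The sector step.} The step you call the main obstacle is short, but your heuristic for it is false. $g$ vanishes at $\theta=\pi/2$ while $g(0^+)=1$, so a solution vanishing at an interior angle need not vanish on the axis. The Bernoulli condition is also not needed.

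Write the ODE as $(f'/\sin\theta)'+2f/\sin\theta=0$. The function $f_1=\sin^2\theta$ is a solution that is positive on $(0,\pi)$. Constancy of the Wronskian $\frac{1}{\sin\theta}(f_1f'-ff_1')$ (Sturm separation) shows that no solution is positive on $(\theta_1,\theta_2)$ and vanishes at both ends when $0<\theta_1<\theta_2<\pi$. A sector reaching the axis requires $f(0^+)=0$ or $f(\pi^-)=0$. Since $g(0^+)=1$ and $g(\pi^-)=-1$, this kills the $g$-component, and then $f=A\sin^2\theta$ has no zero in $(0,\pi)$.

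Your concern about sectors away from the axis is legitimate. The paper's finite-energy argument removes the $Q_1$ component only on components that reach the axis. With the Sturm argument added, your route is more complete than the paper's sketch on this point.
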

\begin{proof}
    The proof is similar to~\cite[Theorem 3.8, case $x_{1}^{\circ}=0$]{MR3225630}. We provide a sketch here. Note that for any $x_{0}\in A^{u}$ and any subsonic solution $u$, the blow-up limit $u_{0}$ satisfies
    \begin{equation*}
        \operatorname{div}\left( \frac{1}{x_{1}}\nabla u_{0} \right)=0\quad\text{ in }\{u_{0}>0\}\text{ in the distribution sense.}
    \end{equation*}
    One can define $w_{0}(x_{1},x_{2})$ (up to a constant) so that 
    \begin{equation*}
        \partial_{1}w_{0}=-\frac{1}{x_{1}}\partial_{2}u_{0}\quad\text{ and }\quad \partial_{2}w_{0}=\frac{1}{x_{1}}\partial_{1}u_{0}.
    \end{equation*}
    It follows that $w_{0}$ is harmonic and homogeneous of degree $1$. Thus, we write $w_{0}(\varrho\sin\theta,\varrho\cos\theta)=\varrho f_{0}(\cos\theta)$ and we obtain that $f_{0}(s)$ satisfies the Legendre differential equation
    \begin{equation*}
        (1-s^{2})f_{0}''(z)-2sf_{0}'(z)+2f(s)=0\quad\text{ in }\{u_{0}>0\},
    \end{equation*}
    where $s=\cos\theta$. It follows that $f_{0}(s)=\alpha P_{1}(s)+\beta Q_{1}(s)$ for some $\alpha$, $\beta\in \mathbb{R} $ and 
    \begin{equation*}
        P_{1}(s)=s,\qquad Q_{1}(s)=\frac{s}{2}\log\left( \frac{1+s}{1-s} \right)-1,\qquad s\in(-1,1).
    \end{equation*}
    Thanks to the requirement
    \begin{equation*}
        \int_{D}x_{1}|\nabla w_{0}|^{2}\,dx=\int_{D}\frac{1}{x_{1}}|\nabla u_{0}|^{2}<+\infty,
    \end{equation*}
    where $D$ is an arbitrary connected component of $\{u_{0}>0\}$. We may deduce that $\beta=0$. This implies that $w_{0}(x_{1},x_{2})=\alpha x_{2}$ and hence $u_{0}=\alpha x_{1}^{2}$. This together with the formula~\eqref{fvadu0} gives the desired result.
\end{proof}
When the free surface is assumed to be a continuous injective curve, the following holds. 
\begin{proposition}[Curve case]\label{propccad}
    Let $u$ be a subsonic weak solution of~\eqref{fb}, let $x^{\circ}\in A^{u}$ and suppose that $u$ satisfies~\eqref{gasad}. Assume further that $\partial\{u>0\}\cap B_{r}^{+}(x^{\circ})$ is in a neighborhood of $x^{\circ}$ a continuous injective curve $\sigma:I\to \mathbb{R}^{2}$, where $0\in I$ and $\sigma(0)=x^{\circ}$. Then if $M^{x_{1}}(0^{+})=\frac{2x_{1}^{\circ}}{3\bar{\rho}_{0}}$ (see Figure~\ref{Fig: cusp1}), then either $\sigma_{2}(t)\neq x_{2}^{\circ}$ in $(0,t_{1})$ and 
    \begin{equation*}
        \lim_{t\to 0^{+}}\frac{\sigma_{1}(t)}{\sigma_{2}(t)-x_{2}^{\circ}}=0,
    \end{equation*}
    or $\sigma_{2}(t)\neq x_{2}^{\circ}$ in $(-t_{1},t_{1})\setminus\{0\}$, $\sigma_{2}-x_{2}^{\circ}$ changes sign at $t=0$ and 
    \begin{equation*}
        \lim_{t\to 0}\frac{\sigma_{1}(t)}{\sigma_{2}(t)-x_{2}^{\circ}}=0.
    \end{equation*}
    If $M^{x_{1}}(0^{+})=0$ (see Figure~\ref{Fig: cusp1}), then $\sigma_{2}(t)\neq x_{2}^{\circ}$ in $(-t_{1},t_{1})\setminus 0$ and 
    \begin{equation*}
        \lim_{t\to 0}\frac{\sigma_{1}(t)}{\sigma_{2}(t)-x_{2}^{\circ}}=0.
    \end{equation*}
\end{proposition}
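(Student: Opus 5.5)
The plan is to follow the strategy of \cite[Theorem 3.10]{MR4595616} and the curve-case argument behind \propref{propcc}, adapted to the axis geometry. The one input we need from the earlier results is the blow-up classification in Corollary~\ref{corblad}. For every sequence $r_m\to0^+$, $u_m(x)=u(x^\circ+r_mx)/r_m^2$ converges strongly in $W^{1,2}_{\mathit{w},\mathit{loc}}(\mathbb{R}^2_+)$ and locally uniformly. The limit is either $\alpha x_1^2$ or $0$, and in both cases the density $\chi_0$ is determined by $M^{x_1}(0^+)$. Using formula \eqref{fvadu0} with suitable test fields $\phi$, we will show that $\chi_0=1$ a.e. in $\mathbb{R}^2_+$ when $M^{x_1}(0^+)=\frac{2x_2^\circ}{3\bar\rho_0}$, and $\chi_0=0$ when $M^{x_1}(0^+)=0$. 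Here $\chi_0$ is constant because it is homogeneous of degree $0$ and satisfies $\partial_2\chi_0=0$, which follows by testing with $\phi=(0,\phi_2)$.

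\emph{Step 1: rescaled free boundary.} Let $\sigma^m(t)=(\sigma(t)-x^\circ)/r_m$ describe $\partial\{u_m>0\}$. Consider first the case $M^{x_1}(0^+)=0$. Then $\chi_{\{u_m>0\}}\to0$ in $L^1_{\mathit{loc}}$ with weight $x_1$. We argue that every point of $\partial\{u_m>0\}\cap(B_2^+\setminus B_{1/2})$ lies in a thin set $\{x_1<\varepsilon_m\}$ with $\varepsilon_m\to0$. Suppose instead that a free boundary point $y^m$ with $y^m_1\ge c>0$ persists. Away from the axis the equation is uniformly elliptic, because the subsonicity condition \eqref{subs} holds and the weight $1/x_1$ is bounded. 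Nondegeneracy, i.e. the fact that the positivity set has positive density at such boundary points, comes from the free boundary condition $|\nabla u_m|^2=x_1^2(x_2^\circ+r_mx_2)\ge c$ on the reduced boundary together with $u_m$ being a weak solution. This would give $\mathcal{L}^2(\{u_m>0\}\cap B_\rho(y^m))\ge c\rho^2$, contradicting the vanishing density. Hence $\sigma_1(t)/|\sigma(t)-x^\circ|\to0$.

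Since the curve is injective and continuous with $\sigma(0)=x^\circ$, and it cannot return to the axis point $x^\circ$ by injectivity, we get $\sigma_2(t)\neq x_2^\circ$ for small $t\neq0$. This yields $\sigma_1/(\sigma_2-x_2^\circ)\to0$. The same argument applied to $\{u_m=0\}$ in the case $M^{x_1}(0^+)=\frac{2x_2^\circ}{3\bar\rho_0}$ shows that both $\{u_m>0\}$ and $\partial\{u_m>0\}$ lie eventually near the axis. The zero set then has vanishing density, so the curve is again asymptotically tangent to the axis.

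\emph{Step 2: case distinction.} In the case $M^{x_1}(0^+)=\frac{2x_2^\circ}{3\bar\rho_0}$ we decide whether $\sigma_2-x_2^\circ$ changes sign. If it does not change sign, there is a one-sided cusp: the positivity set fills the half-plane except for a thin spike along the axis on one side, and only the branch $t>0$ matters. If it changes sign, the curve passes through $x^\circ$ while staying tangent to the axis in both directions. Both alternatives are compatible with full density, so we only record them, using the intermediate value theorem on $\sigma_2-x_2^\circ$ over a small interval $(-t_1,t_1)$. In the case $M^{x_1}(0^+)=0$ the curve is asymptotically tangent to the axis at both ends. The Jordan-type argument of \cite{MR4595616} then gives $\sigma_2\neq x_2^\circ$ on $(-t_1,t_1)\setminus\{0\}$.

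\emph{Main obstacle.} The hard step is the nondegeneracy estimate near, but away from, the axis, which is what rules out free boundary points with $y_1\ge c$ in the blow-up. It has to be uniform in $m$ even though the operator $\operatorname{div}(\nabla u/(x_1H))$ degenerates as $x_1\to0$. I would try a comparison (barrier) argument in balls $B_{c/2}(y^m)\subset\{x_1>c/2\}$. There the rescaled operator is a uniformly elliptic perturbation of $\frac{1}{x_1\bar\rho_0}\Delta$, thanks to \eqref{limadhm}. This should give the standard Alt--Caffarelli linear growth from the gradient condition on the free boundary, and hence the positive density bound.
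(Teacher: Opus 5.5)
The target is right: it suffices to show that, for every $r_m\to0^+$, the rescaled free boundary $\partial\{u_m>0\}\cap(\overline{B_2}\setminus B_{1/2})$ lies in $\{x_1<\varepsilon_m\}$ with $\varepsilon_m\to0$. Two surrounding steps need small repairs. First, test \eqref{fvadu0} with general $\phi$, not only $\phi=(0,\phi_2)$. This gives $\int\chi_0\operatorname{div}(x_1\phi)\,dx=0$ for both $u_0\equiv0$ and $u_0=\alpha x_1^2$, so $\chi_0$ is constant without the unproved degree-$0$ homogeneity of $\chi_0$. Second, $\sigma_2(t)\neq x_2^\circ$ does not follow from injectivity alone. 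It follows from tangency, via $|\sigma_2(t)-x_2^\circ|\geq\frac12|\sigma(t)-x^\circ|>0$.

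The genuine gap is how you obtain tangency. You need density lower bounds at free boundary points $y^m$ with $y^m_1\geq c$, uniform in $m$: for the positive phase when $\chi_0=0$, and for the zero phase when $\chi_0=1$. Neither is available for weak solutions. They are only stationary for domain variations, so the Alt--Caffarelli nondegeneracy and density estimates, which come from minimality, do not apply. Also, the $C^{2,\alpha}$ regularity of the free boundary is qualitative and not uniform in $m$. The natural barrier argument, comparing at the nearest free boundary point, gives at most $u_m(x)\geq c\operatorname{dist}(x,\partial\{u_m>0\})$. By itself that does not exclude a thin positive phase. A zero-phase density bound cannot be expected at all, since thin zero sets (cusps, slits) are exactly the degenerate geometries under study. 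When $u_0\equiv0$ and $\chi_0=1$, nothing in your argument rules out a thin zero cusp leaving $x^\circ$ in a non-vertical direction. If $u_0=\alpha x_1^2$ with $\alpha>0$, locally uniform convergence alone already pushes the free boundary to the axis, so the whole difficulty is the case $u_0\equiv0$.

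The missing tool is the free-boundary measure used in \lemref{lem:cuspstp} and in the argument of \cite{MR3225630} that the paper invokes. Set $H_m=H(|\nabla u_m|^2/x_1^2;x_2^\circ+r_mx_2)$ and $\mu_m:=\operatorname{div}\bigl(\nabla u_m/(x_1H_m)\bigr)$. Since $u_m\geq0$ solves the equation in $\{u_m>0\}$, $\mu_m$ is a nonnegative Radon measure in $\{x_1>0\}$.

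On the reduced free boundary, $|\nabla u_m|^2=x_1^2(x_2^\circ+r_mx_2)$, so $H_m=\bar\rho_0$ there and $\mu_m\geq\bar\rho_0^{-1}\sqrt{x_2^\circ+r_mx_2}\,\mathcal{H}^1\lfloor\partial_{\mathit{red}}\{u_m>0\}$. On the other hand, \eqref{gasad} gives $|1/H_m-1/\bar\rho_0|\leq Cr_m$, and $\nabla u_m\rightharpoonup\nabla u_0$. Hence $\mu_m\to\bar\rho_0^{-1}\operatorname{div}(x_1^{-1}\nabla u_0)=0$ locally in $\{x_1>0\}$ for both possible limits, because $x_1^{-1}\nabla(\alpha x_1^2)=(2\alpha,0)$ is divergence free. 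So the free-boundary length in every compact subset of $\{x_1>0\}$ tends to zero.

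Now use the curve hypothesis, which your Step 1 never really exploits. The rescaled curve is connected and passes through $0$. If it contained a point $p\in\overline{B_1}$ with $p_1\geq\delta$, the arc from $p$ back to $0$ would have to leave $B_{\delta/2}(p)\subset\{x_1\geq\delta/2\}\cap B_2$. That gives length at least $\delta/2$ there, a contradiction. This yields tangency to the axis in both density cases at once, with no nondegeneracy needed. The sign alternatives for $\sigma_2-x_2^\circ$ then follow by continuity, as you describe.
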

The proof is similar to~\cite[Theorem 3.8, case $x_{1}^{\circ}=0$]{MR3225630}, so we omit it.
\begin{figure}[!ht]
    \centering
    \tikzset{every picture/.style={line width=0.75pt}} 

    \begin{tikzpicture}[x=0.75pt,y=0.75pt,yscale=-1,xscale=1]
    
    \draw    (187.84,190.32) -- (188.09,39.45) ;
    \draw [shift={(188.09,37.45)}, rotate = 90.09] [color={rgb, 255:red, 0; green, 0; blue, 0 }  ][line width=0.75]    (10.93,-3.29) .. controls (6.95,-1.4) and (3.31,-0.3) .. (0,0) .. controls (3.31,0.3) and (6.95,1.4) .. (10.93,3.29)   ;
    \draw [color={rgb, 255:red, 0; green, 0; blue, 255 }  ,draw opacity=1 ]   (187.82,113.67) .. controls (188.17,75.92) and (198.17,65.67) .. (217.62,56.67) ;
    \draw [color={rgb, 255:red, 0; green, 0; blue, 255 }  ,draw opacity=1 ]   (158.42,56.67) .. controls (177.92,65.67) and (187.42,76.42) .. (187.82,113.67) ;
    \draw    (158.42,56.67) .. controls (182.17,68.42) and (202.17,65.17) .. (217.62,56.67) ;
    \draw  [dash pattern={on 0.84pt off 2.51pt}]  (158.42,56.67) .. controls (171.42,51.17) and (203.17,51.42) .. (217.62,56.67) ;

    \draw [color={rgb, 255:red, 0; green, 0; blue, 255 }  ,draw opacity=1 ]   (187.97,113.88) .. controls (187.62,151.63) and (177.62,161.88) .. (158.17,170.88) ;
    \draw [color={rgb, 255:red, 0; green, 0; blue, 255 }  ,draw opacity=1 ]   (217.37,170.88) .. controls (197.87,161.88) and (188.37,151.13) .. (187.97,113.88) ;
    \draw  [dash pattern={on 0.84pt off 2.51pt}]  (217.37,170.88) .. controls (202.67,163.92) and (172.67,163.42) .. (158.17,170.88) ;
    \draw    (217.37,170.88) .. controls (200.17,180.92) and (176.17,179.42) .. (158.17,170.88) ;
    \draw    (175.67,159.17) .. controls (180.92,162.29) and (193.42,163.92) .. (199.42,159.17) ;
    \draw  [dash pattern={on 0.84pt off 2.51pt}]  (175.67,159.17) .. controls (180.42,156.92) and (192.42,156.17) .. (199.42,159.17) ;
    
    \draw [color={rgb, 255:red, 0; green, 0; blue, 255 }  ,draw opacity=1 ]   (345.15,112) .. controls (345.5,74.25) and (355.5,64) .. (374.95,55) ;
    \draw [color={rgb, 255:red, 0; green, 0; blue, 255 }  ,draw opacity=1 ]   (315.75,55) .. controls (335.25,64) and (344.75,74.75) .. (345.15,112) ;
    \draw    (315.75,55) .. controls (339.5,66.75) and (359.5,63.5) .. (374.95,55) ;
    \draw  [dash pattern={on 0.84pt off 2.51pt}]  (315.75,55) .. controls (328.75,49.5) and (360.5,49.75) .. (374.95,55) ;

    \draw    (345.03,188.44) -- (345.27,37.56) ;
    \draw [shift={(345.28,35.56)}, rotate = 90.09] [color={rgb, 255:red, 0; green, 0; blue, 0 }  ][line width=0.75]    (10.93,-3.29) .. controls (6.95,-1.4) and (3.31,-0.3) .. (0,0) .. controls (3.31,0.3) and (6.95,1.4) .. (10.93,3.29)   ;
    \draw [color={rgb, 255:red, 0; green, 0; blue, 255 }  ,draw opacity=1 ]   (482.3,102.88) .. controls (481.95,140.63) and (471.95,150.88) .. (452.5,159.88) ;
    \draw [color={rgb, 255:red, 0; green, 0; blue, 255 }  ,draw opacity=1 ]   (511.7,159.88) .. controls (492.2,150.88) and (482.7,140.13) .. (482.3,102.88) ;
    \draw  [dash pattern={on 0.84pt off 2.51pt}]  (511.7,159.88) .. controls (497,152.92) and (467,152.42) .. (452.5,159.88) ;
    \draw    (511.7,159.88) .. controls (494.5,169.92) and (470.5,168.42) .. (452.5,159.88) ;
    \draw    (470,148.17) .. controls (475.25,151.29) and (487.75,152.92) .. (493.75,148.17) ;
    \draw  [dash pattern={on 0.84pt off 2.51pt}]  (470,148.17) .. controls (474.75,145.92) and (486.75,145.17) .. (493.75,148.17) ;
    
    \draw    (482.17,180) -- (482.6,36.56) ;
    \draw [shift={(482.61,34.56)}, rotate = 90.17] [color={rgb, 255:red, 0; green, 0; blue, 0 }  ][line width=0.75]    (10.93,-3.29) .. controls (6.95,-1.4) and (3.31,-0.3) .. (0,0) .. controls (3.31,0.3) and (6.95,1.4) .. (10.93,3.29)   ;
    \draw    (263.95,391.8) -- (264.78,228.5) ;
    \draw [shift={(264.79,226.5)}, rotate = 90.29] [color={rgb, 255:red, 0; green, 0; blue, 0 }  ][line width=0.75]    (10.93,-3.29) .. controls (6.95,-1.4) and (3.31,-0.3) .. (0,0) .. controls (3.31,0.3) and (6.95,1.4) .. (10.93,3.29)   ;
    \draw [color={rgb, 255:red, 0; green, 0; blue, 255 }  ,draw opacity=1 ]   (264.21,346.14) .. controls (264.66,283.16) and (279.33,274.6) .. (307.77,256.27) ;
    \draw [color={rgb, 255:red, 0; green, 0; blue, 255 }  ,draw opacity=1 ]   (220.88,254.94) .. controls (239.77,266.16) and (265.1,280.71) .. (264.44,344.05) ;
    \draw [color={rgb, 255:red, 0; green, 0; blue, 255 }  ,draw opacity=1 ]   (264.36,316.86) .. controls (265.07,284.14) and (274.21,263.29) .. (283.64,255.86) ;
    \draw [color={rgb, 255:red, 0; green, 0; blue, 255 }  ,draw opacity=1 ]   (242.79,255.57) .. controls (255.07,266.72) and (264.5,282.72) .. (264.36,316.86) ;
    \draw    (242.79,255.57) .. controls (255.93,262.43) and (273.36,262.14) .. (283.64,255.86) ;
    \draw    (220.88,254.94) .. controls (247.55,270.71) and (285.1,270.05) .. (307.77,256.27) ;
    \draw  [dash pattern={on 0.84pt off 2.51pt}]  (242.79,255.57) .. controls (255.07,251.57) and (274.79,251.57) .. (283.64,255.86) ;
    \draw  [dash pattern={on 0.84pt off 2.51pt}]  (220.88,254.94) .. controls (244.88,244.05) and (283.23,244.11) .. (306.66,255.83) ;
    \draw [color={rgb, 255:red, 0; green, 0; blue, 255 }  ,draw opacity=1 ]   (406.34,260) .. controls (405.9,322.98) and (391.23,331.54) .. (362.79,349.87) ;
    \draw [color={rgb, 255:red, 0; green, 0; blue, 255 }  ,draw opacity=1 ]   (449.67,351.2) .. controls (430.79,339.98) and (405.45,325.43) .. (406.12,262.09) ;
    \draw [color={rgb, 255:red, 0; green, 0; blue, 255 }  ,draw opacity=1 ]   (406.2,289.28) .. controls (405.48,322) and (396.34,342.85) .. (386.91,350.28) ;
    \draw [color={rgb, 255:red, 0; green, 0; blue, 255 }  ,draw opacity=1 ]   (427.77,350.57) .. controls (415.48,339.42) and (406.06,323.42) .. (406.2,289.28) ;
    \draw  [dash pattern={on 0.84pt off 2.51pt}]  (427.77,350.57) .. controls (414.63,343.71) and (397.2,344) .. (386.91,350.28) ;
    \draw  [dash pattern={on 0.84pt off 2.51pt}]  (449.67,351.2) .. controls (423.01,335.43) and (385.76,337.24) .. (362.79,349.87) ;
    \draw    (427.77,350.57) .. controls (412.9,355.81) and (400.9,355.52) .. (386.91,350.28) ;
    \draw    (449.67,351.2) .. controls (430.62,364.1) and (386.62,365.81) .. (363.9,350.31) ;
    \draw    (405.64,391.12) -- (406.47,227.82) ;
    \draw [shift={(406.48,225.82)}, rotate = 90.29] [color={rgb, 255:red, 0; green, 0; blue, 0 }  ][line width=0.75]    (10.93,-3.29) .. controls (6.95,-1.4) and (3.31,-0.3) .. (0,0) .. controls (3.31,0.3) and (6.95,1.4) .. (10.93,3.29)   ;
    
    \draw (184.94,23.32) node [anchor=north west][inner sep=0.75pt]  [font=\scriptsize]  {$x_{2}$};
    \draw (172.42,105.32) node [anchor=north west][inner sep=0.75pt]  [font=\scriptsize]  {$x_{2}^{\circ }$};
    \draw (191.17,105.07) node [anchor=north west][inner sep=0.75pt]  [font=\scriptsize]  {$u >0$};
    \draw (178.98,53.62) node [anchor=north west][inner sep=0.75pt]  [font=\tiny,rotate=-359.94]  {$u=0$};
    \draw (346.83,23.98) node [anchor=north west][inner sep=0.75pt]  [font=\scriptsize]  {$x_{2}$};
    \draw (330.08,105.32) node [anchor=north west][inner sep=0.75pt]  [font=\scriptsize]  {$x_{2}^{\circ }$};
    \draw (350.17,105.07) node [anchor=north west][inner sep=0.75pt]  [font=\scriptsize]  {$u >0$};
    \draw (335.98,52.96) node [anchor=north west][inner sep=0.75pt]  [font=\tiny,rotate=-359.94]  {$u=0$};
    \draw (181.16,167.97) node [anchor=north west][inner sep=0.75pt]  [font=\tiny,rotate=-0.31]  {$u=0$};
    \draw (473.49,156.97) node [anchor=north west][inner sep=0.75pt]  [font=\tiny,rotate=-0.31]  {$u=0$};
    \draw (488.17,105.07) node [anchor=north west][inner sep=0.75pt]  [font=\scriptsize]  {$u >0$};
    \draw (465.75,105.32) node [anchor=north west][inner sep=0.75pt]  [font=\scriptsize]  {$x_{2}^{\circ }$};
    \draw (485.17,21.98) node [anchor=north west][inner sep=0.75pt]  [font=\scriptsize]  {$x_{2}$};
    \draw (268.67,217.39) node [anchor=north west][inner sep=0.75pt]  [font=\scriptsize]  {$x_{2}$};
    \draw (250.03,338.65) node [anchor=north west][inner sep=0.75pt]  [font=\scriptsize]  {$x_{2}^{\circ }$};
    \draw (268.71,334.32) node [anchor=north west][inner sep=0.75pt]  [font=\scriptsize]  {$u=0$};
    \draw (411.01,217.2) node [anchor=north west][inner sep=0.75pt]  [font=\scriptsize]  {$x_{2}$};
    \draw (408.08,257.21) node [anchor=north west][inner sep=0.75pt]  [font=\scriptsize]  {$x_{2}^{\circ }$};
    \draw (267.45,274.58) node [anchor=north west][inner sep=0.75pt]  [font=\tiny,rotate=-316.81]  {$u >0$};
    \draw (253.91,251.02) node [anchor=north west][inner sep=0.75pt]  [font=\tiny,rotate=-359.3]  {$u=0$};
    \draw (411.62,283.84) node [anchor=north west][inner sep=0.75pt]  [font=\scriptsize]  {$u=0$};
    \draw (423.31,331.08) node [anchor=north west][inner sep=0.75pt]  [font=\tiny,rotate=-43.99]  {$u >0$};
    \draw (395.05,344.5) node [anchor=north west][inner sep=0.75pt]  [font=\tiny]  {$u=0$};
    \draw (460.33,179.67) node [anchor=north west][inner sep=0.75pt]  [font=\scriptsize] [align=left] {Down-ward\\cusp};
    \draw (326.67,189) node [anchor=north west][inner sep=0.75pt]  [font=\scriptsize] [align=left] {Up-ward\\cusp};
    \draw (174.33,191.67) node [anchor=north west][inner sep=0.75pt]  [font=\scriptsize] [align=left] {Double\\cusp};
    \draw (249.33,394.67) node [anchor=north west][inner sep=0.75pt]  [font=\scriptsize] [align=left] {Double\\cusp};
    \draw (391,391.66) node [anchor=north west][inner sep=0.75pt]  [font=\scriptsize] [align=left] {Double\\cusp};

    \end{tikzpicture}
    \caption{Cusp}
    \label{Fig: cusp1}
\end{figure}
\section{The blow-up analysis at the origin}\label{sec:0}
In this section, we analyze the final case outlined in the introduction, corresponding to the point $x^{\circ}=(0,0)$. We begin by characterizing the blow-up limit $u_{0}$, establishing the following result.
\begin{lemma}\label{lemo}
    Let $u$ be a subsonic variational solution of~\eqref{fb}, let $0\in\Omega$ and assume the growth assumption~\eqref{gastp}. Then 
    \begin{enumerate}
        \item [(1).] The limit $M^{x_{1}x_{2}}(0^{+}):=\lim_{r\to 0^{+}}M^{x_{1}x_{2}}(r)$ exists and is finite (recall~\eqref{Mx12r} for the definition of $M^{x_{1}x_{2}}(r)$).
        \item [(2).] Let $r_{m}\to 0^{+}$ as $m\to\infty$ be a vanishing sequence so that the blow-up sequence 
        \begin{equation}\label{bl0}
            u_{m}(x):=\frac{u(r_{m}x)}{r_{m}^{5/2}}
        \end{equation}
        converges weakly in $W_{\mathit{w},\mathit{loc}}^{1,2}(\mathbb{R} _{+}^{2})$ to a blow-up limit $u_{0}$, then $u_{m}$ converges strongly to $u_{0}$ in $W_{\mathit{w},\mathit{loc}}^{1,2}(\mathbb{R} _{+}^{2})$ and $u_{0}$ is a homogeneous function of degree $5/2$. 
    \end{enumerate}
\end{lemma}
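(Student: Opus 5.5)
We follow the scheme of \lemref{lemstp} and \lemref{lemad}, now based on the monotonicity formula of \propref{prop0}. The plan has three steps.

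\textit{Step 1: existence of $M^{x_{1}x_{2}}(0^{+})$.} By \eqref{Mx12r1}, the function $M^{x_{1}x_{2}}$ differs from a nondecreasing function by the integral of $r^{-5}\bigl(K_{1}^{x_{2}}+\sum_{i=1}^{3}K_{i}^{x_{1}x_{2}}\bigr)$. So it suffices to show that each remainder is $O(r^{5})$, or at least $o(r^{5})$ in an integrable way.

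The growth condition \eqref{gastp} at the origin gives
\begin{equation*}
|\nabla u|\leqslant Cx_{1}\sqrt{x_{2}^{+}}\leqslant Cr^{3/2}, \qquad u\leqslant Cr^{5/2} \quad\text{ in } B_{r}^{+}.
\end{equation*}
Using \eqref{esm1}, we bound the four terms as follows.
\begin{itemize}
\item For $K_{1}^{x_{2}}$, the inner integrals are $O\bigl((|\nabla u|^{2}/x_{1}^{2})^{2}+x_{2}^{2}\bigr)=O(r^{2})$. The extra weight $x_{1}\leqslant r$ then gives $|K_{1}^{x_{2}}(r)|\leqslant C\int_{B_{r}^{+}}x_{1}r^{2}\,dx\leqslant Cr^{5}$.
\item For $K_{1}^{x_{1}x_{2}}$, the bracket is $O(r)$. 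Indeed, $\partial_{2}F=O(|\nabla u|^{2}/x_{1}^{2})$, and $\lambda'(x_{2})-1/\bar{\rho}_{0}=O(x_{2})$ by \eqref{lbd'}. Hence $|K_{1}^{x_{1}x_{2}}|\leqslant C\int x_{1}x_{2}\,r\leqslant Cr^{5}$.
\item For $K_{2}^{x_{1}x_{2}}$ and $K_{3}^{x_{1}x_{2}}$, we use \eqref{esmH}, which gives $|1/H-1/\bar{\rho}_{0}|\leqslant Cr$. Then
\begin{equation*}
|K_{2}^{x_{1}x_{2}}|\leqslant Cr\int_{\partial B_{r}^{+}}\frac{u|\nabla u|}{x_{1}}\,d\mathcal{H}^{1}, \qquad |K_{3}^{x_{1}x_{2}}|\leqslant C\int_{\partial B_{r}^{+}}\frac{u^{2}}{x_{1}}\,d\mathcal{H}^{1}.
\end{equation*}
\end{itemize}

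The factor $1/x_{1}$ is where care is needed, and I expect it to be the main obstacle. Pointwise $u\leqslant Cr^{5/2}$ is not enough, since it leaves a nonintegrable $\int_{\partial B_{r}^{+}}1/x_{1}$. We must exploit $u=0$ on $\{x_{1}=0\}$. Integrating $|\partial_{1}u|\leqslant Cx_{1}\sqrt{r}$ from the axis gives $u\leqslant Cx_{1}^{2}\sqrt{r}$. Hence $u^{2}/x_{1}\leqslant Cx_{1}^{3}r\leqslant Cr^{4}$, and $u|\nabla u|/x_{1}\leqslant Cx_{1}^{2}r^{2}\leqslant Cr^{4}$. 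Integrating over an arc of length $O(r)$ gives $O(r^{5})$ for both terms.

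Thus $r^{-5}\times(\text{remainders})$ is bounded, hence integrable, and the limit $M^{x_{1}x_{2}}(0^{+})$ exists. Finiteness follows from the same bounds applied to $r^{-4}I(r)$ and $r^{-5}J(r)$.

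\textit{Step 2: homogeneity of degree $5/2$.} We integrate \eqref{Mx12r1} from $r_{m}\sigma$ to $r_{m}\tau$ and rescale, which gives
\begin{equation*}
2\int_{B_{\tau}^{+}\setminus B_{\sigma}^{+}}|x|^{-6}\,\frac{\bigl(\nabla u_{m}\cdot x-\tfrac{5}{2}u_{m}\bigr)^{2}}{x_{1}H_{m}}\,dx .
\end{equation*}
Here $H_{m}=H\bigl(r_{m}|\nabla u_{m}|^{2}/x_{1}^{2};r_{m}x_{2}\bigr)$, and $H_{m}\to\bar{\rho}_{0}$ uniformly, with error $O(r_{m})$, by the same computation as \eqref{cvghm}. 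The right-hand side tends to $0$ by Step 1. Weak lower semicontinuity in $W^{1,2}_{\mathit{w}}$ then forces $\nabla u_{0}\cdot x=\tfrac{5}{2}u_{0}$ a.e., i.e. $u_{0}$ is homogeneous of degree $5/2$.

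\textit{Step 3: strong convergence.} We argue as in \lemref{lemad}. Testing the equation of $u_{m}$ against $u_{m}\eta$, with $\eta\in C_{0}^{\infty}$, gives
\begin{equation*}
\int\frac{|\nabla u_{m}|^{2}\eta}{x_{1}H_{m}}\,dx=-\int\frac{u_{m}\nabla u_{m}\cdot\nabla\eta}{x_{1}H_{m}}\,dx .
\end{equation*}
The axis boundary term vanishes by the compatibility condition \eqref{com}, exactly as in \eqref{eni}. We pass to the limit using three facts:
\begin{itemize}
\item $1/H_{m}\to1/\bar{\rho}_{0}$ strongly;
\item $u_{m}/x_{1}$ converges strongly in $L^{2}_{\mathit{w},\mathit{loc}}$, which follows from the bound $u_{m}\leqslant Cx_{1}^{2}$ together with compact embedding;
\item $\nabla u_{m}$ converges weakly.
\end{itemize}
We also need $\operatorname{div}(x_{1}^{-1}\nabla u_{0})=0$ in $\{u_{0}>0\}$, obtained by passing to the limit in the weak equation. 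Together these give $\int|\nabla u_{m}|^{2}\eta/x_{1}\to\int|\nabla u_{0}|^{2}\eta/x_{1}$, which is strong convergence in $W^{1,2}_{\mathit{w},\mathit{loc}}(\mathbb{R}^{2}_{+})$.
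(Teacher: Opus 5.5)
Your proposal is correct and follows the paper's proof essentially step by step. You bound $K_1^{x_2}$ and $K_i^{x_1x_2}$ by $Cr^5$ to get the limit of $M^{x_1x_2}$, integrate \eqref{Mx12r1} over $[r_m\sigma,r_m\tau]$ and rescale to get homogeneity, and pass to the limit in $\int|\nabla u_m|^2\eta/(x_1H_m)=-\int u_m\nabla u_m\cdot\nabla\eta/(x_1H_m)$ to get strong convergence. Your use of $u=0$ on the axis to obtain $u\leqslant Cx_1^2\sqrt{r}$ controls the $1/x_1$ weight in $K_2^{x_1x_2}$ and $K_3^{x_1x_2}$, a step the paper glosses over (it only cites $|u|\leqslant Cr^{5/2}$). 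One small slip: the pointwise bound is $u|\nabla u|/x_1\leqslant Cx_1^2 r$, not $Cx_1^2r^2$, but this still gives $|K_2^{x_1x_2}(r)|\leqslant Cr^5$.
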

\begin{proof}
    We first prove the existence of the limit $M^{x_{1}x_{2}}(0^{+})$. It follows from \eqref{subs} that for subsonic variational solution $u$, 
    \begin{equation*}
        \left|\pd{!}{\tau}\left( \frac{1}{H(\tau;x_{2})} \right)\right| \leqslant C\quad\text{ and }\quad\left|\pd{!}{x_{2}}\left( \frac{1}{H(\tau;x_{2})} \right)\right| \leqslant C\quad\text{ in }B_{r}^{+}.
    \end{equation*}
    This, together with the growth assumption gives that 
    \begin{equation}\label{esm02}
        |K_{1}^{x_{1}}(r)|\leqslant C\int_{B_{r}^{+}}|x_{1}|\left(  \int_{0}^{\tfrac{|\nabla u|^{2}}{x_{1}^{2}}}\tau\,d\tau\right)+|x_{1}|\left( \int_{0}^{x_{2}}\tau\,d\tau \right)\,dx \leqslant Cr^{5},
    \end{equation}
    and 
    \begin{equation}\label{esm03}
        |K_{1}^{x_{1}x_{2}}(r)| \leqslant C\int_{B_{r}^{+}}|x_{1}||x_{2}|\left( \frac{|\nabla u|^{2}}{x_{1}^{2}}+|x_{2}| \right)\,dx \leqslant Cr^{5}.
    \end{equation}
    It follows by a direct calculation that~\eqref{esmH} and~\eqref{esmH1} holds in $B_{r}^{+}$, Moreover, the growth assumption~\eqref{gastp} implies that 
    \begin{equation*}
        |\nabla u| \leqslant Cr^{3/2}\quad\text{ and }\quad |u| \leqslant Cr^{5/2}\quad\text{ in }B_{r}^{+}.
    \end{equation*}
    Thus, we can deduce from the equation~\eqref{K2x12r} and equation~\eqref{K3x12r} that 
    \begin{equation}\label{esm04}
        |K_{2}^{x_{1}x_{2}}(r)| \leqslant Cr^{5},
    \end{equation}
    and 
    \begin{equation}\label{esm05}
        |K_{3}^{x_{1}x_{2}}(r)| \leqslant Cr^{5}.
    \end{equation}
    Then the existence of $M^{x_{1}x_{2}}(0^{+})$ follows directly from the estimates \eqref{esm02}, \eqref{esm03},~\eqref{esm04} and~\eqref{esm05}. For any $0<\sigma<\tau<\infty$ and any vanishing $r_{m}\to 0^{+}$, we integrate the equation~\eqref{Mx12r1} from $r_{m}\sigma$ to $r_{m}\tau$ and we get 
    \begin{align*}
        \begin{split}
            &2\int_{B_{\tau}^{+}\setminus B_{\sigma}^{+}}|x|^{-6}\frac{1}{x_{1}H(\tfrac{r_{m}|\nabla u_{m}|^{2}}{x_{1}^{2}};r_{m}x_{2})}\left(\nabla u_{m}\cdot x-\frac{5}{2}u_{m} \right)^{2}\,dx\\
            &=M^{x_{1}x_{2}}(r_{m}\tau)-M^{x_{1}x_{2}}(r_{m}\sigma)\\
            &-\int_{r_{m}\sigma}^{r_{m}\tau}r^{-5}K_{1}^{x_{2}}(r)\,dr\\
            &-\int_{r_{m}\sigma}^{r_{m}\tau}r^{-5}\sum_{i=1}^{3}K_{i}^{x_{1}x_{2}}(r)\,dr.
        \end{split}
    \end{align*}
    Passing to the limit as $m\to\infty$, since
    \begin{equation*}
        \lim_{m\to\infty}H\left(\tfrac{r_{m}|\nabla u_{m}|^{2}}{x_{1}^{2}};r_{m}x_{2}\right)=\bar{\rho}_{0}\quad\text{ as }m\to\infty,
    \end{equation*}
    we obtain that $u_{0}$ is a homogeneous function of degree $5/2$. It follows from
    \begin{align*}
        &\frac{1}{H(\tfrac{r_{m}|\nabla u_{m}|^{2}}{x_{1}^{2}};r_{m}x_{2})}-\frac{1}{\bar{\rho}_{0}}\\
        &=\int_{0}^{1}\frac{d}{d\theta}\left( \frac{1}{H(\tfrac{r_{m}\theta|\nabla u_{m}|^{2}}{x_{1}^{2}};r_{m}x_{2})} \right)\,d\theta\\
        &+\int_{0}^{1}\frac{d}{ds}\left( \frac{1}{H(0;r_{m}sx_{2})} \right)\,ds\\
        &=\int_{0}^{1}\frac{-\partial_{1}H(\tfrac{r_{m}\theta|\nabla u_{m}|^{2}}{x_{1}^{2}};r_{m}x_{2})}{H^{2}(\tfrac{r_{m}\theta|\nabla u_{m}|^{2}}{x_{1}^{2}};r_{m}x_{2})}\,d\theta\left( \frac{r_{m}|\nabla u_{m}|^{2}}{x_{1}^{2}} \right)\\
        &+\int_{0}^{1}\frac{-\partial_{2}H(0;r_{m}sx_{2})}{H^{2}(0;r_{m}sx_{2})}\,ds(r_{m}x_{2})
    \end{align*}
    and a similar argument as in~\lemref{lemstp} (3) that 
    \begin{equation*}
        \frac{1}{H(\tfrac{r_{m}|\nabla u_{m}|^{2}}{x_{1}^{2}};r_{m}x_{2})}\to\frac{1}{\bar{\rho}_{0}}\quad\text{ strongly in }L_{\mathit{loc}}^{2}(\mathbb{R} _{+}^{2}).
    \end{equation*}
    This together with the weak $L_{\mathit{w},\mathit{loc}}^{2}$ convergence of $\nabla u_{m}$ gives 
    \begin{equation*}
        \lim_{m\to\infty}\int_{\mathbb{R} _{+}^{2}}\frac{u_{m}\nabla u_{m}\cdot\nabla\eta}{x_{1}H(\tfrac{r_{m}|\nabla u_{m}|^{2}}{x_{1}^{2}};r_{m}x_{2})}\,dx=\int_{\mathbb{R} _{+}^{2}}\frac{u_{0}\nabla u_{0}\cdot\nabla\eta}{x_{1}\bar{\rho}_{0}}\,dx,
    \end{equation*}
    for any test function $\eta\in C_{0}^{\infty}(\mathbb{R} _{+}^{2})$. Therefore,
    \begin{align*}
        &o(1)+\int_{\mathbb{R}_{+} ^{2}}\frac{1}{x_{1}\bar{\rho}_{0}}|\nabla u_{m}|^{2}\eta\,dx\\
        &=\int_{\mathbb{R}_{+} ^{2}}\frac{|\nabla u_{m}|^{2}}{x_{1}H(\tfrac{r_{m}|\nabla u_{m}|^{2}}{x_{1}^{2}};r_{m}x_{2})}\eta\,dx\\
        &=-\int_{\mathbb{R}_{+} ^{2}}\frac{u_{m}\nabla u_{m}\nabla \eta}{x_{1}H(\tfrac{r_{m}|\nabla u_{m}|^{2}}{x_{1}^{2}};r_{m}x_{2})}\,dx\\
        &\to-\int_{\mathbb{R}_{+} ^{2}}\frac{u_{0}\nabla u_{0}\cdot\nabla \eta}{x_{1}\bar{\rho}_{0}}\,dx\\
        &=\int_{\mathbb{R}_{+} ^{2}}\frac{1}{x_{1}\bar{\rho}_{0}}|\nabla u_{0}|^{2}\eta\,dx.
    \end{align*}
    This gives the strong convergence of $u_{m}$.
\end{proof}
The existence and finiteness of $M^{x_{1}x_{2}}(0^{+})$ and the homogeneity of $u_{0}$ allow us to characterize the weighted density.
\begin{lemma}[Weighted density at the origin]
    Let $u$ be a subsonic variational solution of~\eqref{fb}, assume $u$ satisfies the growth assumption~\eqref{gastp}. Then 
    \begin{enumerate}
        \item [(1).] $M^{x_{1}}(0^{+})$ takes the value 
        \begin{equation}\label{wden0}
            M^{x_{1}x_{2}}(0^{+})=\frac{1}{\bar{\rho}_{0}}\lim_{r\to 0^{+}}r^{-4}\int_{B_{r}^{+}}x_{1}x_{2}\chi_{\left\{ u>0 \right\} }\,dx.
        \end{equation}
        \item [(2).] Let $u_{m}$ be a blow-up sequence defined in equation~\eqref{bl0}, let $u_{0}$ be the $W_{\mathit{w},\mathit{loc}}^{1,2}$ strong limit of $u_{m}$ and $\chi_{0}$ be the $L_{\mathit{loc}}^{1}$ strong limit of $\chi_{\left\{ u_{m}>0 \right\} }$. Then $u_{0}$ is a homogeneous solution of 
        \begin{align}\label{fv0u0}
            \begin{split}
                0&=\frac{1}{\bar{\rho}_{0}}\int_{\mathbb{R} _{+}^{2}}\frac{1}{x_{1}}(|\nabla u_{0}|^{2}\operatorname{div}\phi-2\nabla u_{0}D\phi\nabla u_{0})-\frac{1}{x_{1}^{2}}|\nabla u_{0}|^{2}\phi_{1}\,dx\\
                &+\frac{1}{\bar{\rho}_{0}}\int_{\mathbb{R} _{+}^{2}}(x_{1}x_{2}\chi_{0}\operatorname{div}\phi+x_{2}\chi_{0}\phi_{1}+x_{1}\chi_{0}\phi_{2})\,dx
            \end{split}
        \end{align}
        for each $\phi=(\phi_{1},\phi_{2})\in C_{0}^{1}(\mathbb{R} ^{2};\mathbb{R} ^{2})$ such that $\phi_{1}=0$ on $\{x_{1}=0\}$.
    \end{enumerate}
\end{lemma}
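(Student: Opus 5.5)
The proof follows the template of the stagnation-point and axis-point density lemmas, now using the origin-adapted monotonicity formula of \propref{prop0} and the compactness of \lemref{lemo}.

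For part (1), fix $r>0$ and a sequence $r_{m}\to0^{+}$ along which $u_{m}$ from \eqref{bl0} converges strongly in $W_{\mathit{w},\mathit{loc}}^{1,2}(\mathbb{R}_{+}^{2})$ to a $5/2$-homogeneous $u_{0}$. Using the scaling $r^{-4}E_{H}(u;B_{r}^{+})=(r/R)^{-4}E_{H}(u_{R};B_{r/R}^{+})$, I will write
\[
M^{x_{1}x_{2}}(rr_{m})=r^{-4}E_{H}(u_{m};B_{r}^{+})-\tfrac{5}{2}r^{-5}\int_{\partial B_{r}^{+}}\tfrac{u_{m}^{2}}{x_{1}\bar{\rho}_{0}}\,d\mathcal{H}^{1}-(rr_{m})^{-4}K_{1}^{x_{2}}(rr_{m}).
\]
Here $K_{1}^{x_{2}}=E_{H}-E_{F}$ is defined as in \eqref{defK1x2r} on half balls. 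By \eqref{esm02} we have $|K_{1}^{x_{2}}(\rho)|\le C\rho^{5}$, so the last term vanishes as $m\to\infty$.

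Inside $E_{H}(u_{m};B_{r}^{+})$ the weight is $1/H(r_{m}|\nabla u_{m}|^{2}/x_{1}^{2};r_{m}x_{2})$. Lemma~\ref{lemo} shows this converges to $1/\bar{\rho}_{0}$ strongly in $L^{2}_{\mathit{loc}}$; in fact it is uniformly $O(r_{m})$-close by \eqref{esmH1} and the growth bound \eqref{gastp}. Together with the strong convergence of $\nabla u_{m}$, this gives convergence of the Dirichlet part to $\bar{\rho}_{0}^{-1}r^{-4}\int_{B_{r}^{+}}|\nabla u_{0}|^{2}/x_{1}$. The boundary term converges by the trace compactness of $u_{m}$; local uniform convergence suffices, since $u_{m}\le C|x|^{5/2}$ and $u_{m}^{2}/x_{1}$ is integrable thanks to $u_{m}=O(x_{1}^{2})$ near the axis.

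The next step is to show that $\operatorname{div}(x_{1}^{-1}\nabla u_{0})=0$ in $\{u_{0}>0\}$, by passing to the limit in the equation for $u_{m}$ exactly as in Lemma~\ref{lemo}. The energy identity \eqref{eni} for $u_{0}$ then yields $\int_{B_{r}^{+}}|\nabla u_{0}|^{2}/x_{1}=\int_{\partial B_{r}^{+}}u_{0}\nabla u_{0}\cdot\nu/x_{1}$; the axis boundary term vanishes by the compatibility condition, which passes to the limit through $|\nabla u_{m}|\le Cx_{1}\sqrt{x_{2}^{+}}$. Homogeneity gives $\nabla u_{0}\cdot\nu=\frac{5}{2}u_{0}/r$ on $\partial B_{r}^{+}$, so the two $u_{0}$-terms cancel. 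What remains is $\bar{\rho}_{0}^{-1}\lim_{m}r^{-4}\int_{B_{r}^{+}}x_{1}x_{2}\chi_{\{u_{m}>0\}}\,dx$. Rescaling back and using that $M^{x_{1}x_{2}}(0^{+})$ exists (Lemma~\ref{lemo}(1)), the limit along $rr_{m}$ is independent of the sequence. This gives \eqref{wden0} and shows that the limit in \eqref{wden0} exists.

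For part (2), I will test \eqref{fv} with $\phi_{m}(x)=\phi(x/r_{m})$; this is admissible since $\phi_{1}=0$ on $\{x_{1}=0\}$. After the change of variables, the $x_{1}^{-1}$ and $x_{1}^{-2}$ gradient terms carry the factor $1/H_{m}\to1/\bar{\rho}_{0}$. The term $x_{1}\lambda(x_{2})\chi\operatorname{div}\phi$ splits as $x_{1}x_{2}\bar{\rho}_{0}^{-1}\chi$ plus a remainder of size $O(r_{m}x_{2}^{2})$. The $\phi_{1}$-term contributes $x_{2}\chi/\bar{\rho}_{0}$, and the $\phi_{2}$-term contributes $x_{1}\lambda'(0)\chi=x_{1}\chi/\bar{\rho}_{0}$ by \eqref{lbd'}. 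The error terms are of the same type as $K_{1}^{x_{2}}$ and $K_{1}^{x_{1}x_{2}}$ and are bounded by $Cr_{m}$ using \eqref{esm02} and \eqref{esm03}. These include $F-|\nabla u|^{2}/(x_{1}^{2}H)$ and $\partial_{2}F$, the latter being $O(|\nabla u|^{2}/x_{1}^{2})$.

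Passing to the limit with strong convergence of $\nabla u_{m}$ in $L^{2}_{\mathit{w},\mathit{loc}}$ and $\chi_{\{u_{m}>0\}}\to\chi_{0}$ in $L^{1}_{\mathit{loc}}$ yields \eqref{fv0u0}. I expect the main obstacle to be the term $\int|\nabla u_{m}|^{2}x_{1}^{-2}\phi_{1}$, which is more singular than the weighted norm. I plan to control it using $|\phi_{1}|\le Cx_{1}$, which follows from $\phi_{1}=0$ on the axis and $\phi\in C^{1}$; this reduces it to the $L^{2}_{\mathit{w}}$ norm, where strong convergence applies.
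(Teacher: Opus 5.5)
Your proposal is correct and follows essentially the same route as the paper. In (1) you decompose $M^{x_1x_2}(rr_m)$ into the rescaled $E_H$-energy, the boundary term and $(rr_m)^{-4}K_1^{x_2}(rr_m)=O(rr_m)$, and cancel the $u_0$-terms using the weak equation $\operatorname{div}(x_1^{-1}\nabla u_0)=0$ and $5/2$-homogeneity; in (2) you test \eqref{fv} with $\phi(x/r_m)$ and control the errors as in \eqref{esm02}--\eqref{esm03}. Your extra justifications fill in details the paper leaves implicit, namely the trace convergence via $u_m=O(x_1^2)$ near the axis, the vanishing axis term, the bound $|\phi_1|\le Cx_1$ for the $x_1^{-2}$ term, and sequence-independence for the existence of the full limit; you also correctly use degree $5/2$ where the paper's proof has the typo $3/2$.
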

\begin{proof}
    (1). It follows from the definition of $M^{x_{1}x_{2}}(r)$ that for any $r>0$ and any vanishing sequence $r_{m}\to 0^{+}$, 
    \begin{align}\label{rescal01}
        \begin{split}
            M^{x_{1}x_{2}}(rr_{m})&=r^{-4}E_{H}(u_{m};B_{r}^{+})-\frac{5}{2}r^{-5}\int_{\partial B_{r}^{+}}\frac{u_{m}^{2}}{x_{1}\bar{\rho}_{0}}\,d\mathcal{H}^{1}\\
            &-(rr_{m})^{-4}K_{1}^{x_{2}}(rr_{m}),
        \end{split}
    \end{align}
    where 
    \begin{equation*}
        E_{H}(u_{m};B_{r}^{+}):=\int_{B_{r}^{+}}x_{1}\left[ \frac{|\nabla u_{m}|^{2}}{x_{1}^{2}H(\tfrac{r_{m}|\nabla u_{m}|^{2}}{x_{1}^{2}};r_{m}x_{2})}+\frac{x_{2}}{\bar{\rho}_{0}}\chi_{\left\{ u_{m}>0 \right\} } \right]\,dx,
    \end{equation*}
    and 
    \begin{align*}
        K_{1}^{x_{2}}(rr_{m})&=\int_{B_{rr_{m}}^{+}}x_{1}\int_{0}^{\tfrac{r_{m}|\nabla u_{m}|^{2}}{x_{1}^{2}}}\tfrac{\partial}{\partial\tau}\left( \tfrac{1}{H(\tau;r_{m}x_{2})} \right)\,d\tau\,dx\\
        &-\int_{B_{rr_{m}}^{+}}\left( \int_{0}^{r_{m}x_{2}}\tfrac{\partial}{\partial\tau}\left( \tfrac{1}{H(\tau;r_{m}x_{2})} \right) \tau\,d\tau\right)\chi_{\left\{ u_{m}>0 \right\} } dx.
    \end{align*}
    It follows from estimates~\eqref{esm02} that $|K_{1}^{x_{2}}(rr_{m})| \leqslant C(rr_{m})^{5}$ and this implies that 
    \begin{equation*}
        \lim_{m\to \infty}(rr_{m})^{-4}K_{1}^{x_{2}}(rr_{m})=0.
    \end{equation*}
    Passing to the limit in the equation~\eqref{rescal01} gives 
    \begin{align*}
        M^{x_{1}x_{2}}(0^{+})&=\frac{r^{-4}}{\bar{\rho}_{0}}\int_{\partial B_{r}^{+}}\frac{u_{0}}{x_{1}}\left[ \nabla u_{0}\cdot\nu-\frac{5}{2}\frac{u_{0}}{r} \right]\,d\mathcal{H}^{1}\\
        &+\lim_{m\to\infty}\frac{r^{-4}}{\bar{\rho}_{0}}\int_{B_{r}^{+}}x_{1}x_{2}\chi_{\left\{ u_{m}>0 \right\} }\,dx\\
        &=\lim_{m\to\infty}\frac{r^{-4}}{\bar{\rho}_{0}}\int_{B_{r}^{+}}x_{1}x_{2}\chi_{\left\{ u_{m}>0 \right\} }\,dx,
    \end{align*}
    where we used $\operatorname{div}(\tfrac{1}{x_{1}}\nabla u_{0})=0$ in the weak sense and the fact that $u_{0}$ is a homogeneous function of degree $3/2$. This proves~\eqref{wden0}.

    (2). Let $\phi\in C_{0}^{1}(\mathbb{R} ^{2};\mathbb{R} ^{2})$ with $\phi_{1}=0$ on $\{x_{1}=0\}$. Set $\phi_{m}(x)=\phi(\tfrac{x}{r_{m}})$ and we obtain 
    \begin{align*}
        0&=\int_{\Omega_{m}}\frac{|\nabla u_{m}|^{2}}{x_{1}H(\tfrac{r_{m}|\nabla u_{m}|^{2}}{x_{1}^{2}};r_{m}x_{2})}\operatorname{div}\phi\,dx\\
        &-2\int_{\Omega_{m}}\frac{\nabla u_{m}D\phi\nabla u_{m}}{x_{1}H(\tfrac{r_{m}|\nabla u_{m}|^{2}}{x_{1}^{2}};r_{m}x_{2})}\,dx\\
        &-\int_{\Omega_{m}}\frac{|\nabla u_{m}|^{2}\phi_{1}}{x_{1}H(\tfrac{r_{m}|\nabla u_{m}|^{2}}{x_{1}^{2}};r_{m}x_{2})}\,dx\\
        &+\int_{\Omega_{m}}\frac{x_{1}x_{2}}{\bar{\rho}_{0}}\chi_{\left\{ u_{m}>0 \right\} }\operatorname{div}\phi+\frac{x_{2}}{\bar{\rho}_{0}}\chi_{\left\{ u_{m}>0 \right\} }\phi_{1}+\frac{x_{1}}{\bar{\rho}_{0}}\chi_{\left\{ u_{m}>0 \right\} }\phi_{2}\,dx\\
        &+r_{m}^{-1}\int_{\Omega_{m}}x_{1}\left( \int_{0}^{r_{m}x_{2}}\tfrac{\partial}{\partial\tau}\left( \tfrac{1}{H(\tau;r_{m}x_{2})} \right)\tau\,d\tau \right) \chi_{\left\{ u_{m}>0 \right\} } \operatorname{div}\phi\,dx\\
        &-r_{m}^{-1}\int_{\Omega_{m}}\int_{0}^{\tfrac{r_{m}|\nabla u_{m}|^{2}}{x_{1}^{2}}}\tfrac{\partial}{\partial\tau}\left( \tfrac{1}{H(\tau;r_{m}x_{2})} \right)\tau\,d\tau\operatorname{div}\phi\,dx\\
        &+r_{m}^{-1}\int_{\Omega_{m}}\phi_{1}\left( \int_{0}^{r_{m}x_{2}}\tfrac{\partial}{\partial\tau}\left( \tfrac{1}{H(\tau;r_{m}x_{2})} \right)\tau\,d\tau \right)\chi_{\left\{ u_{m}>0 \right\} }\,dx \times\\
        &-r_{m}^{-1}\int_{\Omega_{m}}\int_{0}^{\tfrac{r_{m}|\nabla u_{m}|^{2}}{x_{1}^{2}}}\tfrac{\partial}{\partial\tau}\left( \tfrac{1}{H(\tau;r_{m}x_{2})} \right)\tau\,d\tau\,dx.
    \end{align*}
    Using a similar argument as in~\eqref{esm02} and~\eqref{esm03}, we obtain~\eqref{fv0u0} by passing to the limit as $m\to\infty$ in the above identity.
\end{proof}
Finally, for the blow-up limits of $u_{0}$ near the origin, one has 
\begin{corollary}[The blow-up limits at the origin]
    Let $u$ be a subsonic variational solution of~\eqref{fb}, and let $\delta$ be defined as in~\eqref{d}. Suppose that $u$ satisfies the growth assumption~\eqref{gastp}. Then 
    \begin{enumerate}
        \item [(1).] The only possible values of weighted density defined in~\eqref{wden0} are 
        \begin{equation*}
            M^{x_{1}x_{2}}(0^{+})\in \left\{ \frac{m_{0}}{\bar{\rho}_{0}},\frac{1}{8\bar{\rho}_{0}},0 \right\} 
        \end{equation*}
        where $m_{0}:=\int_{B_{1}^{+}\cap\{\pi-\theta^{*}<\arctan(x_{1}/x_{2})<\pi\} }x_{1}x_{2}\,dx$ and $\theta^{*}=\arccos s^{*}$, $s^{*}\in(-1,0)$ is the unique solution $s\in(-1,1)$ of $P_{3/2}'(s)=0$ ($P_{3/2}$ is the Legendre function of the first kind).
        \item [(2).] If $M^{x_{1}x_{2}}(0^{+})=\frac{m_{0}}{\bar{\rho}_{0}}$, then 
        \begin{equation*}
            \frac{u(rx)}{r^{5/2}}\to \beta_{0}(x_{1}^{2}+x_{2}^{2})^{1/4}P_{3/2}'\left(-\tfrac{x_{2}}{\sqrt{x_{1}^{2}+x_{2}^{2}}}\right)\chi_{\left\{\pi-\theta^{*}<\arctan\left(\tfrac{x_{1}}{x_{2}}\right)<\pi \right\} },
        \end{equation*}
        as $r\to 0^{+}$, strongly in $W_{\mathit{w},\mathit{loc}}^{1,2}(\mathbb{R} _{+}^{2})$ and locally uniformly in $\mathbb{R} _{+}^{2}$.
        \item [(3).] If $M^{x_{1}x_{2}}(0^{+})=\frac{1}{8\bar{\rho}_{0}}$, then 
        \begin{equation*}
            \frac{u(rx)}{r^{5/2}}\to 0\quad\text{ as }r\to 0^{+},
        \end{equation*}
        strongly in $W_{\mathit{w},\mathit{loc}}^{1,2}(\mathbb{R} _{+}^{2})$ and locally uniformly in $\mathbb{R} _{+}^{2}$.
        \item [(4).] If $M^{x_{1}x_{2}}(0^{+})=0$, then 
        \begin{equation*}
            \frac{u(rx)}{r^{5/2}}\to 0\quad\text{ as }r\to 0^{+},
        \end{equation*}
        strongly in $W_{\mathit{w},\mathit{loc}}^{1,2}(\mathbb{R} _{+}^{2})$ and locally uniformly in $\mathbb{R} _{+}^{2}$.
    \end{enumerate}
\end{corollary}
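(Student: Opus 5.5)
We follow the scheme of Corollary~\ref{corbl} and Corollary~\ref{corblad}. By Lemma~\ref{lemo}, every blow-up sequence $u_{m}(x)=u(r_{m}x)/r_{m}^{5/2}$ has a subsequence converging strongly in $W_{\mathit{w},\mathit{loc}}^{1,2}(\mathbb{R}_{+}^{2})$ to a limit $u_{0}$ homogeneous of degree $5/2$. Compactness of the sequence comes from the growth bounds $|\nabla u|\le Cr^{3/2}$ and $u\le Cr^{5/2}$ in $B_{r}^{+}$, and these bounds also give local uniform convergence. Moreover $\chi_{\{u_{m}>0\}}\to\chi_{0}$ in $L^{1}_{\mathit{loc}}$ after passing to a further subsequence. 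The limit $u_{0}$ then satisfies $\operatorname{div}(\tfrac{1}{x_{1}}\nabla u_{0})=0$ in $\{u_{0}>0\}$, vanishes on $\{x_{2}\le 0\}$ (since the free surface lies in $\{x_{2}\ge 0\}$), and satisfies the limiting domain-variation identity \eqref{fv0u0}. The weighted density formula \eqref{wden0} then reads $M^{x_{1}x_{2}}(0^{+})=\frac{1}{\bar\rho_{0}}\int_{B_{1}^{+}}x_{1}x_{2}\chi_{0}\,dx$. Hence the task reduces to classifying the pairs $(u_{0},\chi_{0})$.

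\emph{Case $u_{0}\not\equiv 0$.} We introduce the stream-function conjugate $w_{0}$ defined by $\partial_{1}w_{0}=-\tfrac{1}{x_{1}}\partial_{2}u_{0}$ and $\partial_{2}w_{0}=\tfrac{1}{x_{1}}\partial_{1}u_{0}$. This $w_{0}$ is axially harmonic (harmonic in $\mathbb{R}^{3}$) and homogeneous of degree $3/2$. Writing $w_{0}=\varrho^{3/2}f(\cos\theta)$, the function $f$ solves Legendre's equation of order $3/2$ on each connected component of the positivity set. The finiteness of $\int x_{1}|\nabla w_{0}|^{2}$ near the axis excludes $Q_{3/2}$ at $s=1$, so on a component touching the axis $f=cP_{3/2}$. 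Equivalently, $u_{0}=c\,x_{1}\varrho^{3/2}\sqrt{1-s^{2}}\,P_{3/2}'(s)$, which is the expression \eqref{thm:blor} after the change $s\mapsto -s$ coming from the orientation $x_{2}\ge 0$.

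The free boundary condition obtained from \eqref{fv0u0} by integrating by parts on a smooth cone is $|\nabla u_{0}|^{2}=x_{1}^{2}x_{2}$ on $\partial\{u_{0}>0\}$. This forces the boundary ray to be a zero of $P_{3/2}'$, namely $\theta^{*}$, and fixes $c=\beta_{0}$. We must also rule out components that do not touch the axis. Such a cone would be bounded by two rays on which the condition must hold at both ends. We expect to exclude it by the sign and zero structure of $P_{3/2}$ and $Q_{3/2}$ on $(-1,1)$, or else by the constraint $u_{0}=0$ in $\{x_{2}\le 0\}$. Since $\chi_{0}=\chi_{\{u_{0}>0\}}$ on this cone, the density equals $m_{0}/\bar\rho_{0}$, and this gives statement (2).

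\emph{Case $u_{0}\equiv 0$.} Now \eqref{fv0u0} reduces to $\int(x_{1}x_{2}\chi_{0}\operatorname{div}\phi+x_{2}\chi_{0}\phi_{1}+x_{1}\chi_{0}\phi_{2})=0$, that is, $\nabla(x_{1}x_{2})\chi_{0}\cdots$. More precisely, $x_{1}x_{2}\nabla\chi_{0}=0$ in the distributional sense in $\{x_{1}>0\}$. Hence $\chi_{0}$ is constant on $\{x_{1}>0,x_{2}>0\}$. By homogeneity, and since $\chi_{0}\in\{0,1\}$ a.e. (inherited from $L^{1}$ limits of characteristic functions, using the density formula and the monotonicity argument as in \cite{MR3225630}), $\chi_{0}\equiv 0$ or $\chi_{0}\equiv 1$ there. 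This gives density $0$ or $\int_{B_{1}}x_{1}x_{2}^{+}dx/\bar\rho_{0}=\frac{1}{8\bar\rho_{0}}$. Statements (3) and (4), together with the list in (1), follow.

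Finally, $M^{x_{1}x_{2}}(0^{+})$ is independent of the subsequence, and the three possible values are distinct, so the value determines the case. In case (2) the limit is unique because $\beta_{0}$ and $\theta^{*}$ are fixed, so the whole family $r\to 0^{+}$ converges; in cases (3) and (4) the limit is $0$ for every subsequence. We expect the main obstacle to be the ODE step. It requires showing rigorously that $P_{3/2}'$ has a unique zero $s^{*}$ in $(-1,0)$ and excluding the spurious $Q_{3/2}$ and non-axis cones. For this we would use the hypergeometric representation $P_{3/2}(s)={}_2F_1(-\tfrac32,\tfrac52;1;\tfrac{1-s}{2})$ together with a Sturm comparison argument.
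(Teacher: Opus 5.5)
Your overall scheme is the one the paper intends: blow-up, homogeneity, the Legendre ODE via the velocity potential, and the trivial case through $\int\chi_0\operatorname{div}(x_1x_2\phi)\,dx=0$. The paper gives no separate proof and relies on the pattern of Corollaries~\ref{corbl} and~\ref{corblad}. Your trivial case is fine, but the case $u_0\not\equiv0$ fails.

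First, $u_0=0$ on $\{x_2\le 0\}$ does not follow from the free surface lying in $\{x_2\ge0\}$. The Garabedian configuration itself has its free-boundary ray in $\{x_2>0\}$ and fluid in $\{x_2<0\}$. The constraint comes from \eqref{gastp}, which gives $\nabla u=0$ in $\{x_2<0\}$ near $0$, together with $u(0)=0$.

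Second, once this constraint holds, your identification of the positivity cone contradicts it. Write $x_1=\varrho\sin\theta$, $x_2=\varrho\cos\theta$. The solution regular on the positive $x_2$-axis is $\varrho^{5/2}\sin^2\theta\,P_{3/2}'(\cos\theta)$. This is your formula; note it carries a factor $x_1^2$ that \eqref{thm:blor} lacks, so it is not ``the expression \eqref{thm:blor}''. It is positive exactly for $\theta\in(0,\theta^*)$ with $\theta^*\approx114.8^\circ>90^\circ$. So the cone enters $\{x_2<0\}$. On the ray $\theta=\theta^*$ one has $x_1^2x_2<0$, so $|\nabla u_0|^2=x_1^2x_2$ cannot hold there. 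The substitution $s\mapsto -s$ is not a symmetry of the problem, since both the free boundary condition and the constraint break $x_2\mapsto -x_2$. It produces the solution regular on the negative axis, supported in $\{\pi-\theta^*<\theta<\pi\}$, which contains the whole lower quadrant.

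Carried out consistently, the ODE step gives the opposite conclusion. Write $u_0=\varrho^{5/2}g(\theta)$; then $g$ solves $(g'/\sin\theta)'+\tfrac{15}{4}\,g/\sin\theta=0$ on $(0,\pi)$. We have $P_{3/2}'(s)=\tfrac{15}{8}\,{}_2F_1(-\tfrac12,\tfrac72;2;\tfrac{1-s}{2})$, and every non-constant coefficient of this series is negative. Hence $P_{3/2}'$ has a single zero $s^*\in(-1,0)$ and is positive on $[0,1]$, so $g_P(\theta)=\sin^2\theta\,P_{3/2}'(\cos\theta)>0$ on $(0,\pi/2]$.
\begin{itemize}
\item A component of $\{u_0>0\}$ touching the axis must be $c\,g_P$, because the second solution does not vanish at $\theta=0$. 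But $c\,g_P$ never vanishes in $(0,\pi/2]$, while $g(\theta)=0$ for $\theta\ge\pi/2$.
\item A component $(\theta_1,\theta_2)\subset(0,\pi/2]$ is excluded by Sturm separation against $g_P$.
\end{itemize}
So under \eqref{gastp} every blow-up limit at the origin is $u_0\equiv0$. The hypothesis of (2) is never met (note $m_0\notin\{0,\tfrac18\}$), and (1), (3), (4) follow from your trivial-case analysis alone.

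Your proposal instead asserts a nontrivial limit from mutually inconsistent steps. The case $u_0\not\equiv0$ must be replaced by this exclusion argument. If you want the Garabedian profile to actually occur, you must weaken \eqref{gastp} so the fluid may occupy part of $\{x_2<0\}$. Then only the component regular at the negative axis survives, but two things must be redone: the density computation, and the determination of $\chi_0$ on the adjacent zero cone. There \eqref{fv0u0} gives $|\nabla u_0|^2=(1-\bar\chi)\,x_1^2x_2$, with $\bar\chi$ the constant value of $\chi_0$ on that cone.
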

\begin{figure}[!ht]
    \centering
    \tikzset{every picture/.style={line width=0.75pt}} 

    \begin{tikzpicture}[x=0.75pt,y=0.75pt,yscale=-1,xscale=1]
    
    \draw    (218.71,268.14) -- (218.71,73.76) ;
    \draw [shift={(218.71,71.76)}, rotate = 90] [color={rgb, 255:red, 0; green, 0; blue, 0 }  ][line width=0.75]    (10.93,-3.29) .. controls (6.95,-1.4) and (3.31,-0.3) .. (0,0) .. controls (3.31,0.3) and (6.95,1.4) .. (10.93,3.29)   ;
    \draw [color={rgb, 255:red, 144; green, 19; blue, 254 }  ,draw opacity=1 ]   (305.6,120.92) .. controls (277,135.45) and (245.5,145.95) .. (218.42,211.69) ;
    \draw [color={rgb, 255:red, 144; green, 19; blue, 254 }  ,draw opacity=1 ]   (218.42,211.69) .. controls (192.5,138.45) and (145,133.45) .. (124,120.44) ;
    \draw  [dash pattern={on 0.84pt off 2.51pt}]  (305.6,120.92) .. controls (299,102.95) and (153,100.95) .. (124,120.44) ;
    \draw    (305.6,120.92) .. controls (261,140.95) and (163,138.95) .. (124,120.44) ;
    \draw  [dash pattern={on 0.84pt off 2.51pt}]  (246,163.45) .. controls (236,156.95) and (195.35,157.58) .. (189.5,162.45) ;
    \draw    (246,163.45) .. controls (239.5,169.45) and (201,173.45) .. (189.5,162.45) ;
    \draw  [dash pattern={on 4.5pt off 4.5pt}]  (285,67.45) -- (219,209.85) ;
    \draw  [draw opacity=0][fill={rgb, 255:red, 80; green, 227; blue, 194 }  ,fill opacity=1 ] (221.49,201.11) .. controls (224.05,202.37) and (226.03,205.27) .. (226.48,208.85) .. controls (227.11,214.02) and (224.28,218.67) .. (220.16,219.22) .. controls (219.72,219.28) and (219.29,219.29) .. (218.86,219.25) -- (219,209.85) -- cycle ; \draw   (221.49,201.11) .. controls (224.05,202.37) and (226.03,205.27) .. (226.48,208.85) .. controls (227.11,214.02) and (224.28,218.67) .. (220.16,219.22) .. controls (219.72,219.28) and (219.29,219.29) .. (218.86,219.25) ;  
    \draw    (428.97,270.06) -- (428.97,75.68) ;
    \draw [shift={(428.97,73.68)}, rotate = 90] [color={rgb, 255:red, 0; green, 0; blue, 0 }  ][line width=0.75]    (10.93,-3.29) .. controls (6.95,-1.4) and (3.31,-0.3) .. (0,0) .. controls (3.31,0.3) and (6.95,1.4) .. (10.93,3.29)   ;
    \draw [color={rgb, 255:red, 144; green, 19; blue, 254 }  ,draw opacity=1 ]   (518.92,121.56) .. controls (503.6,155.52) and (474.79,213.71) .. (431.74,212.33) ;
    \draw [color={rgb, 255:red, 144; green, 19; blue, 254 }  ,draw opacity=1 ]   (431.74,212.33) .. controls (386.52,216.91) and (349.74,156.16) .. (337.32,121.08) ;
    \draw  [dash pattern={on 0.84pt off 2.51pt}]  (518.92,121.56) .. controls (496.24,110.69) and (385.14,107.65) .. (337.32,121.08) ;
    \draw    (518.92,121.56) .. controls (474.5,141.95) and (393.5,143.45) .. (337.32,121.08) ;
    \draw  [dash pattern={on 0.84pt off 2.51pt}]  (491.34,171.51) .. controls (477,165.45) and (373,164.95) .. (365.5,173.45) ;
    \draw    (491.34,171.51) .. controls (463,194.95) and (381.5,184.45) .. (365.5,173.45) ;
    \draw    (327.52,487.3) -- (327.15,278.3) ;
    \draw [shift={(327.15,276.3)}, rotate = 89.9] [color={rgb, 255:red, 0; green, 0; blue, 0 }  ][line width=0.75]    (10.93,-3.29) .. controls (6.95,-1.4) and (3.31,-0.3) .. (0,0) .. controls (3.31,0.3) and (6.95,1.4) .. (10.93,3.29)   ;
    \draw [color={rgb, 255:red, 144; green, 19; blue, 254 }  ,draw opacity=1 ]   (415.67,327.76) .. controls (383,348.95) and (325.68,396.12) .. (327.52,467.73) ;
    \draw [color={rgb, 255:red, 144; green, 19; blue, 254 }  ,draw opacity=1 ]   (328.25,462.36) .. controls (324.71,398.53) and (276.5,354.95) .. (234.07,327.28) ;
    \draw  [dash pattern={on 0.84pt off 2.51pt}]  (415.67,327.76) .. controls (396.6,307.7) and (263,306.5) .. (234.07,327.28) ;
    \draw    (415.67,327.76) .. controls (342,377.45) and (251.5,339.45) .. (234.07,327.28) ;
    \draw [color={rgb, 255:red, 144; green, 19; blue, 254 }  ,draw opacity=1 ] [dash pattern={on 4.5pt off 4.5pt}]  (287.55,329.75) .. controls (308,350.45) and (327.89,387.94) .. (327.52,467.73) ;
    \draw [color={rgb, 255:red, 144; green, 19; blue, 254 }  ,draw opacity=1 ] [dash pattern={on 4.5pt off 4.5pt}]  (328.25,462.36) .. controls (328.25,374.13) and (346,353.95) .. (370.92,330.39) ;
    \draw    (370.92,330.39) .. controls (352,341.95) and (300,341.45) .. (287.55,329.75) ;
    \draw  [dash pattern={on 0.84pt off 2.51pt}]  (368.34,329.63) .. controls (342.5,319.95) and (314.2,320.9) .. (287.55,329.75) ;
    \draw  [dash pattern={on 4.5pt off 4.5pt}]  (151,77.95) -- (219,209.85) ;
    
    \draw (211.85,50.38) node [anchor=north west][inner sep=0.75pt]    {$x_{2}$};
    \draw (223.28,134.94) node [anchor=north west][inner sep=0.75pt]    {$u >0$};
    \draw (202.98,202.21) node [anchor=north west][inner sep=0.75pt]    {$0$};
    \draw (228.39,204.88) node [anchor=north west][inner sep=0.75pt]    {$\theta ^{*} \approx 114.799^{\circ }$};
    \draw (236.02,176.87) node [anchor=north west][inner sep=0.75pt]    {$u=0$};
    \draw (425.18,51.02) node [anchor=north west][inner sep=0.75pt]    {$x_{2}$};
    \draw (440.23,141.7) node [anchor=north west][inner sep=0.75pt]    {$u >0$};
    \draw (415.08,212.88) node [anchor=north west][inner sep=0.75pt]    {$0$};
    \draw (470.69,202.3) node [anchor=north west][inner sep=0.75pt]    {$u=0$};
    \draw (321.92,257.22) node [anchor=north west][inner sep=0.75pt]    {$x_{2}$};
    \draw (334.36,379.76) node [anchor=north west][inner sep=0.75pt]  [rotate=-309.51]  {$u >0$};
    \draw (311.22,448.74) node [anchor=north west][inner sep=0.75pt]    {$0$};
    \draw (356.64,382.77) node [anchor=north west][inner sep=0.75pt]    {$u=0$};
    \draw (164,272) node [anchor=north west][inner sep=0.75pt]  [font=\small] [align=left] {Garabedian pointed\\bubble};
    \draw (371,277) node [anchor=north west][inner sep=0.75pt]  [font=\small] [align=left] {Horizontal flatness};
    \draw (312,490) node [anchor=north west][inner sep=0.75pt]  [font=\small] [align=left] {Cusp};

    \end{tikzpicture}
\end{figure}
When the free boundary is assumed to be a curve. Then
\begin{proposition}[Curve case]
    Let $u$ be a subsonic weak solution of~\eqref{fb} with~\eqref{gastp}. Assume further that $\partial\{u>0\}\cap B_{r}^{+}$ is in a neighborhood of $0$ a continuous injective curve $\sigma:I\to \mathbb{R} ^{2}$, where $0\in I$ and $\sigma(0)=0$. Then,
    \begin{enumerate}
        \item [(1).] If $M^{x_{1}}(0^{+})=\frac{m_{0}}{\bar{\rho}_{0}}$, then $\sigma_{1}(t)\neq 0$ in $(0,t_{1})$ and 
        \begin{equation*}
            \lim_{t\to 0^{+}}\frac{\sigma_{2}(t)}{\sigma_{1}(t)}=\cot(\pi-\theta^{*}),
        \end{equation*}
        where $\theta^{*}:=\arccos s^{*}$ where $s^{*}\in(-1,0)$ such that $P'_{3/2}(s^{*})=0$.
        \item [(2).] If $M^{x_{1}x_{2}}(0^{+})=\frac{1}{8\bar{\rho}_{0}}$, then $\sigma_{1}(t)\neq 0$ in $(0,t_{1})$ and 
        \begin{equation*}
            \lim_{t\to 0^{+}}\frac{\sigma_{2}(t)}{\sigma_{1}(t)}=0.
        \end{equation*}
        \item [(3).] If $M^{x_{1}x_{2}}(0^{+})=0$, then $\sigma_{1}(t)\neq 0$ in $(-t_{1},t_{1})\setminus\{0\}$ and 
        \begin{equation*}
            \lim_{t\to 0}\frac{\sigma_{2}(t)}{\sigma_{1}(t)}=0.
        \end{equation*}
    \end{enumerate}
\end{proposition}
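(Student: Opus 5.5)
The plan is to follow the curve-case arguments for the stagnation and axis points (cf. \propref{propcc}, \propref{propccad}, and \cite[Theorem 3.10]{MR4595616}). We combine the classification of blow-up limits at the origin from the preceding Corollary with a topological argument for the injective curve $\sigma$. We work with the rescalings $u_{m}(x)=u(r_{m}x)/r_{m}^{5/2}$. By \lemref{lemo} these converge strongly in $W_{\mathit{w},\mathit{loc}}^{1,2}(\mathbb{R}_{+}^{2})$ and locally uniformly to a homogeneous $u_{0}$ of degree $5/2$. Moreover $\chi_{\{u_{m}>0\}}\to\chi_{0}$ in $L_{\mathit{loc}}^{1}$, and $M^{x_{1}x_{2}}(0^{+})$ equals $\bar{\rho}_{0}^{-1}\int_{B_{1}^{+}}x_{1}x_{2}\chi_{0}\,dx$ by \eqref{wden0}.

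\textbf{Step 1: identify $\chi_{0}$ in each case.} If $M^{x_{1}x_{2}}(0^{+})=m_{0}/\bar{\rho}_{0}$, the limit is the Garabedian profile, so $\chi_{0}$ is the indicator of the cone $\{\pi-\theta^{*}<\arctan(x_{1}/x_{2})<\pi\}$. If the value is $\frac{1}{8\bar{\rho}_{0}}$, then $u_{0}\equiv0$. The domain variation identity \eqref{fv0u0} with $u_{0}=0$ then forces $\chi_{0}$ to be constant on $\{x_{1}>0,\,x_{2}>0\}$; test it with $\phi$ supported there to get $\nabla(x_{1}x_{2}\chi_{0})=(x_{2}\chi_{0},x_{1}\chi_{0})$ distributionally, hence $\nabla\chi_{0}=0$. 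With the density value $1/8$ this gives $\chi_{0}=\chi_{\{x_{2}>0\}}$. If the value is $0$, then $\chi_{0}=0$ a.e. in $\{x_{2}>0\}$, and $\chi_{0}=0$ in $\{x_{2}<0\}$ by property (5).

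\textbf{Step 2: transfer to the curve.} Use non-degeneracy of the weighted measure together with Hausdorff convergence of the free boundaries. The bound $\int_{B_{r}}\sqrt{x_{2}^{+}}\,d|\nabla\chi_{\{u>0\}}|\leqslant Cr^{3/2}$ (axial analogue of the Remark after Corollary~\ref{corbl}, via the divergence theorem) and locally uniform convergence show that $\partial\{u_{m}>0\}$ converges locally in Hausdorff distance, in $\{x_{2}>0\}$ away from the origin, to $\partial\{\chi_{0}=1\}$. In case (1) this set is the ray at angle $\pi-\theta^{*}$ from the $x_{2}$-axis. In case (2) it is the half-line $\{x_{2}=0,x_{1}>0\}$. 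In case (3) it is contained in $\{x_{2}=0\}$, and the positive phase has vanishing weighted density.

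Since $\sigma$ is injective and continuous through $0$, for small $|t|$ the point $\sigma(t)/|\sigma(t)|$ must accumulate only on the limiting directions. Because the limit is independent of the subsequence, $\sigma_{2}/\sigma_{1}$ has the stated limits. That $\sigma_{1}(t)\neq0$ for $t\neq0$ follows because a return of $\sigma$ to the axis $\{x_{1}=0\}$ at points of $A^{u}$ arbitrarily near $0$ would, after rescaling, produce free boundary near the $x_{2}$-axis. That is excluded by the limiting picture in cases (1) and (2), and by injectivity together with Step 1 in case (3). In cases (1) and (2) only one branch $t>0$ is present, because the other side of the curve lies on the axis $\{x_{1}=0\}$, where $u=0$ by definition. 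In case (3) both branches collapse tangentially to $\{x_{2}=0\}$, which produces the cusp.

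\textbf{Main obstacle.} The main obstacle is Step 2 in the degenerate cases (2)--(3): showing that the curve cannot oscillate or approach the axis while the positive phase has asymptotically full or zero density. The weight $\sqrt{x_{2}^{+}}$ degenerates near $\{x_{2}=0\}$, so Hausdorff convergence must be obtained only in $\{x_{2}>\epsilon\}$. We would then use the curve's connectedness and the maximum principle argument from \lemref{lem:cuspstp} to control the strip $\{0<x_{2}<\epsilon\}$.
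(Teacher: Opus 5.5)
The paper gives no proof of this proposition. It implicitly relies on the V\v{a}rv\v{a}ruc\v{a}--Weiss curve-case argument, whose key step reappears in the proof of \lemref{lem:cuspstp}. Your two-step architecture matches that argument, and Step~1 is fine: with $u_0=0$, identity \eqref{fv0u0} reads $\int\chi_0\operatorname{div}(x_1x_2\phi)\,dx=0$.

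\textbf{The gap is in Step~2.} You cite $\int_{B_r}\sqrt{x_2^+}\,d|\nabla\chi_{\{u>0\}}|\leqslant Cr^{3/2}$, but this is an \emph{upper} bound, and after rescaling it is only $O(1)$ on $B_1$. Combined with locally uniform convergence of $u_m$ and $L^1$ convergence of $\chi_{\{u_m>0\}}$, it cannot stop free boundary points of $u_m$ (thin positive fingers, small components) from persisting where $u_0\equiv0$. Weak solutions are not known to be non-degenerate, so the asserted Hausdorff convergence is unproved, and it is exactly what the conclusion rests on. The missing ingredient is a lower bound combined with convergence of measures:
\begin{itemize}
\item The nonnegative measures $\mu_m:=\operatorname{div}\bigl(\nabla u_m/(x_1H_m)\bigr)$, tested against $\eta\in C^1_0$ that may cross the axis, converge weakly to $\mu_0:=\operatorname{div}\bigl(\nabla u_0/(x_1\bar{\rho}_0)\bigr)$. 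This uses $|\nabla u_m|/x_1\leqslant C$, $\nabla u_m\to\nabla u_0$ and $H_m\to\bar{\rho}_0$.
\item $\mu_0$ vanishes on open sets where $u_0\equiv0$.
\item $\mu_m$ dominates $\bar{\rho}_0^{-1}\sqrt{x_2}\,\mathcal{H}^1$ restricted to $\partial_{\mathit{red}}\{u_m>0\}$. Indeed $H_m=\bar{\rho}_0$ and $|\nabla u_m|=x_1\sqrt{x_2}$ on the regular free boundary. The axis contribution $\lim\partial_1u_m/x_1$ is nonnegative because $u_m\geqslant0$ vanishes on $\{x_1=0\}$.
\end{itemize}
Now suppose $\sigma(t_m)/|\sigma(t_m)|\to z$ with $z_2>0$ and $u_0\equiv0$ near $z$. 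The rescaled curve joins a point near $z$ to the origin, so it has length at least $\delta/2$ in $B_\delta(z)$. Hence $\mu_m(B_\delta(z))\geqslant c\,\delta\sqrt{z_2}$, contradicting $\mu_m(B_\delta(z))\to0$. This is the argument that excludes the direction $(0,1)$ in cases (2)--(3), and the directions in the interior of $\{\chi_0=0\}\cap\{x_2>0\}$ in case (1).

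\textbf{Your plan for the degenerate strip also fails.} The maximum-principle argument of \lemref{lem:cuspstp} needs the strong Bernstein estimate $|\nabla u|^2\leqslant x_1^2x_2^+$, which is not assumed here (only \eqref{gastp}). It also serves to \emph{rule out} cusps, so it cannot be what describes case (3). No separate control of $\{0<x_2<\epsilon\}$ is needed; the degenerate directions are handled by connectedness. The accumulation set of $\sigma(t)/|\sigma(t)|$ as $t\to0^+$ is a closed arc, being the limit set of a continuous path.
\begin{itemize}
\item In cases (2)--(3), the measure argument removes every direction with $z_2>0$, which leaves only $(1,0)$.
\item In case (1), the arc contains the Garabedian direction. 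A small ball about a point of the ray with no free boundary points of $u_m$ would lie entirely in $\{u_m>0\}$ or entirely in $\{u_m=0\}$, contradicting $\chi_0=0$ on one side or $u_0>0$ on the other; and there is only one branch.
\item Any larger arc in case (1) contains directions with $z_1,z_2>0$. These lie either in $\{u_0>0\}$, excluded by uniform convergence, or in the interior of $\{u_0=0\}$, excluded by the measure argument.
\end{itemize}
The same reasoning gives $\sigma_1\neq0$. Your appeal to ``the limiting picture'' is not enough near the axis: $u_0=0$ on $\{x_1=0\}$, so uniform convergence alone never yields a contradiction for free boundary points accumulating there. You need either the measure argument or connectedness. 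With these two ingredients your outline becomes a proof; without them Step~2 is only an assertion.
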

\section{A nonlinear frequency formula at the origin}\label{Sec:fre}
In this section, we examine the degenerate point at the origin. Specifically, we consider the case where $u_{0}=0$, 
\begin{equation*}
    M^{x_{1}x_{2}}(0^{+})=\frac{1}{\bar{\rho}_{0}}\int_{B_{1}^{+}}x_{1}x_{2}^{+}\,dx=\frac{1}{8\bar{\rho}_{0}},
\end{equation*}
and $u=0$ on $\{x_{2} \leqslant 0\}$. We refer to such a point as a \textit{trivial original with non-zero density}. The remainder of this paper is dedicated to investigating the qualitative properties of such points, such as finding a nontrivial solution of homogeneous higher than $5/2$.

We begin by noting the following key observation. Define for any $r\in(0,\delta)$ the function
\begin{align}\label{Mx12rp}
    \begin{split}
        \widetilde{M}^{x_{1}x_{2}}(r)&:=M^{x_{1}x_{2}}(r)\\
        &-\int_{0}^{r}t^{-5}K_{1}^{x_{2}}(t)\,dt-\int_{0}^{r}t^{-5}\sum_{i=1}^{3}K_{i}^{x_{1}x_{2}}(t)\,dt,
    \end{split}
\end{align}
where $M^{x_{1}x_{2}}(r)$, $K_{1}^{x_{2}}(r)$ and $K_{i}^{x_{1}x_{2}}(r)$ for $i=1$, $2$, $3$ are defined in~\propref{prop0}. It follows from equation~\eqref{Mx12r1} that   
\begin{align*}
    (\widetilde{M}^{x_{1}x_{2}}(r))'&=(M^{x_{1}x_{2}}(r))'-r^{-5}K_{1}^{x_{2}}(r)-r^{-5}\sum_{i=1}^{3}K_{i}^{x_{1}x_{2}}(r)\\
    &=2r^{-4}\int_{\partial B_{r}^{+}}\frac{\left( \nabla u\cdot\nu-\frac{5}{2}\frac{u}{r} \right)^{2}}{x_{1}H(\tfrac{|\nabla u|^{2}}{x_{1}^{2}};x_{2})}\,d\mathcal{H}^{1} \geqslant 0.
\end{align*}
Here we used the integrability of the functions $r\mapsto r^{-5}K_{1}^{x_{2}}(r)$ and $r\mapsto r^{-5}\sum_{i=1}^{3}K_{i}^{x_{1}x_{2}}(r)$, which is proved in~\lemref{lemo} (1). Therefore, $r\mapsto\widetilde{M}^{x_{1}x_{2}}(r)$ is a nondecreasing function defined for $r\in(0,\delta)$. Moreover, it is easy to deduce that 
\begin{equation}\label{Mx12rp1}
    \widetilde{M}^{x_{1}x_{2}}(r) \geqslant \widetilde{M}^{x_{1}x_{2}}(0^{+})=M^{x_{1}x_{2}}(0^{+})=\frac{1}{\bar{\rho}_{0}}r^{-4}\int_{B_{r}^{+}}x_{1}x_{2}^{+}\,dx.
\end{equation}
The inequality in~\eqref{Mx12rp1} and the definition of $\widetilde{M}^{x_{1}x_{2}}(r)$ in the equation~\eqref{Mx12rp} imply 
\begin{align}\label{freqf1}
    \begin{split}
        &r^{-4}\int_{B_{r}^{+}}\frac{|\nabla u|^{2}}{x_{1}H(\tfrac{|\nabla u|^{2}}{x_{1}^{2}};x_{2})}\,dx-\frac{5}{2}r^{-5}\int_{\partial B_{r}^{+}}\frac{1}{x_{1}\bar{\rho}_{0}}u^{2}\,d\mathcal{H}^{1}\\
        & \geqslant \frac{1}{\bar{\rho}_{0}}r^{-4}\int_{B_{r}^{+}}x_{1}x_{2}^{+}\left( 1-\chi_{\left\{ u>0 \right\} } \right)\,dx\\
        &+r^{-4}\int_{B_{r}^{+}}x_{1}\left[ \frac{|\nabla u|^{2}}{x_{1}^{2}H(\tfrac{|\nabla u|^{2}}{x_{1}^{2}};x_{2})}-F(\tfrac{|\nabla u|^{2}}{x_{1}^{2}};x_{2}) \right]\,dx\\
        &+r^{-4}\int_{B_{r}^{+}}x_{1}\left[ \frac{x_{2}^{+}}{\bar{\rho}_{0}} -\lambda(x_{2})\right]\chi_{\left\{ u>0 \right\} }\,dx\\
        &+\int_{0}^{r}t^{-5}K_{1}^{x_{2}}(t)\,dt+\int_{0}^{r}t^{-5}\sum_{i=1}^{3}K_{i}^{x_{1}x_{2}}(t)\,dt.
    \end{split}
\end{align}
Inspired by this inequality, we have 
\begin{lemma}\label{lemma:freq}
    Let $u$ be a subsonic variational solution of~\eqref{fb}, and let $\delta$ be defined as in~\eqref{d}. Assume that $0$ is a trivial original with non-zero density and that $u$ satisfies the growth assumption 
    \begin{equation}\label{Bern}
        \frac{|\nabla u|^{2}}{x_{1}^{2}} \leqslant x_{2}^{+}\qquad\text{ locally in }\Omega.
    \end{equation}
    Define for a.e. $r\in(0,\delta)$ the functions 
    \begin{equation*}
        D(r)=\frac{\displaystyle r\int_{B_{r}^{+}}\frac{|\nabla u|^{2}}{x_{1}H(\tfrac{|\nabla u|^{2}}{x_{1}^{2}};x_{2})}\,dx}{\displaystyle\int_{\partial B_{r}^{+}}\frac{u^{2}}{x_{1}\bar{\rho}_{0}}\,d\mathcal{H}^{1}},
    \end{equation*}
    and 
    \begin{equation*}
        V(r)=\frac{\displaystyle r\frac{1}{\bar{\rho}_{0}}\int_{B_{r}^{+}}x_{1}x_{2}^{+}\left( 1-\chi_{\left\{ u>0 \right\} } \right)\,dx+e(r)}{\displaystyle\int_{\partial B_{r}^{+}}\frac{u^{2}}{x_{1}\bar{\rho}_{0}}\,d\mathcal{H}^{1}},
    \end{equation*}
    where 
    \begin{equation*}
        e(r):=rK_{1}^{x_{2}}(r)+r^{5}\int_{0}^{r}t^{-5}K_{1}^{x_{2}}(t)\,dt+r^{5}\int_{0}^{r}\sum_{i=1}^{3}t^{-5}K_{i}^{x_{1}x_{2}}(t)\,dt.
    \end{equation*}
    Then we have 
    \begin{equation}\label{Dr-Vr}
        D(r)-V(r) \geqslant \frac{5}{2}\quad\text{ for all }r\in(0,\delta_{0}).
    \end{equation}
    Moreover, there exists $r_{0}\in(0,\delta)$ sufficiently small so that 
    \begin{equation}\label{Vrge0}
        V(r) \geqslant 0\qquad\text{ for all }r\in(0,r_{0}).
    \end{equation}
\end{lemma}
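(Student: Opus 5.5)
The first claim \eqref{Dr-Vr} should follow from \eqref{freqf1} by exact bookkeeping, with no further estimates. I would multiply \eqref{freqf1} by $r^{5}$ and divide by $J(r)=\int_{\partial B_{r}^{+}}\frac{u^{2}}{x_{1}\bar{\rho}_{0}}\,d\mathcal{H}^{1}$. This is legitimate because $0\in\partial\{u>0\}$ and $u$ is continuous and nonnegative, so $J(r)>0$ for every small $r$; otherwise $u$ would vanish on $\partial B_r^+$, and by the energy identity \eqref{eni} it would vanish on $B_r^+$.

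After this normalisation, the left-hand side of \eqref{freqf1} is exactly $D(r)-\frac{5}{2}$. On the right-hand side I group the second and third lines. Since $u=0$ on $\{x_{2}\leqslant 0\}$, we have $x_{2}^{+}\chi_{\{u>0\}}=x_{2}\chi_{\{u>0\}}$. By the definition \eqref{defK1x2r} of $K_{1}^{x_{2}}$ as $E_{H}-E_{F}$, and using $\lambda(x_{2})=\frac{x_{2}}{\bar\rho_0}+\int_0^{x_2}\partial_\tau(1/H)\tau\,d\tau$, these two lines add up to exactly $r^{-4}K_{1}^{x_{2}}(r)$. Multiplying by $r^{5}$, the whole right-hand side becomes
\[
\frac{r}{\bar\rho_0}\int_{B_r^+}x_1x_2^+(1-\chi_{\{u>0\}})\,dx+e(r),
\]
which is the numerator of $V(r)$. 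This gives $D(r)-\frac52\geqslant V(r)$ for all admissible $r$.

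For \eqref{Vrge0} I would split the numerator of $V$ into a sign-definite part and a remainder, and then show the remainder is absorbed for small $r$. The signs I expect are as follows.
\begin{itemize}
\item The deficit term $\frac{r}{\bar\rho_0}\int x_1x_2^+(1-\chi)$ is nonnegative.
\item $K_{3}^{x_1x_2}\geqslant 0$. Indeed, $t\mapsto H(t;x_2)$ is decreasing, $H(x_2;x_2)=\bar\rho_0$, and the Bernstein bound \eqref{Bern} gives $\frac{|\nabla u|^2}{x_1^2}\leqslant x_2$. Hence $H\geqslant\bar\rho_0$, i.e. $\frac1{\bar\rho_0}-\frac1H\geqslant0$.
\item $K_1^{x_2}\leqslant 0$. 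Its integrand is $\int_{|\nabla u|^2/x_1^2}^{x_2}\partial_\tau(1/H)\tau\,d\tau$ on $\{u>0\}$, taken with a minus sign, and $\partial_\tau(1/H)\geqslant0$. Quantitatively,
\[
|K_1^{x_2}(r)|\leqslant C\int_{B_r^+\cap\{u>0\}}x_1x_2\Big(x_2-\frac{|\nabla u|^2}{x_1^2}\Big)\,dx .
\]
\end{itemize}
Using the mean-value representation \eqref{esmH}, I would similarly bound $K_1^{x_1x_2}$ and $K_2^{x_1x_2}$ by quantities of the same type. These are $C r\int x_1x_2\big(x_2-\frac{|\nabla u|^2}{x_1^2}\big)\chi$ and $C\int_{\partial B_r^+}\big(x_2-\frac{|\nabla u|^2}{x_1^2}\big)\frac{u|\nabla u|}{x_1}$, each carrying an extra factor of $r$ or $x_2$ relative to the leading terms.

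The main obstacle is to show that the Bernstein defect $x_2-\frac{|\nabla u|^2}{x_1^2}$, weighted by $x_1x_2$ on $\{u>0\}$, is dominated by the nonnegative pieces: the deficit term, $K_3^{x_1x_2}$, and $J(r)$ itself (the last enters with a factor $r$ and is harmless). My first attempt would be an integration-by-parts identity. I would test the equation $\operatorname{div}(\nabla u/(x_1H))=0$ against $x_2$ and $x_1x_2$ on $B_r^+\cap\{u>0\}$, and use the free boundary condition $|\nabla u|^2/x_1^2=x_2$ on $\partial_{\mathit{red}}\{u>0\}$ together with $H=\bar\rho_0$ there. The aim is to express $\int x_1x_2(x_2-|\nabla u|^2/x_1^2)\chi$ as the measure of $\{u=0\}$ weighted by $x_1x_2^+$ plus boundary terms on $\partial B_r^+$ controlled by $J(r)$ and $r\int_{\partial B_r^+}u|\nabla u|/x_1$. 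Once this is in place, every negative term is at most $C r$ times a nonnegative one. Choosing $r_0$ with $Cr_0<1$ then gives numerator${}\geqslant0$, hence $V(r)\geqslant0$ on $(0,r_0)$. The integrated terms $r^5\int_0^r t^{-5}(\cdots)\,dt$ inherit the same comparison, because the comparison holds pointwise in $t$ and the nonnegative quantities are monotone in $t$.
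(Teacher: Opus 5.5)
Your derivation of \eqref{Dr-Vr} is correct and is the paper's argument made precise. Multiplying \eqref{freqf1} by $r^{5}/J(r)$ turns its left side into $D(r)-\frac{5}{2}$. The second and third lines on the right combine to $r^{-4}K_1^{x_2}(r)$, so the right side becomes the numerator of $V$.

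The argument for \eqref{Vrge0}, however, has a genuine gap. The step you single out as the main obstacle is not merely unproved; it is false at a trivial origin, where the Bernstein defect is essentially maximal. Since $u(rx)/r^{5/2}\to 0$, you have $\int_{B_r^+}x_2|\nabla u|^2/x_1\,dx\leqslant r\int_{B_r^+}|\nabla u|^2/x_1\,dx=o(r^5)$. Maximal density gives $\int_{B_r^+}x_1(x_2^+)^2(1-\chi_{\{u>0\}})\,dx=o(r^5)$. Hence
\[
\int_{B_r^+\cap\{u>0\}}x_1x_2\Big(x_2-\frac{|\nabla u|^2}{x_1^2}\Big)\,dx=r^5\Big(\int_{B_1^+}x_1(x_2^+)^2\,dx+o(1)\Big).
\]
Testing the equation against $x_2$ or $x_1x_2$ can only turn the gradient part into boundary terms. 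The full-ball term is neither a measure of $\{u=0\}$ nor a boundary term.

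Now suppose $\partial_\tau(1/H)\geqslant c_0>0$ near the origin, which is the genuinely compressible case. Then $rK_1^{x_2}(r)\leqslant -c\,r^6$. By contrast, $Cr$ times the deficit term is $r^2\,o(r^4)=o(r^6)$. The $K_3^{x_1x_2}$-contribution is at most $Cr^5\int_0^r t^{-5}J(t)\,dt=o(r^6)$. Moreover, $J(r)$ is not part of the numerator of $V$, so a loss of size $CrJ(r)$ would only give $V\geqslant -Cr$. The absorption you propose therefore cannot work.

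What you overlook is $K_1^{x_1x_2}$, which carries no extra factor of $r$. Under \eqref{Bern} its integrand equals $-x_1x_2\int_{|\nabla u|^2/x_1^2}^{x_2}\partial_{x_2}(1/H)\,d\tau\geqslant 0$ on $\{u>0\}$ and vanishes on $\{u=0\}$. It is of the same order as the integrand of $K_1^{x_2}$. Set $\alpha=\partial_\tau(1/H)(0;0)=-\partial_{x_2}(1/H)(0;0)$ and $m_2=\int_{B_1^+}x_1(x_2^+)^2\,dx$. Then $K_1^{x_2}(t)=-\frac{\alpha}{2}m_2t^5(1+o(1))$ and $K_1^{x_1x_2}(t)=\alpha m_2t^5(1+o(1))$, so
\[
rK_1^{x_2}(r)+r^5\int_0^r t^{-5}\big(K_1^{x_2}+K_1^{x_1x_2}\big)(t)\,dt=o(r^6).
\]
The sign of the numerator of $V$ is thus decided by a leading-order cancellation and then by lower-order terms. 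An absorption argument cannot see this.

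For comparison, at exactly this point the paper asserts \eqref{0}, i.e.\ $r^{-4}|K_1^{x_2}(r)|\leqslant Cr^{-3}\int_{B_r^+}x_1x_2^+(1-\chi_{\{u>0\}})\,dx$. It derives this from the one-sided inequality $\int_0^{|\nabla u|^2/x_1^2}\leqslant\int_0^{x_2}$. That inequality only bounds $K_1^{x_2}$ from above, which is vacuous since $K_1^{x_2}\leqslant 0$. By the asymptotics above, \eqref{0} cannot hold at such a point when $\alpha>0$. So you located the weak point correctly, but neither your identity nor the paper's estimate repairs it.

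There is also a secondary issue. Even a pointwise bound $|K(t)|\leqslant Ct\int_{B_t^+}x_1x_2^+(1-\chi_{\{u>0\}})$ does not control $r^5\int_0^r t^{-5}K(t)\,dt$ through monotonicity of the deficit alone, because $t^{-4}$ is not integrable at $0$. One needs $t\mapsto t^{-4}\int_{B_t^+}x_1x_2^+(1-\chi_{\{u>0\}})$ to be nondecreasing. This is what the paper's auxiliary function $\Pi$ and the $\frac{5}{2}$-homogeneous replacement are meant to supply.
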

\begin{proof}
    The inequality in~\eqref{Dr-Vr} can be obtained directly from the inequality in~\eqref{freqf1}. Indeed, dividing the inequality~\eqref{freqf1} on both sides by the nonnegative term $\frac{1}{\bar{\rho}_{0}}r^{-4}\int_{B_{r}^{+}}x_{1}x_{2}^{+}\chi_{\left\{ u>0 \right\} }\,dx$ yields the inequality~\eqref{Dr-Vr}. It remains to prove that $V(r)$ is nonnegative for all $r\in(0,r_{0})$ with $r_{0}>0$ sufficiently small. It follows from $\tfrac{\partial}{\partial\tau}(\tfrac{1}{H(\tau;x_{2})}) \geqslant 0$ and the growth assumption~\eqref{Bern} that 
    \begin{equation*}
        \int_{0}^{\tfrac{|\nabla u|^{2}}{x_{1}^{2}}}\tfrac{\partial}{\partial\tau}\left( \tfrac{1}{H(\tau;x_{2})} \right)\tau\,d\tau \leqslant \int_{0}^{x_{2}}\tfrac{\partial}{\partial\tau}\left( \tfrac{1}{H(\tau;x_{2})} \right)\tau\,d\tau.
    \end{equation*}
    Recall~\eqref{EH-EF}, we have that 
    \begin{align*}
        |K_{1}^{x_{2}}(r)|& \leqslant \int_{B_{r}^{+}}x_{1}\int_{0}^{x_{2}}\left|\tfrac{\partial}{\partial\tau}\left( \tfrac{1}{H(\tau;x_{2})} \right)\tau\,d\tau\right|\left( 1-\chi_{\left\{ u>0 \right\} } \right)\,dx.
    \end{align*}
    Thus, one has 
    \begin{equation}\label{0}
        0 \leqslant  r^{-4}|K_{1}^{x_{2}}(r)| \leqslant Cr^{-3}\int_{B_{r}^{+}}x_{1}x_{2}^{+}\left( 1-\chi_{\left\{ u>0 \right\} } \right)\,dx.
    \end{equation} 
    A similar argument for the term $\int_{0}^{r}t^{-5}K_{1}^{x_{2}}(t)\,dt$ gives that 
    \begin{equation}\label{1}
        0 \leqslant \int_{0}^{r}t^{-5}|K_{1}^{x_{2}}(t)|\,dt \leqslant C\int_{0}^{r}t^{-4}\int_{B_{t}^{+}}x_{1}x_{2}^{+}\left( 1-\chi_{\left\{ u>0 \right\} } \right)\,dx.
    \end{equation}
    Moreover, it follows from~\eqref{K1x12r}  that  
    \begin{align*}
        &\int_{0}^{r}t^{-5}K_{1}^{x_{1}x_{2}}(t)\,dt\\
        &=\int_{0}^{r}t^{-5}\int_{B_{t}^{+}}x_{1}x_{2}^{+}\int_{0}^{\tfrac{|\nabla u|^{2}}{x_{1}^{2}}}\tfrac{\partial}{\partial x_{2}}\left( \tfrac{1}{H(\tau;x_{2})} \right)\,d\tau(1-\chi_{\left\{ u>0 \right\} })\,dx\\
        &+\int_{0}^{r}t^{-5}\int_{B_{t}^{+}}x_{1}x_{2}^{+}\Bigg[ \int_{0}^{\tfrac{|\nabla u|^{2}}{x_{1}^{2}}}\tfrac{\partial}{\partial x_{2}}(\tfrac{1}{H(\tau;x_{2})})\,d\tau\\
        &\qquad\qquad\qquad\qquad-\int_{0}^{x_{2}}\tfrac{\partial}{\partial x_{2}}(\tfrac{1}{H(\tau;x_{2})})\,d\tau \Bigg]\chi_{\left\{ u>0 \right\} }\,dx.
    \end{align*}
    Therefore, we may deduce from $-C \leqslant \tfrac{\partial}{\partial x_{2}}(\tfrac{1}{H(\tau;x_{2})}) \leqslant 0$ that
    \begin{equation*}
        \int_{0}^{\tfrac{|\nabla u|^{2}}{x_{1}^{2}}}\tfrac{\partial}{\partial x_{2}}(\tfrac{1}{H(\tau;x_{2})})\,d\tau-\int_{0}^{x_{2}}\tfrac{\partial}{\partial x_{2}}(\tfrac{1}{H(\tau;x_{2})})\,d\tau \geqslant 0. 
    \end{equation*}
    Thus, one has 
    \begin{align}\label{2}
        \begin{split}
            &\int_{0}^{r}t^{-5}K_{1}^{x_{2}x_{2}}(t)\,dt \\
            &\geqslant \int_{0}^{r}t^{-5}\int_{B_{t}^{+}}x_{1}x_{2}^{+}\int_{0}^{\tfrac{|\nabla u|^{2}}{x_{1}^{2}}}\tfrac{\partial}{\partial x_{2}}\left( \tfrac{1}{H(\tau;x_{2})} \right)\,d\tau(1-\chi_{\left\{ u>0 \right\} })\,dx \\
            &\geqslant -C\int_{0}^{r}t^{-4}\int_{B_{t}^{+}}x_{1}x_{2}^{+}(1-\chi_{\left\{ u>0 \right\} })\,dx dt.
        \end{split} 
     \end{align}
     Let us now consider 
     \begin{equation*}
         \Pi(r)=\int_{0}^{r}t^{-4}\int_{B_{t}^{+}}x_{1}x_{2}^{+}\left( 1-\chi_{\left\{ u>0 \right\} } \right)
         \,dx dt.
     \end{equation*}
     It follows that $\Pi(r) \geqslant 0$ and $\Pi(0)=0$. Moreover, a direct calculation gives that 
     \begin{equation*}
         \Pi'(r)=r^{-4}\int_{B_{r}^{+}}x_{1}x_{2}^{+}(1-\chi_{\left\{ u>0 \right\} })\,dx.
    \end{equation*}
    Since $\chi_{\left\{ u_{r}>0 \right\} }\to 1$ as $r\to 0^{+}$, we have that 
    \begin{equation*}
        \lim_{r\to 0^{+}}\Pi'(r)=\lim_{r\to 0^{+}}\int_{B_{1}^{+}}x_{1}x_{2}^{+}(1-\chi_{\left\{ u_{r}>0 \right\} })\,dx=0.
    \end{equation*}
    A direct calculation gives that 
    \begin{align*}
        \Pi''(r)&=\pd{!}{r}\left( r^{-4}\int_{B_{r}^{+}}x_{1}x_{2}^{+}\chi_{\left\{ u>0 \right\} }\,dx \right)\\
        &=4r^{-5}\int_{B_{r}^{+}}x_{1}x_{2}^{+}\chi_{\left\{ u>0 \right\} }\,dx-r^{-4}\int_{\partial B_{r}^{+}}x_{1}x_{2}^{+}\chi_{\left\{ u>0 \right\} }\,d\mathcal{H}^{1}\\
        &=r^{-1}\left[ 4\int_{B_{1}^{+}}x_{1}x_{2}^{+}\chi_{\left\{ u_{r}>0 \right\} }\,dx-\int_{\partial B_{1}^{+}}x_{1}x_{2}^{+}\chi_{\left\{ u_{r}>0 \right\} }\,d\mathcal{H}^{1} \right].
    \end{align*}
    Let us consider the $\tfrac{5}{2}$ homogeneous replacement of $u_{r}$ defined by 
    \begin{equation*}
        z_{r}(x):=|x|^{5/2}u_{r}(\tfrac{x}{|x|})=\frac{|x|^{5/2}}{r^{5/2}}u\left( r\frac{x}{|x|} \right).
    \end{equation*}
    In terms of the polar coordinates, we have by a direct calculation that 
    \begin{align*}
        \int_{B_{1}\cap\{z_{r}>0\}}x_{1}x_{2}^{+}\,dx&=\int_{0}^{1}\int_{\mathbb{S}^{1}\cap\{z_{r}>0\} }\rho\sin\theta\rho(\cos\theta)^{+}\rho\,d\rho d\theta\\
        &=\int_{0}^{1}\rho^{3}\,d\rho\int_{\mathbb{S}^{1}\cap\{z_{r}>0\} }\sin\theta(\cos\theta)^{+} d\theta\\
        &=\frac{1}{4}\int_{\partial B_{1}^{+}\cap\{u_{r}>0\}}x_{1}x_{2}^{+}\,d\mathcal{H}^{1}.
    \end{align*}
    This implies 
    \begin{equation*}
        \Pi''(r)=4r^{-1}\left[ \int_{B_{1}^{+}}x_{1}x_{2}^{+}\chi_{\left\{ u_{r}>0 \right\} } -\int_{B_{1}^{+}}x_{1}x_{2}^{+}\chi_{\left\{ z_{r}>0 \right\} }\,dx\right].
    \end{equation*}
    Note that if $x\in \{u_{r}<z_{r}\}\cap B_{1}^{+}$, then $\Pi''(r)<0$ and this implies that $\Pi(r)<r\Pi'(0)=0$ for $r\in(0,\delta)$. However this contradicts to the fact that $\Pi(r) \geqslant 0$. Thus, $u_{r} \geqslant z_{r}$ in $B_{1}^{+}$ and $\Pi''(r) \geqslant 0$. This gives that 
    \begin{equation}\label{Pir1}
        0 \leqslant \Pi(r)=\Pi(r)-\Pi(0)=\Pi'(\tilde{r})r \leqslant \Pi'(r)r=r^{-3}\int_{B_{r}^{+}}x_{1}x_{2}^{+}\left( 1-\chi_{\left\{ u>0 \right\} } \right)\,dx.
    \end{equation}
    This, combined with~\eqref{1} and~\eqref{2} gives that 
    \begin{equation}\label{4}
        \int_{0}^{r}t^{-5}(K_{1}^{x_{2}}(t)+K_{1}^{x_{1}x_{2}}(t))\,dt \geqslant -Cr^{-3}\int_{B_{r}^{+}}x_{1}x_{2}^{+}\left( 1-\chi_{\left\{ u>0 \right\} } \right)\,dx.
    \end{equation}
    We now consider the $\int_{0}^{r}t^{-5}(K_{2}^{x_{1}x_{2}}(t)+K_{3}^{x_{1}x_{2}}(t))\,dt$. Under the growth  assumption $\tfrac{|\nabla u|^{2}}{x_{1}^{2}} \leqslant x_{2}^{+}$, we deduce from $\partial_{1}H<0$ that $H(\tfrac{|\nabla u|^{2}}{x_{1}^{2}};x_{2}) \geqslant  H(x_{2};x_{2})=\bar{\rho}_{0}$. This implies 
    \begin{equation}\label{Bern1}
        \frac{1}{\bar{\rho}_{0}}-\frac{1}{H(\tfrac{|\nabla u|^{2}}{x_{1}^{2}};x_{2})} \geqslant 0\quad\text{ in }B_{r}^{+}.
    \end{equation}
    Therefore, it follows from the definition of $K_{2}^{x_{1}x_{2}}(r)$ and $K_{3}^{x_{1}x_{2}}(r)$ in equations~\eqref{K2x12r} and~\eqref{K3x12r} that $K_{3}^{x_{1}x_{2}}(r) \geqslant 0$ for all $r\in(0,\delta)$ and $K_{2}^{x_{1}x_{2}}(r) \geqslant 0$ in $\partial B_{r}^{+}\cap\{u_{\nu}\leqslant 0\}$. It suffices to consider the case when $(K_{2}^{x_{1}x_{2}}(r)+K_{3}^{x_{1}x_{2}}(r))$ in $\partial B_{r}^{+}\cap\{u_{\nu} \geqslant 0\}$. A direct calculation gives that 
    \begin{align*}
        \begin{split}
            &K_{2}^{x_{1}x_{2}}(r)+K_{3}^{x_{1}x_{2}}(r)\\
            &\geqslant 5\int_{\partial B_{r}^{+}\cap\{u_{\nu} \geqslant 0\}}\left( \frac{1}{x_{1}H(\tfrac{|\nabla u|^{2}}{x_{1}^{2}};x_{2})}-\frac{1}{x_{1}\bar{\rho}_{0}} \right)u\left( \nabla u\cdot\nu-\frac{5}{2}\frac{u}{r} \right)d\mathcal{H}^{1}\\
            &=\frac{25}{2}\int_{\partial B_{r}^{+}\cap\{u_{\nu} \geqslant 0\}}\left(\frac{1}{x_{1}\bar{\rho}_{0}}-\frac{1}{x_{1}H(\tfrac{|\nabla u|^{2}}{x_{1}^{2}};x_{2})} \right)\frac{u^{2}}{r}\,d\mathcal{H}^{1}\\
            &-5\int_{\partial B_{r}^{+}\cap\{u_{\nu} \geqslant 0\}}\left(\frac{1}{x_{1}\bar{\rho}_{0}}-\frac{1}{x_{1}H(\tfrac{|\nabla u|^{2}}{x_{1}^{2}};x_{2})} \right)u\nabla u\cdot\nu\, d\mathcal{H}^{1}\\
            &\geqslant\frac{25}{2}\int_{\partial B_{r}^{+}\cap\{u_{\nu} \geqslant 0\}}\left(\frac{1}{x_{1}\bar{\rho}_{0}}-\frac{1}{x_{1}H(\tfrac{|\nabla u|^{2}}{x_{1}^{2}};x_{2})}\right)\frac{u^{2}}{r}\chi_{\left\{ u>0 \right\} }\,d\mathcal{H}^{1}\\
            &-5\int_{\partial B_{r}^{+}\cap\{u_{\nu} \geqslant 0\}}\left(\frac{1}{x_{1}\bar{\rho}_{0}}-\frac{1}{x_{1}H(\tfrac{|\nabla u|^{2}}{x_{1}^{2}};x_{2})}\right)u\nabla u\cdot\nu\, d\mathcal{H}^{1}\\
            &=\frac{25}{2}\int_{\partial B_{r}^{+}\cap\{u_{\nu} \geqslant 0\}}\left(\frac{1}{x_{1}\bar{\rho}_{0}}-\frac{1}{x_{1}H(\tfrac{|\nabla u|^{2}}{x_{1}^{2}};x_{2})}\right)\frac{u^{2}}{r}\left( \chi_{\left\{ u>0 \right\} } -1\right)\,d\mathcal{H}^{1}\\
            &+5\int_{\partial B_{r}^{+}\cap\{u_{\nu} \geqslant 0\}}\left(\frac{1}{x_{1}\bar{\rho}_{0}}-\frac{1}{x_{1}H(\tfrac{|\nabla u|^{2}}{x_{1}^{2}};x_{2})}\right)u\left( \frac{5}{2}\frac{u}{r}-\nabla u\cdot\nu \right)\,d\mathcal{H}^{1}\\
            & \geqslant \frac{25}{2}\int_{\partial B_{r}^{+}\cap\{u_{\nu} \geqslant 0\}}\left(\frac{1}{x_{1}\bar{\rho}_{0}}-\frac{1}{x_{1}H(\tfrac{|\nabla u|^{2}}{x_{1}^{2}};x_{2})}\right)\frac{u^{2}}{r}\left( \chi_{\left\{ u>0 \right\} } -1\right)\,d\mathcal{H}^{1}\\
            &+5\int_{\partial B_{r}^{+}\cap\{u_{\nu} \geqslant 0\}}\left(\frac{1}{x_{1}\bar{\rho}_{0}}-\frac{1}{x_{1}H(\tfrac{|\nabla u|^{2}}{x_{1}^{2}};x_{2})}\right)u\nabla u\cdot\nu\left(\chi_{\left\{ u>0 \right\} }-1 \right)\,d\mathcal{H}^{1}.
        \end{split}
    \end{align*}
    where in the last inequality, we used the fact that $\tfrac{u}{r} \geqslant \frac{u}{r}\chi_{\left\{ u>0 \right\}}=\tfrac{2}{5}u_{\nu}\chi_{\left\{ u>0 \right\} }$ for any $r\in(0,r_{0})$ with $r_{0}$ sufficiently small, and this can be deduced from the monotonicity formula~\eqref{Mx12r1}.
    Thanks to~\eqref{esmH}, we have 
    \begin{equation*}
        \int_{0}^{r}t^{-5}(K_{2}^{x_{1}x_{2}}(t)+K_{3}^{x_{1}x_{2}}(t))\,dt \geqslant -C\int_{0}^{r}t^{-3}\int_{\partial B_{t}^{+}}x_{1}x_{2}^{+}(1-\chi_{\left\{ u>0 \right\} })\,d\mathcal{H}^{1}dt.
    \end{equation*}
    Moreover, a direct calculation gives 
    \begin{align*}
        &\int_{0}^{r}t^{-3}\int_{\partial B_{t}^{+}}x_{1}x_{2}^{+}\left( 1-\chi_{\left\{ u>0 \right\} } \right)\,d\mathcal{H}^{1}dt\\
        &=\int_{0}^{r}t^{-3}\frac{d}{dt}\int_{B_{t}^{+}}x_{1}x_{2}^{+}(1-\chi_{\left\{ u>0 \right\} })\,dx dt\\
        &=r^{-3}\int_{B_{r}^{+}}x_{1}x_{2}^{+}\left( 1-\chi_{\left\{ u>0 \right\} } \right)\,dx+3\Pi(r).
    \end{align*}
    This implies that 
    \begin{equation}\label{5}
        \int_{0}^{r}t^{-3}(K_{2}^{x_{1}}(t)+K_{3}^{x_{1}x_{2}}(t))\,dt \geqslant -Cr^{-3}\int_{B_{r}^{+}}x_{1}x_{2}^{+}(1-\chi_{\left\{ u>0 \right\} })\,dx.
    \end{equation}
    We now obtain $V(r) \geqslant 0$ for all $r\in(0,r_{0})$, as a direct application of the estimates \eqref{0}, \eqref{Pir1},~\eqref{4} and~\eqref{5}.
\end{proof}
Define the function $N(r):=D(r)-V(r)$, the preceding lemma establishes that $N(r) \geqslant \frac{5}{2}$. We aim to prove the existence of the limit $\lim_{r\to 0^{+}}N(r)$. To this end, we analyze the derivative of $N(r)$, which is derived in the following proposition.
\begin{proposition}\label{prop:freq}
    Let $u$ be a subsonic variational solution of~\eqref{fb}, let $\delta$ be defined as in~\eqref{d} and let $0$ be a trivial original with non-zero density. Assume that $u$ satisfies the growth assumption~\eqref{Bern}. Let $D(r)$, $V(r)$ and $e(r)$ defined as in~\lemref{lemma:freq}. Define for a.e. $r\in(0,\delta)$ the function
    \begin{equation*}
        N(r):=D(r)-V(r)
    \end{equation*}
    Then
    \begin{align}\label{N1r1}
        \begin{split}
            N'(r)&=\frac{2}{rJ(r)}\displaystyle\int_{\partial B_{r}^{+}}\frac{[r(\nabla u\cdot\nu)-D(r)u]^{2}}{x_{1}H(\tfrac{|\nabla u|^{2}}{x_{1}^{2}};x_{2})}\,d\mathcal{H}^{1}\\
            &+\frac{2}{r}V^{2}(r)+\frac{2}{r}V(r)\left( N(r)-\frac{5}{2} \right)\\
            &+\frac{\frac{2}{5}K_{2}^{x_{1}x_{2}}(r)}{J(r)}\left( N(r)-\frac{5}{2} \right)-\frac{K_{3}^{x_{1}x_{2}}(r)}{J(r)},
        \end{split}
    \end{align}
    and 
    \begin{align}\label{N1r2}
        \begin{split}
            N'(r)&=\frac{2}{rJ(r)}\int_{\partial B_{r}^{+}}\frac{[r(\nabla u\cdot\nu)-N(r)u]^{2}}{x_{1}H(\tfrac{|\nabla u|^{2}}{x_{1}^{2}};x_{2})}\,d\mathcal{H}^{1}\\
            &+\frac{2}{r}V(r)\left( N(r)-\frac{5}{2} \right)\\
            &+\frac{\frac{2}{5}K_{2}^{x_{1}x_{2}}(r)}{J(r)}\left( N(r)-\frac{5}{2} \right)-\frac{K_{3}^{x_{1}x_{2}}(r)}{J(r)}\\
            &+\frac{4V(r)}{r}\left( N(r)-\frac{5}{2} \right)\frac{\mathcal{J}(r)}{J(r)}\\
            &+\frac{10V(r)}{r}\frac{\mathcal{J}(r)}{J(r)}+\frac{2}{r}V^{2}(r)\frac{\mathcal{J}(r)}{J(r)},
        \end{split}
    \end{align}
    where 
    \begin{equation*}
        \mathcal{J}(r):=\int_{\partial B_{r}^{+}}\left( \frac{1}{x_{1}H(\tfrac{|\nabla u|^{2}}{x_{1}^{2}};x_{2})}-\frac{1}{x_{1}\bar{\rho}_{0}} \right)u^{2}\,d\mathcal{H}^{1}.
    \end{equation*}
\end{proposition}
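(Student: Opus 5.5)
The plan is a direct differentiation of the two quotients $D(r)$ and $V(r)$ with common denominator $J(r)=\int_{\partial B_{r}^{+}}\frac{u^{2}}{x_{1}\bar{\rho}_{0}}\,d\mathcal{H}^{1}$. Write $D(r)=rG(r)/J(r)$ with $G(r)=\int_{B_{r}^{+}}\frac{|\nabla u|^{2}}{x_{1}H}\,dx$. We have $G'(r)=\int_{\partial B_{r}^{+}}\frac{|\nabla u|^{2}}{x_{1}H}\,d\mathcal{H}^{1}$. As in \eqref{Jr10}, $J'(r)=\frac{1}{r}J(r)+2\int_{\partial B_{r}^{+}}\frac{u\nabla u\cdot\nu}{x_{1}\bar{\rho}_{0}}$; the extra $\frac1r J$ comes from the weight $1/x_{1}$ scaling like $r^{-1}$ on $\partial B_r^+$.

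By the energy identity \eqref{eni}, $G(r)=\int_{\partial B_{r}^{+}}\frac{u\nabla u\cdot\nu}{x_{1}H}$. Hence
\[
\int_{\partial B_r^+}\frac{u\nabla u\cdot\nu}{x_1\bar\rho_0}=G(r)-\tfrac15K_2^{x_1x_2}(r),
\qquad
\int_{\partial B_r^+}\frac{u^2}{x_1H}=J(r)+\mathcal{J}(r),
\]
and $K_{3}^{x_{1}x_{2}}(r)=-\frac{25}{2r}\mathcal{J}(r)$. These three conversions are the only places where the compressible defect enters.

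The key step is to express $rG'(r)$ through the Pohožaev identity \eqref{ty0}. Rewrite \eqref{ty0} in the form that produced \eqref{freqf1}: use the definition of $\widetilde M^{x_1x_2}$ together with $\widetilde M^{x_1x_2}{}'\ge0$, but now keep the exact identity $(\widetilde M^{x_1x_2})'=2r^{-4}\int_{\partial B_r^+}\frac{(\nabla u\cdot\nu-\frac52\frac ur)^2}{x_1H}$. This expresses $\frac{d}{dr}\big(r\,\text{numerator of }V\big)$ plus the bulk Dirichlet term through the boundary integral $2r\int_{\partial B_r^+}\frac{(\nabla u\cdot\nu)^2}{x_1H}$ and $K$-terms. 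The function $e(r)$ was designed exactly so that these $K$-terms telescope. One checks this by differentiating $r^5\int_0^r t^{-5}K\,dt$, which gives $5r^4\int_0^r t^{-5}K\,dt+K(r)$. Consequently
\[
\frac{d}{dr}\big[J(r)(D(r)-V(r))\big]
\]
reduces to $2r\int_{\partial B_r^+}\frac{(\nabla u\cdot\nu)^2}{x_1H}$ plus lower-order terms proportional to $N$ and to the numerator of $V$.

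Then compute
\[
N'=\frac{(JN)'}{J}-N\frac{J'}{J}.
\]
Substitute $J'$ from above and use $rG=DJ$. Complete the square
\[
2r\int\frac{(\nabla u\cdot\nu)^2}{x_1H}-4D\int\frac{u\nabla u\cdot\nu}{x_1H}+\frac{2D^2}{r}\int\frac{u^2}{x_1H}
=\frac{2}{r}\int\frac{[r\nabla u\cdot\nu-Du]^2}{x_1H}.
\]
The leftover quadratic terms are $-\frac{2D^2}{r}\mathcal J$ and $+\frac{2D^2}{r}J$, and the cross-term replacement produces $K_2^{x_1x_2}$.

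Next regroup the leftovers using $D=N+V$. This gives $\frac2rV^2+\frac2rV(N-\frac52)$, the $\frac25K_2^{x_1x_2}(N-\frac52)/J$ term (the coefficient $\frac25$ comes from $\frac15K_2$ times the factor $2$), and $-K_3^{x_1x_2}/J$, where $\mathcal{J}$ has been converted via $K_3^{x_1x_2}$. This is \eqref{N1r1}. For \eqref{N1r2}, complete the square with $N$ in place of $D$ instead. Expanding $[r\nabla u\cdot\nu-Du]^2=[r\nabla u\cdot\nu-Nu]^2-2Vu(r\nabla u\cdot\nu-Nu)+V^2u^2$ and splitting $\frac{1}{x_1H}=\frac1{x_1\bar\rho_0}+(\cdot)$, the $\bar\rho_0$-parts cancel against $\frac2rV^2$ via the energy identity. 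What remains are exactly the $\mathcal{J}$-weighted terms $\frac{4V}{r}(N-\frac52)\frac{\mathcal J}{J}+\frac{10V}{r}\frac{\mathcal J}{J}+\frac2rV^2\frac{\mathcal J}{J}$, where $\frac52$ enters as $N=(N-\frac52)+\frac52$.

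The main obstacle is the bookkeeping in the third step. The $K_1^{x_2}$, $K_1^{x_1x_2}$ and $\int_0^r$ terms must cancel exactly against $e'(r)$, and the $\chi_{\{u>0\}}$ terms must combine into the numerator of $V$; the boundary term $r\int_{\partial B_r^+}x_1x_2^+(1-\chi)$ arises from differentiating that numerator. I would first verify this cancellation in the incompressible case $H\equiv\bar\rho_0$, where all $K$'s vanish and the formula reduces to the classical one of \cite{MR3225630}. I would then track each $K$-term separately.
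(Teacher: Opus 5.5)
Your route is the paper's route in substance: differentiate $N=(JN)/J$, use Poho\v{z}aev \eqref{ty0} with \eqref{eni} for the numerator, complete the square, and regroup with $D=N+V$. However, two pieces of your bookkeeping are wrong, and once they are fixed the computation does not land on \eqref{N1r1}.

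First, $J'(r)=2\int_{\partial B_r^+}\frac{u\nabla u\cdot\nu}{x_1\bar\rho_0}\,d\mathcal{H}^1$, with no $\frac1r J$ term. The $r^{-1}$ scaling of the weight $1/x_1$ is exactly cancelled by the $r$ scaling of arclength, since $J(r)=\int_{\partial B_1^+}\frac{u(ry)^2}{y_1\bar\rho_0}\,d\mathcal{H}^1(y)$; this is what \eqref{Jr10} says. Your extra term would leave an uncancelled $-N/r$ in $N'$.

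Second, $e(r)$ only absorbs the $K_1^{x_2}$ and $K_1^{x_1x_2}$ contributions. Write $JN=rI(r)-\frac{r^5}{8\bar\rho_0}-r^5\int_0^r t^{-5}\big(K_1^{x_2}+\sum_{i=1}^3K_i^{x_1x_2}\big)\,dt$ and use \eqref{ty0}, \eqref{eni}. This gives
\[
(JN)'=2r\int_{\partial B_r^+}\frac{(\nabla u\cdot\nu)^2}{x_1H}\,d\mathcal{H}^1-\frac{5}{r}JV-K_2^{x_1x_2}-K_3^{x_1x_2}.
\]
This $-K_2^{x_1x_2}$ supplies the $-\frac52$ in $\frac25K_2^{x_1x_2}(N-\frac52)/J$, and this $-K_3^{x_1x_2}$ supplies the term $-K_3^{x_1x_2}/J$.

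Consequently, the leftover you correctly isolate when completing the square has nothing to cancel against. That leftover is $-\frac{2D^2}{r}\frac{\mathcal J}{J}$, coming from $\int_{\partial B_r^+}\frac{u^2}{x_1H}=J+\mathcal J$. It cannot be ``converted via $K_3^{x_1x_2}$'' into $-K_3^{x_1x_2}/J$. Since $\mathcal J=-\frac{2r}{25}K_3^{x_1x_2}$, it equals $+\frac{4D^2}{25}\frac{K_3^{x_1x_2}}{J}$, in addition to the $-K_3^{x_1x_2}/J$ already produced by $(JN)'$.

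Carried out correctly, your plan gives \eqref{N1r1} plus the extra term $-\frac{2D^2(r)}{r}\frac{\mathcal J(r)}{J(r)}$. The algebra you describe for the second form then yields
\[
N'=\frac{2}{rJ}\int_{\partial B_r^+}\frac{[r\nabla u\cdot\nu-Nu]^2}{x_1H}\,d\mathcal{H}^1+\Big(N-\frac52\Big)\Big(\frac{2V}{r}+\frac{2K_2^{x_1x_2}}{5J}-\frac{2N+5}{r}\,\frac{\mathcal J}{J}\Big)
\]
in place of \eqref{N1r2}.

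The paper reaches \eqref{N1r1} only because of its completing-the-square step, $\frac{r^2}{J}\int_{\partial B_r^+}\frac{(\nabla u\cdot\nu)^2}{x_1H}-D^2=\frac1J\int_{\partial B_r^+}\frac{[r\nabla u\cdot\nu-Du]^2}{x_1H}$. That step tacitly uses $\int_{\partial B_r^+}\frac{u^2}{x_1H}=J$, i.e.\ it discards exactly this term. So the identities as stated hold only when $\mathcal J\equiv0$. Your incompressible sanity check cannot detect the problem, because $\mathcal J$ and all the $K$'s vanish there.

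The right move is to keep the $\mathcal J$-term rather than absorb it into $K_3^{x_1x_2}$. Note that $\mathcal J\le0$ under \eqref{Bern}, so for $N\ge\frac52$ it enters with a favorable sign.
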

\begin{proof}
    Let us define 
    \begin{equation*}
        \tilde{I}(r):=r^{-4}I(r)-\int_{0}^{r}t^{-5}K_{1}^{x_{2}}(t)\,dt-\int_{0}^{r}\sum_{i}t^{-5}K_{i}^{x_{1}x_{2}}(t)\,dt.
    \end{equation*} 
    Then a direct calculation gives that 
    \begin{align*}
        &(r^{-5}J(r))\cdot(D(r)-V(r))\\
        &=r^{-4}\int_{B_{r}^{+}}\frac{|\nabla u|^{2}}{x_{1}H(\tfrac{|\nabla u|^{2}}{x_{1}^{2}};x_{2})}\,dx\\
        &-r^{-4}(E_{H}(u;B_{r}^{+})-E_{F}(u;B_{r}^{+}))\\
        &-r^{-4}\frac{1}{\bar{\rho}_{0}}\int_{B_{r}^{+}}x_{1}x_{2}^{+}\left( 1-\chi_{\left\{ u>0 \right\} } \right)\,dx\\
        &-\int_{0}^{r}t^{-5}K_{1}^{x_{2}}(t)\,dt-\int_{0}^{r}\sum_{i=1}^{3}t^{-5}K_{i}^{x_{1}x_{2}}(t)\,dt\\
        &=r^{-4}\int_{B_{r}^{+}}x_{1}\left[  F(\tfrac{|\nabla u|^{2}}{x_{1}^{2}};x_{2})+\lambda(x_{2})\chi_{\left\{ u>0 \right\} }\right]\,dx-\int_{0}^{r}t^{-5}K_{1}^{x_{2}}(t)\,dt\\
        &-\int_{0}^{r}\sum_{i=1}^{3}t^{-5}K_{i}^{x_{1}x_{2}}(t)\,dt-r^{-4}\frac{1}{\bar{\rho}_{0}}\int_{B_{r}^{+}}x_{1}x_{2}^{+}\,dx\\
        &=\tilde{I}(r)-r^{-4}\frac{1}{\bar{\rho}_{0}}\int_{B_{r}^{+}}x_{1}x_{2}^{+}\,dx.
    \end{align*}
    This implies that 
    \begin{equation*}
        N(r)=D(r)-V(r)=\frac{\tilde{I}(r)-r^{-4}\frac{1}{\bar{\rho}_{0}}\int_{B_{r}^{+}}x_{1}x_{2}^{+}\,dx}{r^{-5}J(r)}=\frac{\tilde{I}(r)-\frac{1}{8\bar{\rho}_{0}}}{r^{-5}J(r)}.
    \end{equation*}
    Therefore, 
    \begin{align*}
        N'(r)&=\frac{\tilde{I}'(r)\cdot(r^{-5}J(r))-(\tilde{I}(r)-\tfrac{1}{8\bar{\rho}_{0}})(r^{-5}J(r))'}{(r^{-5}J(r))^{2}}\\
        &=\frac{\tilde{I}'(r)}{r^{-5}J(r)}-\frac{(\tilde{I}(r)-\tfrac{1}{8\bar{\rho}_{0}})}{r^{-5}J(r)}\frac{(r^{-5}J(r))}{(r^{-5}J(r))'}.
    \end{align*}
    It follows from~\eqref{Ir10} and~\eqref{Jr10} that 
    \begin{align*}
        N'(r)&=\frac{\displaystyle\left(2r\int_{\partial B_{r}^{+}}\frac{(\nabla u\cdot\nu)^{2}}{x_{1}H(\tfrac{|\nabla u|^{2}}{x_{1}^{2}};x_{2})}\,d\mathcal{H}^{1}-5\int_{\partial B_{r}^{+}}\frac{u\nabla u\cdot\nu}{x_{1}H(\tfrac{|\nabla u|^{2}}{x_{1}^{2}};x_{2})}\,d\mathcal{H}^{1}\right)}{\displaystyle\int_{\partial B_{r}^{+}}\frac{u^{2}}{x_{1}\bar{\rho}_{0}}\,d\mathcal{H}^{1}}\\
        &-\frac{K_{2}^{x_{1}x_{2}}(r)}{\displaystyle\int_{\partial B_{r}^{+}}\frac{u^{2}}{x_{1}\bar{\rho}_{0}}\,d\mathcal{H}^{1}}-\frac{K_{3}^{x_{1}x_{2}}(r)}{\displaystyle\int_{\partial B_{r}^{+}}\frac{u^{2}}{x_{1}\bar{\rho}_{0}}\,d\mathcal{H}^{1}}\\
        &-(D(r)-V(r))\frac{1}{r}\cdot\frac{\displaystyle \left( 2r\int_{\partial B_{r}^{+}}\frac{u\nabla u\cdot\nu}{x_{1}\bar{\rho}_{0}}\,d\mathcal{H}^{1}-5\int_{\partial B_{r}^{+}}\frac{u^{2}}{x_{1}\bar{\rho}_{0}}\,d\mathcal{H}^{1} \right)}{\displaystyle\int_{\partial B_{r}^{+}}\frac{u^{2}}{x_{1}\bar{\rho}_{0}}\,d\mathcal{H}^{1}}.
    \end{align*}
    Since
    \begin{align*}
        &(D(r)-V(r))\frac{1}{r}\cdot\frac{\displaystyle 2r\int_{\partial B_{r}^{+}}\frac{u\nabla u\cdot\nu}{x_{1}\bar{\rho}_{0}}\,d\mathcal{H}^{1}}{\displaystyle\int_{\partial B_{r}^{+}}\frac{u^{2}}{x_{1}\bar{\rho}_{0}}\,d\mathcal{H}^{1}}\\
        &=(D(r)-V(r))\frac{1}{r}\cdot\frac{\displaystyle2r\int_{\partial B_{r}^{+}}\frac{u\nabla u\cdot\nu}{x_{1}H(\tfrac{|\nabla u|^{2}}{x_{1}^{2}};x_{2})}\,d\mathcal{H}^{1}}{\displaystyle\int_{\partial B_{r}^{+}}\frac{u^{2}}{x_{1}\bar{\rho}_{0}}\,d\mathcal{H}^{1}}\\
        &-(D(r)-V(r))\frac{\frac{2}{5}K_{2}^{x_{1}x_{2}}(r)}{\displaystyle\int_{\partial B_{r}^{+}}\frac{u^{2}}{x_{1}\bar{\rho}_{0}}\,d\mathcal{H}^{1}},
    \end{align*} 
    we have that 
    \begin{align*}
        N'(r)&=\frac{2}{r}\left( \frac{\displaystyle r^{2}\int_{\partial B_{r}^{+}}\frac{(\nabla u\cdot\nu)^{2}}{x_{1}H(\tfrac{|\nabla u|^{2}}{x_{1}^{2}};x_{2})}\,d\mathcal{H}^{1}}{\displaystyle\int_{\partial B_{r}^{+}}\frac{u^{2}}{x_{1}\bar{\rho}_{0}}\,d\mathcal{H}^{1}}-D^{2}(r) \right)\\
        &+\frac{2}{r}V^{2}(r)+\frac{2}{r}V(r)\left( N(r)-\frac{5}{2} \right)\\
        &+\frac{\frac{2}{5}K_{2}^{x_{1}x_{2}}(r)}{\displaystyle\int_{\partial B_{r}^{+}}\frac{u^{2}}{x_{1}\bar{\rho}_{0}}\,d\mathcal{H}^{1}}\left( N(r)-\frac{5}{2} \right)-\frac{K_{3}^{x_{1}x_{2}}(r)}{\displaystyle\int_{\partial B_{r}^{+}}\frac{u^{2}}{x_{1}\bar{\rho}_{0}}\,d\mathcal{H}^{1}}.
    \end{align*}
    It follows from the definition of $D(r)$ that 
    \begin{equation*}
        D(r)=\frac{\displaystyle r\int_{\partial B_{r}^{+}}\frac{u\nabla u\cdot\nu}{x_{1}H(\tfrac{|\nabla u|^{2}}{x_{1}^{2}};x_{2})}\,d\mathcal{H}^{1}}{\displaystyle\int_{\partial B_{r}^{+}}\frac{u^{2}}{x_{1}\bar{\rho}_{0}}\,d\mathcal{H}^{1}}.
    \end{equation*}
    This immediately implies that 
    \begin{align*}
        &\frac{\displaystyle r^{2}\int_{\partial B_{r}^{+}}\frac{(\nabla u\cdot\nu)^{2}}{x_{1}H(\tfrac{|\nabla u|^{2}}{x_{1}^{2}};x_{2})}\,d\mathcal{H}^{1}}{\displaystyle\int_{\partial B_{r}^{+}}\frac{u^{2}}{x_{1}\bar{\rho}_{0}}\,d\mathcal{H}^{1}}-D^{2}(r)\\
        &=\frac{\displaystyle r^{2}\int_{\partial B_{r}^{+}}\frac{(\nabla u\cdot\nu)^{2}}{x_{1}H(\tfrac{|\nabla u|^{2}}{x_{1}^{2}};x_{2})}\,d\mathcal{H}^{1}}{\displaystyle\int_{\partial B_{r}^{+}}\frac{u^{2}}{x_{1}\bar{\rho}_{0}}\,d\mathcal{H}^{1}}\\
        &-2\frac{\displaystyle r\int_{\partial B_{r}^{+}}\frac{u\nabla u\cdot\nu}{x_{1}H(\tfrac{|\nabla u|^{2}}{x_{1}^{2}};x_{2})}\,d\mathcal{H}^{1}}{\displaystyle\int_{\partial B_{r}^{+}}\frac{u^{2}}{x_{1}\bar{\rho}_{0}}\,d\mathcal{H}^{1}}\cdot D(r)+D^{2}(r)\\
        &=\frac{\displaystyle\int_{\partial B_{r}^{+}}\frac{[r(\nabla u\cdot\nu)-D(r)u]^{2}}{x_{1}H(\tfrac{|\nabla u|^{2}}{x_{1}^{2}};x_{2})}\,d\mathcal{H}^{1}}{\displaystyle\int_{\partial B_{r}^{+}}\frac{u^{2}}{x_{1}\bar{\rho}_{0}}\,d\mathcal{H}^{1}}.
    \end{align*}
    This implies 
    \begin{align*}
        N'(r)&=\frac{2}{rJ(r)}\displaystyle\int_{\partial B_{r}^{+}}\frac{[r(\nabla u\cdot\nu)-D(r)u]^{2}}{x_{1}H(\tfrac{|\nabla u|^{2}}{x_{1}^{2}};x_{2})}\,d\mathcal{H}^{1}\\
        &+\frac{2}{r}V^{2}(r)+\frac{2}{r}V(r)\left( N(r)-\frac{5}{2} \right)\\
        &+\frac{\frac{2}{5}K_{2}^{x_{1}x_{2}}(r)}{J(r)}\left( N(r)-\frac{5}{2} \right)-\frac{K_{3}^{x_{1}x_{2}}(r)}{J(r)},
    \end{align*}
    and the equality in~\eqref{N1r1} is obtained. Since $D(r)=N(r)+V(r)$, we obtain 
    \begin{align*}
        &\int_{\partial B_{r}^{+}}\frac{[r(\nabla u\cdot\nu)-D(r)u]^{2}}{x_{1}H(\tfrac{|\nabla u|^{2}}{x_{1}^{2}};x_{2})}\,d\mathcal{H}^{1}\\
        &=\int_{\partial B_{r}^{+}}\frac{[r(\nabla u\cdot\nu)-N(r)u-V(r)u]^{2}}{x_{1}H(\tfrac{|\nabla u|^{2}}{x_{1}^{2}};x_{2})}\,d\mathcal{H}^{1}\\
        &=\int_{\partial B_{r}^{+}}\frac{[r(\nabla u\cdot\nu)-N(r)u]^{2}}{x_{1}H(\tfrac{|\nabla u|^{2}}{x_{1}^{2}};x_{2})}\,d\mathcal{H}^{1}\\
        &-2V(r)r\int_{\partial B_{r}^{+}}\frac{u\nabla u\cdot\nu}{x_{1}H(\tfrac{|\nabla u|^{2}}{x_{1}^{2}};x_{2})}\,d\mathcal{H}^{1}\\
        &+2N(r)V(r)\int_{\partial B_{r}^{+}}\frac{u^{2}}{x_{1}H(\tfrac{|\nabla u|^{2}}{x_{1}^{2}};x_{2})}\,d\mathcal{H}^{1}\\
        &+V^{2}(r)\int_{\partial B_{r}^{+}}\frac{u^{2}}{x_{1}H(\tfrac{|\nabla u|^{2}}{x_{1}^{2}};x_{2})}\,d\mathcal{H}^{1}.
    \end{align*}
    Note that 
    \begin{align*}
        &2N(r)V(r)\int_{\partial B_{r}^{+}}\frac{u^{2}}{x_{1}H(\tfrac{|\nabla u|^{2}}{x_{1}^{2}};x_{2})}\,d\mathcal{H}^{1}\\
        &=2N(r)V(r)\int_{\partial B_{r}^{+}}\frac{1}{x_{1}\bar{\rho}_{0}}u^{2}\,d\mathcal{H}^{1}\\
        &+2N(r)V(r)\int_{\partial B_{r}^{+}}\left( \frac{1}{x_{1}H(\tfrac{|\nabla u|^{2}}{x_{1}^{2}};x_{2})}-\frac{1}{x_{1}\bar{\rho}_{0}} \right)u^{2}\,d\mathcal{H}^{1}.
    \end{align*}
    Therefore, we have that 
    \begin{align*}
        &\int_{\partial B_{r}^{+}}\frac{[r(\nabla u\cdot\nu)-D(r)u]^{2}}{x_{1}H(\tfrac{|\nabla u|^{2}}{x_{1}^{2}};x_{2})}\,d\mathcal{H}^{1}\\
        &=\int_{\partial B_{r}^{+}}\frac{[r(\nabla u\cdot\nu)-N(r)u]^{2}}{x_{1}H(\tfrac{|\nabla u|^{2}}{x_{1}^{2}};x_{2})}\,d\mathcal{H}^{1}-V^{2}(r)\int_{\partial B_{r}^{+}}\frac{1}{x_{1}\bar{\rho}_{0}}u^{2}\,d\mathcal{H}^{1}\\
        &+2N(r)V(r)\int_{\partial B_{r}^{+}}\left( \frac{1}{x_{1}H(\tfrac{|\nabla u|^{2}}{x_{1}^{2}};x_{2})}-\frac{1}{x_{1}\bar{\rho}_{0}} \right)u^{2}\,d\mathcal{H}^{1}\\
        &+V^{2}(r)\int_{\partial B_{r}^{+}}\left( \frac{1}{x_{1}H(\tfrac{|\nabla u|^{2}}{x_{1}^{2}};x_{2})}-\frac{1}{x_{1}\bar{\rho}_{0}} \right)u^{2}\,d\mathcal{H}^{1}.
    \end{align*}
    Note that at this stage, the equality in equation~\eqref{N1r2} is obtained by dividing the above identity on both sides by $\frac{2r}{J(r)}$.
\end{proof}
We then collect some properties of $D(r)$, $V(r)$ and $N(r)$ in the following proposition. 
\begin{proposition}\label{prop:propfre}
    Let $u$ be a subsonic variational solution of~\eqref{fb}, and let $\delta$ be defined as in~\eqref{d}. Assume that $0$ is a trivial original with non-zero density and that $u$ satisfies the growth assumption~\eqref{Bern}. Then there exists $r_{0}>0$ sufficiently small so that 
    \begin{enumerate}
        \item [(1).] $N(r) \geqslant \frac{5}{2}$ for all $r\in(0,r_{0})$.
        \item [(2).] $r\mapsto\frac{1}{r}V^{2}(r)\in L^{1}(0,r_{0})$.
        \item [(3).] $r\mapsto J(r)$ is monotone nondecreasing for all $r\in(0,r_{0})$.
        \item [(4).] The function $r\mapsto N(r)$ has a right limit $N(0^{+})$.
    \end{enumerate}
\end{proposition}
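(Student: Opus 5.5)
Part (1) is exactly \eqref{Dr-Vr} of \lemref{lemma:freq}, after shrinking $r_{0}$ so that \eqref{Vrge0} also holds. For the remaining items I will work from the two derivative identities of \propref{prop:freq}, and use the sign information from the proof of \lemref{lemma:freq}: under the strong Bernstein bound \eqref{Bern} we have \eqref{Bern1}, hence $\mathcal{J}(r)\leqslant 0$ and $K_{3}^{x_{1}x_{2}}(r)\geqslant 0$. The smallness facts I need are that $K_{2}^{x_{1}x_{2}}(r)$ and $K_{3}^{x_{1}x_{2}}(r)$ are $O(r)\,J(r)$, and that $|\mathcal{J}(r)|\leqslant Cr\,J(r)$. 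Both follow from \eqref{esmH1} together with \eqref{Bern}.

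For (3) I plan to start from \eqref{Jr10}, which gives
\[
J'(r)=\frac{1}{r}J(r)+\frac{2}{\bar\rho_0}\int_{\partial B_r^+}\frac{u\,\nabla u\cdot\nu}{x_1}\,d\mathcal{H}^1 ,
\]
where the extra $J/r$ comes from the $x_{1}$-weight and arclength; rather than rely on the exact power, I will use $(r^{-5}J)'$ and only its sign. By the energy identity \eqref{eni}, the boundary term equals $\int_{B_r^+}|\nabla u|^2/(x_1H)\,dx$ up to the factor $H/\bar\rho_0$, which is controlled by $K_2^{x_1x_2}$. This gives
\[
\frac{J'(r)}{J(r)}\geqslant \frac{1}{r}+\frac{2D(r)}{r}-\frac{C|K_2^{x_1x_2}(r)|}{J(r)}\geqslant \frac{1}{r}\Bigl(1+2N(r)\Bigr)-C>0
\]
for small $r$, using $D=N+V\geqslant N\geqslant 5/2$. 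In fact the same computation yields the stronger statement that $r^{-6}J(r)$, or at least $r^{-5}J(r)$, is nondecreasing up to a factor $e^{Cr}$. I will record this because it is needed in (4).

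For (2) I plan to integrate \eqref{N1r1}. All terms on its right-hand side are nonnegative except $-K_3^{x_1x_2}/J$, which is bounded by $C$, and the term $\tfrac{2}{5}K_2^{x_1x_2}(N-\tfrac52)/J$, which is bounded by $C(N-\tfrac52)$. So $N'(r)\geqslant \tfrac{2}{r}V^2(r)-C-C(N(r)-\tfrac52)$. The first task is to show that $N$ is bounded above near $0$. From the monotonicity of $\widetilde M^{x_1x_2}$ we have $\tilde I(r)-\tfrac{1}{8\bar\rho_0}=\widetilde M^{x_1x_2}(r)-\widetilde M^{x_1x_2}(0^+)+\tfrac52 r^{-5}J(r)$, and hence
\[
N(r)=\frac{5}{2}+\frac{\widetilde M^{x_1x_2}(r)-\widetilde M^{x_1x_2}(0^+)}{r^{-5}J(r)} .
\]
A Gronwall argument on $N'\geqslant -C(1+N)$ gives $N(r)\leqslant e^{C(r_0-r)}(N(r_0)+1)$, which is finite for each fixed $r_0$. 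Therefore $(N+Cr)e^{Cr}$ is, up to an integrable correction, nondecreasing and bounded below. Integrating from $r$ to $r_0$ then yields $\int_0^{r_0}\tfrac{2}{r}V^2\,dr\leqslant C(N(r_0)+1)<\infty$, which is (2).

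For (4) I will use the same inequality $N'(r)\geqslant -C(1+N(r))$, with $N$ bounded, to conclude that $r\mapsto N(r)+Cr$ is nondecreasing on $(0,r_0)$ after adjusting $C$. A monotone function bounded below by $5/2$ has a right limit, so $N(0^+)$ exists. The formula \eqref{N1r2} is used to double-check that the $\mathcal{J}$-terms, which appear with sign $V\mathcal{J}\leqslant 0$, are $O(V)+O(V^2)$ and hence integrable by (2). This route provides the alternative proof of (4) should (2) be needed first.

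The main obstacle I expect is the rigorous control of $K_2^{x_1x_2}/J$ and $K_3^{x_1x_2}/J$ by a constant. The estimate \eqref{esmH1} gives $|1/H-1/\bar\rho_0|\leqslant Cr$. For $K_2$ this is then multiplied by $\int_{\partial B_r^+} u|\nabla u|/x_1$, which must be compared to $J(r)/r$. That comparison needs a Cauchy–Schwarz inequality together with the frequency bound on $r^2\int_{\partial B_r^+}(\nabla u\cdot\nu)^2/x_1$, so the argument is somewhat circular. I would close it with a continuity/bootstrap argument on $(0,r_0)$.
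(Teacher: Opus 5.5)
Your part (1) matches the paper. Conditionally on a bound $|K_2^{x_1x_2}(r)|\leqslant C\,J(r)$, your arguments for (2)--(4) are correct and in places cleaner than the paper's:
\begin{itemize}
\item a single Gronwall step on $N+1$ replaces the paper's case split $N\geqslant\frac72$ versus $N\leqslant\frac72$;
\item the identity $J'(r)=\frac{2D(r)}{r}J(r)-\frac25K_2^{x_1x_2}(r)$, which follows from \eqref{Jr10} and \eqref{eni}, gives (3) without the paper's sign claims on $K_2^{x_1x_2}$;
\item monotonicity of $N(r)+Cr$ gives (4) directly from \eqref{N1r1}, without \eqref{N1r2}.
\end{itemize}
There are two slips. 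First, there is no extra $J/r$ in $J'$, since $J(r)=\int_{\partial B_1^+}u(ry)^2/(y_1\bar\rho_0)\,d\mathcal{H}^1(y)$; so you only get $e^{Cr}r^{-5}J$ nondecreasing, not $r^{-6}J$. Second, $K_3^{x_1x_2}=O(1)\,J$ rather than $O(r)\,J$, because of its $1/r$ prefactor; that is in fact what you use.

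The genuine gap is that bound, on which all of (2)--(4) rest. It does not follow from \eqref{esmH1} and \eqref{Bern}. These give $|K_2^{x_1x_2}(r)|\leqslant Cr\int_{\partial B_r^+}u\,|\nabla u\cdot\nu|/x_1\,d\mathcal{H}^1\leqslant Cr^5$, while $J(r)=o(r^5)$ because the blow-up limit $u_0$ vanishes. Cauchy--Schwarz only turns $|K_2^{x_1x_2}|/J$ into $C\bigl(r^2\int_{\partial B_r^+}(\nabla u\cdot\nu)^2/x_1\,d\mathcal{H}^1\big/J(r)\bigr)^{1/2}$. That is a boundary frequency which nothing established so far controls, and ``a continuity/bootstrap argument'' does not say how to control it.

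The paper does not supply this estimate either. It asserts that \eqref{Mx12r1} yields $\nabla u\cdot\nu=\frac52\frac ur$ on $\partial B_r^+$ for all small $r$, whence $K_2^{x_1x_2}=-K_3^{x_1x_2}$ and $|K_3^{x_1x_2}|\leqslant CJ$. But \eqref{Mx12r1} gives this only in integrated form. The exact identity would make $u$ homogeneous of degree $\frac52$ near $0$, hence equal to its blow-up $u_0\equiv0$, contradicting the non-zero density.

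If you pursue this, split $r\nabla u\cdot\nu=(r\nabla u\cdot\nu-D(r)u)+D(r)u$ inside $K_2^{x_1x_2}$.
\begin{itemize}
\item The $D(r)u$ part combines with $-K_3^{x_1x_2}/J$ and with the term $-2D(r)^2\mathcal{J}(r)/(rJ(r))\geqslant0$ into something nonnegative. Note that \eqref{N1r1} drops this last term, because it replaces $\int_{\partial B_r^+}u^2/(x_1H)\,d\mathcal{H}^1$ by $J$. The nonnegativity uses $\mathcal J\leqslant0$, $V\geqslant0$ and $D\geqslant\frac52$.
\item The other part can be absorbed into the square term only at the price of $-Cr(N-\frac52)^2$.
\end{itemize}
You are left with the Riccati-type inequality $N'\geqslant\frac2rV^2+\frac2rV(N-\frac52)-Cr(N-\frac52)^2$. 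So what your bootstrap would actually have to produce is an a priori upper bound on $N$ at some small scale.
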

\begin{proof}
    (1). $N(r)\geqslant \frac{5}{2}$ follows directly from the inequality~\eqref{Dr-Vr} in~\lemref{lemma:freq}.

    (2). It follows from the monotonicity formula~\eqref{Mx12r1} that $u_{\nu}=\frac{5}{2}\frac{u}{r}$ for any $r\in(0,r_{0})$. This together with the definition of $K_{2}^{x_{1}x_{2}}(r)$ and $K_{3}^{x_{1}x_{2}}(r)$ gives that 
    \begin{equation*}
        \frac{|K_{2}^{x_{1}x_{2}}(r)|}{\int_{\partial B_{r}^{+}}\frac{1}{x_{1}\bar{\rho}_{0}}u^{2}\,d\mathcal{H}^{1}}\leqslant C_{1}\qquad\text{ and }\qquad\frac{|K_{3}^{x_{1}x_{2}}(r)|}{\int_{\partial B_{r}^{+}}\frac{1}{x_{1}\bar{\rho}_{0}}u^{2}\,d\mathcal{H}^{1}} \leqslant C_{2}.
    \end{equation*}
    Let us now consider two cases:
    \textit{Case I:} If $N(r) \geqslant \frac{7}{2}$, which implies that $N(r)-\frac{5}{2} \geqslant 1$. Since we only now that $N(r)$ is bounded from below by $\frac{5}{2}$, we do not know the case if $N(r)$ is large. It follows from~\eqref{Bern1} that $K_{3}^{x_{1}x_{2}}(r) \geqslant 0$, which implies that 
    \begin{equation*}
        \frac{K_{3}^{x_{1}x_{2}}(r)}{J(r)} \leqslant \frac{K_{3}^{x_{1}x_{2}}(r)}{J(r)} \left( N(r)-\frac{5}{2} \right).
    \end{equation*}
    The frequency formula in~\eqref{N1r1} gives
    \begin{align*}
        \left( N(r)-\frac{5}{2} \right)' &\geqslant -C_{1}\left( N(r)-\frac{5}{2} \right)-C_{2}\left( N(r)-\frac{5}{2} \right)\\
        &=-(C_{1}+C_{2})\left( N(r)-\frac{5}{2} \right):=-\beta\left( N(r)-\frac{5}{2} \right)\qquad \beta>0.
    \end{align*}
    It follows that 
    \begin{equation*}
        r\mapsto e^{\beta r}\left( N(r)-\frac{5}{2} \right)\quad\text{ is nondecreasing on }(0,r_{0}).
    \end{equation*}
    Therefore, if we set $g(r):=e^{\beta r}\left( N(r)-\frac{5}{2} \right)$, we have $g(r) \leqslant g(r_{0}):=c_{0}$. Thus, we have that 
    \begin{align*}
        \frac{\tfrac{2}{5}K_{2}^{x_{1}x_{2}}(r)}{J(r)}\left( N(r)-\frac{5}{2} \right)& \geqslant -\frac{\tfrac{2}{5}|K_{2}^{x_{1}x_{2}}(r)|}{J(r)}\left( N(r)-\frac{5}{2} \right)\\
        &=-\frac{\tfrac{2}{5}|K_{2}^{x_{1}x_{2}}(r)|}{e^{\beta r}J(r)}e^{\beta r}\left( N(r)-\frac{5}{2} \right)\\
        &=-\frac{\tfrac{2}{5}|K_{2}^{x_{1}x_{2}}(r)|}{e^{\beta r}J(r)}g(r)\\
        & \geqslant -\frac{\tfrac{2}{5}|K_{2}^{x_{1}x_{2}}(r)|}{e^{\beta r}J(r)}c_{0}\\
        &\geqslant -C_{1}e^{-\beta r} \geqslant -C_{1}.
    \end{align*}
    Therefore, in this case, we obtain 
    \begin{equation*}
        N'(r) \geqslant \frac{1}{r}V^{2}(r)-C.
    \end{equation*}
    Since $N(r)$ is bounded below as $r\to 0^{+}$, we obtain the integrability of $r\mapsto\frac{1}{r}V^{2}(r)$ in this case. 

    If \textit{Case II}: $N(r) \leqslant \frac{7}{2}$, we rewrite the equality in~\eqref{N1r1} as 
    \begin{align*}
        N'(r)&=\frac{2}{rJ(r)}\displaystyle\int_{\partial B_{r}^{+}}\frac{[r(\nabla u\cdot\nu)-D(r)u]^{2}}{x_{1}H(\tfrac{|\nabla u|^{2}}{x_{1}^{2}};x_{2})}\,d\mathcal{H}^{1}\\
        &+\frac{2}{r}V^{2}(r)+\frac{1}{r}V(r)\left( N(r)-\frac{5}{2} \right)\\
        &+\frac{1}{r}V(r)\left( N(r)-\frac{5}{2} \right)\\
        &+\frac{\tfrac{2}{5}(K_{2}^{x_{1}x_{2}}(r)+K_{3}^{x_{1}x_{2}}(r))}{J(r)}\left( N(r)-\frac{5}{2} \right)\\
        &-\frac{\tfrac{2}{5}K_{3}^{x_{1}x_{2}}(r)}{J(r)}N(r).
    \end{align*}
    It follows from a similar argument as in~\lemref{lemma:freq} that
    \begin{equation*}
        \frac{1}{r}V(r)+\frac{\tfrac{2}{5}K_{2}^{x_{1}x_{2}}(r)+K_{3}^{x_{1}x_{2}}(r)}{J(r)} \geqslant 0,
    \end{equation*}
    which gives 
    \begin{equation*}
        N'(r) \geqslant \frac{1}{r}V^{2}(r)-C.
    \end{equation*}
    The integrability of $r\mapsto\frac{1}{r}V^{2}(r)$ also follows.

    (3). Recalling the derivative of $(r^{-5}J(r))$ derived in~\eqref{Jr10}, one has 
    \begin{align*}
        &(r^{-5}J(r))'\\
        &=r^{-1}\left( r^{-4}\int_{\partial B_{r}^{+}}\frac{u\nabla u\cdot\nu}{x_{1}H(\tfrac{|\nabla u|^{2}}{x_{1}^{2}};x_{2})}\,d\mathcal{H}^{1}-\frac{5}{2}r^{-5}\int_{\partial B_{r}^{+}}\frac{u^{2}}{x_{1}\bar{\rho}_{0}}\,d\mathcal{H}^{1} \right.\\
        &\left.+r^{-4}\int_{\partial B_{r}^{+}}\left( \frac{1}{x_{1}\bar{\rho}_{0}}-\frac{1}{x_{1}H(\tfrac{|\nabla u|^{2}}{x_{1}^{2}};x_{2})} \right)u\nabla u\cdot\nu\,d\mathcal{H}^{1}\right)\\
        &=r^{-1}\left( r^{-4}\int_{\partial B_{r}^{+}}\frac{|\nabla u|^{2}}{x_{1}H(\tfrac{|\nabla u|^{2}}{x_{1}^{2}};x_{2})}\,d\mathcal{H}^{1}-\frac{5}{2}r^{-5}\int_{\partial B_{r}^{+}}\frac{u^{2}}{x_{1}\bar{\rho}_{0}}\,d\mathcal{H}^{1} \right.\\
        &\left.+r^{-4}\int_{\partial B_{r}^{+}}\left( \frac{1}{x_{1}\bar{\rho}_{0}}-\frac{1}{x_{1}H(\tfrac{|\nabla u|^{2}}{x_{1}^{2}};x_{2})} \right)u\nabla u\cdot\nu\,d\mathcal{H}^{1}\right).
    \end{align*}
    We deduce from the definition of $K_{3}^{x_{1}x_{2}}(r)$ and \eqref{Bern1} that $K_{3}^{x_{1}x_{2}}(r) \geqslant 0$ for any $r \geqslant 0$. Therefore, since $\lim_{r\to 0^{+}}(K_{2}^{x_{1}x_{2}}(r)+K_{3}^{x_{1}x_{2}}(r))=0$. We have that $K_{2}^{x_{1}x_{2}}(r) \leqslant 0$ for $r\in(0,r_{0})$ sufficiently small. Moreover, we infer from the definition of $K_{2}^{x_{1}x_{2}}(r)$ in the equation~\eqref{K2x12r} that $u_{\nu}=\frac{5}{2}\frac{u}{r} \geqslant 0$ for all $r\in(0,r_{0})$. This gives $(r^{-5}J(r))' \geqslant 0$ for all $r\in(0,r_{0})$.

    (4). It follows from the definition of $\mathcal{J}(r)$ that 
    \begin{equation*}
        \frac{|\mathcal{J}(r)|}{J(r)} \leqslant Cr\quad\text{ for any }r\in(0,\delta).
    \end{equation*}
    Then the claim follows by the integrability of $r\mapsto\frac{1}{r}V^{2}(r)$.
\end{proof}
\section{Analysis of the frequency sequence}
Based on the construction of our newly developed frequency formula for quasilinear free boundary problem, we can study the compactness of the frequency sequence. Furthermore, the frequency formula allows us passing to the limit to blow-up limits. Consider 
\begin{equation}\label{vm}
    v_{m}(x):=\frac{u(r_{m}x)}{\|u\|_{L_{\mathit{w}}^{2}(\partial B_{r_{m}}^{+})}}.
\end{equation}
Let us define $u_{m}(x)=\frac{u(r_{m}x)}{r_{m}^{5/2}}$, then it is easy to check that 
\begin{equation*}
    v_{m}(x)=\frac{u_{m}(x)}{\|u_{m}\|_{L_{\mathit{w}}^{2}(\partial B_{1}^{+})}},
\end{equation*}
which gives that 
\begin{equation}\label{vm1}
    \|v_{m}\|_{L_{\mathit{w}}^{2}(\partial B_{1}^{+})}\equiv 1.
\end{equation}
Moreover, a direct calculation gives that 
\begin{align}\label{vm2}
    \|\nabla v_{m}\|_{L_{\mathit{w}}^{2}(B_{1}^{+})}\leqslant \mathscr{C} D(r_{m}).
\end{align}
Inspired by~\eqref{vm1} and~\eqref{vm2}, we can prove 
\begin{proposition}\label{propv0}
    Let $u$ be a subsonic variational solution of~\eqref{fb}, and let $0$ be a trivial original with non-zero density. Assume that $u$ satisfies the growth assumption~\eqref{Bern}. Then 
    \begin{enumerate}
        \item [(1).] $\lim_{r\to 0^{+}}V(r)=0$ and $\lim_{r\to 0^{+}}D(r)=N(0^{+})$.
        \item [(2).] For any blow-up sequence $v_{m}$ defined in~\eqref{vm}, $v_{m}$ is bounded in $W_{\mathit{w}}^{1,2}(B_{1})$.
    \end{enumerate}
\end{proposition}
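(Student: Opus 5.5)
Part (1) rests on the frequency formula in Proposition~\ref{prop:freq} together with Proposition~\ref{prop:propfre}. Part (2) then follows from the normalization \eqref{vm1} and the gradient bound \eqref{vm2}.

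\textbf{Limit of $V$.} First, $N(0^{+})$ exists and is finite, and $N(r)\geqslant \tfrac52$ on $(0,r_{0})$. Since $D=N+V$, it suffices to show $V(r)\to 0$.

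By Proposition~\ref{prop:propfre}(2), $V^{2}(r)/r\in L^{1}(0,r_{0})$. Hence $\liminf_{r\to0^{+}}V(r)=0$, since otherwise $V^{2}/r\geqslant c/r$ near $0$, which is not integrable.

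To upgrade this to a full limit, I would bound the oscillation of $V$. From \eqref{N1r1}, every term in $N'$ other than $\tfrac{2}{r}V^{2}$ and the nonnegative square term is controlled by $C(1+\tfrac{1}{r}V)$. Here I use two facts: the ratios $K_{2}^{x_1x_2}/J$ and $K_{3}^{x_1x_2}/J$ are bounded, and $N-\tfrac52$ is bounded near $0$ by the $e^{\beta r}$ argument of Proposition~\ref{prop:propfre}.

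Integrating \eqref{N1r1} on $(0,r)$ and using that $N(0^{+})$ is finite then gives
\[
\int_{0}^{r}\frac{1}{t J(t)}\int_{\partial B_{t}^{+}}\frac{[t\,\nabla u\cdot\nu - D(t)u]^{2}}{x_{1}H}\,d\mathcal{H}^{1}\,dt \to 0 .
\]

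Next I would differentiate $V$ directly. The numerator of $V$ is $r\,\tfrac{1}{\bar\rho_{0}}\int_{B_{r}^{+}}x_{1}x_{2}^{+}(1-\chi_{\{u>0\}})\,dx+e(r)$. By \eqref{0}, \eqref{Pir1}, \eqref{4} and \eqref{5} it is comparable to $r\,\Pi'(r)$ with $\Pi''\geqslant 0$. Combined with $(r^{-5}J)'\geqslant 0$ from Proposition~\ref{prop:propfre}(3), this shows that $r^{-5}\cdot\text{numerator}$ is nonincreasing up to integrable errors, while $r^{-5}J$ is nondecreasing. So $V(r)$ is, up to an integrable perturbation, monotone nonincreasing as $r\downarrow 0$. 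A monotone function with $\liminf=0$ and $V\geqslant0$ (by \eqref{Vrge0}) tends to $0$. This yields $V(0^{+})=0$ and hence $D(0^{+})=N(0^{+})$.

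\textbf{Main obstacle.} The hardest step is this quasi-monotonicity of $V$. The delicate point is controlling $e(r)$ relative to $J(r)$, because $J(r)$ may decay faster than $r^{5}$ at a trivial point. If a clean monotonicity fails, my fallback is a standard argument using an upper bound on $|V'|$. I would show $|V'(r)|\leqslant \tfrac{C}{r}\,(V+V^{2})$ plus an integrable term, again using the comparisons \eqref{0}--\eqref{5} and $|\mathcal{J}|/J\leqslant Cr$. Together with $\int V^{2}/r<\infty$, this forces $V\to0$: along any excursion of $V$ from $\epsilon$ up to $2\epsilon$, the integral of $V^{2}/r$ picks up at least a fixed amount $c(\epsilon)$, so only finitely many such excursions can occur.

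\textbf{Part (2).} By \eqref{vm1}, $\|v_{m}\|_{L_{w}^{2}(\partial B_{1}^{+})}=1$. By \eqref{vm2} and part (1), $\|\nabla v_{m}\|_{L_{w}^{2}(B_{1}^{+})}\leqslant \mathscr{C}D(r_{m})\leqslant \mathscr{C}(N(0^{+})+1)$ for large $m$. This uses that $H$ is bounded above and below, so the $1/H$ weight is comparable to $1/\bar\rho_{0}$.

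For the $L^{2}_{w}$ norm on the whole of $B_{1}^{+}$, I would integrate along rays. Since $v_{m}=0$ on $\{x_{1}=0\}$, write $v_{m}(s\xi)=v_{m}(\xi)-\int_{s}^{1}\partial_{\rho}v_{m}(\rho\xi)\,d\rho$ for $|\xi|=1$. A weighted Poincaré/trace inequality then gives
\[
\int_{B_{1}^{+}}\frac{v_{m}^{2}}{x_{1}}\,dx\leqslant C\left(\int_{\partial B_{1}^{+}}\frac{v_{m}^{2}}{x_{1}}\,d\mathcal{H}^{1}+\int_{B_{1}^{+}}\frac{|\nabla v_{m}|^{2}}{x_{1}}\,dx\right).
\]
Together with the two bounds above, this gives the uniform bound on $\|v_{m}\|_{W_{w}^{1,2}(B_{1})}$.
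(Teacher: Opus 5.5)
\textbf{The gap.} It lies in the one nontrivial step of part (1): upgrading $\liminf_{r\to0^+}V(r)=0$ to $\lim_{r\to0^+}V(r)=0$.

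Write
\[
V(r)=\frac{\bar\rho_0^{-1}\Pi'(r)+r^{-5}e(r)}{r^{-5}J(r)} .
\]
The two facts you invoke, $\Pi''\geqslant 0$ and $(r^{-5}J)'\geqslant 0$, say that the scaled numerator and the scaled denominator are \emph{both} nondecreasing in $r$. So your claim that $r^{-5}\cdot$numerator is nonincreasing contradicts $\Pi''\geqslant0$. No other normalization $r^{-k}$ turns these two facts into opposite monotonicities, and a quotient of two functions with the same monotonicity has none. Heuristically, if $\int_{B_r^+}x_1x_2^+(1-\chi_{\{u>0\}})\,dx\sim r^{\gamma}$ and $J(r)\sim r^{2N(0^+)}$, then $V\sim r^{\gamma+1-2N(0^+)}$, and nothing you use relates $\gamma$ to $N(0^+)$.

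Your fallback bound $|V'|\leqslant \frac{C}{r}(V+V^2)+g$ fails for a related reason. Set $V^+(r):=r\bar\rho_0^{-1}\int_{B_r^+}x_1x_2^+(1-\chi_{\{u>0\}})\,dx/J(r)$, the part of $V$ without $e(r)$. Differentiating the numerator of $V$ produces
\[
\frac{r}{\bar\rho_0 J(r)}\int_{\partial B_r^+}x_1x_2^+(1-\chi_{\{u>0\}})\,d\mathcal{H}^1=\Bigl(r\frac{d}{dr}\log\int_{B_r^+}x_1x_2^+(1-\chi_{\{u>0\}})\,dx\Bigr)\frac{V^+(r)}{r}.
\]
This term is nonnegative, but the logarithmic derivative in front is not controlled, so it has no upper bound of the form $C V/r$. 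At best you get a one-sided bound on $V'$. Even that requires estimating $e'(r)/J(r)$, which contains boundary terms such as $r\frac{d}{dr}K_1^{x_2}(r)$ that you have not addressed.

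\textbf{The missing idea.} The paper uses a contradiction argument through blow-ups at well-chosen radii, which needs no pointwise control of $V'$.
\begin{enumerate}
\item The paper splits $V=V^++\widetilde V$, with $\widetilde V$ carrying $e(r)$ and $\widetilde V=o(V^+)$, and deduces $(V^+)^2/r\in L^1$.
\item Suppose $V^+(s_m)\geqslant c>0$ along some $s_m\to0$. Since $\int_{s_m}^{2s_m}(V^+)^2\,\frac{dr}{r}\geqslant \log 2\,\min_{[s_m,2s_m]}(V^+)^2$ and the left side tends to $0$, there are $t_m\in[s_m,2s_m]$ with $V^+(t_m)\to0$, hence $V(t_m)\to0$.
\item Then $D(t_m)$ is bounded, so the rescalings $v_m$ along $r_m=t_m$ are bounded in $W^{1,2}_{\mathit{w}}$.
\item By the square term of the frequency formula (this is \eqref{f1}), the limit $v_0$ is homogeneous of degree $N(0^+)$ with $\|v_0\|_{L^2_{\mathit{w}}(\partial B_1^+)}=1$. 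Hence $\|v_m\|_{L^2_{\mathit{w}}(\partial B_{1/2}^+)}$ stays bounded away from zero.
\item The volume term $\int_{B_r^+}x_1x_2^+(1-\chi_{\{u>0\}})\,dx$ is nondecreasing in $r$, $r^{-5}J$ is nondecreasing, and $t_m/2\leqslant s_m\leqslant t_m$. Together these give
\[
V^+(s_m)\leqslant \tfrac12 V^+(t_m)\frac{J(t_m)}{J(t_m/2)}=\frac{\tfrac12 V^+(t_m)}{\|v_m\|^2_{L^2_{\mathit{w}}(\partial B_{1/2}^+)}}\to0,
\]
which is a contradiction.
\end{enumerate}
What your argument lacks is this comparability of $J$ at comparable radii. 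It comes from the nondegeneracy of the homogeneous blow-up limit, not from any monotonicity of $V$.

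Given part (1), your treatment of part (2) is fine, including the ray-integration bound for $\|v_m\|_{L^2_{\mathit{w}}(B_1^+)}$.
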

\begin{proof}
    It follows from the estimates~\eqref{vm1} and~\eqref{vm2} that the boundedness of $\|\nabla v_{m}\|_{L_{\mathit{w}}^{2}(B_{1}^{+})}$ is equivalent to the boundedness of $D(r_{m})$. Therefore once we proved (1), (2) is immediately obtained. In order to prove (1), we begin by using the frequency formula~\eqref{N1r2} derived in~\propref{prop:freq}. Let us define 
    \begin{equation*}
        \varphi(r;u)=\frac{1}{rJ(r;u)}\int_{\partial B_{r}^{+}}\frac{[(r\nabla u\cdot\nu)-N(r)u]^{2}}{x_{1}H(\tfrac{|\nabla u|^{2}}{x_{1}^{2}};x_{2})}\,d\mathcal{H}^{1}.
    \end{equation*}
    It follows by rescaling that 
    \begin{align*}
        \int_{r_{m}\varrho}^{r_{m}\sigma}\varphi(r)\,dr=\int_{\varrho}^{\sigma}\frac{2\bar{\rho}_{0}}{r\|v_{m}\|_{L_{\mathit{w}}^{2}(\partial B_{r}^{+})}}\int_{\partial B_{r}^{+}}\frac{[(r\nabla v_{m}\cdot\nu)-N(rr_{m})v_{m}]^{2}}{x_{1}H(\tfrac{r_{m}|\nabla u_{m}|^{2}}{x_{1}^{2}};r_{m}x_{2})}\,d\mathcal{H}^{1}.
    \end{align*}
    The statement (4) in~\propref{prop:freq} implies that 
    \begin{align*}
        \int_{\varrho}^{\sigma}\frac{2\bar{\rho}_{0}}{r\|v_{m}\|_{L_{\mathit{w}}^{2}(\partial B_{r}^{+})}}\int_{\partial B_{r}^{+}}\frac{[(r\nabla v_{m}\cdot\nu)-N(0^{+})v_{m}]^{2}}{x_{1}H(\tfrac{r_{m}|\nabla u_{m}|^{2}}{x_{1}^{2}};r_{m}x_{2})}\,d\mathcal{H}^{1}\to 0\quad\text{ as }m\to\infty.
    \end{align*}
    Therefore, we may deduce from (3) in~\propref{prop:freq} that for any $0<\varrho<\sigma<1$,
    \begin{equation}\label{f1}
        \int_{B_{\sigma}^{+}\setminus B_{\varrho}^{+}}\frac{1}{x_{1}}|x|^{-6}(\nabla v_{m}\cdot x-N(0^{+})v_{m})^{2}\,dx\to 0\quad\text{ as }m\to\infty.
    \end{equation}
    Let us define 
    \begin{equation*}
        V^{+}(r)=\frac{\displaystyle r\frac{1}{\bar{\rho}_{0}}\int_{B_{r}^{+}}x_{1}x_{2}^{+}\left( 1-\chi_{\left\{ u>0 \right\} } \right)\,dx}{\displaystyle\int_{\partial B_{r}^{+}}\frac{u^{2}}{x_{1}\bar{\rho}_{0}}\,d\mathcal{H}^{1}},
    \end{equation*}
    and 
    \begin{equation*}
        \widetilde{V}(r)=\frac{\displaystyle e(r)}{\displaystyle\int_{\partial B_{r}^{+}}\frac{u^{2}}{x_{1}\bar{\rho}_{0}}\,d\mathcal{H}^{1}}.
    \end{equation*}
    It follows that $V(r)=V^{+}(r)+\widetilde{V}(r)$. Moreover, we infer from~\eqref{Vrge0} that $\lim_{r\to 0^{+}}\frac{\widetilde{V}(r)}{V^{+}(r)}=0$. Since 
    \begin{align*}
        \frac{1}{r}V^{2}(r)&=\frac{1}{r}(V^{+}(r))^{2}+\frac{1}{r}\widetilde{V}^{2}(r)+\frac{2}{r}V^{+}(r)\widetilde{V}(r)\\
        &\geqslant \frac{1}{r}(V^{+}(r))^{2}+\frac{2}{r}V^{+}(r)\widetilde{V}(r).
    \end{align*}
    It follows from $\lim_{r\to 0^{+}}\frac{V^{+}(r)\widetilde{V}(r)}{(V^{+}(r))^{2}}=0$ that there exists $r_{0}$ sufficiently small so that 
    \begin{equation*}
        \frac{1}{r}V^{2}(r) \geqslant \frac{1}{r}(V^{+}(r))^{2}+\frac{2}{r}V^{+}(r)\widetilde{V}(r) \geqslant \frac{c_{0}}{r}(V^{+}(r))^{2}.
    \end{equation*}
    Then the integrability of $r\mapsto\frac{1}{r}V^{2}(r)$ in $L^{1}(0,r_{0})$ implies the integrability of $r\mapsto\frac{1}{r}(V^{+}(r))^{2}$. We can now prove (1) of the proposition. 
    Suppose a contradiction that (1) is not true. Let $s_{m}\to 0^{+}$ be a sequence such $V^{+}(s_{m})$ is bounded away from zero. Due to its integrability, we have that 
    \begin{equation*}
        \min_{r\in[s_{m},s_{2m}]}V^{+}(r)\to 0\quad\text{ as }m\to\infty.
    \end{equation*}
    Let $t_{m}\in[s_{m},s_{2m}]$ be such that $V^{+}(t_{m})\to 0$ as $m\to\infty$. For the choice $r_{m}:=t_{m}$ for each $m$, the sequence $v_{m}$. The fact that $V^{+}(r_{m})\to 0$ implies $\widetilde{V}(r_{m})\to 0$ and thus, $V(r_{m})\to 0$. This further implies that $D(r_{m})$ is bounded since $N(0^{+})$ has a right limit. Hence $v_{m}$ is bounded in $W_{\mathit{w}}^{1,2}(B_{1}^{+})$. It follows that $v_{0}$ is a homogeneous function of degree $N(0^{+})$. Note that also, 
    \begin{align*}
        V^{+}(s_{m})&=\frac{s_{m}^{-4}\int_{B_{s_{m}}^{+}}x_{1}x_{2}^{+}\left( 1-\chi_{\left\{ u>0 \right\} } \right)\,dx}{s_{m}^{-5}\int_{\partial B_{s_{m}}^{+}}\tfrac{1}{x_{1}}u^{2}\,d\mathcal{H}^{1}}\\
        & \leqslant \frac{s_{m}^{-4}\int_{B_{s_{m}}^{+}}x_{1}x_{2}^{+}\left( 1-\chi_{\left\{ u>0 \right\} } \right)\,dx}{(r_{m}/2)^{-5}\int_{\partial B_{r_{m}/2}^{+}}\tfrac{1}{x_{1}}u^{2}\,d\mathcal{H}^{1}}\\
        & \leqslant \frac{1}{2}\frac{\int_{\partial B_{r_{m}}^{+}}\frac{1}{x_{1}}u^{2}\,d\mathcal{H}^{1}}{\int_{\partial B_{r_{m}/2}^{+}}\frac{1}{x_{1}}u_{0}^{2}\,d\mathcal{H}^{1}}\widetilde{V}^{+}(r_{m})\\
        &=\frac{1}{2}\frac{V^{+}(r_{m})}{\|v_{m}\|_{L_{\mathit{w}}^{2}( \partial B_{1/2}^{+} )}^{2}}.
    \end{align*}
    Since, at least along a subsequence,
    \begin{equation*}
        \|v_{m}\|_{L_{\mathit{w}}^{2}(\partial B_{1/2}^{+})}^{2}\to \|v_{0}\|_{L_{\mathit{w}}^{2}(\partial B_{1/2}^{+})}^{2}>0
    \end{equation*}
    This leads a contradiction. It follows that $V^{+}(r)\to 0$ as $r\to 0^{+}$. Thus, $\widetilde{V}(r)\to 0^{+}$ and $D(r)\to N(0^{+})$.
\end{proof}
As a direct corollary of~\propref{propv0} and equation~\eqref{f1}, one has 
\begin{corollary}
    Let $v_{m}$ be the sequence defined in~\eqref{vm} that converges weakly in $W_{\mathit{w}}^{1,2}(B_{1}^{+})$ to a blow-up limit $v_{0}$. Then $v_{0}$ is a continuous homogeneous function of degree $N(0^{+})$ in $B_{1}^{+}$, and satisfies 
    \begin{equation*}
        v_{0} \geqslant 0\quad\text{ in }B_{1}^{+}\qquad v_{0}\equiv 0\quad\text{ in }B_{1}^{+}\cap\{x_{2} \leqslant 0\}\qquad \|v_{0}\|_{L_{\mathit{w}}^{2}(\partial B_{1}^{+})}=1.
    \end{equation*}
\end{corollary}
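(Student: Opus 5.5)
Since $v_{m}$ is bounded in $W_{\mathit{w}}^{1,2}(B_{1}^{+})$ by \propref{propv0} (2), we first extract a subsequence converging weakly in $W_{\mathit{w}}^{1,2}(B_{1}^{+})$ to $v_{0}$. On compact subsets of $B_{1}^{+}$ we also get strong $L^{2}$ convergence, and by the trace theorem on each $\partial B_{r}^{+}$ the weighted traces converge in $L_{\mathit{w}}^{2}(\partial B_{r}^{+})$ for a.e. $r$. The weight $1/x_{1}$ is harmless here: near $\{x_{1}=0\}$ one applies a Hardy-type inequality, $\int u^{2}/x_{1}^{3}\lesssim\int|\nabla u|^{2}/x_{1}$, valid because $v_{m}=0$ on the axis. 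This inequality also gives uniform integrability of the traces near the axis. Passing to the limit in \eqref{vm1} then yields $\|v_{0}\|_{L_{\mathit{w}}^{2}(\partial B_{1}^{+})}=1$. Nonnegativity $v_{0}\geqslant 0$ and $v_{0}\equiv 0$ on $\{x_{2}\leqslant 0\}$ survive a.e. limits, since every $v_{m}$ has these properties; the second comes from item (5) of the definition of variational solution together with the growth assumption \eqref{Bern}, which forces $\nabla u=0$ where $x_{2}\leqslant 0$.

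For homogeneity we use \eqref{f1}. For any $0<\varrho<\sigma<1$,
\[
\int_{B_{\sigma}^{+}\setminus B_{\varrho}^{+}}\frac{1}{x_{1}}|x|^{-6}\big(\nabla v_{m}\cdot x-N(0^{+})v_{m}\big)^{2}\,dx\to 0 .
\]
The integrand is a convex quadratic in $(\nabla v_{m},v_{m})$, and $v_{m}\to v_{0}$ strongly in $L^{2}_{\mathit{loc}}$ while $\nabla v_{m}\rightharpoonup\nabla v_{0}$ weakly. Weak lower semicontinuity therefore gives that the same integral vanishes for $v_{0}$. Hence $\nabla v_{0}\cdot x=N(0^{+})v_{0}$ a.e. in $B_{1}^{+}\setminus\{0\}$. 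Integrating along rays (in the distributional sense, i.e. $\frac{d}{dt}\big(t^{-N(0^{+})}v_{0}(tx)\big)=0$) shows that $v_{0}$ is homogeneous of degree $N(0^{+})$. Because $\|v_{0}\|_{L_{\mathit{w}}^{2}(\partial B_{1}^{+})}=1$, $v_{0}\not\equiv 0$.

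Continuity is the main obstacle, because $W_{\mathit{w}}^{1,2}$ does not embed in $C^{0}$ in two dimensions. Our plan is to exploit homogeneity: $v_{0}(\varrho,\theta)=\varrho^{N(0^{+})}g(\theta)$. Finiteness of the weighted Dirichlet energy on an annulus gives $g\in W^{1,2}_{\mathit{loc}}$ in $\theta$ (with weight $1/\sin\theta$), and one-dimensional Sobolev embedding then makes $g$ continuous on the open arc. Continuity at the axis endpoint follows from the weighted bound $\int|g'|^{2}/\sin\theta\,d\theta<\infty$. Indeed, by Cauchy--Schwarz, $|g(\theta)-g(\theta')|^{2}\leqslant\int_{\theta'}^{\theta}|g'|^{2}/\sin s\,ds\cdot\int_{\theta'}^{\theta}\sin s\,ds\to 0$. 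Hence $v_{0}$ is continuous in $B_{1}^{+}$. At the origin, continuity follows from $N(0^{+})\geqslant\tfrac52>0$ (\propref{prop:propfre} (1)) and the boundedness of $g$.
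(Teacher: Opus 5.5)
Your proposal is correct and follows the paper's route. The paper records this corollary only as a direct consequence of the $W_{\mathit{w}}^{1,2}$ bound in \propref{propv0} and the vanishing of the radial-deviation integral \eqref{f1}, which is exactly how you obtain compactness and homogeneity of degree $N(0^{+})$. Your additional steps (weighted trace compactness to pass to the limit in \eqref{vm1}, deriving $u=0$ on $\{x_{2}\leqslant 0\}$ from \eqref{Bern}, and continuity via one-dimensional Sobolev regularity of the angular profile $g$) fill in details the paper leaves implicit rather than change the argument.
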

\section{Compensated compactness and the strong convergence}
In order to passing to the limit in domain variation solutions, we have to prove that the strong convergence of $v_{m}$. 
\begin{proposition}\label{prop:cc}
    Let $u$ be a subsonic variational solution of~\eqref{fb}, and let $0$ be a trivial original with non-zero density. Assume that $u$ satisfies the growth assumption~\eqref{Bern}. Then $v_{m}$ converges to $v_{0}$ strongly in $W_{\mathit{w},\mathit{loc}}^{1,2}(B_{1}^{+}\setminus\{0\})$ and $v_{0}\operatorname{div}\left( \frac{1}{x_{1}}\nabla v_{0} \right)=0$ in the sense of distributions on $B_{1}^{+}$.
\end{proposition}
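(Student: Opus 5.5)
We want to show that the weak limit $v_0$ of $v_m$ is in fact a strong limit in $W^{1,2}_{w,loc}(B_1^+\setminus\{0\})$, and that $v_0\operatorname{div}(\frac{1}{x_1}\nabla v_0)=0$. Weak convergence comes from the boundedness in \propref{propv0}. We then identify the equation for $v_0$ and upgrade to strong convergence by a compensated compactness argument applied to the rescaled velocity fields.

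\textbf{Setting up the rescaled fields.} Put $H_m:=H(\tfrac{r_m|\nabla u_m|^2}{x_1^2};r_mx_2)$. Each $v_m$ satisfies $\operatorname{div}\big(\frac{\nabla v_m}{x_1H_m}\big)=0$ in $\{v_m>0\}$. The measure $\mu_m:=\operatorname{div}\big(\frac{\nabla v_m}{x_1H_m}\big)$ is nonnegative and supported on $\partial\{v_m>0\}$, since $u$ is a nonnegative subsolution across the free boundary. Testing against a cut-off $\eta$ and using the bound in $W^{1,2}_w$ shows $\mu_m(K)\le C(K)$ on compacts $K\subset B_1^+\setminus\{0\}$.

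As in \lemref{lemo}, the Bernstein bound \eqref{Bern} gives $|1/H_m-1/\bar\rho_0|\le Cr_m$ uniformly. Hence the nonlinearity vanishes in the limit, and every weak limit satisfies $\operatorname{div}(\frac{1}{x_1}\nabla v_0)=\mu_0\ge0$ with $\mu_0$ carried by $\partial\{v_0>0\}$.

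We introduce the normalized velocities $U_m:=\frac{1}{x_1H_m}(\partial_2 v_m,-\partial_1 v_m)$ and the momentum $\rho_mU_m$. For $x_1\ge c>0$ these are bounded in $L^2$ uniformly. We also have:
\begin{itemize}
\item $\operatorname{curl}(H_mx_1U_m)$ equals the gradient itself, hence is compact in $W^{-1,2}_{loc}$ after rescaling by $r_m$;
\item $\operatorname{div}(x_1H_mU_m)$ is compact, since $\mu_m$ is a bounded measure and measures embed compactly in $W^{-1,q}$ for $q<2$.
\end{itemize}
Interpolation with $W^{-1,\infty}$ bounds (via Murat's lemma) places both in a compact subset of $W^{-1,2}_{loc}$.

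\textbf{Div--curl step.} The div--curl lemma of Murat--Tartar then gives
\[
\frac{|\nabla v_m|^2}{x_1H_m}\rightharpoonup \frac{|\nabla v_0|^2}{x_1\bar\rho_0}
\]
in the sense of distributions on $B_1^+\setminus\{0\}$. Because $H_m\to\bar\rho_0$ uniformly, this is equivalent to convergence of $\int\eta|\nabla v_m|^2/x_1$. Weak convergence together with convergence of the weighted norms yields strong convergence in $W^{1,2}_{w,loc}(B_1^+\setminus\{0\})$. We work away from $0$ and from $\{x_1=0\}$ first, and treat neighbourhoods of the axis using the compatibility condition \eqref{com}, which keeps the boundary terms at $x_1=\varepsilon$ uniformly small.

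\textbf{Identifying the equation.} Since $v_m\operatorname{div}(\frac{\nabla v_m}{x_1H_m})=0$ (because $v_m=0$ on the support of $\mu_m$), strong convergence lets us pass to the limit in
\[
\int \frac{v_m\nabla v_m\cdot\nabla\eta+|\nabla v_m|^2\eta}{x_1H_m}=0.
\]
The limit is $\int\frac{1}{x_1}(v_0\nabla v_0\cdot\nabla\eta+|\nabla v_0|^2\eta)=0$, i.e.\ $v_0\operatorname{div}(\frac1{x_1}\nabla v_0)=0$ in $\mathcal D'(B_1^+)$. The point $0$ is removable because $v_0$ is homogeneous of degree $N(0^+)\ge5/2$ and therefore has finite weighted energy near the origin.

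\textbf{Main obstacle.} The hardest step is the concentration/compactness of $\mu_m$. We only have $\mu_m$ bounded as measures, and the blow-up here is normalized by $\|u\|_{L^2_w(\partial B_{r_m}^+)}$ rather than by $r_m^{5/2}$. So the free-boundary term $\sqrt{x_2}\,\mathcal H^1\llcorner\partial\{u>0\}$ in rescaled form carries a factor $r_m^{5/2}/\|u\|_{L^2_w(\partial B_{r_m}^+)}$, which must not blow up. We intend to control this ratio through $V(r_m)\to0$ (\propref{propv0}(1)) together with the BV estimate from the remark after \corref{corbl}. If this fails, we fall back to proving that $\mu_m$ converges in $W^{-1,2}$ by showing that $\chi_{\{v_m>0\}}\to1$ in $L^1_{loc}$, which follows from the density being $\frac1{8\bar\rho_0}$.
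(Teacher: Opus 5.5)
\paragraph{Genuine gap: the div--curl step.} The step that fails is the div--curl claim. As a side remark, $x_1H_mU_m=(\partial_2v_m,-\partial_1v_m)$ is divergence-free with curl $-\Delta v_m$, so your two bullets are mislabelled. The pairing that actually produces $|\nabla v_m|^2/(x_1H_m)$ is $a_m=\nabla v_m/(x_1H_m)$, $b_m=\nabla v_m$, with $\operatorname{curl}b_m=0$ and $\operatorname{div}a_m=\mu_m$.

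For this pairing you need $\mu_m$ to be precompact in $W^{-1,2}_{\mathit{loc}}$. Murat's lemma gives that only if, besides the mass bound, $\mu_m$ is bounded in $W^{-1,p}_{\mathit{loc}}$ for some $p>2$, which in practice means $a_m$ bounded in $L^p_{\mathit{loc}}$. The only a priori bound you have is \eqref{Bern}. After normalisation it reads $|\nabla v_m|\le\Lambda_m x_1\sqrt{x_2^+}$ with
\[
\Lambda_m=\frac{r_m^{5/2}}{\|u\|_{L^2_{\mathit{w}}(\partial B_{r_m}^+)}}=\frac{1}{\|u(r_m\cdot)/r_m^{5/2}\|_{L^2_{\mathit{w}}(\partial B_1^+)}}\longrightarrow\infty,
\]
and it diverges precisely because the origin is trivial, i.e.\ $u(rx)/r^{5/2}\to0$.

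So the ratio you single out as the main obstacle does not merely risk blowing up; it must blow up. $V(r_m)\to0$ cannot prevent this, since it compares the zero phase with $J(r)$, not $r^5$ with $J(r)$. Your fallback fails too: $\chi_{\{v_m>0\}}\to1$ in $L^1_{\mathit{loc}}$ gives no control of the $W^{-1,2}$ norm of $\mu_m=\Lambda_m\bar\rho_0^{-1}\sqrt{x_2}\,\mathcal{H}^{1}$ on $\partial_{\mathit{red}}\{v_m>0\}$.

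More fundamentally, the implication you need is false in general. Take $\psi_m=(\log m)^{-1/2}\log\max\{|x|,1/m\}$. Then $\Delta\psi_m\ge0$, its mass tends to $0$, and $\nabla\psi_m\rightharpoonup0$, yet $|\nabla\psi_m|^2\rightharpoonup2\pi\delta_0$. When the divergence measure has only a sign and a mass bound, what passes to the limit (Delort, Evans--M\"uller) is the mixed product $\partial_1v_m\partial_2v_m$ and $(\partial_1v_m)^2-(\partial_2v_m)^2$, never the full energy density. Asserting convergence of $|\nabla v_m|^2/(x_1H_m)$ therefore essentially assumes the conclusion.

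\paragraph{The missing ingredient.} You use the frequency formula only to remove the origin; it has to enter the compactness step itself. The paper extracts from compensated compactness only the mixed term \eqref{cc2}, and gets the diagonal information from \eqref{f1}.

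By \eqref{f1}, $\nabla v_m\cdot x-N(0^+)v_m\to0$ in $L^2$ on annuli. Since $v_m\to v_0$ in $L^2$ and $v_0$ is homogeneous of degree $N(0^+)$, it follows that $x\cdot\nabla v_m\to x\cdot\nabla v_0$ strongly. Then each $\partial_iv_m\,(x\cdot\nabla v_m)$ converges, as a weak-times-strong product. Subtracting the mixed term gives convergence of $x_1(\partial_1v_m)^2$ and $x_2(\partial_2v_m)^2$ against test functions, from which the paper deduces convergence of the weighted Dirichlet energy on annuli.

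Your remaining steps are fine and agree with the paper: the uniform bound $|1/H_m-1/\bar\rho_0|\le Cr_m$ from \eqref{Bern}, and passing to the limit in $\int(v_m\nabla v_m\cdot\nabla\eta+|\nabla v_m|^2\eta)/(x_1H_m)=0$ once strong convergence is known.
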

\begin{proof}
    Given that $u_{m}$ satisfies $\operatorname{div}(\tfrac{1}{x_{1}H_{m}}\nabla u_{m})=0$ in $\{u_{m}>0\}$ where $H_{m}=H(\tfrac{r_{m}|\nabla u_{m}|^{2}}{x_{1}^{2}};r_{m}x_{2})$, we have that $v_{m}$ satisfies 
    \begin{equation*}
        \operatorname{div}\left( \frac{1}{x_{1}H_{m}}\nabla v_{m} \right)=0\quad\text{ in }\{v_{m}>0\}.
    \end{equation*}
    Let us now state the compensated compactness framework. Assume that $(u_{1,m},u_{2,m},\rho_{m})$ is a sequence of approximate solutions to the Euler system 
    \begin{align*}
        \left\{
            \begin{alignedat}{2}
                &\partial_{x_{1}}(x_{1}\rho_{m}u_{1,m})+\partial_{x_{2}}(x_{1}\rho_{m}u_{2,m})=0,\\
                &\partial_{x_{1}}(x_{1}\rho_{m}u_{1,m}^{2})+\partial_{x_{2}}(x_{1}\rho u_{1,m}u_{2,m})+x_{1}\partial_{x_{1}}p(\rho_{m})=0,\\
                &\partial_{x_{1}}(x_{1}\rho u_{1,m}u_{2,m})+\partial_{x_{2}}(x_{1}\rho u_{2,m}^{2})+\partial_{x_{2}}p(\rho_{m})+\rho_{m}g=0.
            \end{alignedat}
        \right.
    \end{align*} 
    Assume further that $(\rho_{m},u_{1,m},u_{2,m})$ satisfies $u_{1,m}^{2}+u_{2,m}^{2} \leqslant \sqrt{p'(\rho_{m})}$ a.e. in $\Omega$, that $B_{m}:=\frac{u_{1,m}^{2}+u_{2,m}^{2}}{2}+\int_{\bar{\rho}_{0}}^{\rho_{m}}\frac{p'(s)}{s}\,ds$ is uniformly bounded and that $\curl(u_{1,m},u_{2,m})$ is a locally bounded measure, then there exists a subsequence that converges a.e. to a weak solution $(\bar{\rho},\bar{u}_{1},\bar{u}_{2})$ as $m\to\infty$ of the above Euler system. In other words, for any test function $\varphi\in C_{0}^{1}(\Omega)$,
    \begin{align*}
        \left\{
            \begin{alignedat}{2}
                &\int_{\Omega}x_{1}(\bar{\rho}\bar{u}_{1}\varphi_{x_{1}}+\bar{\rho}\bar{u}_{2}\varphi_{x_{2}})\,dx=0,\\
                &\int_{\Omega}x_{1}(\bar{\rho}\bar{u}_{1}^{2}+p(\bar{\rho})+\bar{\rho}\bar{u}_{1}\bar{u}_{2})\varphi_{x_{1}}\,dx=0,\\
                &\int_{\Omega}x_{1}\left( (\bar{\rho}\bar{u}_{1}\bar{u}_{2})\varphi_{x_{1}}+(\bar{\rho}\bar{u}_{2}^{2}+p)\varphi_{x_{2}}+\rho g\varphi\right)\,dx=0.
            \end{alignedat}
        \right.
    \end{align*}
    This implies that 
    \begin{equation}\label{cc1}
        \int_{\Omega}x_{1}\rho_{m}u_{1,m}u_{2,m}\eta\,dx\to \int_{\Omega}x_{1}\bar{\rho}\bar{u}_{1}\bar{u}_{2}\eta\,dx\quad\text{ for any test function }\eta\in C_{0}^{0}(\Omega).
    \end{equation}
    We modify each $v_{m}$ and $H_{m}$ to 
    \begin{equation*}
        \tilde{v}_{m}:=v_{m}*\phi_{m}\in C^{\infty}(B_{1}),\qquad\tilde{H}_{m}=H_{m}*\phi_{m}\in C^{\infty}(B_{1}),
    \end{equation*}
    where $\phi_{m}$ is a standard mollifier such that $\operatorname{div}\left( \tfrac{1}{x_{1}H_{m}}\nabla\tilde{v}_{m} \right) \geqslant 0$ and 
    \begin{equation*}
        \left( \operatorname{div}\left( \frac{\nabla\tilde{v}_{m}}{x_{1}H_{m}} \right) \right)(B_{\sigma}) \leqslant \mathscr{C}_{2}\quad\text{ for all }m.
    \end{equation*}
    Moreover, 
    \begin{equation*}
        \|v_{m}-\tilde{v}_{m}\|_{W_{\mathit{w}}^{1,2}(B_{\sigma})}\to 0\qquad\|\tilde{H}_{m}-H_{m}\|_{L^{2}(B_{\sigma})}\to 0\quad\text{ as }m\to \infty.
    \end{equation*}
    Set $\rho_{m}=H_{m}$, and consider the associated velocity field which is given by $(u_{1,m},u_{2,m}):=(\tfrac{\partial_{2}\tilde{v}_{m}}{x_{1}H_{m}},-\tfrac{\partial_{1}\tilde{v}_{m}}{x_{1}H_{m}})$.  For any $0<\varrho<\sigma<1$, it is easy to verify that i) $u_{1,m}^{2}+u_{2,m}^{2} \leqslant \sqrt{p'(\rho_{m})}$ in $B_{\sigma}^{+}\setminus B_{\varrho}^{+}$, ii) $\frac{u_{1,m}^{2}+u_{2,m}^{2}}{2}+\int_{\bar{\rho}_{0}}^{\rho_{m}}\frac{p'(s)}{s}\,ds$ is bounded in $B_{\sigma}^{+}\setminus B_{\varrho}^{+}$, and iii) $\curl(u_{1,m},u_{2,m})=\operatorname{div}\left( \tfrac{\nabla\tilde{v}_{m}}{x_{1}H_{m}} \right)$ is locally a bounded measure. The framework of compensated compactness can be applied and we deduce from~\eqref{cc1} that
    \begin{equation*}
        \int_{B_{\sigma}^{+}\setminus B_{\varrho}^{+}}\frac{1}{x_{1}\tilde{H}_{m}}\partial_{2}\tilde{v}_{m}\partial_{1}\tilde{v}_{m}\eta\,dx\to\int_{B_{\sigma}^{+}\setminus B_{\varrho}^{+}}\frac{1}{x_{1}\bar{\rho}_{0}}\partial_{2}v_{0}\partial_{1}v_{0}\eta\,dx,
    \end{equation*}
    for each $\eta\in C_{0}^{0}(B_{\sigma}^{+}\setminus\bar{B}_{\varrho}^{+})$. It follows from the boundedness of $\nabla v_{m}$ and the strong $W^{1,2}$ convergence of $\tilde{v}_{m}$ to $v_{m}$ and the following two identities 
    \begin{align*}
        &\int_{B_{\sigma}^{+}\setminus B_{\varrho}^{+}}\frac{1}{x_{1}H_{m}}\left( \partial_{1}v_{m}\partial_{2}v_{m}-\partial_{1}v_{0}\partial_{2}v_{0} \right)\eta\,dx\\
        &=-\int_{B_{\sigma}^{+}\setminus B_{\varrho}^{+}}\frac{1}{x_{1}H_{m}}\left( \partial_{1}\tilde{v}_{m}\partial_{2}\tilde{v}_{m}-\partial_{1}v_{m}\partial_{2}v_{m} \right)\eta\,dx\\
        &+\int_{B_{\sigma}^{+}\setminus B_{\varrho}^{+}}\frac{1}{x_{1}H_{m}}\left( \partial_{1}\tilde{v}_{m}\partial_{2}\tilde{v}_{m}-\partial_{1}v_{0}\partial_{2}v_{0} \right)\eta\,dx,
    \end{align*}
    and 
    \begin{align*}
        &\int_{B_{\sigma}^{+}\setminus B_{\varrho}^{+}}\frac{1}{x_{1}H_{m}}\left( \partial_{1}\tilde{v}_{m}\partial_{2}\tilde{v}_{m}-\partial_{1}v_{m}\partial_{2}v_{m} \right)\eta\,dx\\
        &=\int_{B_{\sigma}^{+}\setminus B_{\varrho}^{+}}\frac{1}{x_{1}H_{m}}\left( (\partial_{1}\tilde{v}_{m}-\partial_{1}v_{m})\partial_{2}\tilde{v}_{m}+\partial_{1}v_{m}\left( \partial_{2}\tilde{v}_{m}-\partial_{2}v_{m} \right) \right)\eta \,dx
    \end{align*}
    that 
    \begin{equation}\label{cc2}
        \int_{B_{\sigma}^{+}\setminus B_{\varrho}^{+}}\frac{1}{x_{1}H_{m}}\partial_{2}v_{m}\partial_{1}v_{m}\eta\,dx\to\int_{B_{\sigma}^{+}\setminus B_{\varrho}^{+}}\frac{1}{x_{1}\bar{\rho}_{0}}\partial_{2}v_{0}\partial_{1}v_{0}\eta\,dx,
    \end{equation}
    for each $\eta\in C_{0}^{0}(B_{\sigma}^{+}\setminus\bar{B}_{\varrho}^{+})$. On the other hand, for each $0<\varrho<\sigma$, the equation~\eqref{f1} implies that 
    \begin{equation*}
        \nabla v_{m}\cdot x-N(0^{+})v_{m}\to 0\quad\text{ strongly in }L^{2}(B_{\sigma}^{+}\setminus B_{\varrho}^{+}),
    \end{equation*}
    Since $v_{0}$ is a homogeneous function of degree $N(0^{+})$, we have $\nabla v_{0}\cdot x=N(0^{+})v_{0}$ and this implies 
    \begin{align}\label{f2}
        \begin{split}
            \partial_{1}v_{m}\cdot x_{1}+\partial_{2}v_{m}\cdot x_{2}&=\nabla v_{m}\cdot x\stackrel{L^{2}(B_{\sigma}^{+}\setminus B_{\varrho}^{+})}{\longrightarrow} N(0^{+})v_{m}\stackrel{L^{2}(B_{\sigma}^{+}\setminus B_{\varrho}^{+})}{\longrightarrow } N(0^{+})v_{0}\\
            &=\nabla v_{0}\cdot x=\partial_{1}v_{0}\cdot x_{1}+\partial_{2}v_{0}\cdot x_{2}.
        \end{split}
    \end{align}
    We now claim that 
    \begin{align}\label{cc3}
        \begin{split}
            &\int_{B_{\sigma}^{+}\setminus B_{\varrho}^{+}}\frac{1}{x_{1}H_{m}}\left( \partial_{1}v_{m}\partial_{1}v_{m}x_{1}+\partial_{1}v_{m}\partial_{2}v_{m}x_{2} \right)\eta\,dx\\
            &\to\int_{B_{\sigma}^{+}\setminus B_{\varrho}^{+}}\frac{1}{x_{1}\bar{\rho}_{0}}\left( \partial_{1}v_{0}\partial_{1}v_{0}+\partial_{1}v_{0}\partial_{2}v_{0} \right)\eta\,dx.
        \end{split}
    \end{align}
    Since $H_{m}$ converges to $\bar{\rho}_{0}$ strongly in $L^{2}(B_{\sigma}^{+})$. It suffices to prove that 
    \begin{align*}
        &\int_{B_{\sigma}^{+}\setminus B_{\varrho}^{+}}\frac{1}{x_{1}H_{m}}\Bigg[  \partial_{1}v_{m}\left( \partial_{1}v_{m}x_{1}+\partial_{2}v_{m}x_{2} \right)\eta\\
        &\qquad\qquad\qquad\qquad-\partial_{1}v_{0}\left( \partial_{1}v_{0}x_{1}+\partial_{2}v_{0}x_{2} \right)\eta\Bigg]\,dx \to 0.
    \end{align*}
    However, this is a direct application of~\eqref{f2} and the following computation
    \begin{align*}
        &\int_{B_{\sigma}^{+}\setminus B_{\varrho}^{+}}\frac{1}{x_{1}H_{m}}\left[  \partial_{1}v_{m}\left( \partial_{1}v_{m}x_{1}+\partial_{2}v_{m}x_{2} \right)-\partial_{1}v_{0}\left( \partial_{1}v_{0}x_{1}+\partial_{2}v_{0}x_{2} \right)\right]\eta\,dx\\
        &=\int_{B_{\sigma}^{+}\setminus B_{\varrho}^{+}}\frac{1}{x_{1}H_{m}}\left[ \partial_{1}v_{m}\left( \partial_{1}v_{m}x_{1}+\partial_{2}v_{m}x_{2}-(\partial_{1}v_{0}x_{1}+\partial_{2}v_{0}x_{2}) \right) \right]\eta\,dx\\
        &+\int_{B_{\sigma}^{+}\setminus B_{\varrho}^{+}}\frac{1}{x_{1}H_{m}}(\partial_{1}v_{0}x_{1}+\partial_{2}v_{0}x_{2})(\partial_{1}v_{m}-\partial_{1}v_{0})\eta\,dx,
    \end{align*}
    where we used the weak convergence of $\nabla v_{m}$ to $\nabla v_{0}$. Therefore~\eqref{cc3} is proved. we deduce from~\eqref{cc3} and~\eqref{cc2} that 
    \begin{equation*}
        \int_{B_{\sigma}^{+}\setminus B_{\varrho}^{+}}\frac{1}{H_{m}}(\partial_{1}v_{m})^{2}\eta\,dx \to \int_{B_{\sigma}^{+}\setminus B_{\varrho}^{+}}\frac{1}{\bar{\rho}_{0}}(\partial_{1}v_{0})^{2}\eta\,dx,
    \end{equation*}
    for each $\eta\in C_{0}^{0}(B_{\sigma}^{+}\setminus\bar{B}_{\varrho}^{+})$. Similarly, we can prove that 
    \begin{align*}
        &\int_{B_{\sigma}^{+}\setminus B_{\varrho}^{+}}\frac{1}{x_{1}H_{m}}\left( \partial_{1}v_{m}\partial_{2}v_{m}x_{1}+\partial_{2}v_{m}\partial_{2}v_{m}x_{2} \right)\eta\,dx\\
        &\to\int_{B_{\sigma}^{+}\setminus B_{\varrho}^{+}}\frac{1}{x_{1}\bar{\rho}_{0}}\left( \partial_{1}v_{0}\partial_{2}v_{0}x_{1}+\partial_{2}v_{0}\partial_{2}v_{0}x_{2} \right)\eta\,dx.
    \end{align*}
    Using~\eqref{cc2} once more yields that 
    \begin{equation*}
        \int_{B_{\sigma}^{+}\setminus B_{\varrho}^{+}}\frac{x_{2}}{x_{1}H_{m}}(\partial_{2}v_{m})^{2}\eta\,dx \to \int_{B_{\sigma}^{+}\setminus B_{\varrho}^{+}}\frac{x_{2}}{x_{1}\bar{\rho}_{0}}(\partial_{2}v_{0})^{2}\eta\,dx.
    \end{equation*}
    Using the strong $L^{2}$ convergence of $H_{m}$ to $\bar{\rho}_{0}$, one has $\nabla v_{m}$ converges strongly in $L_{\mathit{w},\mathit{loc}}^{2}(B_{\sigma}^{+}\setminus\bar{B}_{\varrho}^{+})$. Since $0<\varrho<\sigma<1$ were arbitrary, we have that $\nabla v_{m}$ converges strongly in $L_{\mathit{w},\mathit{loc}}^{2}(B_{1}^{+}\setminus\{0\})$. As a consequence of the strong convergence, we see that 
    \begin{equation*}
        \int_{B_{1}^{+}}\frac{1}{x_{1}\bar{\rho}_{0}}\nabla\left( \eta v_{0} \right)\cdot\nabla v_{0}\,dx =0\quad\text{ for each }\eta\in C_{0}^{1}(B_{1}^{+}\setminus\{0\}).
    \end{equation*}
    Combined with the fact that $v_{0}=0$ in $B_{1}^{+}\cap\{x_{2} \leqslant 0\}$, we proves that 
    \begin{equation*}
        v_{0}\operatorname{div}\left(  \frac{1}{x_{1}\bar{\rho}_{0}}\nabla v_{0}\right)=0
    \end{equation*}
    in the sense of Radon measures on $B_{1}^{+}$.
\end{proof}
As an application of~\propref{prop:cc}, we obtained the existence of a nontrivial homogeneous solution of degree $\geqslant \frac{5}{2}$ at the origin, which can also be regarded as a compressible counterpart of Theorem 7.1 in~\cite{MR3225630}. 
\begin{corollary}
    Let $u$ be a subsonic weak solution so that the origin is a degenerate original point. Suppose that the free boundary $B_{1}^{+}\cap\partial\{u>0\}$ is in a neighborhood of the origin a continuous injective curve $\sigma:I\to \mathbb{R} ^{2}$, where $I$ is an interval of $\mathbb{R} $ containing $0$, such that $\sigma=(\sigma_{1},\sigma_{2})$ and $\sigma(0)=0$. Then $\sigma_{1}(t)\neq 0$ in $(0,t_{1})$, 
    \begin{equation*}
        \lim_{t\to 0^{+}}\frac{\sigma_{2}(t)}{\sigma_{1}(t)}=0,
    \end{equation*}
    and 
    \begin{equation*}
        \frac{u(rx)}{\|u\|_{L_{\mathit{w}^{2}}(\partial B_{1}^{+})}}\to \beta x_{1}^{2}x_{2}^{+}\quad\text{ as }t\to 0^{+},
    \end{equation*}
    strongly in $W_{\mathit{w},\mathit{loc}}^{1,2}(B_{1}^{+}\setminus\{0\})$ and weakly in $W_{\mathit{w}}^{1,2}(B_{1}^{+})$. Here $\beta$ is a constant given by 
    \begin{equation*}
        \beta:=\frac{1}{\sqrt{\int_{\partial B_{1}^{+}}x_{1}^{3}(x_{2}^{+})^{2}\,d\mathcal{H}^{1}}}.
    \end{equation*}
\end{corollary}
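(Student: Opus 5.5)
Throughout, $0$ is a trivial origin with non-zero density, i.e.\ $M^{x_{1}x_{2}}(0^{+})=\frac{1}{8\bar{\rho}_{0}}$ and $u_{r}\to 0$; this is the only remaining case after Lemma~\ref{lem:cuspstp} and the curve-case proposition at the origin. The plan is to identify every blow-up limit $v_{0}$ of the normalized sequence $v_{m}$ in~\eqref{vm} explicitly, and then read off the free boundary geometry from it.

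\textbf{Step 1 (extracting a limit).} By Proposition~\ref{propv0}, $v_{m}$ is bounded in $W_{\mathit{w}}^{1,2}(B_{1}^{+})$, so along a subsequence $v_{m}\rightharpoonup v_{0}$. By the corollary following Proposition~\ref{propv0}, $v_{0}$ is homogeneous of degree $N(0^{+})\ge \frac52$, nonnegative, vanishes on $\{x_{2}\le 0\}$, and has $\|v_{0}\|_{L^{2}_{\mathit{w}}(\partial B_{1}^{+})}=1$. By Proposition~\ref{prop:cc}, the convergence is strong in $W^{1,2}_{\mathit{w},\mathit{loc}}(B_{1}^{+}\setminus\{0\})$ and $v_{0}\operatorname{div}(\frac{1}{x_{1}}\nabla v_{0})=0$.

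\textbf{Step 2 (solving for $v_{0}$).} Since density $\frac18$ forces $\chi_{\{u_r>0\}}\to\chi_{\{x_2>0\}}$, I would first show $v_{0}>0$ on the whole quarter-plane $\{x_{1}>0,x_{2}>0\}$, using continuity and the density information. Then $\operatorname{div}(\frac{1}{x_{1}}\nabla v_{0})=0$ there, with $v_{0}=0$ on $\{x_{2}=0\}$ and the axis compatibility $v_0/x_1^2$ bounded. As in Corollary~\ref{corblad}, introduce the conjugate $w_{0}$ with $\partial_{1}w_{0}=-\partial_{2}v_{0}/x_{1}$ and $\partial_{2}w_{0}=\partial_{1}v_{0}/x_{1}$. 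Then $w_{0}$ is axisymmetric harmonic and homogeneous of degree $N(0^{+})-2$. Writing $w_{0}=\varrho^{k}f(\cos\theta)$ with $k=N(0^{+})-2\ge\frac12$ reduces the problem to Legendre's equation of order $k$ on $s\in(0,1)$. The conditions are regularity at $s=1$ (the axis, so no $Q_k$ component) and $v_{0}=0$ at $s=0$, which becomes $\frac{d}{ds}[(1-s^{2})P_{k}'(s)]$-type vanishing, i.e.\ $P_{k}(0)=0$ up to normalization. Among $k\ge\frac12$, the first admissible value is $k=1$, giving $w_{0}=cx_{2}$ and hence $v_{0}=c\,x_{1}^{2}x_{2}^{+}$; this value also agrees with the homogeneity bound. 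Excluding the higher admissible $k$ (for example $k=3$) is where I would use positivity $v_{0}>0$ in the quarter-plane: the higher Legendre modes change sign on $(0,1)$. Normalization then gives $c=\beta$. Since the limit is unique, the whole family $r\to0^{+}$ converges, not just a subsequence.

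\textbf{Step 3 (curve geometry).} The curve statement follows the scheme of the curve-case propositions and \cite[Theorem 7.1]{MR3225630}. The limit $v_{0}$ is positive exactly on $\{x_{2}>0\}$. Together with $\chi_{\{u_{r}>0\}}\to\chi_{\{x_{2}>0\}}$ and nondegeneracy of $v_{0}$ away from $\{x_{2}=0\}$, this shows that the rescaled free boundary lies in $\{|x_{2}|<\varepsilon|x_{1}|\}$, which gives $\sigma_{2}/\sigma_{1}\to 0$. Injectivity of the curve and $u=0$ on the axis give $\sigma_{1}\neq0$ for small $t>0$.

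I expect the main obstacle to be Step 2's claim that $v_{0}>0$ in the full quarter-plane, so that the equation holds there and there is no free boundary inside. I would first try to derive it from the density characterization: the set $\{v_{0}=0\}\cap\{x_{2}>0\}$ is a cone, and because $V(r)\to0$ (Proposition~\ref{propv0}), the measure $r^{-4}\int_{B_{r}^{+}} x_{1}x_{2}^{+}(1-\chi_{\{u>0\}})\,dx$ is negligible relative to $J(r)r^{-5}$. If that argument fails, the fallback is to use the curve assumption to exclude interior zero cones directly.
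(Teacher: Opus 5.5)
Your overall frame matches the intended adaptation of \cite[Theorem 7.1]{MR3225630}: normalized blow-ups $v_m$, Proposition~\ref{prop:cc}, identification of $v_0$ through the conjugate $w_0$, and uniqueness of the limit. However, Step 2 has a genuine gap, and Steps 2 and 3 are in the wrong order.

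The facts $\chi_{\{u_r>0\}}\to\chi_{\{x_2>0\}}$ and $V(r)\to0$ only say that the zero set of $u$ is small. They say nothing about regions where $u>0$ but $u=o(\|u\|_{L^2_{\mathit{w}}(\partial B_r^+)})$. Proposition~\ref{prop:cc} only gives $v_0\operatorname{div}(\frac{1}{x_1}\nabla v_0)=0$, so a priori $\{v_0>0\}$ can be a union of sub-cones of the quarter-plane. Each such cone could carry a homogeneous solution of degree $N(0^+)$ vanishing on its boundary rays. Your primary route therefore cannot work, and the curve assumption is indispensable.

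Your fallback, though, needs to know that the rescaled free boundary avoids $\{x_2>\varepsilon|x|\}$, and you only derive that in Step 3 from the identified $v_0$. This is circular. Deriving it from $v_0\ge c>0$ would also require locally uniform convergence of $v_m$, which you do not have: the only Lipschitz bound, $|\nabla v_m|\le C\|u_m\|^{-1}_{L^2_{\mathit{w}}(\partial B_1^+)}$, degenerates.

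The geometry must come first, from $u_m\to0$ rather than from $v_0$. The measure $\mu_m=\operatorname{div}(\frac{\nabla u_m}{x_1H_m})$ satisfies $\mu_m(B_2^+)\to0$. It is bounded below by $\bar\rho_0^{-1}\sqrt{x_2^+}\,\mathcal{H}^1$ restricted to $\partial_{\mathit{red}}\{u_m>0\}$, because $H_m=\bar\rho_0$ and $|\nabla u_m|=x_1\sqrt{x_2}$ there. An injective curve from $0$ to a point of $\partial B_1\cap\{x_2>\varepsilon x_1\}$ has $\sqrt{x_2}$-weighted length at least $c(\varepsilon)>0$, which gives a contradiction. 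This is item (2) of the curve-case proposition at the origin, and it already yields $\sigma_1\neq0$ and $\sigma_2/\sigma_1\to0$.

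Then, for large $m$, the connected set $\{x_1>0,\ x_2>\varepsilon|x|\}\cap B_1$ contains no free boundary and has almost full density of $\{u_m>0\}$, so it lies in $\{u_m>0\}$. The equation is linear in $\nabla v_m$ and $1/H_m\to1/\bar\rho_0$ in $L^2$, so weak convergence suffices to get $\operatorname{div}(\frac{1}{x_1}\nabla v_0)=0$ in the open quarter-plane. Then $v_0>0$ there by the strong maximum principle and $\|v_0\|_{L^2_{\mathit{w}}(\partial B_1^+)}=1$. In this order, Proposition~\ref{prop:cc} is needed only for the strong convergence asserted in the statement.

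The Legendre reduction is also miscomputed. Since $\nabla w_0=\frac{1}{x_1}(-\partial_2v_0,\partial_1v_0)$ is homogeneous of degree $N(0^+)-2$, $w_0$ has degree $k=N(0^+)-1\ge\frac32$, not $N(0^+)-2$. Compare Corollary~\ref{corblad}: degree $2$ for $u_0$ gives degree $1$ for $w_0$.

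For $w_0=\varrho^kP_k(\cos\theta)$, the associated function is $v_0=\frac{1}{k+1}\varrho^{k+1}(1-s^2)P_k'(s)$ with $s=\cos\theta$. So the Dirichlet condition $v_0=0$ on $\{x_2=0\}$ reads $P_k'(0)=0$. The condition you wrote, vanishing of $\frac{d}{ds}[(1-s^2)P_k'(s)]=-k(k+1)P_k(s)$ at $s=0$, is the Neumann condition $\partial_\theta v_0=0$. Accordingly, your mode $k=1$, $w_0=cx_2$, produces $v_0=\frac{c}{2}x_1^2$. That is the axis profile, which neither vanishes on $\{x_2=0\}$ nor has degree $\ge\frac52$.

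The correct mode is $k=2$: $w_0=c(x_2^2-\frac12x_1^2)=c\varrho^2P_2(\cos\theta)$, giving $v_0=cx_1^2x_2$ and $N(0^+)=3$. Other modes are excluded because $P_k'(0)=0$ with $k\ge\frac32$ forces $k$ to be an even integer, and $P_k'$ changes sign in $(0,1)$ for even $k\ge4$. More simply, a positive homogeneous solution in the quarter-plane vanishing on its boundary is the principal Sturm--Liouville eigenfunction, hence a multiple of $x_1^2x_2$. Your indexing happens to produce the same admissible degrees $\{3,5,7,\dots\}$. That is why the final formula and $c=\beta$ come out right despite the false intermediate identifications.
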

Since the blow-up limit $v_{0}$ satisfies $v_{0}\operatorname{div}(\frac{1}{x_{1}}\nabla v_{0})=0$ on $B_{1}^{+}$, the result follows by adapting the steps from Theorem 7.1 in~\cite{MR3225630}. We therefore omit it.
\appendix
\section{The proof of the first variation formula}\label{app1}
Given a vector field $\phi\in C_{0}^{1}(\Omega)$, for small $\varepsilon>0$ we consider the ODE flow $y=y(\varepsilon;x)$ defined by
\begin{equation*}
    y(0;x)=x,\qquad \partial_{\varepsilon}y(\varepsilon;x)=\phi(y(\varepsilon;x)),
\end{equation*}
It follows that for $\varepsilon>0$ small, $y(\varepsilon;x)=x+\varepsilon\phi(y(\varepsilon;x))+o(\varepsilon)=x+\varepsilon\phi(x)+o(\varepsilon)$ and 
\begin{equation*}
    \det D_{x}y(\varepsilon;x)=\mathit{Id}+\varepsilon D\phi(x)+o(\varepsilon).
\end{equation*}
For variational solution $u$ let us define the perturbation $u_{\varepsilon}(x):=u(y(\varepsilon;x))$. It follows by the change of variable $y=y(\varepsilon;x)$ and a direct calculation that
\begin{align*}
    &\int_{\Omega}x_{1}F\left( \frac{|\nabla u_{\varepsilon}(x)|^{2}}{x_{1}^{2}} ;x_{2}\right)\,dx\\
    &=\int_{\Omega}(y_{1}-\varepsilon\phi_{1}(y(\varepsilon;x)))\\
    &\qquad\qquad F \begin{pmatrix}
        \frac{|\nabla u(y;\varepsilon)|^{2}+2\varepsilon\nabla u(y(\varepsilon;x))D\phi(y(\varepsilon;x))\nabla u(y(\varepsilon;x))}{(y_{1}-\varepsilon\phi_{1}(y;\varepsilon))^{2}}+o(\varepsilon)\\
        y_{2}-\varepsilon\phi_{2}(y(\varepsilon;x))+o(\varepsilon) 
    \end{pmatrix}\,dx\\
    &=\int_{\Omega}(y_{1}-\varepsilon\phi_{1}(y))\\
    &\qquad \qquad F \begin{pmatrix}
        \frac{|\nabla u(y)|^{2}+2\varepsilon\nabla u(y)D\phi(y)\nabla u(y)+o(\varepsilon)}{(y_{1}-\varepsilon\phi_{1}(y))^{2}}\\
        y_{2}-\varepsilon\phi_{2}(y)+o(\varepsilon)
    \end{pmatrix}(1-\varepsilon \operatorname{div}\phi(y)+o(\varepsilon))\,dy.
\end{align*} 
Define $f(\varepsilon)=\frac{1}{(y_{1}-\varepsilon\phi_{1}(y))^{2}}$ and we have 
\begin{align*}
    f(\varepsilon)=f(0)+f'(0)\varepsilon+o(\varepsilon)=\frac{1}{y_{1}^{2}}+\frac{2\phi_{1}(y)}{y_{1}^{3}}\varepsilon+o(\varepsilon).
\end{align*}
This implies that 
\begin{align}\label{fv1}
    \begin{split}
        &\int_{\Omega}x_{1}F\left( \frac{|\nabla u_{\varepsilon}(x)|^{2}}{x_{1}^{2}} ;x_{2}\right)\,dx\\
        &=\int_{\Omega}y_{1}F\left( \tfrac{|\nabla u(y)|^{2}}{y_{1}^{2}};y_{2} \right)\,dy-\varepsilon\int_{\Omega}y_{1}F\left( \tfrac{|\nabla u(y)|^{2}}{y_{1}^{2}};y_{2} \right)\operatorname{div}\phi(y)\,dy\\
        &-\varepsilon \int_{\Omega}y_{1}\partial_{2}F\left( \tfrac{|\nabla u(y)|^{2}}{y_{1}^{2}};y_{2} \right)\phi_{2}(y)\,dy-\varepsilon \int_{\Omega}F\left( \tfrac{|\nabla u(y)|^{2}}{y_{1}^{2}};y_{2} \right)\phi_{1}(y)\,dy\\
        &+2\varepsilon\int_{\Omega}\partial_{1}F\left( \tfrac{|\nabla u(y)|^{2}}{y_{1}^{2}};y_{2}-\varepsilon\phi_{2}(y)+o(\varepsilon) \right)\frac{\nabla u(y)D\phi(y)\nabla u(y)}{y_{1}}\,dy\\
        &+2\varepsilon\int_{\Omega}\partial_{1}F\left( \tfrac{|\nabla u(y)|^{2}}{y_{1}^{2}};y_{2}-\varepsilon\phi_{2}(y)+o(\varepsilon) \right)\frac{\phi_{1}(y)|\nabla u(y)|^{2}}{y_{1}^{2}}\,dy+o(\varepsilon).
    \end{split}
\end{align}
Similarly, one has 
\begin{align}\label{fv2}
    \begin{split}
        &\int_{\Omega}x_{1}\lambda(x_{2})\chi_{\left\{ u_{\varepsilon}>0 \right\} }\,dx\\
        &=\int_{\Omega}y_{1}\lambda(y_{2})\chi_{\left\{ u>0 \right\} }\,dy-\varepsilon\int_{\Omega}y_{1}\lambda(y_{2})\chi_{\left\{ u>0 \right\} }\operatorname{div}\phi(y)\,dy\\
        &-\varepsilon\int_{\Omega}y_{1}\lambda'(y_{2})\phi_{2}(y)\chi_{\left\{ u>0 \right\} }\,dy-\varepsilon\int_{\Omega}\lambda(y_{2})\phi_{1}(y)\chi_{\left\{ u>0 \right\} }\,dy.
    \end{split}
\end{align}
Then~\eqref{fv} follows from~\eqref{fv1} and~\eqref{fv2} and the definition of the first domain variation.
\subsubsection*{Acknowledgement}
\hyphenpenalty=10
\sloppy
Du is supported by National
Nature Science Foundation of China under Grants 12125102, 12526202, Nature Science Foundation of Guangdong Province under Grant 2024A1515012794, and Shenzhen Science and Technology Program
(JCYJ20241202124209011).
\subsubsection*{Conflict of interests}
The authors declare no conflicts of interests.
\subsubsection*{Data availability} No data was used in this research.


\begin{thebibliography}{99}
    \bibitem[AM20]{MR4163981}J.~Abrantes Santos and S.~H.~Monari Soares, A limiting free boundary problem for a degenerate operator in Orlicz-Sobolev spaces, {\it Rev. Mat. Iberoam.} {\bf 36} (2020), no.~6, 1687--1720. 

    \bibitem[AC81]{MR618549}H.~W. Alt and L.~A. Caffarelli, Existence and regularity for a minimum problem with free boundary, {\it J. Reine Angew. Math.} {\bf 325} (1981), 105--144.
    
    \bibitem[ACF82]{MR647374}H.~W. Alt, L.~A. Caffarelli and A. Friedman, Jet flows with gravity, {\it J. Reine Angew. Math.} {\bf 331} (1982), 58--103.
    
    \bibitem[ACF83]{MR682265}H.~W. Alt, L.~A. Caffarelli and A. Friedman, Axially symmetric jet flows, {\it Arch. Rational Mech. Anal.} {\bf 81} (1983), no.~2, 97--149. 

    \bibitem[ACF84]{MR752578}H.~W. Alt, L.~A. Caffarelli and A. Friedman, A free boundary problem for quasilinear elliptic equations, {\it Ann. Scuola Norm. Sup. Pisa Cl. Sci.} (4) {\bf 11} (1984), no.~1, 1--44. 

    \bibitem[ACF85]{MR772122}H.~W. Alt, L.~A. Caffarelli and A. Friedman, Compressible flows of jets and cavities, {\it J. Differential Equations} {\bf 56} (1985), no.~1, 82--141. 

    \bibitem[AFT82]{MR666110}C.~J. Amick, L.~E. Fraenkel and J.~F. Toland, On the Stokes conjecture for the wave of extreme form, {\it Acta Math.} {\bf 148} (1982), 193--214. 

    \bibitem[AL12]{MR2915865}D. Arama and G. Leoni, On a variational approach for water waves, {\it Comm. Partial Differential Equations} {\bf 37} (2012), no.~5, 833--874. 

    \bibitem[BF24]{MR4773610}M. Bayrami and M. Fotouhi, Regularity in the two-phase Bernoulli problem for the $p$-Laplace operator, {\it Calc. Var. Partial Differential Equations} {\bf 63} (2024), no.~7, Paper No. 183, 38 pp.

    

    \bibitem[Bers58]{MR96477} L. Bers, {\it Mathematical aspects of subsonic and transonic gas dynamics}, Surveys in Applied Mathematics, Vol. 3, Wiley, New York, 1958 Chapman \& Hall, Ltd., London, 1958. 

    \bibitem[Caf77]{MR454350}L.~A. Caffarelli, The regularity of free boundaries in higher dimensions, {\it Acta Math.} {\bf 139} (1977), no.~3-4, 155--184.
    
    \bibitem[CF82]{MR642623}L.~A. Caffarelli and A. Friedman, Axially symmetric infinite cavities, {\it Indiana Univ. Math. J.} {\bf 31} (1982), no.~1, 135--160. 

    \bibitem[Chan57]{MR92663} S. Chandrasekhar, {\it An introduction to the study of stellar structure}, Dover, New York, 1957. 

    \bibitem[CDSW07]{MR2291790}Q.~G. Chen, C.~M.~Dafermos, M.~Slemrod and D.~H.~Wang, On two-dimensional sonic-subsonic flow, {\it Comm. Math. Phys.} {\bf 271} (2007), no.~3, 635--647. 

    \bibitem[CHW16]{MR3437861}Q.~G. Chen, F. Huang and T.~Y. Wang, Subsonic-sonic limit of approximate solutions to multidimensional steady Euler equations, {\it Arch. Ration. Mech. Anal.} {\bf 219} (2016), no.~2, 719--740.

    \bibitem[CHLWW24]{MR4716737}G.~Q.~Chen, F.~Huang, T.~Li, W.~Wang, Y.~Wang, Global finite-energy solutions of the compressible Euler-Poisson equations for general pressure laws with large initial data of spherical symmetry, {\it Comm. Math. Phys.} {\bf 405} (2024), no.~3, Paper No. 77, 85 pp. 

    \bibitem[CD18]{MR3842050}J.~ Cheng and L. Du, Compressible subsonic impinging flows, {\it Arch. Ration. Mech. Anal.} {\bf 230} (2018), no.~2, 427--458. 

    \bibitem[CDW18]{MR3814594}J.~ Cheng, L.~Du and Y.~Wang, The existence of steady compressible subsonic impinging jet flows, {\it Arch. Ration. Mech. Anal.} {\bf 229} (2018), no.~3, 953--1014. 

    \bibitem[CDX20]{MR4127792}J. Cheng, L.~Du and W. Xiang, Steady incompressible axially symmetric R\'{e}thy flows, {\it Nonlinearity} {\bf 33} (2020), no.~9, 4627--4669.

    \bibitem[CDZ21]{MR4246821}J.~Cheng, L.~Du and Q. Zhang, Existence and uniqueness of axially symmetric compressible subsonic jet impinging on an infinite wall, {\it Interfaces Free Bound.} {\bf 23} (2021), no.~1, 1--58. 

    \bibitem[CF76]{MR421279}R. Courant and K.~O. Friedrichs, {\it Supersonic flow and shock waves}, Applied Mathematical Sciences, Vol. 21, Springer, New York-Heidelberg, 1976. 

    \bibitem[DRRV23]{MR4591831}J.~V. Da~Silva, G.~C.~Rampasso, G.~C.~Ricarte and H.~A.~Vivas, Free boundary regularity for a class of one-phase problems with non-homogeneous degeneracy, {\it Israel J. Math.} {\bf 254} (2023), no.~1, 155--200; 

    \bibitem[DP05]{MR2133664}D. Danielli and A. Petrosyan, A minimum problem with free boundary for a degenerate quasilinear operator, {\it Calc. Var. Partial Differential Equations} {\bf 23} (2005), no.~1, 97--124. 

    \bibitem[Del92]{MR1176677}J.-M. Delort, Une remarque sur le probl\`{e}me des nappes de tourbillon axisym\'{e}triques sur $\mathbb{R}^3$, {\it J. Funct. Anal.} {\bf 108} (1992), no.~2, 274--295.

    \bibitem[DD11]{MR2737815}L.~Du and B. Duan, Global subsonic Euler flows in an infinitely long axisymmetric nozzle, {\it J. Differential Equations} {\bf 250} (2011), no.~2, 813--847. 

    \bibitem[DD16]{MR3537905}
    L.~Du and B. Duan, Subsonic Euler flows with large vorticity through an infinitely long axisymmetric nozzle, {\it J. Math. Fluid Mech.} {\bf 18} (2016), no.~3, 511--530. 

    \bibitem[DHP23]{MR4595616}L.~ Du, J. Huang and Y. Pu, The free boundary of steady axisymmetric inviscid flow with vorticity $I$: near the degenerate point, {\it Comm. Math. Phys.} {\bf 400} (2023), no.~3, 2137--2179. 

    \bibitem[DY24a]{MR4808256}L.~ Du and C. Yang, The free boundary of steady axisymmetric inviscid flow with vorticity $II$: near the non-degenerate points, {\it Comm. Math. Phys.} {\bf 405} (2024), no.~11, Paper No. 262, 58 pp. 

    \bibitem[DY24]{du2024proofstokesconjecturecompressible}L.~Du and C.~Yang, Proof of the Stokes conjecture for compressible gravity water waves, arXiv preprint: arXiv: \href{https://arxiv.org/abs/2410.04178}{2410.04178} (2024).

    \bibitem[EFW25]{MR4848670}S. Eberle, A. Figalli and G.~S. Weiss, Complete classification of global solutions to the obstacle problem, {\it Ann. of Math.} (2) {\bf 201} (2025), no.~1, 167--224. 

    \bibitem[Fri83]{MR702728}A. Friedman, Axially symmetric cavities in rotational flows, {\it Comm. Partial Differential Equations} {\bf 8} (1983), no.~9, 949--997. 

    \bibitem[FJ19]{MR3904453}A. Figalli and J. Serra, On the fine structure of the free boundary for the classical obstacle problem, {\it Invent. Math.} {\bf 215} (2019), no.~1, 311--366. 

    \bibitem[GS55]{MR67650}D. Gilbarg and J.~ Serrin, Uniqueness of axially symmetric subsonic flow past a finite body, {\it J. Rational Mech. Anal.} {\bf 4} (1955), 169--175;

    \bibitem[KW25]{MR4850026}D. Kriventsov and G.~S. Weiss, Rectifiability, finite Hausdorff measure, and compactness for non-minimizing Bernoulli free boundaries, {\it Comm. Pure Appl. Math.} {\bf 78} (2025), no.~3, 545--591. 


    \bibitem[Lind05]{MR2177323} H. Lindblad, Well posedness for the motion of a compressible liquid with free surface boundary, {\it Comm. Math. Phys.} {\bf 260} (2005), no.~2, 319--392.

    \bibitem[LL18]{MR3812074}H.~Lindblad and C.~Luo, A priori estimates for the compressible Euler equations for a liquid with free surface boundary and the incompressible limit, {\it Comm. Pure Appl. Math.} {\bf 71} (2018), no.~7, 1273--1333. 

    \bibitem[Luo18]{MR3887218} C. Luo, On the motion of a compressible gravity water wave with vorticity, {\it Ann. PDE.} {\bf 4} (2018), no.~2, Paper No. 20, 71 pp.

    \bibitem[LZ22]{MR4439376}C.~Luo and J.~Zhang, Local well-posedness for the motion of a compressible gravity water wave with vorticity, {\it J. Differential Equations} {\bf 332} (2022), 333--403. 

    \bibitem[Mil60]{MR112435}L.~M. Milne~Thomson, {\it Theoretical hydrodynamics}, The Macmillan Company, New York, 1960; 

    \bibitem[Mur78]{MR506997}F. Murat, Compacit\'{e}par compensation, {\it Ann. Scuola Norm. Sup. Pisa Cl. Sci.} (4) {\bf 5} (1978), no.~3, 489--507. 

    \bibitem[MN08]{MR2431665}S.~R. Mart\'{i}nez and N.~I. Wolanski, A minimum problem with free boundary in Orlicz spaces, {\it Adv. Math.} {\bf 218} (2008), no.~6, 1914--1971. 

    \bibitem[Sto80]{MR2858161}G.~G. Stokes, {\it Mathematical and physical papers. Volume 1}, reprint of the 1880 original, 
    Cambridge Library Collection, Cambridge Univ. Press, Cambridge, 2009. 

    \bibitem[Tar79]{MR584398}L. Tartar, Compensated compactness and applications to partial differential equations, in {\it Nonlinear analysis and mechanics: Heriot-Watt Symposium, Vol. IV}, pp. 136--212, Res. Notes in Math., 1979. 

    \bibitem[VW11]{MR2810856}E. V\v{a}rv\v{a}ruc\v{a} and G.~S. Weiss, A geometric approach to generalized Stokes conjectures, {\it Acta Math.} {\bf 206} (2011), no.~2, 363--403. 

    \bibitem[VW12]{MR2995099}E. V\v{a}rv\v{a}ruc\v{a} and G.~S. Weiss, The Stokes conjecture for waves with vorticity, {\it Ann. Inst. H. Poincar\'{e} C Anal. Non Lin\'{e}aire} {\bf 29} (2012), no.~6, 861--885. 

    \bibitem[VW14]{MR3225630}E. V\v{a}rv\v{a}ruc\v{a} and G.~S. Weiss, Singularities of steady axisymmetric free surface flows with gravity, {\it Comm. Pure Appl.} Math. {\bf 67} (2014), no.~8, 1263--1306. 

    \bibitem[Wei98]{MR1620644}G.~S. Weiss, Partial regularity for weak solutions of an elliptic free boundary problem, {\it Comm. Partial Differential Equations} {\bf 23} (1998), no.~3-4, 439--455. 

    \bibitem[Wei99]{MR1759450}G.~S. Weiss, Partial regularity for a minimum problem with free boundary, {\it J. Geom. Anal.} {\bf 9} (1999), no.~2, 317--326.

    \bibitem[Wei03]{MR1989835}G.~S. Weiss, A singular limit arising in combustion theory: fine properties of the free boundary, {\it Calc. Var. Partial Differential Equations} {\bf 17} (2003), no.~3, 311--340. 

    \bibitem[WZ10]{MR2748622}G.~S. Weiss and G. Zhang, Existence of a degenerate singularity in the high activation energy limit of a reaction-diffusion equation, {\it Comm. Partial Differential Equations} {\bf 35} (2010), no.~1, 185--199.

    \bibitem[Wei21]{MR4238496}G.~S. Weiss, Bernoulli type free boundary problems and water waves, in {\it Geometric measure theory and free boundary problems}, 89--136, Lecture Notes in Math. Fond. CIME/CIME Found.  Springer, Cham. 
    
    \bibitem[XX07]{MR2375709}C. Xie and Z. Xin, Global subsonic and subsonic-sonic flows through infinitely long nozzles, {\it Indiana Univ. Math. J.} {\bf 56} (2007), no.~6, 2991--3023. 

    \bibitem[XX10a]{MR2644144}C. Xie and Z. Xin, Global subsonic and subsonic-sonic flows through infinitely long axially symmetric nozzles, {\it J. Differential Equations} {\bf 248} (2010), no.~11, 2657--2683. 

    \bibitem[XX10b]{MR2607929}C. Xie and Z. Xin, Existence of global steady subsonic Euler flows through infinitely long nozzles, {\it SIAM J. Math. Anal.} {\bf 42} (2010), no.~2, 751--784. 

\end{thebibliography}
\end{document}